\let\oldtocsection=\tocsection
\let\oldtocsubsection=\tocsubsection
\let\oldtocsubsubsection=\tocsubsubsection
\renewcommand{\tocsection}[2]{\hspace{0em}\oldtocsection{#1}{#2}}
\renewcommand{\tocsubsection}[2]{\hspace{1em}\oldtocsubsection{#1}{#2}}
\renewcommand{\tocsubsubsection}[2]{\hspace{2em}\oldtocsubsubsection{#1}{#2}}
\def\?[#1]{\textbf{[#1]}\marginpar{\Large{\textbf{??}}}}
\DeclareMathSymbol{\eset}{\mathalpha}{AMSb}{"3F}     
\renewcommand{\d}{\mathrm{d}}             
\renewcommand{\emptyset}{\eset}
\numberwithin{equation}{section}
\newtheorem{theorem}{Theorem}[section]
\newtheorem{lemma}[theorem]{Lemma}
\newtheorem{proposition}[theorem]{Proposition}
\newtheorem{corollary}[theorem]{Corollary}
\theoremstyle{remark}
\newtheorem{remark}[theorem]{Remark}
\newtheorem{open}[theorem]{Open question}
\newtheorem{problem}[theorem]{Problem}
\theoremstyle{definition}
\newtheorem{definition}[theorem]{Definition}
\newtheorem{notation}[theorem]{Notation}
\newcommand{\C}{\mathbb{C}}
\newcommand{\D}{\mathbb{D}}
\newcommand{\R}{\mathbb{R}}
\newcommand{\Z}{\mathbb{Z}}
\renewcommand{\H}{\mathbb{H}}
\newcommand{\N}{\mathbb{N}}
\newcommand{\Q}{\mathbb{Q}}
\newcommand{\E}{\mathbb{E}}
\renewcommand{\P}{\mathbb{P}}
\renewcommand{\S}{\mathbb{S}}
\renewcommand{\Im}{\mathrm{Im}}
\renewcommand{\Re}{\mathrm{Re}}
\newcommand{\id}{\mathrm{Id}}
\newcommand{\cT}{\mathcal{T}}
\newcommand{\cE}{\mathcal{E}}
\newcommand{\cC}{\mathcal{C}}
\newcommand{\cH}{\mathcal{H}}
\newcommand{\cB}{\mathcal{B}}
\newcommand{\cA}{\mathcal{A}}
\newcommand{\cV}{\mathcal{V}}
\newcommand{\cP}{\mathcal{P}}
\newcommand{\ind}{\mathds{1}}
\newcommand{\cW}{\mathcal{W}}
\newcommand{\cD}{\mathcal{D}}
\newcommand{\cL}{\mathcal{L}}
\newcommand{\rR}{\mathrm{R}}
\newcommand{\cS}{\mathcal{S}}
\newcommand{\rL}{\mathrm{L}}
\renewcommand{\hat}{\widehat}
\newcommand{\del}{\partial}
\newcommand{\delbar}{\bar{\partial}}
\newcommand{\bc}{\mathbf{c}}
\newcommand{\norm}[1]{\left\Vert #1\right\Vert}
\newcommand{\cU}{\mathcal{U}}
\newcommand{\cQ}{\mathcal{Q}}
\newcommand{\bL}{\mathbf{L}}
\newcommand{\boldeta}{\boldsymbol{\eta}}
\newcommand{\ba}{\mathbf{a}}
\newcommand{\bH}{\mathbf{H}}
\newcommand{\A}{\mathbb{A}}
\newcommand{\cJ}{\mathcal{J}}
\newcommand{\rM}{\mathrm{m}}
\newcommand{\bN}{\mathbf{N}}
\renewcommand{\det}{\mathrm{det}}
\newcommand{\rv}{\mathrm{v}}
\newcommand{\rw}{\mathrm{w}}
\newcommand{\exc}{\mathrm{exc}}
\newcommand{\rmint}{\mathrm{int}}
\newcommand{\rmext}{\mathrm{ext}}
\newcommand{\bU}{\mathbf{U}}
\newcommand{\bk}{\mathbf{k}}
\renewcommand{\exp}{\mathrm{exp}}
\newcommand{\rB}{\mathrm{B}}
\newcommand{\MKS}{\mathrm{MKS}}
\newcommand{\inte}{\mathrm{int}}
\newcommand{\loopmeasure}{\mu^\mathrm{loop}}
\newcommand{\bubmeasure}{\mu^\mathrm{bub}}
\newcommand{\eps}{\varepsilon}
\newcommand{\indic}[1]{\mathbf{1}_{#1}}
\newcommand{\ann}{O}
\author{Guillaume Baverez}
\address{Beijing International Center for Mathematical Research, Peking University}
\email{guillaume.baverez@bicmr.pku.edu.cn}
\author{Antoine Jego}
\address{CNRS \& CEREMADE, Université Paris-Dauphine, PSL University, France}
\email{antoinejego@hotmail.fr}
\title[SLE, CFT, and MKS measures]{The CFT of SLE loop measures and\\the Kontsevich--Suhov conjecture}
\date{}
\keywords{Schramm--Loewner evolutions, Virasoro algebra, conformal field theory.}
\subjclass{Primary: 60J67; 17B68. Secondary: 47L55; 81T40; 30C62.}
\begin{document}

\begin{abstract}
This paper initiates the study of the conformal field theory of the SLE$_\kappa$ loop measure $\nu$ for $\kappa\in(0,4]$, the range where the loop is almost surely simple. First, we construct two commuting representations $(\bL_n,\bar{\bL}_n)_{n\in\Z}$ of the Virasoro algebra with central charge $c_\rM=1-6(\frac{2}{\sqrt{\kappa}}-\frac{\sqrt{\kappa}}{2})^2\leq1$ as (unbounded) first order differential operators on $L^2(\nu)$. Second, we introduce highest-weight representations and characterise their structure: in particular, we prove the existence of vanishing singular vectors at arbitrary levels on the Kac table. Third, we prove an integration by parts formula for the SLE loop measure, and use it to define the Shapovalov form of the representation, a non degenerate (but \emph{not} positive definite) Hermitian form $\cQ$ on $L^2(\nu)$ with a remarkably simple geometric expression. The fact that $\cQ$ differs from the $L^2(\nu)$-inner product is a manifestation of non-unitarity. Finally, we write down a spectral resolution of $\cQ$ using the joint diagonalisation of $\bL_0$ and $\bar{\bL}_0$.
As an application of these results, we provide the first proof of the uniqueness of restriction measures, as conjectured by Kontsevich and Suhov.

Our results lay the groundwork for an in-depth study of the CFT of SLE: in forthcoming works, we will define correlation functions on Riemann surfaces, and prove conformal Ward identities, BPZ equations, and conformal bootstrap formulas.
\end{abstract}

\maketitle

\setcounter{tocdepth}{1}
\tableofcontents

\section{Introduction and main results}

    \subsection{Background and overview}
In 2000, Schramm \cite{Schramm2000} introduced a family of random fractal curves in the plane indexed by a positive real $\kappa$, now known as Schramm--Loewner Evolutions (SLE). These curves are conjectured (and sometimes proved) to describe the scaling limits of interfaces of spin clusters in critical 2D statistical mechanics models. This major breakthrough led to a tremendous increase in the rigorous understanding of the scaling limit of these systems. On the other hand, 2D critical systems have been studied through the lens of conformal field theory (CFT) since the groundbreaking work of Belavin, Polyakov and Zamolodchikov \cite{BPZ84}. The main assumption of this approach is that scaling limits of these models exhibit local conformal invariance, and the algebraic constraints implied by this infinite dimensional symmetry algebra drastically limit the possible theories existing in the limit. This idea was also present in Schramm's work: SLE was characterised from the start as the unique family of measures on curves satisfying conformal invariance and a certain domain Markov property. In \cite{LSW03_restriction}, Lawler, Schramm and Werner studied the ``conformal restriction" property of chordal SLE, relating SLE in a domain to SLE in its subdomains. Kontsevich and Suhov then asked whether this property gave another characterisation of the measure; more precisely, they conjectured the existence and uniqueness of a measure on Jordan loops satisfying the restriction property \cite[Conjecture 1]{KontsevichSuhov07}, in the range of parameters corresponding to $\kappa\in(0,4]$. While the existence was settled in previous works \cite{Werner08_loop,BenoistDubedat16,kemppainen2016nested,Zhan21_SLEloop}, Theorem \ref{thm:uniqueness} of this article gives a positive answer to the uniqueness part. 

In an attempt to reconcile probability and algebra, the CFT aspects of SLE have been studied since the inception of the theory, mostly focusing on the chordal case. The so-called partition functions of SLE can be interpreted as CFT correlation functions \cite{MR2000927, BauerBernard03, MR2187598, MR2337475, MR2358649, MR2518970, Dubedat15, Dubedat2015_fusion, MR3528421, MR3922531, feng2024multiple}, see \cite{Peltola19_review} for a thorough review,
but the mathematical framework of such a CFT remains only partially defined. As explained in \cite{Peltola19_review}, the most unsatisfactory aspect is the absence of mathematically well-defined ``fields" whose expectations would correspond to the SLE partition function. For instance, while SLE partition functions satisfy null-vector/BPZ equations, it is not known whether this corresponds to the insertion of a degenerate field in a correlation function. Our Theorem \ref{thm:module_structure} gives a first step toward a positive answer to this question (the second step --- the conformal Ward identities --- will be the topic of a follow-up work). 

In this paper, we consider the loop version of SLE \cite{kemppainen2016nested,Zhan21_SLEloop} (see also \cite{Werner08_loop,BenoistDubedat16} for the special cases $\kappa=\frac{8}{3},2$), and introduce the main ingredients that will turn the SLE loop measure into a full-fledged CFT. In a nutshell, we construct a structure $(L^2(\nu),(\bL_n,\bar{\bL}_n)_{n\in\Z},\cQ)$, where $\nu$ is the SLE$_\kappa$ loop measure ($\kappa\in(0,4]$), $(\bL_n,\bar{\bL}_n)_{n\in\Z}$ are two commuting representations of the Virasoro algebra with \emph{central charge} $c_\rM=1-6(\frac{2}{\sqrt{\kappa}}-\frac{\sqrt{\kappa}}{2})^2\leq1$ acting as (unbounded) operators on $L^2(\nu)$, and $\cQ$ is a Hermitian form on $L^2(\nu)$ such that $\bL_{-n}$ is the $\cQ$-adjoint of $\bL_n$ for all $n\in\Z$. In representation theoretic terms, $\cQ$ is called the \emph{Shapovalov form}. It turns out to have a very natural geometric interpretation, consisting in ``exchanging" the interior and the exterior of the curve. Our representations are constructed as follows: the Witt algebra defines invariant vector fields on the space of Jordan curves, and the operators are Lie derivatives in these directions. Representations of this flavour (but with substantial differences described in detail in Section \ref{SS:intro_Virasoro}) have been considered in \cite{KirillovYurev87,KirillovYurev88, BauerBernard03, MR2337475}, seeking an analogue of the Borel--Weil theorem for orbits of the group of diffeomorphisms of the circle. Much closer to our approach are the representations considered in \cite{ChavezPickrell14,GQW24}, inspired by the variational formulas of \cite{DurenSchiffer62}.

The Hermitian form $\cQ$ and the $\cQ$-adjoint relations are obtained via an integration by parts formula, which is an infinitesimal version of the conformal restriction property. Interestingly, the integration by parts is precisely the one (formally) satisfied by the path integral with action functional given by the universal Liouville action (a K\"ahler potential for the Weil-Petersson metric on the universal Teichm\"uller space \cite{TakhtajanTeo06}). This formula gives additional content to the idea -- initially motivated by semi-classical considerations \cite{ViklundWang19} (see also \cite{carfagnini2023onsager}) -- that the SLE loop is the correct interpretation of this path integral. The joint spectral theory of $\bL_0$ and $\bar{\bL}_0$ allows us to construct highest-weight representations indexed by a scalar $\lambda$ called the \emph{conformal weight}. We prove that these representations are irreducible (Theorem \ref{thm:module_structure}), giving important information when the conformal weight lies in the Kac table: in this case, there exist linear relations between certain states (e.g. Corollary \ref{C:level2}), a fact that will be used in a later work to prove the celebrated BPZ equations. Finally, we provide a spectral resolution of the Shapovalov form (Theorem \ref{T:spectral}), a crucial ingredient in our proof of the uniqueness part of Kontsevich--Suhov conjecture.

One of the most important aspects of our work is that the Shapovalov form is distinct from the $L^2(\nu)$-inner product, which is a symptom of the \emph{non-unitarity} of the theory. In fact, it is a standard result from the representation theory of the Virasoro algebra that the Shapovalov form is not positive definite for $c_\rM\leq1$ (except for a discrete set of values). In physics, non-unitary theories and their relations to loop models have been studied quite intensively over the past few years, mostly by algebraic methods and consistency conditions (crossing symmetry, conformal bootstrap, null-vector equations...) \cite{RibaultSantachiara15,PiccoRibault16_perco,HeJacobsenSaleur20_potts,JacobsenRibaultSaleur22_On}. However, contrary to the regime of central charge $\geq25$, there seems to be a zoo of possible theories when the central charge is less than 1. This underlines the importance of studying concrete representations like we do in this paper. In the probabilistic literature, a closely related model has been studied with a different point of view \cite{AngSun21_CLE,AngRemySun22_annuli}, namely using a coupling of SLE with Liouville CFT \cite{DKRV16,KRV_DOZZ,GKRV20_bootstrap} inspired by Sheffield's quantum zipper \cite{sheffield2016} (see also the comprehensive book \cite{berestycki2024gaussian} for a detailed and gentle introduction to this difficult topic). In fact, the generating function of the ``electrical thickness" computed in \cite{AngSun21_CLE} is the partition function of our Shapovalov form, in the sense that it is the norm of the top vector in a highest-weight representation. This type of coupling also appears in theoretical physics: \cite[(2.20)]{Eberhardt23_crossing} proposes an inner product on the space of Virasoro conformal blocks at central charge $c_\mathrm{L}\geq25$ by reweighting with the partition function of a ``matter CFT" with central charge $c_\rM=26-c_\mathrm{L}$, and the ``ghost partition function" of central charge $-26$. The putative matter CFT behind this partition function is not clearly identified there, but is conjectured to exist for all values of the central charge $c_\rM\leq 1$, with a continuous spectrum of representations \cite[Conjecture 1]{Eberhardt23_crossing}. These two properties are compatible with our results, and make SLE a natural candidate. In fact, our recent work \cite{BJ25} gives content to this idea by showing how the ghost field appears in a new approach to the conformal welding of quantum surfaces \cite{sheffield2016,AHS20}. Finally, we refer the reader to Section~\ref{sec:future} for additional research directions in relation to conformal field theory and SLE.

\smallskip

In the rest of this introduction, we will describe precisely our main results. Our CFT results require a precise setup and notations, introduced in Section \ref{SS:setup}. Section \ref{subsec:uniqueness} on uniqueness of MKS measures can be read independently. 
Future perspectives on the programme initiated in the current paper are described in Section \ref{sec:future}.

\smallskip

\noindent \textbf{Acknowledgments}
We are particularly grateful to E. Peltola for numerous discussions on SLE and a detailed feedback on this manuscript. We thank E. Peltola and B. Wu for bringing the question of uniqueness of MKS measures to our attention, and W. Werner for pointing out the relevance of our results to the reversibility and the duality of SLE. We also thank V. Vargas for discussions on SLE and the conformal bootstrap.
Finally, we are grateful to the referees for their careful readings and suggestions.

Some of this work has been achieved during the fantastic conference ``Probability in CFT" organised by J. Aru, C. Guillarmou and R. Rhodes, at the Bernoulli Center, EPFL, whose hospitality is greatly acknowledged.
AJ was supported by Eccellenza grant 194648 of the Swiss National Science Foundation and was a member of NCCR SwissMAP when this article was written up. GB acknowledges support from ANR-21-CE40-0003 CONFICA.

\smallskip

\textit{During the final stages of preparation of the current article, we learnt that M. Gordina, W. Qian and Y. Wang were independently working on a related article \cite{GQW24}. 
Specifically, they construct representations of the Witt and Virasoro algebras on $L^2(\nu)$ which would roughly correspond to our Theorems \ref{T:intro_witt} and \ref{T:intro_virasoro} below. They also derive an integration by parts formula and construct the Shapovalov form, which would correspond to Theorem \ref{T:intro_shapo} below. Their approach partly differs from ours, and we invite the reader to check it out in details. We thank them for bringing the references \cite{DurenSchiffer62,ChavezPickrell14} to our knowledge.}

    \subsection{Uniqueness of restriction measures}\label{subsec:uniqueness}
    

In this section, we state the uniqueness of restriction measures and outline the main steps of the proof. Before doing so, we need to introduce some relevant definitions.

\textbf{Space of Jordan curves.} 
Let $\cJ$  be the space of Jordan curves in $\hat\C$:
\begin{equation}\label{E:def_cJ}
\begin{aligned}
    & \cJ = \{ \eta = u(\S^1), u : \S^1 \to \hat\C \text{ continuous and injective} \}
    \qquad \text{and} \\
    & \hspace{40pt}\cJ_{0,\infty}  = \{ \eta \in \cJ: \eta \text{ disconnects 0 from } \infty \}.
\end{aligned}
\end{equation}
We endow $\cJ$ and $\cJ_{0,\infty}$ with the Hausdorff topology (induced by the Hausdorff metric on compact subsets of $\hat\C$). For each $\eta\in\cJ_{0,\infty}$, we denote by $\mathrm{int}(\eta)$ (resp. $\mathrm{ext}(\eta)$) the connected component of $\hat\C\setminus\eta$ containing $0$ (resp. $\infty$). We call these sets the interior and exterior of $\eta$ respectively. By the Jordan curve theorem, $\mathrm{int}(\eta)$ and $\mathrm{ext}(\eta)$ are homeomorphic to $\D$, and $\hat\C\setminus\eta=\mathrm{int}(\eta)\sqcup\mathrm{ext}(\eta)$.

\textbf{Brownian loop measure.}
As introduced by Lawler--Werner \cite{lawler2004brownian}, the whole-plane Brownian loop measure is a measure on Brownian-type trajectories defined by
\begin{equation}
    \label{E:def_BLM}
\loopmeasure_\C = \int_0^\infty \frac{\d t}{t} \int_\C |\d z|^2 \frac{1}{2\pi t} \P_\C^{t,z,z}
\end{equation}
where $\P_\C^{t,z,z}$ is the Brownian bridge measure from $z$ to $z$ with duration $t$ (in our convention, the associated generator is $\frac12 \Delta$). For a domain $D \subset \C$, one defines the Brownian loop measure in $D$ by restricting the whole plane measure to $D$:
\begin{equation}
    \label{E:BLM_restriction}
\loopmeasure_D(\d \cP) = \indic{\cP \subset D} \loopmeasure_\C(\d \cP).
\end{equation}
For any Borel sets $A,B \subset D$, we will denote by $\Lambda_D(A,B)$ the $\loopmeasure_D$-mass of loops touching both $A$ and $B$.
When $\partial D$ is non polar and $A$ and $B$ are disjoint closed sets, $\Lambda_D(A,B)$ is a finite quantity \cite[Lemma~2.4]{field2013reversed}. If $D = \hat{\C}$ and $A$ and $B$ are disjoint non polar subsets, this quantity is infinite but can be renormalised as follows (see \cite[Theorem 1.1]{field2013reversed}):
\begin{equation}
    \label{E:Lambda*}
    \Lambda^*(A,B) := \lim_{r \to 0} \Lambda_{O_r}(A,B) - \log |\log r|,
\end{equation}
where $O_r$ is the Riemann sphere minus a ball of radius $r$ (the resulting quantity does not depend on the precise centre of the ball).

\begin{remark}
Although this will not be used in the current paper, let us recall that, when $A$ and $B$ are two disjoint Jordan curves, which we assume in the rest of this remark, $\Lambda^*(A,B)$ can be expressed in terms of natural geometric quantities.
A proof of this identity can be found in \cite[Proposition 3.1]{LuoMaibach}, based on the observation of Dubédat \cite[Proposition 2.1]{Dubedat_SleFreeField}. The curves $A$ and $B$ cut the sphere $\hat\C$ into three regions: two simply connected domains, that we denote by $D_1$ and $D_2$, and an annular region, that we denote by $\mathrm{Ann}$. Then, there exists a universal constant $c_\diamond \in \R$ such that for any conformally flat metric $g$ on $\hat\C$,
\[ 
\Lambda^*(A,B) = \log \Big( \frac{\mathrm{vol}_g(\hat\C)}{\det'_\zeta(\Delta_{\hat\C,g})}\frac{\det_\zeta(\Delta_{D_1,g})\det_\zeta(\Delta_{D_2,g})}{\det_\zeta(\Delta_{\mathrm{Ann,g}})} \Big) + c_\diamond.
\]
In the above display, $\Delta_{D,g}$ denotes the Laplace operator in $D$ with respect to the background metric $g$, and $\det_\zeta(\Delta_{D,g})$ is its zeta-regularised determinant. In the case of the sphere, $\det'_\zeta(\Delta_{\hat\C,g})$ stands for this determinant where the zero eigenvalue has been removed. See \cite[Appendix~A.3]{LuoMaibach} for more on these notions.
\end{remark}

 \begin{definition}[Restriction measures]\label{D:restriction_measure}
 Fix $c_\rM \le 1$.
 Let $\nu$ be a Borel measure on the space $\cJ$ (resp. $\cJ_{0,\infty}$) of Jordan curves in the Riemann sphere (resp. separating $0$ from $\infty$), that is non-zero and locally finite. For any simply connected domain $D$ (with the disc topology), denote by
 \begin{equation}\label{E:conf_restriction}
 \d\nu_D(\eta):=\ind_{\eta \subset D} e^{\frac{c_\rM}{2}\Lambda^*(\eta,\del D)}\d\nu(\eta).
 \end{equation}
 We say that $\nu$ is a \textit{restriction measure} on $\cJ$ (resp. a \textit{weak restriction measure} on $\cJ_{0,\infty}$), with central charge $c_\rM$, if for any simply connected domains $D_1, D_2$ (resp. both containing $0$ and included in $\hat{\C}\setminus \{\infty\}$) and any biholomorphic map $\Phi:D_1 \to D_2$, $\nu_{D_2} = \Phi^*\nu_{D_1}$.
 \end{definition}

Note that, in the above definition of weak restriction measures on $\cJ_{0,\infty}$, there is no assumption of invariance under conformal maps which exchange the roles of $0$ and $\infty$. The word ``weak'' refers to this weak notion of conformal invariance. This will be used in Section \ref{S:reversibility} in order to give a new proof of reversibility of SLE.

Restriction measures on $\cJ$, also referred to as Malliavin--Kontsevich--Suhov measures (or simply MKS measures), have a long history. In \cite{AiraultMalliavin01}, Airault and Malliavin introduced the notion of \emph{unitarising measures} for the Virasoro algebra from an axiomatic point of view, leaving the construction of such measures open. Interestingly, this article was almost concomitant with Schramm's introduction of SLE \cite{Schramm2000}, and Lawler--Schramm--Werner soon showed the restriction property for chordal SLE \cite{LSW03_restriction}. Then, Kontsevich--Suhov related the restriction property to unitarising measures and conjectured that, for each central charge $c_\rM\leq1$, there exists a unique (up to multiplicative constant) measure on Jordan curves satisfying the properties of Definition~\ref{D:restriction_measure} \cite[Conjecture 1]{KontsevichSuhov07}. We stress that the link between restriction measures and unitarising measures is a rather subtle one, and refer to Section \ref{subsec:comparison} for a detailed comparison. 

The construction of the loop version of restriction measures was first done in the special cases $c_\rM=0$ \cite{Werner08_loop} and $c_\rM=-2$ \cite{BenoistDubedat16}.
A construction for the range $c_\rM \in (0,1]$ was later provided by Kemppainen and Werner \cite{kemppainen2016nested} based on the Conformal Loop Ensemble (counting measure of nested CLE, see also \cite[Theorem 1.1]{AngCaiSunWu}). Finally, Zhan's construction \cite{Zhan21_SLEloop} of the SLE loop measure covers the existence part of Kontsevich--Suhov's conjecture in the whole region $c_\rM\leq1$. In the range $c_\rM\in(0,1]$, Zhan's construction is closely related to the argument of Kemppainen--Werner showing reversibility \cite{kemppainen2016nested}, and indeed the two measures differ only by a multiplicative constant \cite[Theorem 2.18]{AngSun21_CLE}.
The fact that the measure is a Borel measure is verified in Proposition~\ref{prop:cSLE_is_borel} of the current paper.
We also mention that the articles \cite{Benoist16,MaibachPeltola24} study more general measures where the conformal restriction anomaly in \eqref{E:conf_restriction} is not necessarily given by the Brownian loop measure.
Here, we show the uniqueness part for loops in the sphere.

\begin{theorem}[Uniqueness of restriction measures]\label{thm:uniqueness}
Let $c_\rM\leq1$. Up to a positive multiplicative constant, there exists a unique restriction measure on $\cJ$ (resp. weak restriction measure on $\cJ_{0,\infty}$), with central charge $c_\rM$. This measure is the SLE$_\kappa$ loop measure (resp. the SLE$_\kappa$ loop measure restricted to the space $\cJ_{0,\infty}$), for the unique $\kappa\in(0,4]$ such that $c_\rM=1-6(\frac{2}{\sqrt{\kappa}}-\frac{\sqrt{\kappa}}{2})^2$.
\end{theorem}

As explained by Kontsevich and Suhov \cite[Section 2.5]{KontsevichSuhov07}, the problem of existence and uniqueness of MKS measures on general Riemann surfaces equipped with a projective structure (see \cite{BenoistDubedat16} for more details on this general setting of Riemann surfaces) actually reduces to the case of the Riemann sphere. In higher genus, \cite{KontsevichSuhov07} use the more complicated setting of measures with values in determinant line bundles, and we refrain from making a precise statement here. 

We mention that uniqueness was already known in the special case $c_\rM =0$ ($\kappa=8/3$); see \cite{Werner08_loop,ChavezPickrell14}. We also point out that \cite{BenoistDubedat16} shows the existence of a restriction measure with central charge $c_\rM=-2$ as the scaling limit of a discrete model, and uses an independent argument (relying on the convergence of the uniform spanning tree to SLE$_8$ and the duality $\kappa\leftrightarrow\frac{16}{\kappa}$) to prove that this measure is locally absolutely continuous with respect to SLE$_2$. Our theorem implies that this measure \emph{is} in fact the SLE$_2$ loop measure. In Section \ref{S:applications}, we discuss two additional consequences of this uniqueness result: the reversibility and the duality of SLE loop measures.

The main difficulty in proving this uniqueness statement is that there is no way \textit{a priori} to compare two restriction measures (they need not be absolutely continuous with respect to one another). As a matter of comparison, Lawler \cite{MR2518970, lawler2011defining} uses the restriction property to define SLE in multiply connected topologies as absolutely continuous measures with respect to the known SLE measure in simply connected domains. Uniqueness is then much clearer in this type of context.

\textit{Update: After the first version of this paper was released, G. Cai and Y. Gao \cite{arXiv:2502.05890} gave another proof of Theorem \ref{thm:uniqueness} in the case where $c_\rM \in (0,1]$.}

    \textbf{Proof strategy.}
We illustrate our broad strategy of proof with an elementary toy model; see \cite[Section 2]{Malliavin_Gaussian} for more perspectives on this classical toy model. The Gaussian distribution $\mu$, whose density with respect to Lebesgue measure on $\R$ is given by $x\in\R\mapsto (2\pi)^{-1/2} e^{-x^2/2}$, satisfies the following integration by parts formula: for any polynomials $f,g : \R \to \R$,
\begin{equation}
    \label{E:Gaussian_IBP} 
\int_\R f'(x) g(x) \mu(\d x) = \int_\R f(x) (xg(x)-g'(x)) \mu(\d x).
\end{equation}
This identity entirely characterises the probability measure $\mu$. A proof of this claim related to our proof of Theorem \ref{thm:uniqueness} goes as follows. Consider the Hermite polynomials defined inductively by:
\[
H_0 = 1, \qquad H_{n+1} = XH_n-H_n', \quad n \ge 0.
\]
Plugging $f=1$ and $g=H_n$ in \eqref{E:Gaussian_IBP} shows that for all $n \ge 0$, $\int_\R H_{n+1}(x) \mu(\d x) = 0$. Since the Hermite polynomials span all the polynomials,
one has in particular computed all the moments $\int_\R x^n \mu(\d x)$, $n \ge 0$, of $\mu$ and showed that they agree with the moments of the Gaussian distribution. This shows that the only probability measure satisfying \eqref{E:Gaussian_IBP} is the Gaussian distribution.

\medskip

The proof of Theorem \ref{thm:uniqueness} will follow a similar strategy, the rest of the introduction presenting each main step.
Let $c_\rM \le 1$ and fix a weak restriction measure $\nu$ on $\cJ_{0,\infty}$ with central charge $c_\rM$. To show that $\nu$ is a multiple of the SLE loop measure, we will:
\begin{itemize}
    \item Define a notion of derivative in the space $\cJ_{0,\infty}$ of Jordan curves;
    \item Establish an integration by parts formula with respect to $\nu$;
    \item Construct ``polynomials'' out of this formula and study their structure to guarantee that they span all polynomials;
    \item Compute the $L^2(\nu)$-inner product on these polynomials and conclude by a density argument.
\end{itemize}


\subsection{Setup}\label{SS:setup}

Recall from \eqref{E:def_cJ} that we denote by $\cJ_{0,\infty}$ the space of Jordan curves disconnecting 0 from $\infty$, endowed with the topology induced by the Hausdorff metric. 
It will be useful to parametrise the space $\cJ_{0,\infty}$ using conformal maps as follows.
Let 
\begin{align}\label{E:def_cE}
    \cE = \Big\{ \begin{array}{l} f : \D \to \C \text{ holomorphic with } f(0)=0, f'(0)=1 \text{ and} \\ \text{extending continuously and injectively to } \bar\D \text{ with } f(\bar\D) \subset \C
    \end{array} \Big\}.
\end{align}
Similarly and denoting $\D^* = \hat{\C} \setminus \overline{\D}$, let
\begin{align}\label{E:def_cE_tilde}
    \tilde\cE = \Big\{ \begin{array}{l} g : \D^* \to \hat\C\setminus\{0\} \text{ holomorphic with } g(\infty)=\infty, g'(\infty)=1 \text{ and} \\ \text{extending continuously and injectively to } \overline{\D^*} \text{ with } g(\overline{\D^*}) \subset \hat\C \setminus \{0\}
    \end{array} \Big\}.
\end{align}
Endow $\cE$ and $\tilde \cE$ with the local uniform topology on $\D$ and $\D^*$ respectively.
Every curve $\eta$ in $\cJ_{0,\infty}$ can be uniquely represented by a pair $(c_\eta,f_\eta)\in\R\times\cE$ (resp. a pair $(\tilde{c}_\eta,g_\eta)\in\R\times\tilde{\cE}$), where $e^{c_\eta}f_\eta$ uniformises the interior of $\eta$ (resp. $e^{-\tilde{c}_\eta}g_\eta$ uniformises the exterior of $\eta$). See Figure \ref{fig:setup}. This identifies $\cJ_{0,\infty}$ with $\R \times \cE$ and $\R \times \tilde \cE$.
These identifications are not homeomorphisms, but are nevertheless Borel isomorphisms, i.e. this defines a unique $\sigma$-algebra on $\cJ_{0,\infty}$. See Appendix \ref{app:SLE} for more details.

Every $f \in \cE$ and $g \in \tilde \cE$ have series expansions in the neighbourhoods of $0$ and $\infty$ respectively:
\begin{equation}\label{eq:coordinates}
f(z)=z\Big(1+\sum_{m=1}^\infty a_mz^m\Big)
\qquad \text{and} \qquad
g(z)=z\Big(1+\sum_{m=1}^\infty b_mz^{-m}\Big).
\end{equation}
We will think of $(a_m)_{m\geq1}$ as (complex analytic) coordinates on $\cE$. That is, we think of $c$, $f$, $(a_m)$, $\tilde c$, $g$, $(b_m)$ as functions defined on $\cJ_{0,\infty}$, e.g. $c: \eta \in \cJ_{0,\infty}\mapsto c_\eta \in \R$. When the context is clear, we will make the dependence on $\eta$ implicit.

\begin{figure}
\centering
\includegraphics[scale=1]{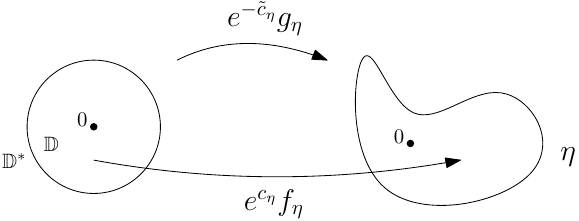}
\caption{\label{fig:setup}A Jordan curve $\eta$ having 0 in its interior and $\infty$ in its exterior. It is uniquely represented by a pair $(c_\eta,f_\eta)\in\R\times\cE$ such that $e^{c_\eta}f_\eta$ uniformises the interior (or a pair $(\tilde{c}_\eta,g_\eta)\in\R\times\tilde{\cE}$ such that $e^{-\tilde{c}_\eta}g_\eta$ uniformises the exterior). This defines homeomorphisms $\cJ_{0,\infty}\simeq\R\times\cE\simeq\R\times\tilde{\cE}$.}
\end{figure}

We denote by $\nu$ the SLE$_\kappa$ loop measure of \cite{Zhan21_SLEloop} restricted to $\cJ_{0,\infty}$, which is a scale-invariant Borel measure on $\cJ_{0,\infty}$ (Proposition \ref{prop:sle_is_borel}).
In this whole paper, $\kappa$ is fixed and belongs to $(0,4]$, the range corresponding to simple loops and the corresponding central charge is fixed by
\begin{equation}\label{E:central_charge}
    c_\rM = 1-6\Big(\frac{2}{\sqrt{\kappa}}-\frac{\sqrt{\kappa}}{2}\Big)^2.
\end{equation}
By scale invariance, the marginal law of $\nu$ on the $c$-coordinate is a multiple of Lebesgue measure on $\R$. The multiplicative constant is irrelevant for the current paper, and it will be convenient to fix it at $1/\sqrt{\pi}$.
Again by scale invariance, the marginal law of $\nu$ on the $f$-coordinate, conditionally on $c$, does not depend on the value of $c$. This gives a probability measure on $\cE$ that we will denote by $\nu^\#$. A variant of $\nu^\#$ is called the \emph{shape measure} in \cite{AngCaiSunWu}. Altogether, we have decomposed
\begin{equation}
    \label{E:SLE_shape}
    \nu=\frac{1}{\sqrt{\pi}}\d c\otimes\nu^\#.
\end{equation}

	\subsection{Virasoro representations on \texorpdfstring{$L^2(\nu)$}{L2(nu)}}\label{SS:intro_Virasoro}

We now present our results concerning Witt and Virasoro representations on the space of functions defined on $\cJ_{0,\infty}$, or more precisely on $L^2(\nu)$.
These results go beyond the SLE setting and are of general interest since they primarily concern notions of derivatives in the space of Jordan curves.
See below Theorem~\ref{T:intro_virasoro} for a comparison with earlier works.

 The \emph{Witt algebra} is the space of Laurent polynomial vector fields $\C(z)\del_z$. It is a Lie algebra with bracket
\begin{equation}
    \label{E:bracket_vector_field}
[v\del_z,w\del_z]=(vw'-v'w)\del_z.
\end{equation}
We will use the notation $\rv_n=-z^{n+1}\del_z$ for all $n\in\Z$. The commutation relations on these generators are $[\rv_n,\rv_m]=(n-m)\rv_{n+m}$. Let $\rv=v\del_z\in\C(z)\del_z$. The vector field $\rv$ generates a family of conformal transformations $\phi_t$ solution to the differential equation
\[\del_t\phi_t=v\circ\phi_t,\qquad\phi_0=\mathrm{id}.\]
More precisely, for all set $K$ compactly included in $\C\setminus\{0\}$, there exists a complex neighbourhood $O$ of the origin such that for all $t\in O$, $\phi_t$ is well defined on $K$.
Hence, for every $\eta\in\cJ_{0,\infty}$, we can consider the small motion
\begin{equation}\label{E:phi_cdot_eta}
\phi_t(\eta)=\{\phi_t(z):z \in \eta\}.
\end{equation}
The condition that $v$ is a Laurent polynomial can be weakened to any holomorphic $v$ defined in a neighbourhood of the trace of the curve (in which case the flow $(\phi_t)_t$ would still be well defined for small enough times, in a neighbourhood of the curve). 

In \eqref{E:cC} and \eqref{E:cC_compact}, we will introduce two dense subspaces $\cC$ and $\cC_\mathrm{comp}$ of $L^2(\nu)$. These spaces are essentially composed of polynomials in the variables $a_m$ and $\bar a_m$, $m\ge 1$, from the expansion \eqref{eq:coordinates} of $f$.
In  Proposition~\ref{P:construction_cL}, we will show that for any vector field $\rv \in \C(z)\del_z$, there exist endomorphisms $\cL_\rv, \bar\cL_\rv : \cC \to \cC$ such that for any $F \in \cC$ and $\eta \in \cJ_{0,\infty}$,
 \[F(\phi_t(\eta))=F(\eta)+t\cL_\rv F(\eta)+\bar{t}\bar{\cL}_\rv F(\eta)+o(t),
 \qquad \text{as} \quad t \to 0.\]
In other words, $\cL_\rv$ and $\bar{\cL}_\rv$ are the $(1,0)$- and $(0,1)$-parts of the Lie derivative in direction $\rv$.
We emphasise that the $o(t)$ in the above display is simply a complex number which vanishes as $t\to 0$.

Our first main result concerns the study of these differential operators as operators on $L^2(\nu)$.

\begin{theorem}[Witt representation]\label{T:intro_witt}
For all $\rv \in \C(z)\del_z$,
the densely defined operators $\cL_\rv,\bar{\cL}_\rv:\cC\to L^2(\nu)$ are closable and preserve $\cC$. They form two commuting representations of the Witt algebra: for all $\rv,\rw\in\C(z)\del_z$, we have
\[[\cL_\rv,\cL_\rw]=\cL_{[\rv,\rw]};\qquad[\bar{\cL}_\rv,\bar{\cL}_\rw]=\bar{\cL}_{[\rv,\rw]};\qquad[\cL_\rv,\bar{\cL}_\rw]=0.\]
In the sequel, we will write $\cL_n:=\cL_{\rv_n}$, $\bar{\cL}_n:=\bar{\cL}_{\rv_n}$, $n \in \Z$.
\end{theorem}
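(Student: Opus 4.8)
The plan is to work in the coordinates $(c, (a_m)_{m\geq 1})$ on $\cJ_{0,\infty} \simeq \R \times \cE$ and compute the action of the Lie derivatives $\cL_{\rv_n}$ explicitly as first-order differential operators in these coordinates, then verify the bracket relations at the level of vector fields and promote the identities to $L^2(\nu)$ using the closability statement. First I would recall from Proposition~\ref{P:construction_cL} the explicit formula for $\cL_\rv$ and $\bar\cL_\rv$ acting on $\cC$: the infinitesimal deformation $\phi_t \cdot \eta$ of a curve, read through the uniformising map $e^c f$ of its interior, changes $(c, f)$ by an amount governed by the Loewner-type/variational formulas (in the spirit of Duren--Schiffer), so that $\cL_\rv$ is a concrete holomorphic vector field $\kw_\rv = \sum_k p_k(\ba) \partial_{a_k} + (\text{c-term})\partial_c$ with polynomial coefficients, and $\bar\cL_\rv$ is its ``conjugate'' counterpart acting in the antiholomorphic coordinates. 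The key point is that the assignment $\rv \mapsto \kw_\rv$ is the standard action of the Witt algebra on $\cE$ (or on the Neretin/semigroup picture), hence is a Lie algebra morphism by a direct, if tedious, bracket computation on the generators $\rv_n = -z^{n+1}\partial_z$; and since $\cL_\rv$ depends only on the holomorphic coordinates while $\bar\cL_\rw$ depends only on the antiholomorphic ones, the two families manifestly commute.

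Concretely, the steps in order are: (1) Using $\cC$ (a dense space of ``polynomial'' functionals of finitely many $a_m$, times nice functions of $c$, as set up in \eqref{E:cC}), show $\cL_\rv$ maps $\cC$ into $\cC$ — this is already part of Proposition~\ref{P:construction_cL} — so that compositions $\cL_\rv \cL_\rw$ and commutators are well-defined on $\cC$ without domain issues. (2) On $\cC$, verify $[\cL_\rv, \cL_\rw] F = \cL_{[\rv,\rw]} F$ pointwise in $\eta$: expand $F(\phi^\rv_s \phi^\rw_t \cdot \eta)$ to second order in $(s,t)$, use the flow composition identity $\phi^\rv_s \phi^\rw_t = \phi^\rw_t \phi^\rv_s \phi^{[\rv,\rw]}_{st} + o(st)$ for the conformal flows (which follows from \eqref{E:bracket_vector_field} and the uniqueness of solutions to the Loewner ODE), and read off the coefficient of $st$; the holomorphic/antiholomorphic splitting of the Lie derivative then gives the three identities simultaneously, including $[\cL_\rv, \bar\cL_\rw] = 0$. (3) Extend to the closures: since $\cL_\rv, \bar\cL_\rv$ are closable (Theorem statement, proved via the integration-by-parts/adjoint relations, or via explicit adjoints on $\cC$), and the commutator identity holds on the core $\cC$, it extends to the closed operators on the natural domain — here I would be slightly careful and state the identity on the common core $\cC$ (or on $\cC_{\mathrm{comp}}$), which is the honest meaning of ``they form a representation'', rather than claiming equality of unbounded operators on their maximal domains.

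The main obstacle I expect is step~(2), specifically controlling the flows $\phi^\rv_t$ near $0$ and $\infty$: a general Laurent polynomial vector field $\rv_n$ does not generate a global conformal automorphism of $\hat\C\setminus\{0,\infty\}$, only a local flow defined away from neighbourhoods of the punctures and only for small complex time, so one must argue that $\phi^\rv_t \circ \eta$ still makes sense as an element of $\cJ_{0,\infty}$ for $t$ small (the curve stays away from the bad region) and that the map $t \mapsto \phi^\rv_t \cdot \eta$ is differentiable in the Carathéodory topology with the claimed derivative — this is where the careful setup of $\cE$, $\tilde\cE$ and the uniformising constants $c_\eta, \tilde c_\eta$ does the work, and where one genuinely uses that $\kappa \in (0,4]$ only insofar as $\nu$-a.e.\ curve is a nice Jordan curve. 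Once the pointwise (in $\eta$) identity $F(\phi^\rv_t \cdot \eta) = F(\eta) + t\cL_\rv F(\eta) + \bar t \bar\cL_\rv F(\eta) + o(t)$ is in hand with the remainder uniform on the relevant sets, the bracket computation is a finite-dimensional calculation in the coordinates $(c, a_1, \dots, a_N)$ and the representation property is essentially bookkeeping; the commutation $[\cL_\rv,\bar\cL_\rw]=0$ is then immediate from the coordinate description.
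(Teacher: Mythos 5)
Your overall route—explicit coordinate expressions via Proposition~\ref{P:construction_cL}, flow composition for the brackets, closability from the adjoint/integration-by-parts relation—matches the paper's proof. However, there is a genuine error in one of the justifications you give for $[\cL_\rv,\bar\cL_\rw]=0$.

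You assert, in two places, that $\cL_\rv$ ``depends only on the holomorphic coordinates while $\bar\cL_\rw$ depends only on the antiholomorphic ones, [so] the two families manifestly commute,'' and later that this commutation is ``immediate from the coordinate description.'' This is false when $\rv$ has a pole at $0$. The coordinate formula the paper derives (see \eqref{eq:witt_general_formula}) reads, for $n\ge 2$,
\[
\cL_{-n}=\varpi(\rv_{-n})\del_c+e^{-nc}\sum_{m\ge 1}\bigl(A_{n,m}-B_{n,m}\bigr)\del_{a_m}-e^{-nc}\sum_{m\ge 1}\overline{C}_{n,m}\del_{\bar a_m},
\]
and the coefficients $C_{n,m}$ are polynomials in \emph{both} $(a_m)$ and $(\bar a_m)$. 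So $\cL_\rv$ genuinely contains $\partial_{\bar a_m}$ derivatives and its coefficients are not holomorphic; it is \emph{not} a $(1,0)$ vector field on $\cE$ in the naive sense, and the ``manifest'' commutation you invoke does not hold. (The confusion is between being the $(1,0)$-part of the $t$-derivative in the definition $F(\phi_t\cdot\eta)=F+t\cL_\rv F+\bar t\bar\cL_\rv F+o(t)$, which is always true, and being a holomorphic vector field on the parameter space $\cE$, which fails.) The vanishing of $[\cL_\rv,\bar\cL_\rw]$ really does require the second-order expansion argument: the cross-terms $s\bar t$ and $\bar s t$ in $F(\psi_s\circ\phi_t\cdot\eta)-F(\phi_t\circ\psi_s\cdot\eta)$ vanish because the composition $\psi_s\circ\phi_t$ has no $s\bar t$- or $\bar s t$-term (the composed flow is holomorphic in $(s,t)$), not because of any holomorphicity of $\cL_\rv$ itself. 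You do correctly describe this flow-composition computation in the body of step (2), so your plan is salvageable; but you should delete the ``manifest from the coordinates'' shortcut, as following it would lead to a dead end for the negative modes. The rest—mapping $\cC$ to $\cC$, extending the quasiconformal flow near the punctures (controlled via the annulus $\ann_\eta$), and obtaining closability from the densely defined adjoint—is all in line with the paper.
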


In our next result,
we modify the Witt representation to a Virasoro representation with central charge $c_\rM$ \eqref{E:central_charge}.
To do so,
let us introduce the ``differential one-forms" $\vartheta, \tilde \vartheta:\cJ_{0,\infty}\times\C(z)\del_z\to\C$ defined for $\rv = v(z) \partial_z \in\C(z)\del_z$ and $\eta \in \cJ_{0,\infty}$ by:
\begin{equation}\label{eq:def_vartheta}
\vartheta_\eta(\rv)=\frac{e^{-c_\eta}}{2i\pi}\oint\cS f_\eta^{-1}(z)v(e^{c_\eta}z)\d z;\qquad\tilde{\vartheta}_\eta(\rv)=\frac{e^{\tilde{c}_\eta}}{2i\pi}\oint\cS g_\eta^{-1}(z)v(e^{-\tilde{c}_\eta}z)\d z,
\end{equation}
where $\cS f=(\frac{f''}{f'})'-\frac{1}{2}(\frac{f''}{f'})^2$ is the \emph{Schwarzian derivative}. The function $\cS f_\eta^{-1}$ is viewed as a quadratic differential in the neighbourhood of 0, which is dual to meromorphic vector fields by the residue pairing (contour integral on a small loop around 0). Similarly, in the definition of $\tilde{\vartheta}$, the contour integral is on a small loop around $\infty$. By definition, we have the expansion $\cS (e^{c_\eta} f_\eta)^{-1}(z)=\sum_{n=2}^\infty\vartheta_\eta(\rv_{-n})z^{n-2}$ around 0, and the coefficients $\vartheta_\eta(\rv_n)$ are called the \emph{Neretin polynomials} of $e^{c_\eta}f_\eta$ \cite[Section 3.2]{GQW24}. For all $\rv \in \C(z)\del_z$, we define
\begin{equation}
    \label{E:bL}
\bL_\rv:=\cL_\rv-\frac{c_\rM}{12}\vartheta(\rv)
\qquad \text{and} \qquad
\bar{\bL}_\rv:=\bar{\cL}_\rv-\frac{c_\rM}{12}\overline{\vartheta(\rv)}
\end{equation}
as operators on $\cC$ \eqref{E:cC}.
Note that $\bL_\rv=\cL_\rv$ if $\rv\in\C[z]\del_z$ has no pole at $z=0$. In the sequel, we will use the notation $\bL_n=\bL_{\rv_n}$ for all $n\in\Z$. 


\begin{theorem}[Virasoro representation]\label{T:intro_virasoro}
For all $\rv \in \C(z)\del_z$,
the densely defined operators $\bL_\rv,\bar{\bL}_\rv : \cC \to L^2(\nu)$ are closable and preserve $\cC$. They form two commuting representations of the Virasoro algebra with central charge $c_\rM$: for all $\rv,\rw\in\C(z)\del_z$,
\begin{align*}
&[\bL_\rv,\bL_\rw]=\bL_{[\rv,\rw]}+\frac{c_\rM}{12}\omega(\rv,\rw);\qquad[\bar{\bL}_\rv,\bar{\bL}_\rw]=\bar{\bL}_{[\rv,\rw]}+\frac{c_\rM}{12}\overline{\omega(\rv,\rw)};\qquad[\bL_\rv,\bar{\bL}_\rw]=0,
\end{align*}
where $\omega$ is the Virasoro cocycle 
\begin{equation}\label{eq:def_omega}
\omega(v\del_z,w\del_z)=\frac{1}{2i\pi}\oint_{\mathbb{S}^1} v'''(z)w(z)\d z.
\end{equation}
\end{theorem}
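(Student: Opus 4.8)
The plan is to deduce Theorem~\ref{T:intro_virasoro} from the already-established Witt representation (Theorem~\ref{T:intro_witt}) by a purely algebraic computation with the correction terms $\vartheta(\rv)$, followed by a functional-analytic clean-up about closability and domains. First I would record the key property of $\vartheta$: it is a $1$-cocycle on the Witt algebra ``shifted'' so that $\cL_\rv$ acting on $\vartheta(\rw)$ produces exactly the defect needed to turn the Witt bracket into the Virasoro bracket. Concretely, I expect the identity
\[
\cL_\rv\big(\vartheta(\rw)\big)-\cL_\rw\big(\vartheta(\rv)\big)=\vartheta([\rv,\rw])+\omega(\rv,\rw),
\]
valid on the dense subspace, where $\omega$ is the cocycle in \eqref{eq:def_omega}. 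Granting this, a direct expansion gives, on $\cC_\mathrm{comp}$,
\begin{align*}
[\bL_\rv,\bL_\rw]
&=[\cL_\rv,\cL_\rw]-\frac{c_\rM}{12}\Big(\cL_\rv\vartheta(\rw)-\cL_\rw\vartheta(\rv)\Big)\\
&=\cL_{[\rv,\rw]}-\frac{c_\rM}{12}\vartheta([\rv,\rw])-\frac{c_\rM}{12}\omega(\rv,\rw)
=\bL_{[\rv,\rw]}-\frac{c_\rM}{12}\omega(\rv,\rw),
\end{align*}
which is the claimed relation up to fixing the sign convention in $\omega$ (one chooses the orientation of the contour in \eqref{eq:def_omega} accordingly). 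The $\bar{\bL}$ relations follow by complex conjugation since $\overline{\vartheta(\rv)}$ depends antiholomorphically and $\bar{\cL}_\rv$ commutes with $\cL_\rw$; the mixed commutator $[\bL_\rv,\bar{\bL}_\rw]=0$ is immediate from $[\cL_\rv,\bar{\cL}_\rw]=0$ together with the fact that $\vartheta(\rv)$ and $\overline{\vartheta(\rw)}$ are functions (multiplication operators) and $\cL$, $\bar{\cL}$ act on functions by the Leibniz rule, so all cross terms cancel.

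The heart of the matter is therefore the cocycle identity for $\vartheta$. I would prove it by unwinding the definition \eqref{eq:def_vartheta}: $\vartheta_{e^cf}(\rv)$ is (up to the scaling factor $e^{-c}$) the residue pairing of the quadratic differential $\cS f^{-1}$ against the vector field $v$, pulled back appropriately. The Lie derivative $\cL_\rw$ moves the curve $\eta$ by the flow $\phi_t$ of $\rw$, hence moves $f$, hence moves $\cS f^{-1}$ by the well-known transformation law of the Schwarzian under composition, $\cS(h\circ\psi)=(\cS h\circ\psi)\,\psi'^2+\cS\psi$. Differentiating this cocycle relation for the Schwarzian at $t=0$ is exactly what produces, after taking residues, the antisymmetrised combination $\vartheta([\rv,\rw])$ plus the ``anomaly'' $\omega(\rv,\rw)=\frac{1}{2i\pi}\oint w''' v\,\d z$, the third-derivative term being the infinitesimal Schwarzian $\frac{d}{dt}\big|_0 \cS\phi_t$. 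One must also track how the conformal radius coordinate $c$ (equivalently the prefactor $e^{-c}$) varies under the flow; this contributes to $\cL_\rv\vartheta(\rw)$ through the $\rv_0$-direction, and I expect it to combine cleanly because $\vartheta$ is designed to be the differential of the universal Liouville action / Weil--Petersson potential, whose variation is governed precisely by the Schwarzian (cf.\ \cite[Chapter~2]{TakhtajanTeo06}). I would do this computation first for $\rv=\rv_n$, $\rw=\rv_m$ with $n,m$ ranging over $\Z$, reducing everything to contour integrals of monomials, where the residues are elementary and the famous $\frac{n^3-n}{12}$ Virasoro term appears.

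For the functional-analytic part: closability of $\bL_\rv$ and $\bar{\bL}_\rv$ on $\cC_\mathrm{comp}$ follows from closability of $\cL_\rv$ (Theorem~\ref{T:intro_witt}) because $\vartheta(\rv)$ is, for fixed $\rv$, a fixed (bounded on the relevant sets, or at least measurable) function of $\eta$ — so $\bL_\rv$ differs from $\cL_\rv$ by a multiplication operator, and adding a multiplication operator by an $L^2$-bounded function to a closable operator preserves closability; one has to be slightly careful since $\vartheta(\rv)$ need not be bounded, but restricting to $\cC_\mathrm{comp}$ (functions with compact support in the $c$-coordinate, per \eqref{E:cC_compact}) controls the $e^{\pm c}$ factors, which is presumably exactly why this smaller domain is used here rather than $\cC$. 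The commutation relations, proved initially on $\cC_\mathrm{comp}$, then extend to the closures by a standard density/continuity argument: one checks both sides are closed operators agreeing on a core. The main obstacle I anticipate is not any single step but the bookkeeping in the Schwarzian variation — getting every sign, every factor of $2\pi i$, and the interplay between the interior uniformiser $f$, its inverse $f^{-1}$, and the scaling coordinate $c$ correct so that the anomaly comes out as exactly $\frac{c_\rM}{12}\omega$ with $\omega$ as normalised in \eqref{eq:def_omega}. A secondary subtlety is justifying that $\cL_\rw$ may be applied to $\vartheta(\rv)$ at all, i.e.\ that $\eta\mapsto\vartheta_\eta(\rv)$ lies in the domain $\cC$ (or $\cC_\mathrm{comp}$) of the Witt operators; this should follow from the explicit polynomial/analytic dependence of $\vartheta$ on the coordinates $(c,(a_m))$ once those function spaces are defined in \eqref{E:cC}--\eqref{E:cC_compact}.
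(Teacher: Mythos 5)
Your high-level plan — promote the Witt representation to a Virasoro one by a cocycle computation with $\vartheta$ and then take closures — is the right spirit, and the algebraic expansion of $[\bL_\rv,\bL_\rw]$ in terms of $[\cL_\rv,\cL_\rw]$ and $\cL_\rv\vartheta(\rw)-\cL_\rw\vartheta(\rv)$ is exactly how the paper proceeds. The mixed commutator $[\bL_\rv,\bar\bL_\rw]=0$ and the $\bar\bL$ relations via conjugation are also fine. But there is a genuine gap in the middle, and a weaker one at the end.

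The gap is in the cocycle identity. Your proposed route — differentiate $\vartheta(\rw)$ along the flow $\phi_t$ of $\rv$ and feed the result into the Schwarzian chain rule — only works when $\rv\in z^2\C[z]\del_z$. In that regime $v'(0)=0$ so the flow preserves the normalisation of the interior uniformiser at $0$, $e^{c_t}f_t=\phi_t\circ(e^cf)+o(t)$, and $\cS f_t^{-1}$ does indeed transform by the Schwarzian cocycle, giving $\cL_\rv\vartheta(\rw)=\vartheta([\rv,\rw])-\omega(\rv,\rw)$. This covers two of the three index regimes (both $\rv,\rw$ polynomial; $\rv$ polynomial and $\rw$ with a pole). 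But when $\rv$ has a pole at $0$ (that is, $\rv\in z\C[z^{-1}]\del_z$), the flow is singular at $0$ and does not act on $(c,f)$ by post-composition: one must pass through a quasiconformal extension of $\phi_t$ and a conjugation by a circle map, and $f_t$ acquires both $t$- and $\bar t$-parts in a way that your sketch does not control. The paper does not even attempt a direct computation in this case; instead, for $\rv,\rw\in z\C[z^{-1}]\del_z$ it invokes the adjoint relation $\bL_\rv^*=\Theta\circ\bL_{\rv^*}\circ\Theta$ (Corollary \ref{cor:ibp}) to reduce to the already-established polynomial case by conjugating with $\Theta$ — and it explicitly remarks that this case ``would be difficult to obtain without the integration by parts formula.'' Your proposal leaves this case open.

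On closability: you argue it from ``closable plus multiplication by a function is closable,'' taking closability of $\cL_\rv$ from Theorem \ref{T:intro_witt} as given. Two problems. First, the statement is not true for unbounded multiplication operators in general; you need the specific structure that the adjoint of $\bL_\rv$ contains $\cC_\mathrm{comp}$ in its domain, which is precisely what the IBP formula delivers. Second, the closability in Theorem \ref{T:intro_witt} is itself proved via Theorem \ref{T:ibp} (densely defined adjoint), so you cannot treat it as independent input. In short, the integration by parts formula is not an afterthought here — it is the engine behind both the negative-index commutators and the closability, and your proposal bypasses it at exactly the points where it is indispensable.
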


Representations of the Virasoro algebra on the space of Jordan curves are not new, but there is a crucial difference between the one in \cite{Kirillov98,AiraultMalliavin01} and the one in \cite{KontsevichSuhov07} (which is also the one from \cite{GQW24} and the present paper). This is a rather subtle point, and we refer the reader to Section \ref{subsec:comparison} for a comprehensive discussion. As we described in Theorems \ref{T:intro_witt} and \ref{T:intro_virasoro}, our setup relies on the action of meromorphic vector fields on $\cJ_{0,\infty}$, via the associated flows. In particular, we obtain two commuting families of operators, instead of only one. Identifying the right framework was one of the main challenges of this work which was inspired by recent progress on Liouville CFT; see especially \cite{BGKRV22}.

		\subsection{Integration by parts and Shapovalov form}\label{SS:shapo}
Our next result concerns the representation of the Shapovalov form on $L^2(\nu)$. Namely, we construct a Hermitian form $\cQ$ on $L^2(\nu)$ such that $\cQ(\bL_{-n}F,G)=\cQ(F,\bL_nG)$ for all $n\in\Z$.

Let $\iota:\hat{\C}\to\hat{\C}$ be the anticonformal involution $\iota(z):=1/\bar{z}$. The conjugation by $\iota$ defined by $g \in \tilde \cE \mapsto\iota\circ g\circ\iota \in \cE$ is a bicontinuous involution. We define a bounded operator $\Theta$ on $L^2(\nu)$ by
\begin{equation}\label{eq:def_Theta}
\Theta F(c_\eta,f_\eta):=F(\tilde{c}_\eta,\iota\circ g_\eta\circ\iota),\qquad\forall F\in L^2(\nu),\,\forall \eta \in \cJ_{0,\infty}.
\end{equation}
This defines an involution of $L^2(\nu)$. Moreover, by reflection symmetry of $\nu$ ($\nu$ is invariant under~$\iota$ \cite[Lemma 2.6]{GQW24}), $\Theta$ is also self-adjoint. Thus, the spectrum of $\Theta$ is $\{1,-1\}$ and we will think of the corresponding eigenspaces as spaces of even and odd functions. Since $\Theta$ is self-adjoint, the expression
\begin{equation}\label{eq:def_shapo}
\cQ(F,G):=\langle F,\Theta G\rangle_{L^2(\nu)}
\end{equation}
defines a Hermitian form (i.e. a symmetric sesquilinear form) on $L^2(\nu)$. This form is non-degenerate since $\cQ(F,\Theta F)=\norm{F}_{L^2(\nu)}^2\neq0$ for all $F\neq0$, but it is neither positive nor negative: it is positive (resp. negative) on the space of even (resp. odd) functions.

\begin{theorem}\label{T:intro_shapo}
Let $\rv=v\del_z\in\C(z)\del_z$ and define
\begin{equation}\label{eq:star}
    \rv^*=v^*(z)\del_z:=z^2\overline{v(1/\bar z)}\del_z.
\end{equation}
The operator $\bL_{\rv^*}$ is the $\cQ$-adjoint of $\bL_\rv$, in the sense that for all $F,G\in \cC_\mathrm{comp}$ \eqref{E:cC_compact}, we have 
\[\cQ(\bL_{\rv^*}F,G)=\cQ(F,\bL_\rv G);\qquad\cQ(\bar{\bL}_{\rv^*}F,G)=\cQ(F,\bar{\bL}_\rv G).\]
In other words, we have the following hermiticity relations with respect to the $L^2(\nu)$-inner product:
\[\bL_\rv^*=\Theta\circ\bL_{\rv^*}\circ\Theta;\qquad\bar{\bL}_\rv^*=\Theta\circ\bar{\bL}_{\rv^*}\circ\Theta.\]
\end{theorem}

This result stands out as one of our main contributions and drives some of our proofs. Specifically, it will be a crucial input in the proof of the commutation relations of $\bL_\rv$ and $\bL_\rw$ for $\rv, \rw \in z \C[z^{-1}] \partial_z$ and in the proof of the closability of $\bL_\rv$ on $L^2(\nu)$.

As we will see,
Theorem \ref{T:intro_shapo} is a consequence of the following integration by parts formula of independent interest (see Theorem \ref{T:ibp}): for all $\rv \in \C(z)\del_z$,
\begin{equation}\label{E:intro_IBP}
\cL_\rv^*=-\bar{\cL}_\rv-\frac{c_\rM}{12}\overline{\tilde{\vartheta}_\eta(\rv)}
+\frac{c_\rM}{12}\overline{\vartheta_\eta(\rv)}; \qquad
\bar\cL_\rv^*=-\cL_\rv-\frac{c_\rM}{12}\tilde{\vartheta}_\eta(\rv)
+\frac{c_\rM}{12}\vartheta_\eta(\rv),
\end{equation}
where $\vartheta_\eta(\rv)$ and $\tilde{\vartheta}_\eta(\rv)$ are defined in \eqref{eq:def_vartheta}.

\noindent\textbf{Heuristics.}
We now discuss some heuristics that first led us to guess the aforementioned integration by parts formula. The SLE loop measure with parameter $\kappa$ can be thought of as being absolutely continuous w.r.t. the SLE loop measure with parameter $8/3$ (corresponding to a vanishing central charge):
\begin{equation}
    \label{E:intro_path_integral}
\text{``}~\d \nu_\kappa(\eta) = \exp \Big( \frac{c_\rM}{24} I^L(\eta) \Big) \d \nu_{8/3}(\eta)~\text{''}
\end{equation}
where $I_L(\eta)$ is the Loewner energy, or universal Liouville action, of $\eta$.
This path integral formulation is especially inspired by \cite{carfagnini2023onsager}, but, as we are about to argue, is also consolidated by our integration by parts formula \eqref{E:intro_IBP}.
Making sense of \eqref{E:intro_path_integral} would require renormalisations because, for $\nu_{8/3}$-almost every $\eta$, $I^L(\eta)=+\infty$ (and, indeed, $\nu_\kappa$ is not absolutely continuous w.r.t $\nu_{8/3}$ when $\kappa \neq 8/3$).
The SLE$_{8/3}$-loop measure possesses a greater level of symmetry since it is exactly invariant under restrictions. This can be used to show the integration by parts \eqref{E:intro_IBP} in this case. Making formal computations with \eqref{E:intro_path_integral} and using a formula by Takhtajan and Teo \cite[Chapter~2, Theorem~3.5]{TakhtajanTeo06} relating variations of the Loewner energy to the Schwarzian derivative of $f$ and $g$, we can then derive \eqref{E:intro_IBP}. This formal computation is detailed in Appendix~\ref{Appendix:formal}.

		\subsection{Highest-weight representations}\label{subsubsec:hw_modules}

The spectral theory of $\bL_0$ and $\bar{\bL}_0$ leads to the introduction of a family of operators $(\bL_n^\lambda,\bar{\bL}^\lambda_n)_{n\in\Z}$ indexed by $\lambda\in\C$, which are densely defined operators on $L^2(\nu^\#)$. See \eqref{eq:op_lambda} for the precise definition. They have a simple analytic description \eqref{eq:op_lambda} and can be defined more abstractly by the conjugation of the representations $(\bL_n,\bar{\bL}_n)_{n\in\Z}$ with a certain operator introduced in Section~\ref{subsec:level_operators} (see \eqref{E:cP_lambda}). For each $\lambda\in\C$, the commutation relations are preserved by this conjugation (Lemma~\ref{lem:commutations_lambda}), so we get two commuting Virasoro representations $(\bL_n^\lambda,\bar{\bL}_n^\lambda)_{n\in\Z}$ with central charge $c_\rM$. The relevance of these operators (Proposition \ref{lem:commutations_lambda}) is that
\[\bL_0^\lambda\ind=\bar{\bL}_0^\lambda\ind=\lambda\ind\quad \text{and for each} \quad n \ge 1, \quad
\bL_n^\lambda\ind=\bar{\bL}_n^\lambda\ind=0,\]
where $\ind$ is the constant function $f \in \cE \mapsto 1$.
In other words, $\ind$ is a highest-weight state of weight $\lambda$ for these representations. We are going to use this property to define highest-weight representations, but we first need to introduce some notations.

\noindent\textbf{Integer partitions and polynomials.}
We can represent an integer partition by a sequence $\bk=(k_m)_{m\in\N^*}\in\N^{\N^*}$ containing only finitely many non-zero terms, where the number partitioned by $\bk$ is $|\bk|=\sum_{m=1}^\infty mk_m$.
For $N \ge 0$, we denote by $\cT$ (resp. $\cT_N$) the set of all integer partitions (resp. the set of partitions of $N$). By convention, $\cT_0 = \{\varnothing\}$ where $\varnothing = (0,0,0,\dots)$ denotes the empty partition.
For every $\bk,\tilde{\bk}\in\cT$, we define the monomial
\begin{equation}\label{E:monomial}
\ba^\bk {\bar \ba}^{\tilde \bk}:=\prod_{m=1}^\infty a_m^{k_m} {\bar{a}_m}^{\tilde k_m}\in\C[(a_m, \bar{a}_m)_{m\geq1}]
,\end{equation}
and we call $|\bk|$ (resp. $|\tilde{\bk}|$) the \emph{left (resp. right) degree} of $\ba^\bk {\bar \ba}^{\tilde \bk}$. There are $\#\cT_N\#\cT_{\tilde{N}}$ linearly independent monomials of left degree $N$ and right degree $\tilde{N}$. The \emph{total degree} of $\ba^\bk\bar{\ba}^{\tilde{\bk}}$ is $|\bk|+|\tilde{\bk}|$, and the total degree of an arbitrary polynomial is the maximal total degree over its monomials. We will also use the same shorthand notation for operators: 
\begin{equation}\label{eq:string}
\bL_{-\bk}:=\cdots\bL_{-3}^{k_3}\bL_{-2}^{k_2}\bL_{-1}^{k_1};\qquad\bL_\bk:=\bL_1^{k_1}\bL_2^{k_2}\bL_3^{k_3}\cdots.
\end{equation}
Note the reversal of the order in which we read the partition.

With these notations at hand, we can define $\Psi_{\lambda,\bk,\tilde{\bk}}:=\bL_{-\bk}^\lambda\bar{\bL}^\lambda_{-\tilde{\bk}}\ind\in\C[(a_m,\bar{a}_m)_{m\geq1}]$, $\Psi_{\lambda,\bk}:=\Psi_{\lambda,\bk,\emptyset}$, and
\begin{equation}
    \label{eq:def_V}
\cV_\lambda:=\mathrm{span}\left\lbrace\Psi_{\lambda,\bk}\big|\,\bk\in\cT\right\rbrace;\qquad\cW_\lambda:=\mathrm{span}\left\lbrace\Psi_{\lambda,\bk,\tilde{\bk}}\big|\,\bk,\tilde{\bk}\in\cT\right\rbrace,
\end{equation}
where the linear span is understood in the algebraic sense (finite linear combinations). By definition, $\cV_\lambda$ is a highest-weight representation of weight $\lambda$. As such, they have a natural grading $\cV_\lambda=\oplus_{N\in\N}\cV_\lambda^N$ where $\cV_\lambda^N=\mathrm{span}\{\Psi_{\lambda,\bk}|\,\bk\in\cT_N\}$, and similarly $\cW_\lambda^{N,\tilde{N}}=\mathrm{span}\{\Psi_{\lambda,\bk,\tilde{\bk}}|\,\bk\in\cT_N,\,\tilde{\bk}\in\cT_{\tilde{N}}\}$. Vectors in $\cW_\lambda^{N,\tilde{N}}$ are eigenstates of $\bL_0^\lambda$ (resp. $\bar{\bL}_0^\lambda$) with eigenvalue $\lambda+N$ (resp. $\lambda+\tilde{N}$).

 The \emph{Kac table} is the countable set (see e.g. \cite[Appendix A]{Peltola19_review})
\[kac:=\left\lbrace\lambda_{r,s}:=(r^2-1)\frac{\kappa}{16}+(s^2-1)\frac{1}{\kappa}+\frac{1}{2}(1-rs)|\,r,s\in\N^*\right\rbrace.\]
In particular, we have
\[\lambda_{1,2}=\frac{6-\kappa}{2\kappa};\qquad\lambda_{2,1}=\frac{3\kappa-8}{16}.\]
The next theorem classifies the structure of these representations. The main point is that $\cV_\lambda$ is irreducible for all $\lambda\in\C$. We refer the reader to Appendix \ref{app:virasoro} for details on the terminology used in the statement. Briefly, a Verma module is the ``largest" possible highest-weight representation. It may contain non-trivial submodules, in which case the quotient by the maximal proper submodule is called the irreducible quotient.

\begin{theorem}\label{thm:module_structure}
\begin{enumerate}[leftmargin=*]
\item \label{item:verma} Suppose $\lambda\not\in kac$. Then, $\cV_\lambda$ is the Verma module and, for all $N\in\N$, $\cV_\lambda^N=\mathrm{span}\{\ba^\bk|\,\bk\in\cT_N\}$. In particular, $\cV_\lambda=\C[(a_m)_{m\geq1}]$.
\item \label{item:irred_quotient} Suppose $\lambda\in kac$. Then, $\cV_\lambda$ is the irreducible quotient of the Verma module by the maximal proper submodule.
\item \label{item:tensor_product} For all $\lambda\in\C$, $\cW_\lambda$ is isomorphic to the tensor product of two copies of $\cV_\lambda$. 
Moreover,
for all $\lambda\not\in kac$ and $N\in\N$, we have $\oplus_{n+\tilde{n}\leq N}\cW_\lambda^{n,\tilde{n}}=\mathrm{span}\{\ba^\bk\bar{\ba}^{\tilde{\bk}}|\,|\bk|+|\tilde{\bk}|\leq N\}$. In particular, $\cW_\lambda=\C[(a_m,\bar{a}_m)_{m\geq1}]$.
\end{enumerate}
\end{theorem}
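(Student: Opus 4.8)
The plan is to analyse the highest-weight representations $\cV_\lambda$ and $\cW_\lambda$ through the single quantity that controls reducibility, namely the Shapovalov form, and to extract the module structure from what we already know about it. The key point is that the operators $\bL_n^\lambda$ are obtained from the $\bL_n$ by conjugation with the level operator $\cP_\lambda$, and via Theorem~\ref{T:intro_shapo} the form $\cQ$ pulls back to a contravariant Hermitian form on $\cV_\lambda$ (resp.\ $\cW_\lambda$) for which $\bL_{-n}^\lambda$ and $\bL_n^\lambda$ are adjoint; this is exactly the abstract Shapovalov form of the highest-weight module of weight $\lambda$ (up to the normalisation fixed by $\cQ(\ind,\ind)=1$). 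First I would recall the classical fact from Virasoro representation theory (Kac determinant formula, as in Feigin--Fuks/Kac--Raina) that the Shapovalov determinant at level $N$ of the Verma module $M(c,\lambda)$ vanishes if and only if $\lambda$ admits a factorisation $\lambda=\lambda_{r,s}$ with $rs\le N$; hence for $\lambda\notin kac$ the form is non-degenerate at every level and $\cV_\lambda$ is the full Verma module, while for $\lambda\in kac$ the radical of the form is a nonzero proper submodule.

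For part~\eqref{item:verma}, once we know $\cV_\lambda$ is the Verma module for $\lambda\notin kac$, it remains to identify the grading concretely: $\dim\cV_\lambda^N=\#\cT_N$, and I claim the monomials $\{\ba^\bk:\bk\in\cT_N\}$ form a basis. This is where the explicit analytic form \eqref{eq:op_lambda} of the operators enters: $\bL_{-m}^\lambda$ acts on polynomials in $(a_m)$ as a first-order differential operator whose top-degree part raises the total degree by exactly one and whose leading symbol in the coordinates $(a_m)$ is (up to nonzero constants and lower-order corrections) $\partial/\partial a_m$-dual, so that $\bL_{-\bk}^\lambda\ind$ has total degree $|\bk|$ with a nonvanishing leading monomial $\ba^\bk$ up to triangular corrections. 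An upper-triangularity argument with respect to a suitable ordering of partitions then shows the $\Psi_{\lambda,\bk}$, $\bk\in\cT_N$, span the same space as the monomials $\ba^\bk$, $\bk\in\cT_N$; comparing dimensions (both equal $\#\cT_N$, using that the Verma module has exactly this graded dimension) forces $\cV_\lambda^N=\mathrm{span}\{\ba^\bk:\bk\in\cT_N\}$ and hence $\cV_\lambda=\C[(a_m)_{m\ge1}]$.

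For part~\eqref{item:irred_quotient}, the issue is to show $\cV_\lambda$ — which is \emph{defined} as the span of the $\Psi_{\lambda,\bk}$ inside the concrete function space, not as an abstract quotient — really is the irreducible quotient rather than some intermediate quotient of the Verma module. Here I would use non-degeneracy of $\cQ$ on $L^2(\nu)$ (stated below \eqref{eq:def_shapo}): the pulled-back form on $\cV_\lambda$ is the image of a genuinely non-degenerate form, so any vector of $\cV_\lambda$ that is $\cQ$-orthogonal to all of $\cV_\lambda$ must be $0$; equivalently the induced form on $\cV_\lambda$ is non-degenerate. Since the Shapovalov form descends to a \emph{non-degenerate} form precisely on the irreducible quotient $L(c,\lambda)=M(c,\lambda)/J$ (where $J$ is the maximal proper submodule, which equals the radical of the form), the surjection $M(c,\lambda)\twoheadrightarrow\cV_\lambda$ must have kernel exactly $J$, giving $\cV_\lambda\cong L(c_\rM,\lambda)$. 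For part~\eqref{item:tensor_product}, the commutation $[\bL_\rv,\bar\bL_\rw]=0$ makes $\cW_\lambda$ a module over $\mathrm{Vir}\otimes\overline{\mathrm{Vir}}$ generated by a bi-highest-weight vector of weight $(\lambda,\lambda)$, so there is a canonical surjection $\cV_\lambda\otimes\cV_\lambda\twoheadrightarrow\cW_\lambda$, $\Psi_{\lambda,\bk}\otimes\overline{\Psi_{\lambda,\tilde\bk}}\mapsto\Psi_{\lambda,\bk,\tilde\bk}$; injectivity follows again by comparing with the non-degenerate (tensor-product) Shapovalov form, and for $\lambda\notin kac$ the concrete description of the bigraded pieces follows from part~\eqref{item:verma} applied on each side together with the fact that $\bL_{-\bk}^\lambda$ acts only on the holomorphic variables $(a_m)$ and $\bar\bL_{-\tilde\bk}^\lambda$ only on $(\bar a_m)$.

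The main obstacle I anticipate is part~\eqref{item:irred_quotient}: one must rule out, rigorously, that $\cV_\lambda$ collapses \emph{too far} (a proper quotient of $L(c_\rM,\lambda)$ is impossible since $L$ is irreducible, but one must be sure $\ind\ne 0$ in $\cV_\lambda$, i.e.\ the representation is nonzero — which is immediate since $\ind\in L^2(\nu^\#)$ — and that the map from the abstract Verma module is well-defined, which needs the commutation relations of Theorem~\ref{T:intro_virasoro} to hold on the relevant domain $\cC_\mathrm{comp}\ni\ind$). The cleanest route is the dimension count: show $\dim\cV_\lambda^N$ is simultaneously $\le\dim L(c_\rM,\lambda)^N$ (as a quotient of $L$) and $\ge\dim L(c_\rM,\lambda)^N$ (because the non-degenerate form on $\cV_\lambda^N$ cannot live on a space smaller than the rank of the form on $L(c_\rM,\lambda)^N$), forcing equality and hence the isomorphism. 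A secondary technical point is justifying that all these identities between unbounded operators, which a priori hold only after closure on dense domains, restrict correctly to the finite-dimensional algebraic spans $\cV_\lambda^N\subset\C[(a_m)_{m\ge1}]$; but since $\C[(a_m)_{m\ge1}]\subset\cC_\mathrm{comp}$ and the operators $\bL_n^\lambda$ preserve polynomials, everything takes place within honest polynomial computations and no functional-analytic subtlety survives.
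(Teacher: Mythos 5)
Your handling of Items~(1) and~(3) is sound and essentially matches the paper's algebraic proof: for $\lambda\notin kac$ the Verma module is irreducible, hence $\cV_\lambda\simeq M_{c_\rM,\lambda}$, and the level-by-level dimension count against $\C[(a_m)_{m\ge1}]$ closes the argument; Item~(3) is then routine from the commuting actions.

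The argument for Item~(2) has a genuine gap. You claim that non-degeneracy of $\cQ$ on $L^2(\nu)$ implies that the induced form on $\cV_\lambda$ is non-degenerate. This implication is false: the restriction of a non-degenerate Hermitian form to a subspace can be degenerate (e.g.\ a Minkowski form restricted to a null plane), and here matters are worse because $\cQ_\lambda$ on $\cV_\lambda\subset\C[(a_m)_{m\ge1}]$ is \emph{not} a restriction of $\cQ$ at all — polynomials are not $L^2(\nu)$ functions, and $\cQ_\lambda$ as defined in \eqref{eq:def_Q_lambda} is a weighted expectation (a spectral disintegration of $\cQ$), not a pullback. Your dimension-count variant also has the implication backwards: you write ``$\dim\cV_\lambda^N\le\dim L(c_\rM,\lambda)^N$ as a quotient of $L$'', but $\cV_\lambda$ is a priori only a quotient of $M(c_\rM,\lambda)$; showing that the quotient map kills the maximal submodule $I_{c_\rM,\lambda}$, i.e.\ that $\cV_\lambda$ really is a quotient of $L$, is precisely the content of the claim, while the opposite inequality $\dim\cV_\lambda^N\ge\dim L(c_\rM,\lambda)^N$ is the easy one. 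What you are missing is a mechanism showing that the image of every singular vector of $M(c_\rM,\lambda)$ vanishes inside $\C[(a_m)_{m\ge1}]$.

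The paper supplies this mechanism in two independent ways. The algebraic proof embeds $(\C[(a_m)_{m\ge1}],(\bL_n^\lambda)_n)$ into the \emph{dual} Verma module $M^\vee_{c_\rM,\lambda}$ via the map $\Phi_\lambda$ (injectivity coming from the upper-triangular leading term $\bL_n^\lambda=-\del_{a_n}+\cdots$ for $n>0$), and then invokes Lemma~\ref{lem:irreducible_quotient}: the highest-weight submodule generated by the cyclic vector of $M^\vee_{c_\rM,\lambda}$ is automatically the irreducible quotient $V_{c_\rM,\lambda}$, because singular vectors of $M$ dualise into relations that hold for free. The analytic proof instead shows directly that a polynomial $P$ annihilated by all $\cL_n$, $n\ge1$, must be constant, by integrating along the contracting flow $t\mapsto e^tf(e^{-t}\cdot)$ and observing that its generating vector fields vanish to order 2 at the origin. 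Either route fills the gap; the non-degeneracy claim does not, and without one of these your proof of Item~(2) does not go through.
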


Theorem \ref{thm:module_structure} implies that certain linear relations exist between the states at level $rs$ in $\cV_{\lambda_{r,s}}$. Here is one instance of such a relation when $rs=2$.

\begin{corollary}\label{C:level2}
For $\lambda\in\{\frac{6-\kappa}{2\kappa},\frac{3\kappa-8}{16}\}$, the following equality holds in $\C[(a_m)_{m\geq1}]$:
\begin{equation}\label{eq:singular_vector}
\left((\bL_{-1}^\lambda)^2-\frac{2}{3}(2\lambda+1)\bL_{-2}^\lambda\right)\ind=0.
\end{equation}
\end{corollary}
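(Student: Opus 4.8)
The plan is to deduce Corollary~\ref{C:level2} from the irreducibility of $\cV_\lambda$ established in Theorem~\ref{thm:module_structure}\,\eqref{item:irred_quotient}. Write $\Psi_\lambda:=\bigl((\bL_{-1}^\lambda)^2-\tfrac23(2\lambda+1)\bL_{-2}^\lambda\bigr)\ind\in\cV_\lambda^2$, which by the definition of $\cV_\lambda$ is a genuine element of $\C[(a_m)_{m\geq1}]$. The two weights $\tfrac{6-\kappa}{2\kappa}=\lambda_{1,2}$ and $\tfrac{3\kappa-8}{16}=\lambda_{2,1}$ are precisely the Kac weights $\lambda_{r,s}$ with $rs=2$, so the statement is that the classical level-two null vector of Virasoro highest-weight theory vanishes inside the concrete module $\cV_\lambda$. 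Since $\cV_\lambda$ is a highest-weight module generated by $\ind$ on which the $(\bL_n^\lambda)_{n\in\Z}$ act as a Virasoro representation of central charge $c_\rM$ (Proposition~\ref{lem:commutations_lambda}), the notions of submodule and of the $\bL_0^\lambda$-grading behave exactly as in the abstract theory, and this is all that will be used.

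First I would check that, for $\lambda\in\{\lambda_{1,2},\lambda_{2,1}\}$, the vector $\Psi_\lambda$ is a \emph{singular vector}, i.e.\ $\bL_n^\lambda\Psi_\lambda=0$ for every $n\geq1$. Since $\bL_1^\lambda$ and $\bL_2^\lambda$ generate $\{\bL_n^\lambda:n\geq1\}$ under the bracket, it suffices to treat $n=1$ and $n=2$. Both are short computations using only the Virasoro relations at central charge $c_\rM$ and the highest-weight identities $\bL_0^\lambda\ind=\lambda\ind$, $\bL_n^\lambda\ind=0$ ($n\geq1$): one finds that $\bL_1^\lambda\Psi_\lambda=0$ holds identically (this is what pins down the coefficient $\tfrac23(2\lambda+1)$), while $\bL_2^\lambda\Psi_\lambda=0$ is equivalent to the algebraic relation $c_\rM(2\lambda+1)=2\lambda(5-8\lambda)$. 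Substituting $c_\rM=1-6\bigl(\tfrac{2}{\sqrt\kappa}-\tfrac{\sqrt\kappa}{2}\bigr)^2$, one verifies directly that this relation holds for $\lambda=\lambda_{1,2}$ and for $\lambda=\lambda_{2,1}$ (in both cases it rearranges to $6\kappa\, c_\rM=-9\kappa^2+78\kappa-144$, which is the defining identity for $c_\rM$); this is nothing but the level-two BPZ condition, whose solutions are exactly the level-two Kac weights.

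Then I would conclude as follows. By Theorem~\ref{thm:module_structure}\,\eqref{item:irred_quotient}, $\cV_\lambda$ is irreducible for $\lambda\in kac$, in particular for $\lambda_{1,2}$ and $\lambda_{2,1}$. Suppose $\Psi_\lambda\neq0$. Since $\Psi_\lambda$ is singular of $\bL_0^\lambda$-weight $\lambda+2$, the Virasoro submodule generated by $\Psi_\lambda$ is spanned by the vectors $\bL_{-\bk}^\lambda\Psi_\lambda$, $\bk\in\cT$, all of which lie in $\bigoplus_{N\geq2}\cV_\lambda^N$; it is therefore a nonzero proper submodule (it does not contain $\ind\in\cV_\lambda^0$, the grading being direct), contradicting irreducibility. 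Hence $\Psi_\lambda=0$ in $\cV_\lambda$, which is precisely the identity~\eqref{eq:singular_vector} in $\C[(a_m)_{m\geq1}]$.

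The only genuine work is the commutator bookkeeping behind $\bL_1^\lambda\Psi_\lambda=\bL_2^\lambda\Psi_\lambda=0$ and the elementary identity matching $c_\rM$ with the level-two Kac weights; both are entirely routine, and no input about Verma modules beyond what Theorem~\ref{thm:module_structure} already supplies is needed. The one point to state carefully is that $\Psi_\lambda$ is a bona fide element of the module $\cV_\lambda$ — guaranteed by the construction of $(\bL_n^\lambda)_{n\in\Z}$ in Section~\ref{subsec:level_operators} — so that the grading and submodule arguments are legitimate.
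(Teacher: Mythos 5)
Your proof is correct. It spells out what the paper only asserts in passing — namely that Corollary~\ref{C:level2} ``is a consequence of Theorem~\ref{thm:module_structure}'' — whereas the proof the paper actually writes out in Section~\ref{S:level2} takes a different, computational route. The authors' written proof computes $\bL_{-1}^\lambda\ind$, $\bL_{-2}^\lambda\ind$ and $(\bL_{-1}^\lambda)^2\ind$ as explicit polynomials in $a_1,a_2$, packages them into a $2\times 2$ linear system whose matrix is the level-two Gram matrix of the Shapovalov form, identifies its determinant $32\lambda(\lambda-\lambda_{1,2})(\lambda-\lambda_{2,1})$ (the Kac determinant), and then reads off the linear dependence at the two special weights. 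Your approach instead verifies abstractly, purely from the Virasoro relations and the highest-weight identities of Proposition~\ref{lem:commutations_lambda}, that $\Psi_\lambda$ is a singular vector at level~$2$ exactly when $c_\rM(2\lambda+1)=2\lambda(5-8\lambda)$, confirms that this equation picks out $\lambda_{1,2}$ and $\lambda_{2,1}$, and then invokes irreducibility of $\cV_\lambda$ from Theorem~\ref{thm:module_structure}\,(2) to conclude that a nonzero singular vector at positive level is impossible. Your commutator arithmetic checks out (the coefficient $-\tfrac{2}{3}(2\lambda+1)$ is forced by $\bL_1^\lambda\Psi_\lambda=0$, and $\bL_2^\lambda\Psi_\lambda=0$ becomes the stated algebraic condition, which indeed reduces to $6\kappa c_\rM=-9\kappa^2+78\kappa-144$ for both weights). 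The trade-off is that the paper's proof is self-contained at level~$2$ and produces the explicit polynomials, at the cost of being ad hoc; yours is the structural argument and scales in principle to arbitrary $rs$, but it leans on the full strength of Theorem~\ref{thm:module_structure} — which the authors evidently wanted to complement with a hands-on check, which is the declared purpose of Section~\ref{S:level2}.
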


Corollary \ref{C:level2} is a consequence of Theorem \ref{thm:module_structure}, but we will also give a ``computational proof'' of this result in Section~\ref{S:level2}, by evaluating explicitly the states $\bL_{-2}^\lambda\ind$ and $(\bL_{-1}^\lambda)^2\ind$ as polynomials in the coordinates $a_1,a_2$. This result will be combined with conformal Ward identities in a follow-up work in order to prove BPZ equations for correlation functions involving a degenerate field at level 2.
We mention that such BPZ equations are well known in the chordal case (see e.g. the review \cite{Peltola19_review}) and are derived using local martingales of SLE. We stress however that our approach is orthogonal to the literature so far since in our context they are consequences of our CFT framework.

\begin{remark}
The study of (degenerate) highest-weight representations is a standard topic in the representation theory of the Virasoro algebra \cite{FeiginFuchs}, and the probabilistic construction of Liouville CFT has produced concrete examples of such representations \cite[Theorem 4.5]{BGKRV22}. In \cite{BW}, the first author and Baojun Wu showed an analogue of Theorem \ref{thm:module_structure} for these modules. It would be interesting to see if the coupling of Liouville CFT with SLE could give an alternative proof, following the methods of \cite{AngSun21_CLE}.
\end{remark}

	\subsection{Spectral resolution}\label{subsec:spectral}
Our final result is a spectral resolution for the Shapovalov form $\cQ$, which takes the form of a Plancherel-type formula analogous to \cite[Section 6.8]{GKRV20_bootstrap} and can be understood in connection with the harmonic analysis of the group $\mathrm{Diff}(\S^1)$ (see \cite[Section~1.4]{GKRV20_bootstrap} for a discussion). In Liouville CFT, such a formula is the key to the conformal bootstrap, and we hope to address similar questions for SLE in the future. 

 Our goal is to decompose $\cQ$ on the modules $\cW_{ip}\simeq\cV_{ip}\otimes\overline{\cV}_{ip}$, $p\in\R$ forming the joint spectra of the Hamiltonians $\bL_0,\bar{\bL}_0$. Interestingly, there exists an exact formula for the partition function of the Shapovalov form (i.e. the norm of the constant function in $\cW_{ip}$). To state this precisely, define for $f \in \cE$,
\begin{equation}\label{E:tilde_c_0}
    \tilde c_0(f) = \log\text{-conformal radius of } f(\S^1) \text{ viewed from } \infty;
\end{equation}
namely, the exterior of the curve $f(\S^1)$ is uniformised by $e^{-\tilde{c}_0(f)}g$ for some function $g\in\tilde{\cE}$ (depending on $f$). This is a geometrically meaningful nonpositive (\cite[Equation (21)]{Pommerenke75}) quantity: it is a K\"ahler potential for the Velling-Kirillov metric on the universal Teichm\"uller curve \cite{KirillovYurev87,TakhtajanTeo06}, and it is sometimes called the \emph{logarithmic (or analytic) capacity} \cite{KirillovYurev87}. In \cite{SSW09, AngSun21_CLE} (originally introduced by Kenyon and Wilson), it goes by the name of \emph{electrical thickness} (see Lemma \ref{lem:electrical_thickness}).
Remarkably,
the moment generating function of this random variable under the SLE$_\kappa$ shape measure $\nu^\#$ \eqref{E:SLE_shape} is explicit \cite[Theorem~1.3]{AngSun21_CLE}: for $\Re(\lambda)<\frac{1}{2}(1-\frac{\kappa}{8})$, one has\footnote{Our $\lambda$ differs from the one in \cite{AngSun21_CLE} by a factor $-2$ since we choose to parametrise by the conformal weight.}
\begin{equation}\label{eq:def_R}
\rR(\lambda):=\E^\#[e^{-2\lambda\tilde{c}_0}]=\frac{\sin(\pi(1-\kappa/4))}{\pi(1-\kappa/4)}\frac{\pi\sqrt{(1-\kappa/4)^2+\lambda\kappa}}{\sin(\pi\sqrt{(1-\kappa/4)^2+\lambda\kappa})}.
\end{equation}
For $\lambda\in[\frac{1}{2}(1-\frac{\kappa}{8}),\infty)$, the generating function is infinite. Note that the square-roots appear as a ratio, so the choice of branch is not important and $\rR(\lambda)$ admits a meromorphic continuation to the whole complex plane, whose poles form a discrete subset of the interval $[\frac{1}{2}(1-\frac{\kappa}{8}),\infty)$. The threshold $\frac{1}{2}(1-\frac{\kappa}{8})$ is easy to explain probabilistically: the probability that $e^{-\tilde{c}_0}$ gets larger than some $\epsilon^{-1}>0$ is of the same order as the probability that the SLE gets at distance $\epsilon$ to a fixed point, which is known to scale like $\epsilon^{2-(1+\frac{\kappa}{8})}=\epsilon^{1-\frac{\kappa}{8}}$ (the Hausdorff dimension of the curve is $1+\frac{\kappa}{8}$ a.s. \cite{10.1214/07-AOP364}). 

Let $\lambda\in\C$, and $\bk,\tilde{\bk},\bk',\tilde{\bk}'\in\cT$. If this is well defined and finite, we set
\begin{equation}\label{eq:def_Q_lambda}
\cQ_\lambda(\Psi_{\lambda,\bk,\tilde{\bk}},\Psi_{\bar{\lambda},\bk',\tilde{\bk'}}):=\E^\#\left[e^{-(2\lambda+|\bk|\wedge|\bk'|+|\tilde\bk|\wedge|\tilde{\bk}'|)\tilde{c}_0}\Psi_{\lambda,\bk,\tilde{\bk}}\overline{\Theta^\#\Psi_{\bar{\lambda},\bk',\tilde{\bk}'}}\right],
\end{equation}
where $\Theta^\#P(f)=P(\iota\circ g\circ\iota)$ for all polynomials $P$. From \eqref{eq:def_R}, we have $|\cQ_\lambda(\Psi_{\lambda,\bk,\tilde{\bk}},\Psi_{\bar{\lambda},\bk',\tilde{\bk}'})|<\infty$ when $\Re(\lambda)<\frac{1}{2}(1-|\bk|\wedge|\bk'|-|\tilde\bk|\wedge|\tilde{\bk}'|-\frac{\kappa}{8})$.
For the statement of the next theorem, we use the following convention for the Fourier transform of $F\in L^2(\nu)$:
\[\hat{F}(p,f):=\int_\R e^{2ipc}F(c,f)\frac{\d c}{\sqrt{\pi}}.\]
It defines a unitary transformation of $L^2(\nu)$.
For $\lambda \in \C$, we will also denote by $\rB_\lambda$ the Gram matrix of the Shapovalov form of weight $\lambda$: by definition, for all $\bk,\bk' \in \cT$, $\bL_{\bk'}^\lambda \bL_{-\bk}^\lambda \ind = \rB_\lambda(\bk,\bk')\ind$. We have $\rB_\lambda(\bk,\bk')=0$ if $|\bk|\neq|\bk'|$, so we can view the Shapovalov form as a collection of finite-dimensional matrices indexed by the level. The Shapovalov form is degenerate if and only if $\lambda$ belongs to the Kac table. When $\lambda$ does not belong to the Kac table, we denote its inverse by $\rB_\lambda^{-1}$, which is also a collection of finite-dimensional matrices indexed by the level. See Appendix~\ref{app:virasoro} for details on the Shapovalov form. 

The next statement uses the operator $\cP_\lambda:\cW_\lambda\to\cC_c^\infty(\R)'\otimes\C[(a_m,\bar{a}_m)_{m\geq1}]$, characterised by $\cP_\lambda\Psi_{\lambda,\bk,\tilde{\bk}}=e^{-(2\lambda+|\bk|+|\tilde{\bk}|)c}\Psi_{\lambda,\bk,\tilde{\bk}}$. See Section \ref{subsec:level_operators} for the precise definition and more properties of this operator.

\begin{theorem}\label{T:spectral}
\begin{enumerate}[leftmargin=*]
\item\label{item:iso_Q} For all $\Re(\lambda)<\frac{1}{2}(1-\frac{\kappa}{8})$, $\bk,\tilde{\bk},\bk',\tilde{\bk}'\in\cT$, $F,G\in\cC_c^\infty(\R)$, we have
\begin{equation}\label{E:T_spectral1}
\cQ\left(F\cP_\lambda\Psi_{\lambda,\bk,\tilde{\bk}},G\cP_{\bar{\lambda}}\Psi_{\bar{\lambda},\bk',\tilde{\bk}'}\right)=\int_\R\hat{F}(p)\overline{\hat{G}(-p)}\rR(\lambda+ip)\rB_{\lambda+ip}(\bk,\bk')\rB_{\lambda+ip}(\tilde{\bk},\tilde{\bk}')\frac{\d p}{\sqrt{\pi}}.
\end{equation}
\item Recalling that $\rR(\lambda)$ is defined in \eqref{eq:def_R}, for all $F\in\cC_\mathrm{comp}$ (dense subset of $L^2(\nu)$ defined in \eqref{E:cC_compact}), the following equality holds in $L^2(\nu)$:
\begin{equation}\label{eq:proj_F}
F=\int_\R\frac{1}{\rR(ip)}\sum_{\bk,\tilde{\bk},\bk',\tilde{\bk}'}\cQ(F,\cP_{-ip}\Psi_{-ip,\bk',\tilde{\bk}'})\rB_{ip}^{-1}(\bk,\bk')\rB_{ip}^{-1}(\tilde{\bk},\tilde{\bk}')e^{-(2ip+|\bk|+|\tilde{\bk}|)c}\Psi_{ip,\bk,\tilde{\bk}}\frac{\d p}{\sqrt{\pi}}.
\end{equation}
\item For all $\lambda\in\C$ and $\bk,\tilde{\bk},\bk',\tilde{\bk}'\in\cT$ with $\Re(\lambda)<\frac{1}{2}(1-|\bk|\wedge|\bk'|-|\tilde\bk|\wedge|\tilde{\bk}'|-\frac{\kappa}{8})$, we have $|\cQ_\lambda(\Psi_{\lambda,\bk,\tilde{\bk}},\Psi_{\bar\lambda,\bk',\tilde{\bk}'})|<\infty$ and
\begin{equation}
    \label{E:T_Q_lambda}
\cQ_\lambda(\Psi_{\lambda,\bk,\tilde{\bk}},\Psi_{\bar{\lambda},\bk',\tilde{\bk}'})=\rR(\lambda)\rB_\lambda(\bk,\bk')\rB_\lambda(\tilde\bk,\tilde\bk').
\end{equation}
\end{enumerate}
\end{theorem}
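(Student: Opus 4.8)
The plan is to reduce everything to the scalar identity \eqref{eq:def_R} by exploiting the interplay between the operator $\Theta$ and the conformal-radius change of variables $f \mapsto \iota \circ g_f \circ \iota$. First I would establish part (3), the closed-form evaluation of $\cQ_\lambda$, since parts (1) and (2) will follow from it by integrating the $c$-variable back in. Unpacking the definitions: $\Psi_{\lambda,\bk,\tilde\bk} = \bL_{-\bk}^\lambda \bar\bL_{-\tilde\bk}^\lambda \ind$ and $\Theta^\# \Psi_{\bar\lambda,\bk',\tilde\bk'}$ is obtained by first applying $\Theta$ and then reading off the polynomial in the $a_m$-coordinates; the key geometric input is that $\Theta$ exchanges interior and exterior, so that under $\Theta$ the lowering operators $\bL_{-\bk}^\lambda$ acting from the interior side become raising operators acting from the exterior side. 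Using the $\cQ$-adjoint relations of Theorem \ref{T:intro_shapo} (more precisely their $\lambda$-conjugated versions, Lemma \ref{lem:commutations_lambda}, giving $(\bL_n^\lambda)$ adjoint to $(\bL_{-n}^\lambda)$), I can move all the $\bL_{\pm}^\lambda$ operators onto the highest-weight vector $\ind$, collapsing the pairing $\cQ_\lambda(\Psi_{\lambda,\bk,\tilde\bk},\Psi_{\bar\lambda,\bk',\tilde\bk'})$ to a sum over intermediate partitions of products $\rB_\lambda(\bk,\bk'')\rB_\lambda(\bk'',\bk')$-type terms times $\E^\#[e^{-(2\lambda+\dots)\tilde c_0}]$. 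The exponent $2\lambda + |\bk|\wedge|\bk'| + |\tilde\bk|\wedge|\tilde\bk'|$ is exactly what the grading bookkeeping produces once one notices that acting by $\bL_{-\bk}^\lambda$ on one side and reorganising against $\bL_{\bk'}^\lambda$ on the other leaves $|\bk| \wedge |\bk'|$ "uncancelled" raising/lowering pairs whose commutators feed into the Gram matrices $\rB_\lambda$, while the conformal-radius weight $e^{-2\lambda \tilde c_0}$ supplies the base case $\E^\#[e^{-2\lambda\tilde c_0}] = \rR(\lambda)$ from \eqref{eq:def_R}. The finiteness claim follows from the domain of convergence in \eqref{eq:def_R} shifted by the extra exponent, i.e. $\Re(\lambda) < \tfrac12(1 - |\bk|\wedge|\bk'| - |\tilde\bk|\wedge|\tilde\bk'| - \tfrac{\kappa}{8})$.

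Next I would derive part (1). By definition $\cP_\lambda \Psi_{\lambda,\bk,\tilde\bk} = e^{-(2\lambda + |\bk| + |\tilde\bk|)c}\Psi_{\lambda,\bk,\tilde\bk}$, so $F\cP_\lambda \Psi_{\lambda,\bk,\tilde\bk}$ is a genuine element of $L^2(\nu)$ once $F \in \cS_\exp(\R)$ decays suitably; the point of the weighted space $\cS_\exp(\R)$ from \eqref{E:Sexp} is precisely to absorb the exponential factor $e^{-2\Re(\lambda)c}$. Writing $\cQ(F\cP_\lambda\Psi, G\cP_{\bar\lambda}\Psi') = \langle F\cP_\lambda\Psi, \Theta(G\cP_{\bar\lambda}\Psi')\rangle_{L^2(\nu)}$ and using the disintegration $\nu = \frac{1}{\sqrt\pi}\d c \otimes \nu^\#$ together with the explicit action of $\Theta$ on the $c$-coordinate (it sends $c_\eta \mapsto \tilde c_\eta$, and $\tilde c_\eta = c_\eta + \tilde c_0(f_\eta)$ up to sign conventions by definition of $\tilde c_0$), the $c$-integral factors out as a Fourier-type convolution: the $F(c)$ against $\Theta$ applied to $G$ produces $\int_\R \hat F(p)\overline{\hat G(-p)}\,(\dots)\,\d p/\sqrt\pi$ by Plancherel, while the $f$-dependent remainder is exactly a $\cQ_{\lambda + ip}$-pairing of the polynomial parts. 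Substituting the formula from part (3) with $\lambda$ replaced by $\lambda + ip$ then yields the integrand $\rR(\lambda+ip)\rB_{\lambda+ip}(\bk,\bk')\rB_{\lambda+ip}(\tilde\bk,\tilde\bk')$. One has to be careful about which partitions survive: here the operators are $\bL_{-\bk}^\lambda\bar\bL_{-\tilde\bk}^\lambda$ applied to $\ind$ with no further lowering on the other side beyond $\bk',\tilde\bk'$, so the minimum collapses and the exponent becomes simply $2(\lambda + ip)$, consistent with the weight already carried by $\cP_\lambda$.

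Finally, part (2) is the spectral-inversion statement: it amounts to combining part (1) with the fact that $\{\cP_{ip}\Psi_{ip,\bk,\tilde\bk}\}$ as $p$ ranges over $\R$ and $\bk,\tilde\bk$ over $\cT$ forms a (generalised) complete system diagonalising $\bL_0,\bar\bL_0$ on $L^2(\nu)$, with $\cQ$ block-diagonalised by the irreducibility in Theorem \ref{thm:module_structure} (so that on each block $\cW_{ip}$ the Gram matrix is $\rR(ip)\rB_{ip}\otimes\rB_{ip}$, invertible for a.e. $p$). Concretely I would expand an arbitrary $F \in \cC_\mathrm{comp}$ in its joint $(\bL_0,\bar\bL_0)$-spectral decomposition — the Fourier transform in $c$ diagonalises $\bL_0 + \bar\bL_0$-type operators, and within each fiber the polynomial degree gives the grading — then invert the Gram matrix $\rB_{ip}$ fiberwise using \eqref{E:T_spectral1}, picking up the factors $\frac{1}{\rR(ip)}\rB_{ip}^{-1}(\bk,\bk')\rB_{ip}^{-1}(\tilde\bk,\tilde\bk')$; the coefficient of the basis vector $e^{-(2ip+|\bk|+|\tilde\bk|)c}\Psi_{ip,\bk,\tilde\bk}$ is read off as $\cQ(F,\cP_{-ip}\Psi_{-ip,\bk',\tilde\bk'})$ by duality. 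The identity is first checked on the dense set where all sums and integrals converge absolutely, then extended to $\cC_\mathrm{comp}$ by density.

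\textbf{Main obstacle.} The hard part will be the analytic control in parts (1) and (2): justifying the Fubini/Plancherel interchange of the $c$-integral and the infinite sums over partitions, and — especially — proving that the fiberwise-inverted formula \eqref{eq:proj_F} actually converges in $L^2(\nu)$ rather than merely formally. This requires quantitative bounds on the Gram matrix $\rB_{ip}^{-1}$ and on $1/\rR(ip)$ as $|p| \to \infty$ (growth of Virasoro Shapovalov determinants, and the fact that $\rR(ip)$ stays bounded away from $0$ on the imaginary axis), together with decay estimates on the spectral coefficients $\cQ(F,\cP_{-ip}\Psi)$ coming from $F \in \cC_\mathrm{comp}$. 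The purely algebraic reduction in part (3) is, by contrast, essentially bookkeeping once Theorem \ref{T:intro_shapo} and \eqref{eq:def_R} are in hand; the subtlety there is only to track the grading carefully so that the "$\min$" in the exponent and the domains of finiteness come out exactly as stated.
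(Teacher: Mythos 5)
Your overall strategy (Fourier disintegration in $c$, reduction to the scalar partition function $\rR(\lambda)$, Shapovalov algebra) is the right one, but you invert the order of dependency in a way that creates a genuine gap, and you misjudge where the actual difficulty lies.

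You propose to prove part (3) first by a ``purely algebraic'' computation, moving operators across the pairing $\cQ_\lambda$ via the $\cQ_\lambda$-adjoint relation $\bL_n^\lambda \leftrightarrow \bL_{-n}^\lambda$, and to derive (1) from it. The problem is that no such $\cQ_\lambda$-adjoint relation has been established at this point. Theorem \ref{T:intro_shapo} gives $\cQ(\bL_{\rv^*}F,G)=\cQ(F,\bL_\rv G)$ for operators on $L^2(\nu)$ with respect to the form $\cQ$ (which integrates over $c$); Proposition \ref{lem:commutations_lambda} gives the algebraic conjugation $\bL_n^\lambda = \cP_\lambda^{-1}\circ\bL_n\circ\cP_\lambda$ but says nothing about Hermiticity, and $\cP_\lambda$ (which maps into a distribution space $\cS'_\exp(\R)\otimes\C[\cdot]$ and carries the exponential weight $e^{-(2\lambda+\dots)c}$) is not a unitary intertwiner of $\cQ$ with $\cQ_\lambda$. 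Asserting that $\bL_{-n}^\lambda$ is $\cQ_\lambda$-adjoint to $\bL_n^\lambda$ is essentially equivalent to proving \eqref{E:T_Q_lambda} itself, so the argument is circular as written. The paper proceeds in the opposite direction: it first establishes the pure Fourier identity (your ``part (1) mechanism'') for the full operators $\bL_{-\bk}\bar\bL_{-\tilde\bk}$ on $L^2(\nu)$, computes the same $\cQ$-pairing in two ways --- once via the conjugation relation of Proposition \ref{lem:commutations_lambda} (yielding $\cQ_{\lambda+ip}$ under the $p$-integral) and once via the Theorem \ref{T:intro_shapo} adjointness plus the Gram matrix identity $\bL_{\bk'}^\lambda\bL_{-\bk}^\lambda\ind=\rB_\lambda(\bk,\bk')\ind$ (yielding $\rR(\lambda+ip)\rB_{\lambda+ip}(\bk,\bk')\rB_{\lambda+ip}(\tilde\bk,\tilde\bk')$ under the $p$-integral) --- and then deduces equality of the integrands for a.e.\ $p$ and, by analyticity, for all $p$. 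Your plan needs to be reorganized along these lines; the step you dismiss as ``bookkeeping'' is actually the content-bearing step.

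Two smaller points. Your description of the collapse as ``a sum over intermediate partitions of products $\rB_\lambda(\bk,\bk'')\rB_\lambda(\bk'',\bk')$'' is off: the Gram relation already collapses to a single product, not a convolution over intermediate states. And the ``main obstacle'' you flag for part (2) --- controlling infinite sums over partitions and growth of $\rB_{ip}^{-1}$ as $|p|\to\infty$ --- is largely absent in the paper's argument: for $F\in\cC_\mathrm{comp}$ the Fourier transform $\hat F(p,\cdot)$ is a polynomial in $(a_m,\bar a_m)$ of uniformly bounded degree, so the partition sum is finite for each $p$, and the decay in $p$ is handled by the rapid decay of $\hat F(p)$ from the $\cS_\exp$ structure. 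The genuine subtleties in (2) are the Laplace-shift $p\mapsto p-\tfrac{i}{2}(|\bk|+|\tilde\bk|)$ needed to pass from $\cP^{-1}F$ to $F$ and the analytic continuation from $\Re(\lambda)$ negative enough to the imaginary axis $\lambda=ip$; your sketch does not engage with either.
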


\begin{remark}
In the above statement, the integrand must be understood as being defined Lebesgue-a.e. since it is not defined on the Kac table (which is a discrete set).
\end{remark}

\subsection{Organisation and notations}\label{SS:notation}

The paper is organised as follows:

\begin{itemize}[leftmargin=*, label=\raisebox{0.25ex}{\tiny$\bullet$}]
    \item Section \ref{sec:rep}: we construct our representations of the Witt and Virasoro algebras.
    \item Section \ref{sec:hw_modules}: we construct and classify the highest-weight representations.
    \item Section \ref{sec:IBP}: we derive an integration by parts formula and build in this way the Shapovalov form.
    \item Section \ref{sec:spectral}: we prove the spectral decomposition of the Shapovalov form.
    \item Section \ref{sec:uniqueness}: we provide our proof of the uniqueness of MKS measure.
    \item Section \ref{S:applications}: we discuss two consequences of our uniqueness result: the reversibility and the duality of SLE loop measures.
    \item Section \ref{sec:future}: the current paper lays the groundwork for an in-depth study of the CFT of SLE. We explain this programme in some details in this section.
    \item Appendix \ref{app:virasoro}: we review some standard facts of the representation theory of the Virasoro algebra.
    \item Appendix \ref{app:SLE}: we show that the SLE loop measure is a Borel measure on the space of simple Jordan curves.
\end{itemize}

Here is a list of the main notations we use in this paper.

\begin{itemize}[leftmargin=*, label=\raisebox{0.25ex}{\tiny$\bullet$}]
\item $\D$: the unit disc. $\D^*=\hat{\C}\setminus\bar{\D}$: the exterior disc. $|\d z|^2$: the Euclidean volume on $\C$.
	
\item $\C[z]$: Polynomials in $z$. $\C(z)$: Laurent polynomials in $z$.

\item $\rv_n, n \in \Z$: Vector field $\rv_n = -z^{n+1} \partial_z$.

\item $\rv^*$ vector field dual to $\rv$, defined by $\rv^* =v^*(z)\del_z:=z^2\overline{v(1/\bar z)}\del_z$. 


\item $\cJ$ (resp. $\cJ_{0,\infty}$): Jordan curves in $\hat \C$ (resp. disconnecting 0 from $\infty$), endowed with the topology induced by the Hausdorff metric \eqref{E:def_cJ}.

\item $\cE$: Space of holomorphic maps $f:\D\to\C$ with continuous, injective extension to $\bar{\D}$, $f(0)=0$, $f'(0)=1$, $f(\bar\D) \subset \D$, endowed with the local uniform topology on $\D$ \eqref{E:def_cE}.

\item $\tilde{\cE}$: Space of holomorphic maps $g:\D^*\to\hat{\C}\setminus\{0\}$ with continuous, injective extension to $\overline{\D^*}$, $g(\infty)=\infty$, $g'(\infty)=1$, $g(\overline{\D^*}) \subset \hat\C\setminus\{0\}$, endowed with the local uniform topology on $\D^*$ \eqref{E:def_cE_tilde}.

\item $c:\cJ_{0,\infty}\to\R$, $f:\cJ_{0,\infty}\to\cE$, $a_m:\cJ_{0,\infty}\to\C$. Coordinates on $\cJ_{0,\infty}$.

\item $\tilde{c}:\cJ_{0,\infty}\to\R$, $g:\cJ_{0,\infty}\to\tilde{\cE}$, $b_m:\cJ_{0,\infty}\to\C$. Another set of coordinates on $\cJ_{0,\infty}$.

\item $\nu$: SLE$_\kappa$ loop measure restricted to $\cJ_{0,\infty}$, unless specified otherwise. $\nu^\#$: shape measure on $\cE$ \eqref{E:SLE_shape}. $\E^\#$: expectation wrt $\nu^\#$.

\item $\Theta$: reflection operator \eqref{eq:def_Theta}.

\item $\cQ$: Shapovalov form \eqref{eq:def_shapo}.

\item $\cV_\lambda$: highest-weight representation of weight $\lambda$ \eqref{eq:def_V}.

\item $\cQ_\lambda$: Shapovalov form of weight $\lambda$ \eqref{eq:def_Q_lambda}.

\item $\cT$ ($\cT_N$): set of integer partitions (of $N$), see Section \ref{subsubsec:hw_modules}.

\item $\vartheta$: Differential of the Weil-Petersson potential \eqref{eq:def_vartheta}.

\item $\varpi$: Differential of the Velling-Kirillov potential \eqref{eq:def_varpi}.

\item $\omega$: Virasoro cocycle \eqref{eq:def_omega}.

\item $\ann_\eta$: for any Jordan curve $\eta \in \cJ_{0,\infty}$, annulus defined by
\begin{equation}\label{eq:annulus_eta}
\ann_\eta:=\{z\in\C|\,e^c/8<|z|<8e^{-\tilde{c}}\}.
\end{equation}
By the Koebe quarter theorem, for any $\eta \in \cJ_{0,\infty}$, we have $\frac{e^c}{4}\D\subset\rmint(\eta)$ and $4e^{-\tilde{c}}\D^*\subset\rmext(\eta)$, so that $\ann_\eta$ contains $\eta$ in its interior.
\item If $X$ is a topological space, we denote by $\cC^0(X)$ the space of continuous functions on $X$.
\end{itemize}

\section{Witt and Virasoro representations}\label{sec:rep}

This section is organised as follows.
In Section \ref{subsec:dense}, we define two subspaces $\cC$ and $\cC_\mathrm{comp}$ of $L^2(\nu)$ of ``test functions'' and show that they are dense.
In Section \ref{subsec:witt}, we construct two commuting representations of the Witt algebra on $\cC$.
We will then modify these representations to get two commuting representations of the Virasoro algebra on $\cC_\mathrm{comp}$ in Section \ref{subsec:virasoro}
(we will later show in Theorem \ref{T:ibp} that these operators are closable on $L^2(\nu)$ as a corollary of the integration by parts formula).
These operators are obtained from the infinitesimal action of the group of quasiconformal homeomorphisms of the Riemann sphere on the space of Jordan curves. We will start by recalling some related standard results in Section \ref{subsec:background}.

	\subsection{Background on quasiconformal mappings}\label{subsec:background}

This section gives some background on quasiconformal homeomorphisms of the Riemann sphere. We refer to \cite{Ahlfors87_QCLectures} for more details.

A quasiconformal homeomorphism of $\hat{\C}$ is a homeomorphism $\Phi$ which is differentiable almost everywhere and satisfies
\begin{equation}\label{eq:beltrami}
\del_{\bar{z}}\Phi=\mu\del_z\Phi
\end{equation}
almost everywhere for some $\mu\in L^\infty(\C)$ with $\norm{\mu}_{L^\infty(\C)}<1$. The function $\mu$ is called the \emph{Beltrami coefficient} of $\Phi$. Conversely, for all $\mu\in L^\infty(\C)$ with $\norm{\mu}_{L^\infty(\C)}<1$, the Beltrami equation \eqref{eq:beltrami} admits a solution in the space of quasiconformal homeomorphisms of $\hat{\C}$, and this solution is unique modulo post-composition by M\"obius transformations of $\hat\C$ \cite[Chapter V, Theorem 3]{Ahlfors87_QCLectures}. By Weyl's lemma, the associated quasiconformal map is conformal outside the support of $\mu$ (here and in what follows, we will say that a map is conformal if it is biholomorphic). Given the form of the Beltrami equation, a Beltrami coefficient transforms as a $(-1,1)$-tensor: under a conformal change of coordinates $f$, a Beltrami coefficient pulls back to
\begin{equation}\label{eq:pullback_beltrami}
f^*\mu=\mu\circ f\frac{\overline{f'}}{f'}.
\end{equation}
The group of quasiconformal homeomorphisms $\Phi$ acts naturally on the set of Jordan curves $\eta$ by $\Phi\cdot\eta:=\Phi(\eta)$. Of particular interest to us will be those quasiconformal maps which are conformal away from small neighbourhoods of 0 and $\infty$. These can be generated by the Witt algebra, and we will construct the Witt representations as Lie derivatives along the induced vector fields.

We will consider two different types of quasiconformal mappings:

\begin{definition}\label{def:qc}
Let $\mu\in L^\infty(\C)$ with $\norm{\mu}_{L^\infty(\C)}<1$ and s.t. $\mu$ is compactly supported in $\D$. 
\begin{itemize}[leftmargin=*, label=\raisebox{0.25ex}{\tiny$\bullet$}]
    \item $\Phi_\mu$ is the unique solution to the Beltrami equation satisfying $\Phi_\mu(0)=0$, and $\Phi_\mu(z)=z+O_{z\to\infty}(1)$.
    \item Extend $\mu$ to the whole complex plane by the formula $\mu(z):=\overline{\mu(1/\bar{z})}\frac{z^2}{\bar{z}^2}$ for $|z|>1$. Define $\tilde{\Phi}_\mu$ to be the unique solution to the Beltrami equation fixing $0,1,\infty$. 
    Because $\mu$ vanishes in a neighbourhood of $\S^1$, $\tilde{\Phi}_\mu$ is conformal in that neighbourhood.
    Moreover, by uniqueness of solutions to the Beltrami equation fixing three points, $\tilde{\Phi}_\mu(z) = 1/\overline{\tilde\Phi_\mu(\bar z)}$ for all $z \in \hat\C$. In particular,
    $\tilde{\Phi}_\mu$ restricts to an analytic diffeomorphism of $\S^1$.
\end{itemize}
\end{definition}
The relevance of this definition appears when studying the following problem: suppose $f_0\in\cE$ and $\mu$ is compactly supported in $f(\D)$. Then, $\Phi_\mu\circ f$ is \emph{not} the uniformising map of the domain $\Phi_\mu\circ f_0(\D)$, because it is not conformal on $\D$. However, if $f_1:\D\to\Phi_\mu\circ f_0(\D)$ is such a conformal map, then $\tilde\Phi:=f_1^{-1}\circ\Phi_\mu\circ f_0$ is an analytic diffeomorphism of $\S^1$ which can be described by the second item of the above definition. This is particularly useful when $\Phi_\mu$ is close to the identity, due to explicit variational formulas which we now recall.

Indeed, we can give a first order expansion of $\tilde{\Phi}_{t\mu}$ for small complex $t$ as found in \cite[Chapter V.C, Theorem 5]{Ahlfors87_QCLectures}. This theorem shows that the solution map $t\mapsto\Phi_{t\mu}$ (or $t\mapsto\tilde{\Phi}_{t\mu}$) is of class $\cC^1$ in a complex neighbourhood of $t=0$ (with respect to the uniform topology), and gives an expression for the derivative, recalled in the next proposition.

\begin{proposition}\label{lem:variational_formulas}
For all $\mu\in L^\infty(\C)$ compactly supported in $\D$, we have
\[\tilde{\Phi}_{t\mu}(z)=z-\frac{t}{\pi}\int_\D\frac{z(z-1)\mu(\zeta)}{\zeta(\zeta-1)(\zeta-z)}|\d\zeta|^2+\frac{\bar{t}}{\pi}\int_\D\frac{z(z-1)\overline{\mu(\zeta)}}{\bar{\zeta}(\bar{\zeta}-1)(1-z\bar{\zeta})}|\d\zeta|^2+o(t),\]
where $o(t) \to 0$ as $t \to 0$ uniformly on compacts.
\end{proposition}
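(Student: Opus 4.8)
The plan is to deduce the formula from the classical Ahlfors variational formula for the solution of the Beltrami equation normalised to fix $0,1,\infty$, applied to the reflected coefficient, followed by a single change of variables folding the exterior contribution back into $\D$.

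Recall (Ahlfors, \cite[Chapter V, Theorem 5]{Ahlfors87_QCLectures}) that for $s\in L^\infty(\C)$ with $\norm{s}_{L^\infty(\C)}$ small, the quasiconformal homeomorphism $f^s$ with Beltrami coefficient $s$ fixing $0,1,\infty$ satisfies
\[f^s(z)=z-\frac{1}{\pi}\int_\C s(\zeta)\,\frac{z(z-1)}{\zeta(\zeta-1)(\zeta-z)}\,|\d\zeta|^2+o(\norm{s}_{L^\infty(\C)}),\]
uniformly for $z$ in compact subsets of $\C$. Write $\hat\mu$ for the reflection-symmetric extension of $\mu$ from Definition \ref{def:qc}, so that $\tilde\Phi_{t\mu}=f^{\widehat{t\mu}}$. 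The key elementary observation is that the extension map is \emph{conjugate}-linear in the scalar on $\D^*$: since $\widehat{t\mu}(\zeta)=\overline{(t\mu)(1/\bar\zeta)}\,\zeta^2/\bar\zeta^2=\bar t\,\hat\mu(\zeta)$ for $|\zeta|>1$, we have $\widehat{t\mu}=t\mu$ on $\D$ and $\widehat{t\mu}=\bar t\,\hat\mu$ on $\D^*$. Plugging this into Ahlfors' formula, using that the integral operator above is $\C$-linear in $s$, and splitting $\C=\D\cup\D^*$ gives
\[\tilde\Phi_{t\mu}(z)=z-\frac{t}{\pi}\int_\D\frac{z(z-1)\mu(\zeta)}{\zeta(\zeta-1)(\zeta-z)}|\d\zeta|^2-\frac{\bar t}{\pi}\int_{\D^*}\frac{z(z-1)\hat\mu(\zeta)}{\zeta(\zeta-1)(\zeta-z)}|\d\zeta|^2+o(t),\]
all integrals converging absolutely because the kernel is $O(|\zeta|^{-3})$ at infinity and locally integrable elsewhere.

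It remains to rewrite the exterior integral. Substitute $w=1/\bar\zeta$, which maps $\D^*$ onto $\D$ with $|\d\zeta|^2=|w|^{-4}|\d w|^2$; then $\zeta=1/\bar w$, $\bar\zeta=1/w$, $\hat\mu(\zeta)=\overline{\mu(w)}\,w^2/\bar w^2$, and the identity $\zeta(\zeta-1)(\zeta-z)=\bar w^{-3}(1-\bar w)(1-z\bar w)$ reduces the integrand to $\dfrac{z(z-1)\overline{\mu(w)}}{\bar w(1-\bar w)(1-z\bar w)}$. Using $\bar w(1-\bar w)=-\bar w(\bar w-1)$ then yields exactly the second term in the statement, completing the proof (uniformly on compacts).

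The main technical point to pin down is the error control: one needs Ahlfors' variational expansion with an $o(\cdot)$ that is uniform on compact subsets of $\C$, and since $\C$-linearity of the leading term makes the split above exact, the remainder for $\tilde\Phi_{t\mu}$ is indeed $o(t)$ uniformly on compacts, the hypotheses of the cited theorem being preserved as $\norm{\widehat{t\mu}}_{L^\infty(\C)}=|t|\,\norm{\mu}_{L^\infty(\C)}\to0$. A minor additional point is the convergence of the exterior integral when $0\in\supp\mu$, so that $\hat\mu$ need not be compactly supported: this is harmless since the kernel is $O(|\zeta|^{-3})$ near $\infty$, but one may also first approximate $\mu$ in $L^\infty(\C)$ by coefficients vanishing near the origin and pass to the limit using the continuous dependence of $\tilde\Phi$ on its Beltrami coefficient.
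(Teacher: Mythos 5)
Your proof is correct and follows essentially the same approach as the paper: apply Ahlfors' variational formula to the reflection‑symmetric extension of the Beltrami coefficient, exploit the conjugate‑linearity of the reflection in $t$ to split $\C=\D\cup\D^*$, and fold the exterior contribution back to $\D$ via the anticonformal change of variables $\zeta\mapsto 1/\bar\zeta$. You are somewhat more explicit than the paper's terse two‑line computation — in particular you keep careful track of the $(-1,1)$-tensor factor $\zeta^2/\bar\zeta^2$ in the reflected coefficient (the paper silently combines it with the Jacobian into the single factor $\bar\zeta^{-4}$), you record the uniform $o(t)$ control needed from Ahlfors' theorem, and you note that the kernel's $O(|\zeta|^{-3})$ decay ensures absolute convergence even though $\hat\mu$ is not compactly supported — but the mechanism is identical.
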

\begin{proof}
\cite[Chapter V, Theorem 5]{Ahlfors87_QCLectures} and the symmetry of $\mu$ gives
\begin{align}
    \tilde{\Phi}_{t\mu}(z)=z-\frac{1}{\pi}\int_\D\frac{z(z-1)t\mu(\zeta)}{\zeta(\zeta-1)(\zeta-z)}|\d\zeta|^2-\frac{1}{\pi}\int_{\D^*}\frac{z(z-1)\overline{t\mu(1/\bar{\zeta})}}{\zeta(\zeta-1)(\zeta-z)}|\d\zeta|^2+o(t).
\end{align}
By the change of variable $\zeta\mapsto1/\bar{\zeta}$, the second integral evaluates to
\[\bar{t}\int_\D\frac{z(z-1)\overline{\mu(\zeta)}}{\bar{\zeta}^{-1}(\bar{\zeta}^{-1}-1)(\bar{\zeta}^{-1}-z)}\frac{1}{\bar{\zeta}^4}|\d\zeta|^2=-\bar{t}\int_\D\frac{z(z-1)\overline{\mu(\zeta)}}{\bar{\zeta}(\bar{\zeta}-1)(1-z\bar{\zeta})}|\d\zeta|^2.\]
\end{proof}

We conclude with two lemmas on quasiconformal extensions.

\begin{lemma}\label{lem:extend_qc}
Let $r>0$ and $(v_n)_{n\ge 2} \in \C^{\N_{\ge 2}}$ with only finitely many nonzero coefficients. Let
\[\mu(z):=-\sum_{n=2}^\infty nv_nr^{-2n}\bar{z}^{n-1}z\ind_{|z|<r} \qquad \text{and} \qquad v(z) = \frac{1}{\pi}\int_{\hat{\C}}\frac{\mu(\zeta)}{z-\zeta}|\d \zeta|^2\]
be the Cauchy transform of $\mu$. Then,
\begin{equation}
    \label{E:del_bar_v}
    \del_{\bar z} v = \mu
\end{equation}
and for all $z \in r\D^*$, $v(z)=-\sum_{n=2}^\infty v_nz^{-n+1}$. Moreover,
$\Phi_{t\mu}$ is conformal in $r\D^*$, and we have $\Phi_{t\mu}(z)=z+tv(z)+o(t)$ uniformly on compacts of $r\D^*$.
\end{lemma}

\begin{proof}
The fact that $\del_{\bar z} v = \mu$ is standard; see for instance \cite[Chapter~V, Lemma~3]{Ahlfors66}.
By linearity, it suffices to work with the generators $\rv_{-n} = -z^{-n+1} \partial_z$ for $n\geq2$, so we consider the Beltrami differential $\mu(z)= nr^{-2n}\bar{z}^{n-1}z\ind_{|z|<r}$.
We have
\begin{align*}
v(z)=\frac{1}{\pi}\int_{\hat{\C}}\frac{\mu(\zeta)}{z-\zeta}|\d \zeta|^2
&=-\frac{nr^{-2n}}{\pi}\int_{r\D}\frac{\bar{\zeta}^{n-1}\zeta}{z(1-\frac{\zeta}{z})}|\d \zeta|^2
=-\frac{nr^{-2n}}{\pi}\int_{r\D}\sum_{m=0}^\infty\bar{\zeta}^{n-1}\zeta^{m+1}z^{-m-1}|\d \zeta|^2\\
&=-\frac{nr^{-2n}z^{-n+1}}{\pi}\int_{r\D}|\zeta|^{2n-2}|\d\zeta|^2=-z^{-n+1}.
\end{align*}
Hence, $v$ coincides with $z\mapsto-z^{-n+1}$ in $r\D^*$.

Moving on to the expansion of $\Phi_{t\mu}$,
by \cite[Chapter V.C, Theorem 5]{Ahlfors87_QCLectures},
$\Phi_{t\mu}(z)=z+tv_\mu(z)+o(t)$ uniformly for $z\in\hat{\C}$, for some vector field $v_\mu$ satisfying $\del_{\bar{z}}v_{\mu}=\mu$. Moreover, $\Phi_{t\mu}$ is conformal in $r\D^*$ and the normalisation of $\Phi_{t\mu}$ is such that $v_\mu(z)=O_{z\to\infty}(1)$ and $v_\mu(0)=0$. We will show that $v_\mu$ coincides with $v$ in the space of holomorphic vector fields in the neighbourhood of $\infty$.
By uniqueness (modulo M\"obius transformations) of the solution to the Beltrami equation, $v-v_\mu$ is a polynomial vector field on $\hat{\C}$ (i.e. a quadratic polynomial), so it suffices to check that $v$ has the desired normalisation. On the one hand, we have $v(z)=O_{z\to\infty}(1)$, so we have the desired behaviour at $\infty$. On the other hand, the Cauchy transform of $\mu$ at 0 evaluates to 0 since $\int_{r\D}\bar{\zeta}^{n-1}|\d\zeta|^2=0$ for all $n\geq2$. Hence $v(0)=0$ and we deduce that $v=v_\mu$.
\end{proof}

\begin{lemma}\label{L:derivative0}
Let $\eta\in\mathcal{J}_{0,\infty}$ and $(\phi_t)$ be a family of conformal maps on $O_\eta$ \eqref{eq:annulus_eta} such that $\frac1t(\phi_t-\mathrm{id})\to 0$ uniformly on compacts of $O_\eta$ as $t\to0$. Then, $\phi_t$ admits a quasiconformal extension to $O_\eta\cup\mathrm{int}(\eta)$ such that $|\partial_{\bar z}\phi_t/\partial_z\phi_t|=o(t)$ uniformly as $t\to0$.
\end{lemma}

\begin{proof}
    Assume first that $\phi_t$ is conformal in $\frac{e^c}{8}\D^*$, and up to rescaling assume without loss of generality assume $\frac{3e^c}{16}=1$. Let $\psi_t : \D \to \rmint(\phi_t(\S^1))$ be the normalised conformal map uniformising the interior of $\phi_t(\S^1)$. 
    We have a conformal welding $h_t=\phi_t^{-1}\circ\psi_t$ with $h_t\in\mathrm{Diff}^\omega(\S^1)$. 
    Then, $h_t$ is differentiable at $t=0$ in the space of quasisymmetric homeomorphisms of $\S^1$ (see e.g. \cite[Part I, Section 1.2.3]{TakhtajanTeo06}), and its differential at $t=0$ vanishes. Hence, by the Beurling--Ahlfors extension theorem \cite{BeurlingAhlfors}, $h_t$ can be extended quasiconformally to the whole Riemann sphere with Beltrami coefficient $\mu_t=\del_{\bar z}h_t/\del_zh_t$ such that $|\mu_t|=o(t)$ uniformly as $t\to0$. Then, solving the Beltrami equation in $\hat\C$ with coefficient $(\psi_t)_*(\mu_t\mathbf{1}_\D)$ defines a quasiconformal extension of $\phi_t$ with the required property.

    In the general case, we may write $\phi_t=\phi_t^-+v_t$ where (for small enough $|t|$) $\phi_t^-$ is as in the previous paragraph, and $v_t$ is holomorphic in $O_\eta\cup\mathrm{int}(\eta)$. Applying the previous procedure, we get a quasiconformal extension of $\phi_t^-$ with Beltrami coefficient of order $o(t)$, and $\phi_t^-+v_t$ satisfies the required properties in $\mathcal{U}\cup\mathrm{int}(\eta)$. 
\end{proof}

    \subsection{Dense subspaces of \texorpdfstring{$L^2(\nu)$}{L2(nu)}}\label{subsec:dense}

In this section, we introduce a dense subspace of $\cC\subset L^2(\nu)$. We will denote by $\cC^\infty_c(\R)$ the space of smooth, compactly supported functions on $\R$, and endow it with its usual Fr\'echet topology. The topological dual of $\cC^\infty_c(\R)$ is the space of distributions on $\R$, denoted $\cC^\infty_c(\R)'$. Then, we define 
\begin{equation}\label{E:cC}
    \cC:=\cC^\infty_c(\R)\otimes\C[(a_m,\bar a_m)_{m\geq1}] \qquad\text{and}
\end{equation}
\begin{equation}\label{E:cC_compact}
    \cC_\mathrm{comp}:=\mathrm{span}\left\lbrace\R\times\cE\ni(c,f)\mapsto F(c,f)\chi(\tilde c_0(f))\text{ for some }F\in\cC\text{ and }\chi\in\cC^\infty_c(\R)\right\rbrace,
\end{equation}
where we recall from \eqref{E:tilde_c_0} that $\tilde c_0:\cE\to\R$ is the $\log$-conformal radius viewed from $\infty$, and the linear span is in the algebraic sense. Importantly, by the Koebe quarter theorem, for all $F\in\cC_\mathrm{comp}$, there is $\epsilon>0$ such that the $\log$-conformal radii satisfy $c,\tilde{c}>\log\epsilon$ on the support of $F$.

The space $\cC$ will play the role of spaces of ``elementary functions'', which will serve as the common domains of our operators (before considering their closure).

\begin{lemma}\label{L:dense}
The space $\C[(a_m,\bar a_m)_{m\geq1}]$ is a dense subspace of the space $\cC^0(\cE)$ of continuous functions on $\cE$ and the space $\cC$ is a dense subspace of $L^2(\nu)$.
\end{lemma}

\begin{proof}
We first note that $\C[(a_m,\bar{a}_m)_{m\geq1}]\subset L^2(\nu^\#)$ by de Branges' theorem. Indeed, for any map $(z \in \D \mapsto z + \sum_{m \ge 1} a_m z^{m+1} ) \in \cE$, we can bound $|a_m| \le (m+1)$ and so any polynomial in $(a_m,\bar{a}_m)_{m \ge 1}$ is uniformly bounded on $\cE$ and in particular belongs to $L^2(\nu^\#)$.

Next, we prove that $\C[(a_m,\bar{a}_m)_{m\geq1}]$ is dense in $L^2(\nu^\#)$. Let $\cU$ be the space of univalent functions $f$ on $\D$, with $f(0)=0$ and $f'(0)=1$. This space comes equipped with the topology of uniform convergence of compacts of $\D$. This topology is metrisable, for instance with the distance $d_\cU(f_1,f_2):=\sum_{n=1}^\infty 2^{-n} \norm{f_1-f_2}_{\cC^0((1-2^{-n})\bar{\D})} \wedge 1$. Of course, we have a continuous inclusion $\cE\subset\cU$ and every $f\in\cU$ has a power series expansion of the form \eqref{eq:coordinates}. Therefore, the space $\C[(a_m,\bar{a}_m)_{\geq1}]$ is well defined as a subspace of $\cC^0(\cU)$.

According to \cite[Theorem 1.7]{Pommerenke75}, $\cU$ is compact. Moreover, $\C[(a_m,\bar{a}_m)_{m\geq1}]$ separates points: for all $f_1,f_2\in\cU$, we have $a_m(f_1)=a_m(f_2)$ for all $m\geq1$ if and only if $f_1=f_2$. Hence, $\C[(a_m,\bar{a}_m)_{m\geq1}]$ is dense in $\cC^0(\cU)$ by the Stone--Weierstrass theorem. In particular, $\C[(a_m,\bar{a}_m)_{m\geq1}]$ is dense in $\cC^0(\cE)$.

The spaces $\cE$ and $\cU$ are Polish (separable, completely metrisable), so they are also Radon spaces. Since $\nu^\#$ is a Borel measure on $\cE$ (Proposition \ref{prop:sle_is_borel}), it implies that it is a Radon measure. We can also view $\nu^\#$ as a Radon measure on $\cU$ giving full mass to $\cE$. Note that $\cC^0(\cU)$ is dense in $L^2(\nu^\#)$, since $\cU$ is compact Hausdorff \cite[Theorem 3.13]{Rudin74}. 

Now,
let $F\in L^2(\cE,\nu^\#)\simeq L^2(\cU,\nu^\#)$ and $\epsilon>0$. By the previous points, there exist $G\in\cC^0(\cU)$ and $P\in\C[(a_m,\bar{a}_m)_{m\geq1}]$ such that $\norm{F-G}_{L^2(\nu^\#)}\leq\frac{\epsilon}{2}$ and $\norm{P-G}_{\cC^0(\cU)}\leq\frac{\epsilon}{2}$. Since $\nu^\#$ is a probability measure, we have 
\[\norm{P-F}_{L^2(\nu^\#)}\leq\norm{P-G}_{L^2(\nu^\#)}+\norm{G-F}_{L^2(\nu^\#)}\leq\norm{P-G}_{\cC^0(\cU)}+\frac{\epsilon}{2}\leq\epsilon.\]
This proves that $\C[(a_m,\bar{a}_m)_{m\geq1}]$ is dense in $L^2(\nu^\#)$, and $\cC=\cC^\infty_c(\R)\otimes\C[(a_m,\bar a_m)_{m\geq1}]$ is dense in $L^2(\nu)=L^2(\d c\otimes\nu^\#)$.

To conclude, we prove that the space generated by finite linear combinations of functions the form $f\mapsto\chi(\tilde{c}_0)P(f)$ is also dense in $L^2(\nu^\#)$. Indeed, let $F\in L^2(\nu^\#)$ and $\epsilon>0$. We can fix a plateau function $\chi\in\cC^\infty_c(\R)$ such that $\norm{\chi F-F}_{L^2(\nu^\#)}\leq\frac{\epsilon}{2}$ (and $\norm{\chi}_{\cC^0(\R)}=1$). From the previous paragraph, we can also fix $P\in\C[(a_m,\bar a_m)_{m\geq1}]$ such that $\norm{P-F}_{L^2(\nu^\#)}\leq\frac{\epsilon}{2}$. It follows that $\norm{\chi P-F}_{L^2(\nu^\#)}\leq\norm{\chi P-\chi F}_{L^2(\nu^\#)}+\norm{\chi F-F}_{L^2(\nu^\#)}\leq\norm{\chi}_{\cC^0(\R)}\norm{P-F}_{L^2(\nu^\#)}+\frac\epsilon2\leq\epsilon$.
\end{proof}

\subsection{Witt algebra}\label{subsec:witt}

In this section, we construct and study two commuting representations $(\cL_\rv)_{\rv \in \C(z) \partial_z}$ and $(\bar \cL_\rv)_{\rv \in \C(z) \partial_z}$ of the Witt algebra as endomorphisms of the dense subspace $\cC$ \eqref{E:cC} of $L^2(\nu)$. The results of this section are similar to \cite[Section 3.2]{GQW24} and \cite[Section~3]{ChavezPickrell14}, which are obtained from the variational formulas of \cite{DurenSchiffer62}. Since we were not aware of this reference, we rederived these formulas from scratch using a different method, namely Ahlfors' variational formulas for quasiconformal mappings \cite{Ahlfors87_QCLectures}.

The intellectually satisfying aspect of working with quasiconformal maps is that we get an action of a whole \emph{group}, rather than a mere \emph{infinitesimal} action (the complex Witt algebra does not exponentiate since there is not complexification of $\mathrm{Diff}_+(\S^1)$ \cite{Segal87}). So we can view the Witt algebra as a subalgebra of the Lie algebra of infinitesimal Beltrami differentials, and the representations are obtained as (complex) Lie derivatives along the induced vector fields.

For the next definition, recall that $\ann_\eta\subset\C\setminus\{0\}$ is an annular region containing $\eta$ in its interior~\eqref{eq:annulus_eta}.  
\begin{definition}\label{Def:derivative}
Let $F\in L^2(\nu)$ and $\rv=v(z)\partial_z\in\C(z)\del_z$. We say that $F$ is differentiable at $\eta\in\cJ_{0,\infty}$ in direction $\rv$ if there exist $\cL_\rv F(\eta),\bar{\cL}_\rv F(\eta)\in\C$ such that,
\begin{equation}\label{eq:def_diff}
F(\phi_t(\eta))=F(\eta)+t\cL_\rv F(\eta)+\bar{t}\bar{\cL}_\rv F(\eta)+o(t),
\end{equation}
where $(\phi_t)$ is any $\cC^1$-family of conformal maps on $\ann_\eta$ (with respect to the local uniform topology) defined in a complex neighbourhood of $t=0$, and satisfying 
\begin{equation}\label{eq:phi}
\phi_0=\mathrm{id};\qquad\del_t\phi_t|_{t=0}=v;\qquad\del_{\bar t}\phi_t|_{t=0}=0.
\end{equation}

The function $F$ is said to be differentiable (resp. $\nu$-a.e.) in direction $\rv$ if it is differentiable at every (resp. for $\nu$-almost every) $\eta\in\cJ_{0,\infty}$ in direction $\rv$. In this case, the differentials $\cL_\rv F$, $\bar{\cL}_\rv F$ define complex-valued functions on $\cJ_{0,\infty}$ (resp. defined $\nu$-almost everywhere).
\end{definition}

Note that \eqref{eq:phi} is equivalent to $\phi_t(z)=z+tv(z)+o(t)$, where $|t|^{-1}o(t)\to0$ locally uniformly on $\ann_\eta$ as $t\to0$ in $\C$. One particular family satisfying the conditions of Definition \eqref{Def:derivative} is given by the flow of $\rv$: $\del_t\phi_t=v\circ\phi_t$, $\phi_0=\mathrm{id}_{\ann_\eta}$.
Note also that with this definition and for $\rv \equiv 0$, $\cL_\rv$ is well defined on $L^2(\nu)$ and $\cL_\rv \equiv 0$.

We start with a few remarks on the differentiable structure corresponding to Definition \ref{Def:derivative}.
Let $\mathrm{Hol}(\D)$ be the space of holomorphic functions on $\D$. It is a Fr\'echet space when endowed with the local uniform topology, and we refer to \cite[Section~I.3]{Hamilton} for background on the directional derivative in Fr\'echet spaces. We recall that this differential satisfies some basic properties such as the fundamental theorem of calculus \cite[Theorem I.3.2.2]{Hamilton} and linearity \cite[Theorem~I.3.2.5]{Hamilton}. Every $u\in\mathrm{Hol}(\D)$ has a power series expansion $u(z)=\sum_{m\geq0}u_mz^m$ around 0, and the coefficients $(u_m)_{m\geq0}$ provide coordinates on this space. We also have an embedding $i:\eta \in \mathcal{J}_{0,\infty}\mapsto e^{c_\eta}f_\eta\in\mathrm{Hol}(\D)$, and our two sets of coordinates are related by $e^c=u_1$, $e^ca_m=u_{m+1}$ for $m\geq1$ (note that $i(\mathcal{J}_{0,\infty})$ is a subset of the locus $\{u_1>0\}$).

In the next proposition, we show that differentiating in the space $\cJ_{0,\infty}$ in direction $\rv$ is equivalent to differentiating in $\mathrm{Hol}(\D)$ along the directions $u_\eta(\rv)$ and $\tilde u_\eta(\rv)$ where, for all $z \in \D$,
\begin{align}\label{E:def_urv}
    u_\eta(\rv)(z) & = \frac{e^{c_\eta} f_\eta'(z)}{4i\pi} \oint_{\gamma}\frac{z((e^{c_\eta}f_\eta)^{-1}(\zeta)+z)((e^{c_\eta}f_\eta)^{-1})'(\zeta)^2}{(e^{c_\eta}f_\eta)^{-1}(\zeta)^2((e^{c_\eta}f_\eta)^{-1}(\zeta)-z)} v(\zeta) \d\zeta,\\
    & = \frac{1}{4i\pi} \oint_{(e^{c_\eta}f_\eta)^{-1}(\gamma)}\frac{z(\zeta+z)}{\zeta^2(\zeta-z)} \frac{f_\eta'(z)}{f_\eta'(\zeta)} v(e^{c_\eta}f_\eta(\zeta))\d\zeta,\\
    \label{E:def_urv_tilde}
    \overline{\tilde u_\eta(\rv)(z)} & = - \frac{e^{c_\eta} \overline{f_\eta'(z)}}{4i\pi} \oint_{\gamma}\frac{\bar z((e^{c_\eta}f_\eta)^{-1}(\zeta)+\bar z)((e^{c_\eta}f_\eta)^{-1})'(\zeta)^2}{(e^{c_\eta}f_\eta)^{-1}(\zeta)^2(1-\bar z(e^{c_\eta}f_\eta)^{-1}(\zeta))} v(\zeta) \d\zeta.
\end{align}
The contour $\gamma$ in the integral defining $u_\eta(\rv)(z)$ (resp. $\tilde u_\eta(\rv)(z)$) can be any contour surrounding the origin and $e^{c_\eta}f_\eta(z)$ (resp. surrounding the origin) and included in $\rmint(\eta)$.
When $z \in \D\setminus \{0\}$ and if $\gamma$ is a contour surrounding the origin but not $e^{c_\eta}f_\eta(z)$, we have
\begin{align}\label{E:urveta_small_loop}
    u_\eta(\rv)(z) = \frac{e^{c_\eta} f_\eta'(z)}{4i\pi} \oint_{\gamma}\frac{z((e^{c_\eta}f_\eta)^{-1}(\zeta)+z)((e^{c_\eta}f_\eta)^{-1})'(\zeta)^2}{(e^{c_\eta}f_\eta)^{-1}(\zeta)^2((e^{c_\eta}f_\eta)^{-1}(\zeta)-z)} v(\zeta) \d\zeta + v \circ (e^{c_\eta}f_\eta)(z).
\end{align}

\begin{proposition}\label{P:equivalence_derivative}
    Let $\eta \in \cJ_{0,\infty}$, $\rv = v(z)\del_z \in \C(z)\del_z$ and $(\phi_t)$ be a $\cC^1$-family of conformal maps on $O_\eta$ \eqref{eq:annulus_eta} (with respect to the local uniform topology), defined in a complex neighbourhood of $t=0$, satisfying $\phi_0 = \id$, $\del_t \phi_t\vert_{t=0} = v$ and $\del_{\bar t} \phi_t\vert_{t=0} = v$. Denote by $e^{c_t} f_t$ the uniformising map corrresponding to the perturbed loop $\phi_t(\eta)$. Then the following expansion holds as $t \to 0$, uniformly on compact subsets of $\D$:
    \begin{equation}\label{E:P_equiv}
        \hspace{30pt} e^{c_t} f_t = e^{c_\eta} f_\eta + t u_\eta(\rv) + \bar t \tilde u_\eta(\rv) + o(t).
    \end{equation}
\end{proposition}

The rest of Section \ref{subsec:witt} is organised as follows. Section~\ref{subsubsec:statements_witt} contains the main statements about the Witt representations (which are fairly quick consequences of Proposition \ref{P:equivalence_derivative} above): Propositions \ref{P:construction_cL}, \ref{prop:generators_mobius} and \ref{prop:diff_c}. Proposition \ref{P:equivalence_derivative} will be proved in Section~\ref{SSS:proof_Prop_equiv}. Finally, Propositions~\ref{P:construction_cL}, \ref{prop:generators_mobius} and \ref{prop:diff_c} will be proved in Sections \ref{SSS:L0L1}, \ref{SSS:c} and \ref{SSS:construction}, respectively.

        \subsubsection{Statements on the Witt algebra}\label{subsubsec:statements_witt}

The first main result concerns the construction of the operators $\cL_\rv$ and $\bar\cL_\rv$, $\rv \in \C(z)\partial_z$, as endomorphisms of $\cC$ \eqref{E:cC} and $\cC_\mathrm{comp}$ \eqref{E:cC_compact} as well as the computation of their commutation relations:

\begin{proposition}\label{P:construction_cL}
Let $\rv\in \C(z)\del_z$. Every $F\in\cC$ (resp. $\cC_\mathrm{comp}$) is differentiable in direction $\rv$, and $\cL_\rv F,\bar{\cL}_\rv F\in\cC$ (resp. $\cC_\mathrm{comp}$). The maps $F\mapsto\cL_\rv F$, $F\mapsto\bar{\cL}_\rv F$ define endomorphisms of $\cC$ and $\cC_\mathrm{comp}$, and their commutators satisfy
\[[\cL_\rv,\cL_\rw]=\cL_{[\rv,\rw]},\qquad[\bar{\cL}_\rv,\bar{\cL}_\rw]=\bar{\cL}_{[\rv,\rw]}\quad \text{and} \quad[\cL_\rv,\bar{\cL}_\rw]=0,\]
where $[\rv,\rw]$ is the Lie bracket of vector fields \eqref{E:bracket_vector_field}.

Recalling that we denote for all $n \in \Z$, $\rv_n = -z^{n+1}\partial_z$, we will simply write $\cL_n$ for $\cL_{\rv_n}$.
\end{proposition}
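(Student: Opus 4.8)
The plan is to compute the differentials $\cL_\rv F$ and $\bar\cL_\rv F$ explicitly in the coordinates $(c, (a_m)_{m\ge 1})$, reducing everything to the action of the flow $\phi_t$ on the pair $(c_\eta, f_\eta)$. First I would record the key geometric fact: if $\eta$ is uniformised from the inside by $e^{c}f$ with $f \in \cE$, and $\phi_t(z) = z + tv(z) + o(t)$ is the flow of $\rv$ on the annulus $\ann_\eta$, then $\phi_t \circ \eta$ is uniformised from the inside by $\phi_t \circ (e^{c}f)$, which is a conformal map of $\D$ that does not quite lie in $\R \times \cE$ (it need not fix $0$ with derivative $1$ there). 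So one must post-compose with the unique affine-type normalisation: writing $h_t := \phi_t \circ (e^{c}f)$, we have $h_t(0) = \phi_t(0)$ and $h_t'(0) = \phi_t'(0) e^{c} f'(0) = \phi_t'(0) e^{c}$, and the normalised map is $z \mapsto (h_t(z) - h_t(0))/h_t'(0)$ with new log-conformal radius $c + \log \phi_t'(0) + o(\dots)$ — but since $0 \in \rmint(\eta)$ and $v$ is holomorphic near $0$ (here one uses that $\rv \in \C(z)\del_z$, so $v$ may have a pole only at $0$; I would first treat the case $v$ holomorphic at $0$, then the polar part separately), one expands $\phi_t(w) = w + tv(w) + o(t)$ near $w = 0$ to extract the first-order change in $c$ and in each coordinate $a_m$. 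This produces explicit formulas: $\cL_\rv c$ and $\cL_\rv a_m$ are polynomials in the $(a_j)$ with coefficients depending linearly on the Laurent coefficients of $v$, and crucially $\cL_\rv a_m = 0$ unless $m$ is bounded in terms of the degree of $v$, while $\bar\cL_\rv a_m = 0$ identically when $v$ is holomorphic (the flow is holomorphic, so the dependence on $t$ is holomorphic) — the $\bar t$-term appears only through the normalisation when $v$ has a pole at $0$, via the variational formula of Proposition~\ref{lem:variational_formulas}. Actually, I expect that for $\rv$ holomorphic at $0$ one gets $\bar\cL_\rv \equiv 0$ on polynomials and the full $\bar\cL_\rv$ enters only for the generators $\rv_{-n}$, $n \ge 1$; I would organise the computation around the generators $\rv_n$, $n \in \Z$.

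Next I would check closure of $\cC$ (and $\cC_\mathrm{comp}$) under these operators. Since $\cL_{\rv_n}$ acts on a monomial $c^{(\text{via }\cS_\exp)} \cdot \ba^\bk\bar\ba^{\tilde\bk}$ by Leibniz — differentiating the $\cS_\exp(\R)$-factor in $c$ (which stays in $\cS_\exp(\R)$) and the polynomial factor (which stays polynomial, by the explicit formulas from the previous paragraph) — the result lies in $\cC$. For $\cC_\mathrm{comp}$, I need that the flow $\phi_t$ for small $t$ maps $\{\eta \subset \{\epsilon < |z| < \epsilon^{-1}\}\}$ into a slightly larger such set, so differentiating a function supported on a compact-in-the-Carathéodory-sense set keeps the support compact; this uses that $\phi_t \to \mathrm{id}$ uniformly on the relevant annulus. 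Then I would verify the commutation relations. The cleanest route is the chain rule for the group action: $\phi_t^{\rv} \circ \phi_s^{\rw}$ agrees to second order with the flow of $\rv + \rw + \frac{ts}{2}[\rv,\rw] + \dots$, and since $F(\phi_t \cdot \eta)$ is (jointly) smooth in $(t, \bar t)$ with the expansion \eqref{eq:def_diff}, computing the mixed second-order term two ways gives $[\cL_\rv, \cL_\rw] = \cL_{[\rv,\rw]}$, $[\bar\cL_\rv,\bar\cL_\rw] = \bar\cL_{[\rv,\rw]}$, and $[\cL_\rv,\bar\cL_\rw] = 0$ (the last because mixing a holomorphic and an antiholomorphic direction produces no $[\cdot,\cdot]$ term of type $(1,0)$ or $(0,1)$ — equivalently, the flows in direction $\rv$ and $\rw$ commute with the conjugate structure). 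Alternatively, and perhaps more robustly given that we are working on a common invariant domain $\cC$, I would verify the commutation relation directly on the explicit polynomial formulas for $\cL_n a_m$, which is a finite bookkeeping check once the formulas are in hand; the Witt relation $[\rv_n,\rv_m] = (n-m)\rv_{n+m}$ should fall out of the combinatorics of the Faà di Bruno / Lagrange-inversion coefficients.

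The main obstacle I anticipate is the careful treatment of the normalisation when $\rv$ has a pole at $0$ (the generators $\rv_{-n}$, $n \ge 1$), since then $\phi_t$ is not defined in a full neighbourhood of $0$ and one cannot simply post-compose; instead one must use the quasiconformal extension of Lemma~\ref{lem:extend_qc} to realise the motion $\phi_t \cdot \eta$ as $\Phi_{t\mu} \cdot \eta$ for a Beltrami coefficient $\mu$ supported in a small disc around $0$ (inside $\rmint(\eta)$), read off how $\Phi_{t\mu}$ distorts the interior uniformisation using Proposition~\ref{lem:variational_formulas}, and in particular see that this is where the antiholomorphic derivative $\bar\cL_\rv$ genuinely appears. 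Keeping track of the $o(t)$ errors uniformly over the support of $F \in \cC$ (so that the pointwise expansion \eqref{eq:def_diff} is legitimate and the resulting $\cL_\rv F$ is again a genuine element of $\cC$, not merely a formal expression) is the technical heart; everything else is linear algebra on polynomials. I would also double-check that $\cL_\rv$ and $\bar\cL_\rv$ are \emph{complex-linear} in $F$ but only \emph{real-linear} — in fact complex-linear — in $\rv$ in the appropriate sense, to make the statement "representation of the Witt algebra" precise over the relevant field, matching the commutator identities above.
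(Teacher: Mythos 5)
Your high-level plan coincides with the paper's: compute the induced first-order motion of $(c,(a_m))$ under the flow $\phi_t$, read off $\cL_\rv$ and $\bar\cL_\rv$ as first-order differential operators with polynomial coefficients, use the quasiconformal extension of Lemma~\ref{lem:extend_qc} together with the variational formulas of Proposition~\ref{lem:variational_formulas} when $\rv$ has a pole at $0$, and derive the commutator identities from the second-order expansion of the composed flows. That part, as well as the closure of $\cC$ and $\cC_\mathrm{comp}$ under these operators, is correctly anticipated.

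However, your renormalisation recipe for vector fields with $v(0)\neq 0$ --- already the case for $\rv_{-1}=-\del_z$, which is holomorphic at $0$ --- is wrong. You propose to normalise $h_t:=\phi_t\circ(e^cf)$ by \emph{post}-composing with $z\mapsto(z-h_t(0))/h_t'(0)$, but this changes the image: the resulting map uniformises $(\rmint(\phi_t\cdot\eta)-h_t(0))/h_t'(0)$, not $\rmint(\phi_t\cdot\eta)$, so it does not produce the pair $(c_t,f_t)$ at all. One must instead \emph{pre}-compose $h_t$ with a M\"obius automorphism of $\D$ (which preserves the image) and solve for its parameters from the constraints $e^{c_t}f_t(0)=0$, $f_t'(0)=1$; this is how the paper computes $\cL_{-1},\bar\cL_{-1}$ in the proof of Proposition~\ref{prop:generators_mobius}. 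A symptom of the confusion is your expectation that $\bar\cL_\rv\equiv 0$ on polynomials whenever $v$ is holomorphic at $0$, while antiholomorphic terms ``enter only for $\rv_{-n}$, $n\geq 1$'' --- which is self-contradictory since $\rv_{-1}$ is holomorphic at $0$. In fact the disc automorphism needed for $\rv_{-1}$ expands as $z+t(\alpha-i\theta z-\bar\alpha z^2)+o(t)$, whose $\bar\alpha z^2$-term produces a $\bar t$-dependence, and the paper's explicit formula gives, e.g., $\bar\cL_{-1}a_1=e^{-c}(|a_1|^2-1)\neq 0$. The antiholomorphic mixing therefore begins already at $\rv_{-1}$, not only for the polar generators $\rv_{-n}$, $n\geq 2$, and a literal implementation of your recipe would produce the wrong operators $\cL_{-1},\bar\cL_{-1}$ and hence wrong commutators involving them.
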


\begin{remark}\label{rem:mobius}
Let $\nu_{\hat\C}$ be the SLE loop measure in $\hat\C$, not restricted to $\cJ_{0,\infty}$.
We would like to explain that, when the vector field $\rv \in \C_2[z] \partial_z$ is a quadratic polynomial vector field, one can directly build the operator $\cL_\rv^{\hat\C}$ on $L^2(\nu_{\hat\C})$ using conformal perturbations and Stone's theorem. The exponent $\hat\C$ emphasises that the operator $\cL_\rv^{\hat\C}$ is defined on a different set. The relation to $\cL_\rv$ is as follows: if $F \in L^2(\nu)$, then $\hat F:\eta \in \cJ \mapsto F(\eta) \mathbf{1}_{\eta \in \cJ_{0,\infty}}$ belongs to $L^2(\nu_{\hat\C})$ and
\[ 
\cL_\rv^{\hat\C}( \hat F )(\eta) = \cL_\rv F(\eta) \mathbf{1}_{\eta \in \cJ_{0,\infty}}.
\]
Recall that $\C_2[z]\del_z\simeq\mathfrak{sl}_2(\C)$ is the Lie algebra of the group $\mathrm{PSL}_2(\C)$ of Möbius transformations of the Riemann sphere $\hat \C$ which acts on $\cJ$ by $\Phi\cdot\eta:=\Phi(\eta)$, for all $\eta \in \cJ$, $\Phi \in \mathrm{PSL}_2(\C)$.
This action defines a unitary group of operators $(\bU_\Phi)_{\Phi\in\mathrm{PSL}_2(\C)}$ on $L^2(\nu_{\hat\C})$:
 \[\bU_\Phi F(\eta):=F(\Phi(\eta)).\]
Using \cite[Theorem VIII.9]{ReedSimon1}, one can easily check that the mapping $\Phi\mapsto\bU_\Phi$ is a strongly continuous group of unitary operators on $L^2(\nu_{\hat\C})$.
By Stone's theorem \cite[Theorem VIII.8]{ReedSimon1}, this fact implies that for every $\rv\in\C_2[z]\del_z\simeq\mathfrak{sl}_2(\C)$, there exists a (unique) self-adjoint operator $\bH_\rv^{\hat\C}$ on $L^2(\nu_{\hat\C})$ such that 
\[
\bU_{e^\rv}=e^{i\bH_\rv^{\hat\C}}.
\]
In the above display, $\C_2[z]\del_z\simeq\mathfrak{sl}_2(\C)$ is a six-dimensional real vector space and the exponential of $\rv\in\C_2[z]\del_z$ corresponds to the matrix exponential in $\mathfrak{sl}_2(\C)$.
The generator of dilations is the operator
\[\bH^{\hat\C}:=\bH_{-z\del_z}^{\hat\C}.\]
Following the standard terminology in conformal field theory \cite{Segal87}, we call it the \emph{Hamiltonian}. The operators $\cL_\rv^{\hat\C}$ and $\bar \cL_\rv^{\hat\C}$ can then be obtained by setting
\[
\cL_\rv^{\hat\C}:=\frac{i}{2}(\bH_\rv^{\hat\C}-i\bH_{i\rv}^{\hat\C});\qquad\bar{\cL}_\rv^{\hat\C}:=\frac{i}{2}(\bH_\rv^{\hat\C}+i\bH_{i\rv}^{\hat\C}).
\]
Note that the map $\bH_{\rv}^{\hat\C}$ depends only $\R$-linearly on $\rv$, while $\cL_\rv^{\hat\C}$ (resp. $\bar\cL_\rv^{\hat\C}$) depends $\C$-linearly (resp. $\C$-antilinearly) on $\rv$.
\end{remark}

The next proposition gives the explicit expression for $\cL_0$, $\bar \cL_0$, $\cL_{-1}$ and $\bar \cL_{-1}$, which will be of use in the sequel. We mention that expressions with a similar flavour have been given for Kirillov's operators; see for instance \cite[(7-0') and (7-1')]{Kirillov98} and recall that the difference with our setup is explained in Section~\ref{subsec:comparison}.

\begin{proposition}\label{prop:generators_mobius}
We have
\begin{align*}
&\cL_0=-\frac{\del_c}{2}+\frac{1}{2}\sum_{m=1}^\infty m(a_m\del_{a_m}-\bar{a}_m\del_{\bar{a}_m}),\qquad\bar{\cL}_0=-\frac{\del_c}{2}-\frac{1}{2}\sum_{m=1}^\infty m(a_m\del_{a_m}-\bar{a}_m\del_{\bar{a}_m}),\\
&\cL_{-1}=e^{-c}a_1\del_c+e^{-c}\sum_{m=1}^\infty(m+2)(a_{m+1}-a_1a_m)\del_{a_m}+m(a_1\bar{a}_m-\bar{a}_{m-1})\del_{\bar{a}_m},\\
&\bar\cL_{-1}=e^{-c}\bar a_1\del_c+e^{-c}\sum_{m=1}^\infty(m+2)(\bar a_{m+1}-\bar a_1\bar a_m)\del_{\bar a_m}+m(\bar a_1a_m-a_{m-1})\del_{a_m},
\end{align*}
where we have set $a_0=1$.
\end{proposition}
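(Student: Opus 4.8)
The plan is to extract the four differential operators from their defining relation \eqref{eq:def_diff} by explicitly computing the small motion $\phi_t \cdot \eta$ for $\rv_0 = -z\partial_z$ and $\rv_{-1} = -\partial_z$, and then reading off how the coordinates $(c, (a_m)_{m\ge 1})$ transform. The key observation is that for these two vector fields the flow $\phi_t$ is a genuine Möbius transformation, so the computation is exact rather than merely infinitesimal. For $\rv_0$, the flow is the dilation $\phi_t(z) = e^{-t}z$, and the motion $\phi_t \cdot \eta$ is obtained by scaling: if $e^c f$ uniformises $\mathrm{int}(\eta)$, then $e^{c-t} \cdot e^{t} f(e^{-t}\cdot)$ uniformises $\mathrm{int}(\phi_t \cdot \eta)$ (one must re-normalise so that the derivative at $0$ is $1$; here $t$ is complex, so $e^{-t}$ is a complex scaling and $|f'(0)|$-normalisation requires care). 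Concretely, if $f(z) = z(1 + \sum a_m z^m)$, then $e^{t}f(e^{-t}z) = z(1 + \sum a_m e^{-tm} z^m)$, so under the flow $c \mapsto c - \Re$-part contributions and $a_m \mapsto e^{-tm} a_m$ — more precisely, separating holomorphic and antiholomorphic parts of the $o(t)$-expansion, $\delta c = -\tfrac{t}{2} - \tfrac{\bar t}{2}$ and $\delta a_m = -tm\, a_m$, $\delta \bar a_m = -\bar t m\, \bar a_m$. Applying the chain rule to $F(c, (a_m), (\bar a_m))$ and matching the $t$- and $\bar t$-coefficients yields exactly the stated formulas for $\cL_0$ and $\bar\cL_0$.

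For $\rv_{-1} = -\partial_z$, the flow is the translation $\phi_t(z) = z + t$ (this solves $\partial_t \phi_t = 1$ with $\phi_0 = \mathrm{id}$), and the motion $\phi_t \cdot \eta$ translates the curve by $t$. The point $0$ is no longer in the ``centre'' of the translated interior, so one must recompute the uniformising map normalised at $0$: the interior of $\phi_t \cdot \eta$ is uniformised by $z \mapsto e^c f(z) + t$, but this sends $0$ to $t$, not $0$, and has derivative $e^c f'(0) = e^c$ at $0$, not at the preimage of $0$. So I would post-compose: let $\psi = (e^c f + t)$; we need the preimage $w_0 = \psi^{-1}(0)$ and then re-uniformise by $\psi((w_0 + \cdot)/(\text{normalising factor}))$ — equivalently, compute the new $(c', f')$ by the Koebe normalisation. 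To first order in $t$, $w_0 = -e^{-c}t(1 + O(t))$ up to the $a_1$-correction: solving $e^c f(w_0) = -t$ gives $w_0 = -e^{-c}t + e^{-c}a_1 e^{-2c}t^2 + \cdots$, but only the first order matters. Expanding $e^c f(w_0 + z/(1 + \cdots)) + t$ as a power series in $z$ and reading off the new coefficients gives $\delta c = e^{-c} a_1 t + \overline{e^{-c}a_1} \bar t$ — wait, more carefully: the change in $\log$-conformal-radius is $\delta c = -\log|f'(w_0) \cdot (\text{stuff})|$, whose holomorphic part is $e^{-c}a_1 t$ — and $\delta a_m = e^{-c}t\big((m+2)(a_{m+1} - a_1 a_m)\big) + \bar t \cdot (\text{antiholomorphic correction from the } w_0\text{-shift})$. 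The mixed terms $m(a_1 \bar a_m - \bar a_{m-1})\partial_{\bar a_m}$ arise because the re-centering shift $w_0 \propto t$ is holomorphic in $t$ but interacts with the antiholomorphic coordinates $\bar a_m$ through the normalisation constraint $|f'| = $ real. I would organise this as a bookkeeping computation: write $\phi_t \circ (e^c f) = e^{c'} f' \circ R_t$ for an appropriate conformal reparametrisation $R_t$ of $\D$ fixing $0$, expand everything to order $t$, and match coefficients.

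The main obstacle — and the step requiring genuine care rather than routine algebra — is the re-normalisation at $0$ in the $\rv_{-1}$ case: the translated curve's interior is no longer centred at $0$ with unit derivative, so the pair $(c', f')$ representing $\phi_t \cdot \eta$ is obtained only after solving $\psi(w_0) = 0$ and rescaling, and it is precisely this step that couples the holomorphic flow parameter $t$ to the antiholomorphic coordinates $\bar a_m$, producing the cross-terms $\partial_{\bar a_m}$ in $\cL_{-1}$. One must also be scrupulous about the fact that $t$ is complex: the ``conformal radius'' is $|e^c f'(0)|$-type quantity, so $\delta c$ picks up only the real part of the holomorphic variation, which is why $\cL_0$ has the coefficient $-\tfrac12$ on $\partial_c$ rather than $-1$, and the operators $\cL$ and $\bar\cL$ split the real Lie derivative into its $(1,0)$ and $(0,1)$ parts. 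A clean way to avoid sign and factor errors is to verify the answer against the commutation relation $[\cL_0, \cL_{-1}] = \cL_{[\rv_0, \rv_{-1}]} = -\cL_{-1}$ (since $[\rv_0, \rv_{-1}] = \rv_{-1}$), which the stated formulas should satisfy, and to check consistency with Proposition \ref{P:construction_cL}. Finally, I would remark that one could alternatively derive $\cL_0, \bar\cL_0, \cL_{-1}, \bar\cL_{-1}$ via Remark \ref{rem:mobius} (Stone's theorem applied to the Möbius action), since all of $\rv_0, \rv_{\pm 1}$ lie in $\C_2[z]\partial_z \simeq \mathfrak{sl}_2(\C)$; but the direct coordinate computation above is self-contained and gives the explicit form requested.
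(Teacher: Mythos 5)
Your overall strategy is sound and close to the paper's: compute the flow explicitly, read off its effect on $(c,(a_m),(\bar a_m))$, and match the $t$- and $\bar t$-coefficients. The difference is organisational: the paper works with \emph{real} flow parameters to extract the Stone generators $\bH_{\rv}$, $\bH_{i\rv}$ and then reconstructs $\cL_{\rv}=\tfrac{i}{2}(\bH_{\rv}-i\bH_{i\rv})$, whereas you expand directly in complex $t$. That variant is perfectly legitimate if carried out carefully.

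But your $\cL_0$ computation as written is wrong, and the error is precisely in the normalisation step you flagged. With $\phi_t(z)=e^{-t}z$ and complex $t$, the composite $\phi_t\circ(e^c f)$ has derivative $e^{c-t}$ at $0$. To put it in normalised form one must absorb the imaginary part of $t$ by a \emph{rotation of the domain} $z\mapsto e^{i\Im(t)}z$ (the only allowed reparametrisation of $\D$ fixing $0$), and the real part into $c_t=c-\Re(t)$. This yields $a_m(t)=a_m\,e^{im\Im(t)}$, i.e.\ $\delta a_m=\tfrac{m}{2}(t-\bar t)\,a_m$, not $\delta a_m=-tm\,a_m$. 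Your $\delta a_m$ would give $\cL_0=-\tfrac12\partial_c-\sum_m m\,a_m\partial_{a_m}$, which does not match the stated formula (wrong coefficient, and the $-\bar a_m\partial_{\bar a_m}$ term is missing). The confusion is between the genuine effect of the flow on the image domain (a complex scaling by $e^{-t}$) and a reparametrisation of the source $\D$ (which can only be a rotation); $e^{c-t}\cdot e^t f(e^{-t}z)$ is not a uniformising map of $e^{-t}\mathrm{int}(\eta)$. A related slip: $[\rv_0,\rv_{-1}]=\rv_{-1}$, so the sanity check is $[\cL_0,\cL_{-1}]=\cL_{-1}$, not $-\cL_{-1}$; applying this check would have caught the error.

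Your outline for $\cL_{-1}$ is vaguer but the central point — that the translation $z\mapsto z-t$ displaces the centre, and re-centring via the Koebe normalisation couples the holomorphic $t$ to the antiholomorphic coordinates $\bar a_m$, producing the $\partial_{\bar a_m}$ cross-terms — is correctly identified, and it is exactly the mechanism the paper encodes in the constraints $\alpha=e^{-c}$, $x=e^{-c}\Re(f''(0))$, $\theta=e^{-c}\Im(f''(0))$. You should redo the $\cL_0$ normalisation along the lines above before trusting the $\cL_{-1}$ bookkeeping.
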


Finally, we consider the case of the coordinate function $c: \cJ_{0,\infty} \to \R$ and compute explicitly $\cL_\rv c$ for all vector fields $\rv$.
To this end, define
$\varpi:\cJ_{0,\infty}\times\C(z)\del_z\to\C$ by
\begin{equation}
    \label{eq:def_varpi}
    \varpi_\eta(\rv):=\frac{1}{4i\pi}\oint\left(\frac{((e^{c_\eta}f_\eta)^{-1})'(\zeta)}{(e^{c_\eta}f_\eta)^{-1}(\zeta)}\right)^2v(\zeta)\d \zeta.
\end{equation}

\begin{proposition}\label{prop:diff_c}
For all $\rv\in\C(z)\del_z$, the coordinate $c:\cJ_{0,\infty}\to\R$ is differentiable in direction $\rv$, and
\[\cL_\rv c=\varpi(\rv).\]
\end{proposition}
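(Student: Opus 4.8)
Recall $c_\eta=\log(\text{conformal radius of }\rmint(\eta)\text{ seen from }0)$, and let $F:=e^{c_\eta}f_\eta\colon\D\to\rmint(\eta)$, so that $F(0)=0$, $F'(0)=e^{c_\eta}$, $F'(z)=e^{c_\eta}(1+2a_1z+\cdots)$. I want the first-order variation of $c_\eta$ along the flow $\phi_t$. Since $\rv\mapsto\cL_\rv c$ is complex-linear (the flow of $\rv+\rw$ agrees to first order with $\phi_t^{\rv}\circ\phi_t^{\rw}$) and $\rv\mapsto\varpi_{e^{c_\eta}f_\eta}(\rv)$ is obviously complex-linear, it is enough to treat separately (A) vector fields $\rv=v\del_z$ with $v$ holomorphic on a neighbourhood of $\overline{\rmint(\eta)}$ (e.g.\ $v\in\C[z]$), and (B) $\rv\in z^{-1}\C[z^{-1}]\del_z$. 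The substitutions $w=e^{c_\eta}z$ and then $w=F(z)$ in \eqref{eq:def_varpi} recast $\varpi$ as
\[\varpi_{e^{c_\eta}f_\eta}(\rv)=\tfrac12\,\mathrm{Res}_{w=0}\Big[\Big(\tfrac{(F^{-1})'(w)}{F^{-1}(w)}\Big)^2v(w)\Big]=\tfrac12\,\mathrm{Res}_{z=0}\frac{v(F(z))}{z^2F'(z)},\]
which is the form I will match in both cases.

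\textbf{Case (A).} For small $|t|$ the flow $\phi_t$ is defined and conformal on a neighbourhood of $\overline{\rmint(\eta)}$, so $\rmint(\phi_t\cdot\eta)=\phi_t(\rmint(\eta))$ and $\phi_t$ is a conformal map of $\rmint(\eta)$. Conformality transports conformal radii; with $w_t:=\phi_t^{-1}(0)=-tv(0)+o(t)$, $z_t:=F^{-1}(w_t)=-e^{-c_\eta}tv(0)+o(t)$, and using that the conformal radius of $F(\D)$ seen from $F(z_t)$ equals $|F'(z_t)|(1-|z_t|^2)$,
\[c_{\phi_t\eta}=\log|\phi_t'(w_t)|+\log|F'(z_t)|+\log(1-|z_t|^2).\]
Inserting $\phi_t'(z)=1+tv'(z)+o(t)$, $F'(z_t)=e^{c_\eta}(1+2a_1z_t)+o(t)$, and $\log(1-|z_t|^2)=o(t)$ gives $c_{\phi_t\eta}-c_\eta=\Re\big[t\big(v'(0)-2a_1e^{-c_\eta}v(0)\big)\big]+o(t)$, hence $\cL_\rv c=\tfrac12\big(v'(0)-2a_1e^{-c_\eta}v(0)\big)$; expanding $\big(\tfrac{(F^{-1})'(w)}{F^{-1}(w)}\big)^2v(w)$ at $w=0$ with $F^{-1}(w)=e^{-c_\eta}w(1-a_1e^{-c_\eta}w+\cdots)$ shows the residue above is precisely this, so $\cL_\rv c=\varpi(\rv)$.

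\textbf{Case (B).} Here the flow is undefined near $0$ and $\rmint(\eta)$ is genuinely deformed. Fix $r<e^{c_\eta}/4$; by Koebe, $r\bar\D\subset\rmint(\eta)$ and $\eta\subset\{|z|>r\}$. Take $\mu$ attached to $\rv$ and $r$ by Lemma~\ref{lem:extend_qc} and $\Phi:=\Phi_{t\mu}$ as in Definition~\ref{def:qc}: $\Phi$ is conformal on $\{|z|>r\}\supset\eta$, $\Phi(z)=z+tv(z)+o(t)$, $\Phi(0)=0$, and $\Phi$ is smooth near $0$ with $\partial_z\Phi(0)=1+o(t)$ (its first-order term solves $\delbar v=\mu$ and its Taylor expansion at $0$ has no $z^1$-term). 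As $\Phi$ and $\phi_t$ both equal $\mathrm{id}+tv+o(t)$ uniformly near $\eta$, the conformal maps $\Phi\circ\phi_t^{-1}$ near $\eta$ equal $\mathrm{id}+o(t)$, so — by continuity of $\eta'\mapsto c_{\eta'}$ and the uniform $o(t)$ control — $c_{\phi_t\eta}=c_{\Phi(\eta)}+o(t)$ and I may work with $\Phi(\eta)$. Now $G:=\Phi\circ F\colon\D\to\Phi(\rmint(\eta))=\rmint(\Phi(\eta))$ is quasiconformal with Beltrami coefficient $t\nu$, where $\nu:=(\mu\circ F)\,\overline{F'}/F'$ is supported in $F^{-1}(r\D)\Subset\D$; if $k\colon\D\to\D$ is the quasiconformal self-map with Beltrami coefficient $t\nu$ fixing $0$ and $\S^1$ setwise, then $F_t:=G\circ k^{-1}$ is the conformal uniformisation of $\rmint(\Phi(\eta))$ with $F_t(0)=0$, so
\[c_{\Phi(\eta)}=\log|F_t'(0)|=\log|\partial_z\Phi(0)|+c_\eta-\log|\partial_z k(0)|=c_\eta-\log|\partial_z k(0)|+o(t).\]
By uniqueness of Beltrami solutions, $k$ is Ahlfors' normalised map $\tilde\Phi_{t\nu}$ up to a rotation (harmless for $|\partial_z k(0)|$); differentiating Proposition~\ref{lem:variational_formulas} at $z=0$ and simplifying with $\tfrac1{\zeta^2(\zeta-1)}-\tfrac1{\zeta(\zeta-1)}=-\tfrac1{\zeta^2}$ gives $\log|\partial_z k(0)|=-\Re\big[\tfrac1\pi\int_\D\tfrac{\nu(\zeta)}{\zeta^2}\,|\d\zeta|^2\big]+o(t)$, whence $\cL_\rv c=\tfrac1{2\pi}\int_\D\tfrac{\nu(\zeta)}{\zeta^2}|\d\zeta|^2$. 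The change of variables $w=F(\zeta)$ turns this into $\tfrac1{2\pi}\int_{r\D}\mu(w)\big(\tfrac{(F^{-1})'(w)}{F^{-1}(w)}\big)^2|\d w|^2$, and inserting the explicit monomial form of $\mu$ from Lemma~\ref{lem:extend_qc} and integrating in polar coordinates (only the term pairing $\bar w^{n-1}w$ with the $w^{n-2}$-coefficient of $\big(\tfrac{(F^{-1})'}{F^{-1}}\big)^2$ survives) produces $\tfrac12\,\mathrm{Res}_{w=0}\big[\big(\tfrac{(F^{-1})'}{F^{-1}}\big)^2v\big]=\varpi(\rv)$.

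\textbf{Loose ends and the main difficulty.} The differentiability of $c$ in direction $\rv$ (existence of $\cL_\rv c$, and $\bar\cL_\rv c=\overline{\cL_\rv c}$ since $c$ is real-valued) falls out of the same expansions. I expect the real work to be organisational rather than conceptual: matching each variational formula to the contour integral defining $\varpi$ needs care with signs, with the factor $\tfrac12$ that isolates the holomorphic $t$-part, and with the Jacobian in the change of variables; and Case~(B) relies on the (qualitative) continuity of $\eta\mapsto c_\eta$ together with the uniform $o(t)$-estimates from Ahlfors' theory in order to legitimately trade the flow $\phi_t$ for its quasiconformal surrogate $\Phi_{t\mu}$.
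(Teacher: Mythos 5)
Your proposal is essentially correct, and for the hard case (your Case~(B)) it follows the same strategy as the paper: extend the meromorphic flow to a quasiconformal homeomorphism $\Phi_t$ of the sphere via Lemma~\ref{lem:extend_qc}, sew with a second quasiconformal map that undoes the Beltrami coefficient inside the disc, and read off the first-order variation of $|\partial_z(\cdot)(0)|$ from Ahlfors' variational formula (Proposition~\ref{lem:variational_formulas}); the identity $\frac{1}{\zeta^2(\zeta-1)}-\frac{1}{\zeta(\zeta-1)}=-\frac{1}{\zeta^2}$ and the vanishing of the angular integral $\int_{r\D}\mu(\zeta)\zeta^{-2}|\d\zeta|^2$ then collapse everything to $\varpi(\rv)$, exactly as in the paper. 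The two genuine differences in your write-up: (i) you split $\C(z)=\C[z]\oplus z^{-1}\C[z^{-1}]$ and handle the polynomial part elementarily through the conformal-radius transport formula $\mathrm{ConfRad}(F(\D),F(z))=|F'(z)|(1-|z|^2)$ — the paper instead runs the quasiconformal machine uniformly for all $\rv\in\C(z)\del_z$ — and your Case~(A) computation is clean and correct; (ii) you sew with a disc self-map $k$ rather than the sphere-normalised map $\tilde\Phi_t$, which is a cosmetic difference since $|\partial_z k(0)|=|\partial_z\tilde\Phi_t(0)|$ (they differ by extending by reflection, and the free rotation you mention is harmless for the modulus).

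The one place where your argument is not yet airtight is exactly the loose end you flag: the trade $c_{\phi_t\cdot\eta}=c_{\Phi_{t\mu}(\eta)}+o(t)$. ``Continuity of $\eta'\mapsto c_{\eta'}$'' alone does not upgrade a $o(t)$ discrepancy between the two curves into a $o(t)$ discrepancy between their log-conformal radii; you need a quantitative Lipschitz-type estimate. The cleanest fix — and the one the paper's proof is best read as using — is to avoid the trade altogether: define $\Phi_t$ to coincide \emph{exactly} with $\phi_t$ on $r\D^*$ (the annulus where $\phi_t$ is an honest conformal map) and extend it quasiconformally into $r\D$ fixing $0$. Then $\Phi_t(\eta)=\phi_t(\eta)=\eta_t$ exactly, so the sewing identity is exact, and the Beltrami coefficient of $\Phi_t$ is $t\mu+o(t)$ with $\mu$ precisely the coefficient of Lemma~\ref{lem:extend_qc} (since the first-order part of $\Phi_t$ must solve $\delbar v=\mu$ with the right normalisation). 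Ahlfors' variational formula, which you already invoke, only needs the first-order Beltrami data, so the whole computation goes through unchanged with no replacement step. With this adjustment your Case~(B) is complete and matches the paper.
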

This result has been known for other topologies (smooth Jordan curves \cite{KirillovYurev87}, Weil-Petersson class \cite{TakhtajanTeo06}). Our strategy will follow closely the proof of \cite[Part 2, Theorem 4.5]{TakhtajanTeo06}.

The rest of Section \ref{subsec:witt} is dedicated to the proof of Propositions \ref{P:construction_cL}, \ref{prop:generators_mobius} and \ref{prop:diff_c}.

\subsubsection{Proof of Proposition \ref{P:equivalence_derivative}}\label{SSS:proof_Prop_equiv}
Let $\eta \in \cJ_{0,\infty}$, $\rv = v(z)\del_z = -\sum_{n \in \Z} v_n z^{n+1} \del_z \in \C(z)\del_z$ and $(\phi_t)$ be a $\cC^1$-family of conformal maps on $O_\eta$ \eqref{eq:annulus_eta}, defined in a complex neighbourhood of $t=0$, satisfying $\phi_0 = \id$, $\del_t \phi_t\vert_{t=0} = v$ and $\del_{\bar t} \phi_t\vert_{t=0} = v$. Denote by $e^{c_t} f_t$ and $e^c f$ the uniformising map corrresponding to the loops $\phi_t(\eta)$ and $\eta$ respectively. We want to study the asymptotic behaviour of $e^{c_t}f_t$ as $t\to 0$ in the space $\mathrm{Hol}(\D)$.
Define for $z \in \D\setminus\{0\}$
\begin{gather}\label{E:def_omega_rv}
    \omega_\eta(\rv)(z) = \frac1{4i\pi} \oint\frac{z((e^cf)^{-1}(\zeta)+z)((e^cf)^{-1})'(\zeta)^2}{(e^cf)^{-1}(\zeta)^2((e^cf)^{-1}(\zeta)-z)} v(\zeta) \d\zeta,\\
    \label{E:def_omega_rv_tilde}
    \overline{\tilde\omega_\eta(\rv)(z)} = - \frac{1}{4i\pi} \oint_{\gamma}\frac{\bar z((e^{c_\eta}f_\eta)^{-1}(\zeta)+\bar z)((e^{c_\eta}f_\eta)^{-1})'(\zeta)^2}{(e^{c_\eta}f_\eta)^{-1}(\zeta)^2(1-\bar z(e^{c_\eta}f_\eta)^{-1}(\zeta))} v(\zeta) \d\zeta,
\end{gather}
where the contours are small loops surrounding 0 but not $e^cf(z)$. As observed in \eqref{E:urveta_small_loop}, the functions $u_\eta(\rv)$ and $\tilde u_\eta(\rv)$ agree with
\begin{gather}
    u_\eta(\rv) =v\circ(e^cf)+ \omega_\eta(\rv) e^{c_\eta} f_\eta'
    \quad\text{and}\quad 
    \tilde u_\eta(\rv) = \tilde \omega_\eta(\rv) e^{c_\eta} f_\eta'
\end{gather}
We decompose $\rv=\rv_++\rv_-$ with
\[
\rv_+ = -\sum_{n \ge -1} v_n z^{n+1} \del_z \in\C[z]\del_z
\quad\text{and}\quad
\rv_- = - \sum_{n \le -2} v_n z^{n+1} \del_z \in z^{-1}\C[z^{-1}]\del_z.
\]
As noted in \eqref{eq:annulus_eta}, $\eta$ is compactly included in $O_\eta$, so if $t$ is small enough, $\phi_t(\eta) \subset O_\eta$ \eqref{eq:annulus_eta}.
As in Lemma~\ref{lem:extend_qc}, we let
\begin{equation}\label{E:blipblip}
    \mu(z)=-\sum_{n=2}^\infty nv_{-n}r^{-2n}\bar{z}^{n-1}z\ind_{|z|<r}, \quad \text{with} \quad r=\frac{e^c}{8} \quad \text{and let} \quad \Phi_t:=(\id + tv_+)\circ \Phi_{t\mu}.
\end{equation}
By Lemma \ref{lem:extend_qc}, $\Phi_t$ and $\phi_t$ agree on $O_\eta$ up to $o(t)$. This choice of extension changes $e^{c_t} f_t$ by at most $o(t)$: indeed, by Lemma \ref{L:derivative0}, one can alternatively consider a quasiconformal extension of $\phi_t$ that exactly agrees with $\phi_t$ in a neighbourhood of $\eta$ and whose Beltrami differential equals $t\mu+o(t)$. The rest of the argument below will then show that $e^{c_t} f_t$ has been changed by at most $o(t)$. In the rest of the proof, we consider the perturbation by $\Phi_t$ so as to have a fixed Beltrami differential.
We keep denoting by $\rv_-+\rv_+$ the vector field in $\hat{\C}$ which generates $\Phi_t$. Recall that $\rv_-=\delbar\mu$ and is given by the variational formula of Lemma~\ref{lem:extend_qc}. 

We will show that the following expansion holds uniformly in $\bar\D$ as $t \to 0$:
\begin{equation}\label{E:L_useful}
        \Psi_t:= (e^{c_t} f_t)^{-1} \circ \Phi_t \circ (e^c f) = \id - t\omega_\eta(\rv) - \bar t\tilde \omega_\eta(\rv)+ o(t),
    \end{equation}
where $\omega_\eta(\rv)$ and $\tilde \omega_\eta(\rv)$ are defined in \eqref{E:def_omega_rv} and \eqref{E:def_omega_rv_tilde}.
Let $M_t$ be the unique Möbius transformation of $\D$ that sends $\Psi_t(0) = - tv_{-1}e^{-c}+o(t)$ to 0 and $\Psi_t(1) =: e^{i\theta_t}$ to 1:
\begin{equation}
    \label{E:Mt}
M_t(z) = \frac{1- \overline{\Psi_t(0)} e^{i\theta_t}}{e^{i\theta_t} - \Psi_t(0)} \frac{z-\Psi_t(0)}{1-\overline{\Psi_t(0)}z} = e^{-i\theta_t}(1-tv_{-1}e^{-c}+\overline{tv_{-1}e^{-c}}+o(t))\frac{z-\Psi_t(0)}{1-\overline{\Psi_t(0)}z}, \quad z \in \bar\D,
\end{equation}
and let $\tilde\Phi_t := M_t \circ \Psi_t$.
An elementary computation using the chain rule (see also below) shows that $\del_{\bar z} \tilde\Phi_t$ vanishes in some annular neighbourhood $\{z \in \D: |z|>1-\eps\}$ of $\S^1$ in $\D$. The map $\tilde\Phi_t$ is thus analytic in that set. It also restricts to a diffeomorphism of $\S^1$. Reflecting it across $\S^1$ by defining $\tilde\Phi_t (z) = 1/\overline{\tilde\Phi_t(1/\bar z)}$ for $z \in \hat\C\setminus \bar\D$ yields a quasiconformal homeomorphism of $\hat{\C}$.
The Möbius $M_t$ guarantees that $\tilde\Phi_t$ fixes $0$, $1$ and $\infty$, i.e. that it is normalised in the sense of Definition \ref{def:qc}, second item.
It satisfies
\begin{equation}\label{eq:intertwine_by_qc}
e^{c_t}f_t\circ(M_t^{-1}\circ\tilde{\Phi}_t)=\Phi_t\circ(e^cf).
\end{equation}

It is easy to evaluate the Beltrami differential generating $\tilde{\Phi}_t$. Indeed, the functions $M_t$, $f_t$ and $f$ are holomorphic in $\D$, so the chain rule gives
\begin{align*}
&\del_{\bar{z}}\left( M_t\circ(e^{c_t}f_t)^{-1}\circ\Phi_t\circ(e^cf)\right)=(M_t\circ(e^{c_t}f_t)^{-1})'\circ\Phi_t\circ(e^cf)\,\del_{\bar{z}}\Phi_t\circ(e^cf)\,e^c\overline{f'};\\
&\del_z\left(M_t\circ(e^{c_t}f_t)^{-1}\circ\Phi_t\circ(e^cf)\right)=(M_t\circ(e^{c_t}f_t)^{-1})'\circ\Phi_t\circ(e^cf)\,\del_z\Phi_t\circ(e^cf)\,e^cf',
\end{align*}
so that
\[\frac{\del_{\bar{z}}\tilde{\Phi}_t}{\del_z\tilde{\Phi}_t}=\frac{\del_{\bar{z}}\Phi_t\circ(e^cf)}{\del_z\Phi_t\circ(e^cf)}\frac{\overline{f'}}{f'}=\left(\frac{\del_{\bar{z}}\Phi_t}{\del_z\Phi_t}\right)\circ(e^cf)\frac{\overline{f'}}{f}.\]
Hence, the Beltrami differential generating $\tilde{\Phi}_t$ coincides with the pullback $(e^cf)^*\mu=\mu\circ(e^cf)\frac{\overline{f'}}{f'}$ in $\D$ (recall \eqref{eq:pullback_beltrami}), and is extended to the whole sphere by reflection. In particular, by Proposition~\ref{lem:variational_formulas},
\begin{equation}\label{E:estimate11}
    \tilde{\Phi}_t(z) = z + t w(z) + \bar t \tilde w (z) + o(t)
\end{equation}
uniformly on compact subsets, where
\begin{align*}
    w(z) = -\frac{1}{\pi}\int_\D\frac{z(z-1)(e^c f)^*\mu(\zeta)}{\zeta(\zeta-1)(\zeta-z)}|\d\zeta|^2, \quad
    \tilde w(z) = \frac{1}{\pi}\int_\D\frac{z(z-1)\overline{(e^c f)^*\mu(\zeta)}}{\bar{\zeta}(\bar{\zeta}-1)(1-z\bar{\zeta})}|\d\zeta|^2.
\end{align*}
By doing a change of variable, we can rewrite
\begin{align*}
    w(z) = -\frac{1}{\pi}\int_{\rmint(\eta)}\frac{z(z-1)((e^c f)^{-1})'(\zeta)^2 \mu(\zeta)}{(e^c f)^{-1}(\zeta)((e^c f)^{-1}(\zeta)-1)((e^c f)^{-1}(\zeta)-z)}|\d\zeta|^2
\end{align*}
By \eqref{E:del_bar_v}, $\del_{\bar z} v_- = \mu$ so by Stokes' formula, we deduce that
\begin{equation}
    \label{E:expressionw1}
w(z) = -\frac{1}{2i\pi}\oint\frac{z(z-1)((e^cf)^{-1})'(\zeta)^2}{(e^cf)^{-1}(\zeta)((e^cf)^{-1}(\zeta)-1)((e^cf)^{-1}(\zeta)-z)} v_-(\zeta) \d\zeta.
\end{equation} 
Similarly,
\begin{equation}
    \label{E:expressionw2}
\overline{\tilde w(z)} = \frac{1}{2i\pi}\oint\frac{\bar z(\bar z-1)((e^cf)^{-1})'(\zeta)^2}{(e^cf)^{-1}(\zeta)((e^cf)^{-1}(\zeta)-1)(1-\bar z(e^cf)^{-1}(\zeta))} v_-(\zeta) \d\zeta, \quad z \in \C.
\end{equation}

Now that we know the behaviour of $\tilde\Phi_t$, it remains to study the angle $\theta_t$ appearing in the Möbius $M_t$ \eqref{E:Mt} to get back to $\Psi_t$.
Applying $\del_z$ to both sides of \eqref{eq:intertwine_by_qc} and specialising to $z=0$ gives
\begin{equation}\label{eq:var_c_theta}
e^{c_t-c+i\theta_t}(1 + tv_{-1}e^{-c}-\overline{tv_{-1}e^{-c}}-tv_{-1}e^{-c}f''(0))+o(t)=\frac{\del_z\Phi_t(0)}{\del_z\tilde{\Phi}_t(0)}.
\end{equation}
Now, we use the variational formulas of Proposition \ref{lem:variational_formulas} and Lemma \ref{lem:extend_qc} in order to expand $\del_z\tilde\Phi_t(0)$ and $\del_z\Phi_t(0)$, respectively. Recall also that the $o(t)$ in the aforementioned results have also negligible derivatives compared to $t$; see the discussion above Proposition \ref{lem:variational_formulas}. Using that the $\del_z$-derivative of the Cauchy transform is the Beurling transform \cite[Chapter V, Lemma 3]{Ahlfors87_QCLectures}, we get
\begin{align*}
&\del_z\Phi_t(z)=1-\frac{t}{\pi}\int_\D\frac{\mu(\zeta)}{(z-\zeta)^2}|\d\zeta|^2+t v_+'(z) + o(t)\\
&\del_z\tilde{\Phi}_t(z)=1-\frac{t}{\pi}\int_\D\frac{(2z-1)(\zeta-z)+z(z-1)}{(\zeta-z)^2}\frac{(e^cf)^*\mu(z)}{\zeta(\zeta-1)}|\d\zeta|^2\\
&\quad\qquad\qquad+\frac{\bar{t}}{\pi}\int_\D\frac{(2z-1)(1-z\bar{\zeta})+z(z-1)\bar{\zeta}}{(1-z\bar{\zeta})}\frac{\overline{(e^cf)^*\mu(z)}}{\bar{\zeta}(\bar{\zeta}-1)}|\d\zeta|^2+o(t),
\end{align*}
where the integrals are in the principal value sense. Evaluating these integrals at $z=0$ gives
\begin{align*}
&\del_z\Phi_t(0)=1-\frac{t}{\pi}\int_{\D}\mu(\zeta)\frac{|\d\zeta|^2}{\zeta^2}-tv_0+o(t);\\
&\del_z\tilde{\Phi}_t(0)=1+\frac{t}{\pi}\int_\D\frac{(e^cf)^*\mu(\zeta)}{\zeta^2(\zeta-1)}|\d\zeta|^2-\frac{\bar{t}}{\pi}\int_\D\frac{\overline{(e^cf)^*\mu(z)}}{\bar{\zeta}(\bar{\zeta}-1)}|\d\zeta|^2+o(t).
\end{align*}
As before, using that $\del_{\bar z} v_- = \mu$ \eqref{E:del_bar_v}, the exterior derivative of $\zeta^{-2} v_-(\zeta) \d\zeta$ agrees with $2i \zeta^{-2} \mu(\zeta)|\d\zeta|^2$ and so by Stokes' formula, 
\begin{align*}
    \int_{\D}\mu(\zeta)\frac{|\d\zeta|^2}{\zeta^2} = \lim_{\eps \to 0} \int_{\D\setminus \eps \bar\D}\mu(\zeta)\frac{|\d\zeta|^2}{\zeta^2} = \frac1{2i} \lim_{\eps\to0} \Big(\oint_{\S^1} v_-(\zeta) \frac{\d\zeta}{\zeta^2} - \oint_{\eps\S^1} v_-(\zeta) \frac{\d\zeta}{\zeta^2}\Big).
\end{align*}
On $\S^1$, $v_-$ has an expansion in $\zeta^{n+1}$ with $n<0$; on $\varepsilon\S^1$ with $\varepsilon<r$, the expansion is in $\bar\zeta^n\zeta$ with $n<0$. Hence, the two contour integrals vanish. Going back to \eqref{eq:var_c_theta}, we get that
\begin{align}\label{E:ctite}
    & e^{c_t-c+i\theta_t}(1 + tv_{-1}e^{-c}-\overline{tv_{-1}e^{-c}}-tv_{-1}e^{-c}f''(0))\\
    \notag
    & =1-tv_0-\frac{t}{\pi}\int_\D\frac{(e^cf)^*\mu(\zeta)}{\zeta^2(\zeta-1)}|\d\zeta|^2+\frac{\bar{t}}{\pi}\int_\D\frac{\overline{(e^cf)^*\mu(z)}}{\bar{\zeta}(\bar{\zeta}-1)}|\d\zeta|^2+o(t)
\end{align}
and
\begin{align*}
& e^{i\theta_t}(1 + tv_{-1}e^{-c}-\overline{tv_{-1}e^{-c}}) \\
& = \big(e^{c_t-c+i\theta_t}(1 + tv_{-1}e^{-c}-\overline{tv_{-1}e^{-c}}) \big)^{1/2} \big(\overline{e^{c_t-c+i\theta_t}(1 + tv_{-1}e^{-c}-\overline{tv_{-1}e^{-c}})} \big)^{-1/2} + o(t) \\
& = 1 + t a - \overline{t a} + o(t), \qquad \text{where} \qquad
a = \frac{-v_0+v_{-1}e^{-c}f''(0)}{2} - \frac{1}{2\pi}\int_\D\frac{(\zeta+1)(e^cf)^*\mu(\zeta)}{\zeta^2(\zeta-1)}|\d\zeta|^2.
\end{align*}
By Stokes' formula, we find that
\begin{equation}
    \label{E:expressiona}
a=\frac{-v_0+v_{-1}e^{-c}f''(0)}{2} - \frac{1}{4i\pi}\oint \frac{((e^cf)^{-1}(\zeta)+1) ((e^cf)^{-1})'(\zeta)^2}{(e^cf)^{-1}(\zeta)^2((e^cf)^{-1}(\zeta)-1)} v_-(\zeta)\d\zeta.
\end{equation}
We deduce that
\begin{equation}
    \label{E:estimate12}
M_t(z) = (1+\overline{ta}-ta+o(t)) \frac{z-\Psi_t(0)}{1-\overline{\Psi_t(0)}z}.
\end{equation}

Altogether, the estimate \eqref{E:estimate11} on $\tilde\Phi_t$ and the estimate \eqref{E:estimate12} on $M_t$ yield
\begin{align*}
    \Psi_t(z) & = M_t^{-1} \circ \tilde\Phi_t(z) = (1+ta-\overline{ta}) \tilde\Phi_t -tv_{-1}e^{-c} + \overline{tv_{-1}e^{-c}} z^2 + o(t) \\
    & = z + t(w(z) + a z-v_{-1}e^{-c}) + \bar t(\tilde w(z) - \overline{a} z +\overline{v_{-1}e^{-c}}z^2) + o(t).
\end{align*}
Using the expressions \eqref{E:expressionw1} and \eqref{E:expressiona} of $w$ and $a$, we find that
\begin{align*}
    w(z) + a z-v_{-1}e^{-c}
    = \frac{-v_0+v_{-1}e^{-c}f''(0)}{2}z -v_{-1}e^{-c} - \omega_\eta(\rv_-)(z),
\end{align*}
where $\omega_\eta(\rv_-)$ is defined in \eqref{E:def_omega_rv}.
Elementary computations using the residue theorem show that
$\omega_{\rv_0}(\eta)(z) = z/2$,
$\omega_\eta(\rv_{-1})(z) = e^{-c}(1-zf''(0)/2)$ and $\omega_{\rv_n}(\eta)=0$ for all $n \ge 1$. We have thus identified $w(z) + a z-v_{-1}e^{-c}$ with $-\omega_\eta(\rv_+)(z)-\omega_\eta(\rv_-)(z) = \omega_\eta(\rv)(z)$.
Similarly, the $\bar t$ coefficient in the expansion of $\Psi_t$ agrees with $-\tilde\omega_\eta(\rv)(z)$ which concludes the proof of \eqref{E:L_useful}.

Now, since $\Phi_t(z) = z + tv(z) + o(t)$ uniformly in $\bar\D \setminus r\bar\D$, we get that uniformly on compacts of $\D$ minus a neighbourhood of the origin,
\begin{align*}
    e^{c_t}f_t
=\Phi_t\circ(e^cf)\circ\Psi^{-1}_t
=e^cf+tv\circ(e^cf)&+t\omega_\eta(\rv)e^cf'+\bar{t}\tilde \omega_\eta(\rv)e^cf'+o(t).
\end{align*}
By Cauchy's integral formula, this asymptotic expansion is valid on compact subsets of $\D$.
This concludes the proof of \eqref{E:P_equiv} and completes the proof of Proposition \ref{P:equivalence_derivative}.
\qed

\subsubsection{Existence of derivatives on the space \texorpdfstring{$\cC$}{C}}\label{SSS:construction}

\begin{proof}[Proof of Proposition \ref{P:construction_cL}]
Existence of derivatives follows by Proposition \ref{P:equivalence_derivative} and the existence of directional derivatives in Fréchet spaces. Let us give more details.
Recall from the discussion preceding Proposition \ref{P:equivalence_derivative} that $\cJ_{0,\infty}$ is embedded in the space $\mathrm{Hol}(\D)$ of holomorphic functions $u(z) = \sum_{m\ge 0} u_m z^m$ in $\D$ by the map $i:\eta \in \mathcal{J}_{0,\infty}\mapsto e^{c_\eta}f_\eta\in\mathrm{Hol}(\D)$.
In Proposition \ref{P:equivalence_derivative}, we showed that a small motion $\phi_t(\eta)$ of $\eta$, where $\phi_t(z) = z + t v(z) + o(t)$ is as in Definition \ref{Def:derivative}, induces a small motion $e^{c_t} f_t = e^{c_\eta} f_\eta + tu_\eta(\rv) + \bar t \tilde u_\eta(\rv) + o(t)$ in the space $\mathrm{Hol}(\D)$.
Now every $G:\mathrm{Hol}(\D)\to\C$ induces a function $F:\cJ_{0,\infty}\to\C$ by the formula $F(\eta):=G(e^{c_\eta}f_\eta)$. Moreover, if $G$ is continuously differentiable (in the sense of \cite[Section I.3]{Hamilton}, i.e. admits continuous directional derivatives), then as $t\to0$:
\begin{align*}
F(\phi_t(\eta))
&=G(e^cf+tu_\rv+\bar t\tilde u_\rv+o(t))\\
&=G(e^cf)+t(DG_{e^cf}(u_\rv)+\bar DG_{e^cf}(\tilde u_\rv))+\bar t(DG_{e^cf}(\tilde u_\rv)+\bar DG_{e^cf}(u_\rv))+o(t),
\end{align*} 
where $DG_ab,\bar DG_ab$ denote the complex directional derivative of $G$ at $a\in\mathrm{Hol}(\D)$ in direction $b\in\mathrm{Hol}(\D)$. This means that $F$ is differentiable in direction $\rv$ at every $\eta\in\cJ_{0,\infty}$ in the sense of Definition \ref{Def:derivative}, with $\cL_\rv F=DG(u_\rv)+\bar DG(\tilde u_\rv)$, $\bar\cL_\rv F=DG(\tilde u_\rv)+\bar DG(u_\rv)$. Since $\C(z)\del_z\to\mathrm{Hol}(\D),\,\rv\mapsto u_\rv$ (resp. $\C(z)\del_z\to\mathrm{Hol}(\D),\,\rv\mapsto\tilde u_\rv$) is $\C$-linear (resp. $\C$-antilinear), and $DG$ (resp. $\bar DG$) is $\C$-linear (resp. $\C$-antilinear) on $\C(z)\del_z$, we get that $\rv\mapsto\cL_\rv F(\eta)$ (resp. $\rv\mapsto\bar\cL_\rv F(\eta)$) is a $\C$-linear (resp. $\C$-antilinear) form on $\C(z)\del_z$. Now, every $F\in\cC$ is induced by a function $G$ on $\mathrm{Hol}(\D)$, which is a smooth function of finitely many Taylor coefficients, and is in particular of class $\cC^1$. This concludes the proof.

It remains to prove that $\cL_\rv$ and $\bar\cL_\rv$ preserve $\cC$ and $\cC_\mathrm{comp}$ and to compute their commutators.
Proposition~\ref{prop:diff_c} will give an expression for the derivative of $c$ so we focus on the other coefficients in this proof.
By linearity, we can assume that $\rv=\rv_n$ for some $n \in \Z$. Proposition~\ref{prop:generators_mobius} will give explicit expressions for $\cL_0$ and $\cL_{-1}$, so we only treat the following cases in this proof.

\noindent\textbf{Case 1. $\cL_n$ for $n \ge 1$.}
Applying \eqref{E:P_equiv} to $\rv=-z^{n+1}\del_z$, we see that $c_t=c+o(t)$ (due to $n\geq1$), and $a_m(t)=a_m+te^{nc}P_{n,m}+o(t)$, where $P_{n,m}(f)=-\frac{1}{2i\pi}\oint f(z)^{n+1}\frac{\d z}{z^{m+2}}$ is a polynomial in $(a_m)_{m\geq1}$. Therefore, we have a (unique) first order expansion
\[F(\phi_t(\eta))=F(\eta)+t\cL_n F(\eta)+\bar{t}\bar{\cL}_n F(\eta)+o(t),\]
where $\cL_n=e^{nc}\sum_{m=1}^\infty P_{n,m}\del_{a_m}$ and $\bar{\cL}_n=e^{nc}\sum_{m=1}^\infty \bar{P}_{n,m}\del_{\bar{a}_m}$. These reduce to finite sums when acting on $\cC$, so we get indeed operators $\cL_n,\bar{\cL}_n$ preserving $\cC$.

\noindent\textbf{Case 2. $\cL_{-n}$ for $n \ge 2$.}
As already mentioned, Proposition \ref{prop:diff_c} shows that $c_t=c+t\varpi(\rv)+\overline{t\varpi(\rv)}+o(t)$, so we only need to focus on the other coefficients. As in \eqref{E:blipblip}, let $\mu$ be the Beltrami differential associated to $\rv_{-n}$ given by Lemma \ref{lem:extend_qc}. By definition, we have
\begin{equation}\label{E:last_equation_of_the_world}
a_m(t)=\frac{1}{2i\pi}\oint f_t(z)z^{-m-2}\d z,
\end{equation}
where we can pick the contour to be $r\S^1$ with $r<1$ such that $r\D$ contains the support of $f^*\mu$ in its interior. In the neighbourhood of this contour, the three small motions appearing in the expansion \eqref{E:P_equiv} are each holomorphic for $t$ small enough.
Then,  we have $\mu(\zeta)=n8^{2n}e^{-2nc}\bar{\zeta}^{n-1}\zeta\ind_{|\zeta|<\frac{e^c}{8}}$, and $(e^cf)^*\mu(\zeta)=n8^{2n}e^{-nc}\overline{f(\zeta)}^{n-1}f(\zeta)\frac{\overline{f'(\zeta)}}{f'(\zeta)}\ind_{|f(\zeta)|<\frac{1}{8}}$. This scaling relation and the explicit formulas for $u_\eta(\rv)$ and $\tilde u_\eta(\rv)$ show that the RHS of \eqref{E:P_equiv} depend on $c$ only through a global factor of $e^{-nc}$. Therefore, we will assume $c=0$ in the remainder of the proof. On the one hand, it is easy to see that
\begin{align*}
A_{n,m}:=-\frac{1}{2i\pi}\oint f(z)^{-n+1}z^{-m-2}\d z
\end{align*}
is a polynomial in $(a_m)_{m\geq1}$. From the explicit formula of $\mu$, we have
\begin{equation}\label{eq:expand_w}
\begin{aligned}
B_{n,m}&:=
\oint w(z)f'(z)z^{-m-2}\d z
=\oint\int_\D\frac{z(z-1)f^*\mu(\zeta)}{\zeta(\zeta-1)(\zeta-z)}|\d\zeta|^2\frac{f'(z)\d z}{z^{m+2}}\\
&=n8^{2n}\oint\int_{f^{-1}(\frac{1}{8}\D)}\sum_{k,l=0}^\infty\zeta^{k+l-1}z^{-k-2-m}(z-1)f'(z)\overline{f(\zeta)}^{n-1}f(\zeta)\frac{\overline{f'(\zeta)}}{f'(\zeta)}|\d\zeta|^2\d z.
\end{aligned}
\end{equation}
By a change of variables and Stokes' formula, we have for all $k,l\in\N$
\begin{equation}\label{eq:bulk}
\begin{aligned}
\int_{f^{-1}(\frac{1}{8}\D)}&\overline{f(\zeta)}^{n-1}f(\zeta)\frac{\overline{f'(\zeta)}}{f'(\zeta)}\zeta^{k+l-1}|\d\zeta|^2
=\int_{\frac{1}{8}\D}(f^{-1})'(\zeta)^2f^{-1}(\zeta)^{k+l-1}\bar{\zeta}^{n-1}\zeta|\d\zeta|^2\\
&=\frac{1}{2in}\oint_{\frac{1}{8}\S^1}(f^{-1})'(\zeta)^2f^{-1}(\zeta)^{k+l-1}\bar{\zeta}^n\zeta\d\zeta
=\frac{8^{-2n}}{2in}\oint_{\frac{1}{8}\S^1}(f^{-1})'(\zeta)^2f^{-1}(\zeta)^{k+l-1}\zeta^{-n+1}\d\zeta.
\end{aligned}
\end{equation}
This last expression shows that the integral is a polynomial in $(a_m)_{m\geq1}$ (with no dependence on $(\bar{a}_m)_{m\geq1}$), and it also vanishes for $k+l$ large enough (depending only on $n$). It follows that the series in \eqref{eq:expand_w} converges absolutely, so that we can exchange the sum and in the integral to get
\begin{align*}
B_{n,m}
&=\frac{1}{2i}\sum_{k,l=0}^\infty\oint f'(z)(z-1)z^{-k-m-2}\d z\oint_{\frac{1}{8}\S^1}(f^{-1})'(\zeta)^2f^{-1}(\zeta)^{k+l-1}\zeta^{-n+1}\d\zeta.
\end{align*}
For each fixed $k$, $\oint f'(z)(z-1)z^{-k-m-1}\d z$ is a polynomial in the coefficients of $f$. Keeping in mind that the sum evaluates to zero for $k,l$ large enough, we get that $B_{n,m}\in\C[(a_m)_{m\geq1}]$.

Finally, we study
\begin{align*}
C_{n,m}
&:=\oint\tilde{w}(z)f'(z)z^{-m-2}\d z
=\oint\int_\D\frac{z(z-1)\overline{f^*\mu(\zeta)}}{\bar{\zeta}(\bar{\zeta}-1)(1-\bar{\zeta}z)}|\d\zeta|^2f'(z)z^{-m-2}\d z\\
&=-n\oint\int_{f^{-1}(\frac{1}{8}\D)}\sum_{k,l=0}^\infty\bar{\zeta}^{k+l-1}z^{k-m-1}(z-1)f'(z)f(\zeta)^{n-1}\overline{f(\zeta)}\frac{f'(\zeta)}{\overline{f'(\zeta)}}|\d\zeta|^2\d z.
\end{align*}
For each $k,l\in\N$, the integral over $\frac{e^c}{8}\D$ is the complex conjugate of \eqref{eq:bulk}, so it is a polynomial in $(\bar{a}_m)_{m\geq1)}$ and vanishes for $k+l$ large enough (depending only on $n$). It follows that
\begin{align*}
C_{n,m}=\sum_{k,l=0}^\infty\oint f'(z)(z-1)z^{k-m-1}\d z\int_{f^{-1}(\frac{1}{8}\D)}f(\zeta)^{n-1}\overline{f(\zeta)}\frac{f'(\zeta)}{\overline{f'(\zeta)}}\bar{\zeta}^{k+l-1}|\d\zeta|^2.
\end{align*}
Each term in this (finite) sum is a polynomial in $(a_m,\bar{a}_m)$, so indeed $C_{n,m}$ is a polynomial. Contrary to $A_{n,m}$ and $B_{n,m}$, the polynomial $C_{n,m}$ is in both $(a_m)$ and $(\bar{a}_m)_{m\geq1}$.

Putting things together, \eqref{E:P_equiv} implies that the operators $\cL_{-n}$ and $\bar{\cL}_{-n}$ act on $\cC$ as
\begin{equation}\label{eq:witt_general_formula}
\begin{aligned}
\cL_{-n}=\varpi(\rv_{-n})\del_c+e^{-nc}\sum_{m=1}^\infty A_{n,m}\del_{a_m}-B_{n,m}\del_{a_m}-\overline{C}_{n,m}\del_{\bar{a}_m};\\
\bar{\cL}_{-n}=\overline{\varpi(\rv_{-n})}\del_c+e^{-nc}\sum_{m=1}^\infty\overline{A}_{n,m}\del_{\bar{a}_m}-\overline{B}_{n,m}\del_{\bar{a}_m}-C_{n,m}\del_{a_m}.
\end{aligned}
\end{equation}
As in the proof of the case dealing with $\rv_n, n \ge -1$, it follows from this analysis that $\cL_{-n}$, $\bar{\cL}_{-n}$ are operators on $\cC$ which preserve $\cC$, and they are first order differential operators with polynomial coefficients in $(a_m,\bar{a}_m)$ and a global prefactor $e^{nc}$. These indeed define endomorphisms of $\cC$ and $\cC_\mathrm{comp}$.

\noindent \textbf{Commutation relations.}
Now that we have constructed the operators $\cL_\rv$ and $\bar{\cL}_\rv$, $\rv \in \C(z)\partial_z$, it remains to prove the commutation relations. These hold by construction since the operators $\cL_\rv$ are Lie derivatives in directions of fundamental vector fields, but we include a proof ``by hand" since we work in an infinite dimensional setting.

Let $\rv,\rw\in\C(z)\del_z$, and let $\eta\in\cJ_{0,\infty}$. Let $\phi_t(z)=z+tv(z)+o(t)$, $\psi_s(z)=z+sw(z)+o(s)$ be the corresponding flows, defined for small complex $t,s$ in the region $\ann_\eta$ \eqref{eq:annulus_eta}. Using the differential equation $\del_t\phi_t=v\circ\phi_t$, it is immediate that the flow $t\mapsto\phi_t$ is smooth with respect to the local uniform topology in $\ann_\eta$ (in particular, $\del_t^2\phi_t=v'\circ\phi_tv\circ\phi_t$).

Let $F\in\cC$. The map $(t,s)\mapsto F(\psi_s\circ\phi_t(\eta))$ is of class $\cC^2$ in a neighbourhood of $0$ in $\C^2$, and $s\mapsto\cL_\rv F(\psi_s(\eta))$ is differentiable in direction $\rw$ for $s$ in a neighbourhood of 0. Hence, the mixed partial derivatives evaluate to
\begin{equation}\label{eq:comm}
    \frac{\del^2}{\del t\del s}_{|t,s=0}\left(F(\psi_s\circ\phi_t(\eta))-F(\phi_t\circ\psi_s(\eta))\right)=\frac{\del}{\del s}_{|s=0} \cL_\rv F(\psi_s(\eta))-\frac{\del}{\del t}_{|t=0}\cL_\rw F(\phi_t(\eta))=[\cL_\rv,\cL_\rw]F(\eta).
\end{equation}
The other mixed partial derivatives in $\del_{\bar t}\del_{\bar s}$, $\del_t\del_{\bar s}$, $\del_{\bar t}\del_s$ give respectively $[\bar\cL_{\rv},\bar\cL_{\rw}]F$, $[\cL_\rv,\bar\cL_\rw]F$, $[\bar\cL_\rv,\cL_\rw]F$. On the other hand, the map $(t,s)\mapsto\psi_s\circ\phi_t$ is smooth for the local uniform topology on $\ann_\eta$ (by the same argument above). We have $\frac{\del^2\psi_s\circ\phi_t}{\del t\del s}_{|t,s=0}=w'v$, and the other partial derivatives involving $\del_{\bar t}$ or $\del_{\bar s}$ vanish. It follows that
\begin{align*}
\frac{\del^2}{\del t\del s}_{|t,s=0}\left(F(\psi_s\circ\phi_t(\eta))-F(\phi_t\circ\psi_s\cdot \eta)\right)
=(\cL_{vw'\del_z}-\cL_{wv'\del_z})F(\eta)=\cL_{[\rv,\rw]}F(\eta).
\end{align*}
In the expansion of $F(\psi_s\circ\phi_t(\eta))$, the order $ts$ also contains a term in $\cL_{vw\del_z}$, but this term disappears in the above display by antisymmetry. Finally, the term in $\del_{\bar t}\del_{\bar s}$ gives $\bar\cL_{[\rv,\rw]}F$, while the cross terms $\del_t\del_{\bar s}$ and $\del_{\bar t}\del_s$ vanish. Putting this together with \eqref{eq:comm} gives:
\[
[\cL_\rv,\cL_\rw]=\cL_{[\rv,\rw]};
\qquad
[\bar{\cL}_\rv,\bar{\cL}_\rw]=\bar{\cL}_{[\rv,\rw]};
\qquad
[\cL_\rv,\bar{\cL}_\rw]=[\bar{\cL}_\rv,\cL_\rw]=0.
\]
This finishes the proof of Proposition \ref{P:construction_cL}.
\end{proof}

\subsubsection{Computation of \texorpdfstring{$\cL_0$}{L0}, \texorpdfstring{$\bar\cL_0$}{bar L0}, \texorpdfstring{$\cL_{-1}$}{L-1} and \texorpdfstring{$\bar\cL_{-1}$}{bar L-1}}\label{SSS:L0L1}

\begin{proof}[Proof of Proposition \ref{prop:generators_mobius}]
This proof could be directly obtained from Proposition \ref{P:equivalence_derivative}, but for pedagogical reasons, we will derive an independent and elementary proof.
We will compute successively $\bH_{\rv_0}$, $\bH_{i\rv_0}$, $\bH_{\rv_{-1}}$, and $\bH_{i\rv_{-1}}$, the existence of which follows from Proposition \ref{P:construction_cL} or Remark \ref{rem:mobius} (or simply from the arguments below), which are the generators of a finite dimensional group of operators. The explicit expressions given below have to be understood as acting on $\cC$ where the infinite sums reduce to finite sums.
In this proof, the parameter $t$ will be real.

\emph{Case $\bH_{\rv_0}$}.
The flow generated by $\rv_0=-z\del_z$ is $z\mapsto e^{-t}z$. In the coordinates $(c,f)\in\R\times\cE$, the motion of the curve is $(c_t,f_t)=(c-t,f)$. Hence, 
\[\bH_{\rv_0}=i\del_c.\]

\emph{Case $\bH_{i\rv_0}$}.
The flow generated by $i\rv_0$ is $z\mapsto e^{-it}z$, creating the motion $(c_t,f_t)=(c,e^{-it}f((e^{it}\cdot))$ in the coordinate $(c,f)\in\R\times\cE$. For the Taylor coefficients of $f$ as in \eqref{eq:coordinates}, we get $a_m(t)=e^{imt}a_m$, so that 
\[\bH_{i\rv_0}=\sum_{m=1}^\infty(ma_m\del_{a_m}-m\bar{a}_m\del_{\bar{a}_m}).\]

The formulas for $\cL_0=\frac{i}{2}(\bH_{\rv_0}-i\bH_{i\rv_0})$ and $\bar{\cL}_0=\frac{i}{2}(\bH_{\rv_0}+i\bH_{i\rv_0})$ follow.

\emph{Case $\bH_{\rv_{-1}}$}.
The flow generated by $-\del_z$ is $z\mapsto z-t$. Let $(c_t,f_t)\in\R\times\cE$ be uniformising the interior of $-t+\eta(\S^1)$. Since $e^cf-t$ is another uniformising map, we have $e^{c_t}f_t=e^cf\circ\phi_t-t$ for some M\"obius transformation $\phi_t$ of the unit disc. The generator of this transformation is of the form
\[\phi_t(z)=z+t(\alpha-i\theta z-\bar{\alpha}z^2)+o(t),\]
for some $\alpha\in\C$ and $\theta\in\R$. We also have $c_t=c+tx+o(t)$ for some $x\in\R$. We have $0=e^{c_t}f_t(0)=e^cf(\phi_t(0))-t=t(\alpha e^c-1)+o(t)$, so that $\alpha=e^{-c}$. Moreover,
\begin{align*}
1=f_t'(0)=e^{c-c_t}\phi_t'(0)f'(\phi_t(0))
&=(1-tx)(1-i\theta t)(1+\alpha tf''(0))+o(t)\\
&=1+t(e^{-c}f''(0)-(x+i\theta))+o(t),
\end{align*}
so that 
\begin{equation}
    \label{E:alpha_x_theta}
\alpha=e^{-c};\qquad x=e^{-c}\Re(f''(0));\qquad\theta=e^{-c}\Im(f''(0)).
\end{equation}
Setting $a_0:=1$, we have
\begin{align*}
f_t(z)
&=-te^{-c_t}+e^{c-c_t}\sum_{m=0}^\infty a_m\phi_t(z)^{m+1}\\
&=-te^{-c_t}+(1-tx)\sum_{m=0}^\infty a_m\left(z^{m+1}+t(m+1)(\alpha-i\theta z-\bar{\alpha}z^2)z^m\right)+o(t).
\end{align*}
Rearranging according to the power of $z$ and then using \eqref{E:alpha_x_theta}, we get that
\begin{align*}
f_t(z) & = f(z) + t(\alpha-e^{-c}) + t(-x + 2 a_1 \alpha - i\theta)z \\
& + t \sum_{m=1}^\infty ((m+2)\alpha a_{m+1}-(x+i\theta)a_m-im\theta a_m-m\bar{\alpha}a_{m-1})z^{m+1}+o(t) \\
& = f(z) + t e^{-c} \sum_{m=1}^\infty ((m+2)a_{m+1} - 2a_1a_m -2im \Im(a_1) a_m - ma_{m-1} )z^{m+1} + o(t).
\end{align*}
It follows that
\begin{align*}
i\bH_{\rv_{-1}}=2e^{-c}\Re(a_1)\del_c&+e^{-c}\sum_{m=1}^\infty\left((m+2)a_{m+1}-2a_1a_m-2im\Im(a_1)a_m-ma_{m-1}\right)\del_{a_m}\\
&+e^{-c}\sum_{m=1}^\infty((m+2)\bar{a}_{m+1}-2\bar{a}_1\bar{a}_m+2im\Im(a_1)\bar{a}_m-m\bar{a}_{m-1})\del_{\bar{a}_m}.
\end{align*}

\emph{Case $\bH_{i\rv_{-1}}$}.
We apply the same strategy for the flow $z\mapsto z-it$. Using the same parametrisation, we have this time
\[\alpha=ie^{-c};\qquad x=-e^{-c}\Im(f''(0));\qquad\theta=e^{-c}\Re(f''(0)).\]
Replacing with these values gives
\begin{align*}
i\bH_{i\rv_{-1}}=-2e^{-c}\Im(a_1)\del_c&+ie^{-c}\sum_{m=1}^\infty((m+2)a_{m+1}-2a_1a_m-2m\Re(a_1)a_m+ma_{m-1})\del_{a_m}\\
&-ie^{-c}\sum_{m=1}^\infty((m+2)\bar{a}_{m+1}-2\bar{a}_1\bar{a}_m-2m\Re(a_1)\bar{a}_m+m\bar{a}_{m-1})\del_{\bar{a}_m}.
\end{align*}
The formulas for $\cL_{-1}=\frac{i}{2}(\bH_{\rv_{-1}}-i\bH_{i\rv_{-1}})$ and $\bar{\cL}_{-1}=\frac{i}{2}(\bH_{\rv_1}+i\bH_{i\rv_1})$ follow.
\end{proof}

\subsubsection{The coordinate function \texorpdfstring{$c : \cJ_{0,\infty} \to \R$}{c}}\label{SSS:c}

\begin{proof}[Proof of Proposition \ref{prop:diff_c}]
Let $\rv=v(z)\partial_z \in \C(z) \partial_z$ and $\eta \in \cJ_{0,\infty}$.
By Proposition \ref{P:construction_cL}, any function $F \in C^\infty_c(\R)$ of the variable $c$ only, which agrees with $c \mapsto c$ in a neighbourhood of $c_\eta$, is differentiable in the direction $\rv$. So the differentiability of the function $c: \cJ_{0,\infty} \to \R$ in direction $\rv$ at $\eta$ has already been established in Proposition \ref{P:construction_cL}.
Moreover, by Proposition \ref{P:equivalence_derivative},
\[
\cL_\rv (e^c)(\eta) = u_\eta(\rv)'(0).
\]
From the expression \eqref{E:def_urv} of $u_\eta(\rv)$, we see that $u_\eta(\rv)'(0)$ agrees with $e^{c_\eta}\varpi_\eta(\rv)$ \eqref{eq:def_varpi}. After a change of variable we obtain that $\cL_\rv c(\eta) = \varpi_\eta(\rv)$.
\end{proof}

\subsection{Virasoro representation}\label{subsec:virasoro}
In this section, we modify our representation of the Witt algebra to a representation of the Virasoro algebra. The same modification is considered in \cite[Section 4.1]{GQW24}.
Recall the definition \eqref{eq:def_vartheta} of $\vartheta$ and $\tilde \vartheta$ and that, for all $\rv \in \C(z)\partial_z$, $\bL_\rv$ and $\bar \bL_\rv$ are defined from $\cL_\rv$ and $\bar \cL_\rv$ by:
\[\bL_\rv:=\cL_\rv-\frac{c_\rM}{12}\vartheta(\rv),\qquad\bar{\bL}_\rv:=\bar{\cL}_\rv-\frac{c_\rM}{12}\overline{\vartheta(\rv)}.\]

For ease of future record, we state the following lemma which, in conjunction with Proposition~\ref{P:construction_cL}, implies that $\bL_\rv$ and $\bar \bL_\rv$ preserve the subspace $\cC$.

\begin{lemma}\label{L:preserve_cC}
For all $\rv \in \C(z) \partial_z$, the multiplication by $\vartheta(\rv)$ preserves $\cC$ \eqref{E:cC} and the multiplication by $\tilde{\vartheta}(\rv)$ sends $\cC_\mathrm{comp}$ to $L^2(\nu)$ \eqref{E:cC_compact}.
\end{lemma}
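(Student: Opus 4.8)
The plan is to reduce Lemma~\ref{L:preserve_cC} to an explicit description of $\vartheta(\rv)$ and $\tilde\vartheta(\rv)$ as functions on $\cJ_{0,\infty}$, after which the statement follows from two elementary stability properties of the two tensor factors of $\cC$ \eqref{E:cC}: the space $\cS_\exp(\R)$ is stable under multiplication by $c\mapsto e^{\alpha c}$ for every $\alpha\in\R$ (immediate from the definition \eqref{E:Sexp}, since an element of $\cS_\exp(\R)$ and all its derivatives decay faster than any exponential, which dominates the at-most-exponential growth of $e^{\alpha c}$ and its derivatives), and $\C[(a_m,\bar a_m)_{m\geq1}]$ is a ring. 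Since multiplication by a function never enlarges supports, the support condition in \eqref{E:cC_compact} is automatically preserved, so it will be enough to show that multiplication by $\vartheta(\rv)$, resp.\ by $\tilde\vartheta(\rv)$, maps $\cC$ into $\cC$.

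First I would compute $\vartheta(\rv)$ explicitly. Write $\rv=v\del_z$ with $v(z)=\sum_n v_nz^{n+1}$ a Laurent polynomial. By Lagrange inversion, the inverse germ of $f\in\cE$ at $0$ has an expansion $f^{-1}(w)=w\bigl(1+\sum_{k\geq1}\alpha_k(a)w^k\bigr)$ whose coefficients $\alpha_k$ are universal polynomials in $a_1,\dots,a_k$; consequently $\cS f^{-1}(w)=\sum_{k\geq0}s_k(a)w^k$ with $s_k\in\C[a_1,\dots,a_{k+2}]$. Substituting into \eqref{eq:def_vartheta} and extracting the residue at $0$ yields
\[
\vartheta_{e^cf}(\rv)=\sum_{n\leq-2}v_n\,e^{cn}\,s_{-n-2}(a),
\]
a \emph{finite} sum (as $v$ is a Laurent polynomial) of terms of the form (an exponential of $c$)$\,\times\,$(a polynomial in the $a_m$). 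By the two stability properties recorded above, multiplication by this function preserves $\cS_\exp(\R)\otimes\C[(a_m,\bar a_m)_{m\geq1}]=\cC$, hence also $\cC_\mathrm{comp}$.

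For $\tilde\vartheta(\rv)$ I would perform the mirror-image computation in the chart $\cJ_{0,\infty}\simeq\R\times\tilde\cE$: the inverse germ of $g\in\tilde\cE$ at $\infty$ has Taylor coefficients polynomial in $b_1,b_2,\dots$, its Schwarzian $\cS g^{-1}$ vanishes to order $4$ at $\infty$, and extracting the residue at $\infty$ gives $\tilde\vartheta_{e^{-\tilde c}g}(\rv)=\sum_{n\geq2}v_n\,e^{-\tilde cn}\,\tilde s_{n+2}(b)$ with $\tilde s_{n+2}\in\C[(b_m)_{m\geq1}]$; the same stability argument shows multiplication by $\tilde\vartheta(\rv)$ preserves $\cS_\exp(\R)\otimes\C[(b_m,\bar b_m)_{m\geq1}]$ together with the support condition. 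The step I expect to be the main obstacle is the reconciliation of the two charts: $\cC_\mathrm{comp}$ is defined through the \emph{interior} coordinates, so one must still argue that the above function, naturally presented in the exterior coordinates, lies back in $\cC_\mathrm{comp}$. Here the Koebe bounds are essential — on the support of an element of $\cC_\mathrm{comp}$ both $c$ and $\tilde c$ are bounded, as noted right after \eqref{E:cC_compact} — and they allow one to identify the interior- and exterior-coordinate descriptions on the compact region $\{\eta\subset\{\epsilon<|z|<\epsilon^{-1}\}\}$. Combined with Proposition~\ref{P:construction_cL}, this gives that $\bL_\rv$ and $\bar{\bL}_\rv$ preserve $\cC_\mathrm{comp}$.
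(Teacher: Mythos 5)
Your handling of $\vartheta$ is correct and, in fact, more explicit than the paper's (which simply announces it will treat $\tilde\vartheta$ and leaves the $\vartheta$ case to the reader): a direct residue computation gives $\vartheta_{e^c f}(\rv)=\sum_{n\le -2}v_n e^{nc}s_{-n-2}(a)$, a finite sum of $e^{nc}$ times a polynomial in the $a_m$, which manifestly multiplies $\cS_\exp(\R)\otimes\C[(a_m,\bar a_m)]$ into itself. (A small refinement: the analogous computation for $\tilde\vartheta$ shows $\tilde\vartheta(\rv_n)=0$ for all $n\le 1$, not just $n\le -2$, since $\cS g^{-1}(z)=O(z^{-4})$ at $\infty$; this is harmless, but worth noting.)

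There is, however, a genuine gap in your $\tilde\vartheta$ argument, and it is exactly where you flag it. Your opening reduction --- ``it will be enough to show multiplication by $\tilde\vartheta(\rv)$ maps $\cC$ into $\cC$'' --- cannot be correct for $\tilde\vartheta$: the function $\tilde\vartheta(\rv_n)=e^{-n\tilde c}\,\tilde s_{n+2}(b)$ is written in terms of $\tilde c=\tilde c_0(f)-c$ and the exterior coefficients $b_m$, and neither $\tilde c_0(f)$ nor the $b_m$ are polynomial functions of $(a_m,\bar a_m)$. So multiplication by $\tilde\vartheta(\rv)$ does \emph{not} preserve $\cC$; the support restriction in $\cC_\mathrm{comp}$ must enter the argument in an essential way, not merely to check that supports are preserved. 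You acknowledge this tension two paragraphs later, but the proof as written both asserts the reduction and then concedes it is ``the main obstacle''. And the obstacle is never resolved: you note that $c$ and $\tilde c$ are bounded on the support (correct, by Koebe) and that this ``allows one to identify the interior- and exterior-coordinate descriptions'', but you do not say how. The paper's proof supplies the missing ingredients: after rewriting $\tilde\vartheta_{e^{-\tilde c}g}(\rv_n)=e^{-n(\tilde c_0 - c)}\tilde P_n(g)$ by scaling, it invokes de~Branges' theorem to bound $\tilde P_n(g)$ uniformly on $\tilde\cE$, and then combines this with the Koebe bounds on $\tilde c_0$ and $c$ to conclude that the multiplier is uniformly bounded on the support of any $F\in\cC_\mathrm{comp}$, which is what is actually needed. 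To complete your argument you would need to reproduce that step, and in particular to address why multiplication by this bounded-on-the-support --- but not tensor-factored --- function lands back in $\cC_\mathrm{comp}$; this is precisely the part you left implicit.

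One last small point: your closing sentence invoking Proposition~\ref{P:construction_cL} to conclude that $\bL_\rv,\bar\bL_\rv$ preserve $\cC_\mathrm{comp}$ is the intended \emph{corollary} of the lemma, not part of its proof.
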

\begin{proof}
We will consider $\tilde{\vartheta}(\rv)$. 
It is enough to treat the case $\rv = \rv_n$ for some $n \in \Z$.
We have $\tilde{\vartheta}(\rv_n)=0$ for $n\leq-2$, so we assume $n\geq-1$.
Recalling the definition \eqref{E:tilde_c_0} of $\tilde c_0$, we have by scaling that $\tilde{\vartheta}_\eta(\rv_n)=e^{-n(\tilde{c}_0-c)}\tilde{P}_n(g)$ for some polynomial $\tilde{P}_n$ in the coefficients of the expansion of $g$ at $\infty$. These coefficients are uniformly bounded by de Branges' theorem, so $\tilde{P}_n$ is uniformly bounded. Moreover, by the Koebe quarter theorem, for each $F \in \cC_\mathrm{comp}$, $\tilde c_0$ and $c$ are bounded on the support of $F$. This proves that $e^{-n(\tilde{c}_0-c)} \tilde P_n(g)$ is bounded on the support of $F$, so its product with $F$ belongs to $L^2(\nu)$.
\end{proof}

\begin{proposition}\label{P:commutation_bL}
As endomorphisms of $\cC$, the following commutation relations are satisfied:
\[[\bL_\rv,\bL_\rw]=\bL_{[\rv,\rw]}+\frac{c_\rM}{12}\omega(\rv,\rw);\qquad[\bar{\bL}_\rv,\bar{\bL}_\rw]=\bar{\bL}_{[\rv,\rw]}+\frac{c_\rM}{12}\overline{\omega(\rv,\rw)};\qquad[\bL_\rv,\bar{\bL}_\rw]=0,\]
for all $\rv,\rw\in\C(z)\del_z$,
where $\omega(\rv,\rw)$ is the Virasoro cocycle \eqref{eq:def_omega}.
\end{proposition}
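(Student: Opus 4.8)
The plan is to reduce all three commutation relations to one scalar identity for the $1$-cochain $\vartheta$, prove that identity on a generating set by a Schwarzian computation, and obtain the remaining cases from the integration by parts formula.

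\emph{Step 1: Leibniz reduction.} Since $\cL_\rv$ is a derivation on $\cC_\mathrm{comp}$ (immediate from the directional-derivative definition \eqref{eq:def_diff}) and $\vartheta(\rv),\tilde\vartheta(\rv)$ act by multiplication (Lemma \ref{L:preserve_cC}), one has $[\cL_\rv, m_h] = m_{\cL_\rv h}$ for $h \in \cC_\mathrm{comp}$, and multiplication operators commute. Substituting $\bL_\rv = \cL_\rv - \frac{c_\rM}{12}\vartheta(\rv)$ and using the Witt relations of Proposition \ref{P:construction_cL} gives
\[
[\bL_\rv,\bL_\rw] - \bL_{[\rv,\rw]} = -\tfrac{c_\rM}{12}\, m_{\Omega(\rv,\rw)}, \qquad \Omega(\rv,\rw) := \cL_\rv\vartheta(\rw) - \cL_\rw\vartheta(\rv) - \vartheta([\rv,\rw]),
\]
so the $\bL$-statement is equivalent to $\Omega(\rv,\rw) = -\omega(\rv,\rw)$ as functions on $\cJ_{0,\infty}$. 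The same manipulation reduces $[\bar\bL_\rv,\bar\bL_\rw] - \bar\bL_{[\rv,\rw]}$ to the conjugate identity and $[\bL_\rv,\bar\bL_\rw]$ to $\cL_\rv\overline{\vartheta(\rw)} = \bar\cL_\rw\vartheta(\rv)$ (using $[\cL_\rv,\bar\cL_\rw]=0$); I will focus on the $\bL$-case, the others being analogous (and the mixed one easier). As $\Omega$ is bilinear and antisymmetric, it suffices to treat $\rv = \rv_n$, $\rw = \rv_m$.

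\emph{Step 2: the identity when one vector field is regular at $0$.} If $n \ge -1$ then $v_n = -z^{n+1}$ has no pole at $0$, so $\vartheta(\rv_n) = 0$ (the residue pairing against the holomorphic-at-$0$ quadratic differential $\cS F^{-1}$ vanishes, $F := e^c f$ the interior uniformiser), whence $\bL_{\rv_n} = \cL_{\rv_n}$ and $\Omega(\rv_n,\rv_m) = \cL_{\rv_n}\vartheta(\rv_m)$. Here the flow $\phi_t$ of $\rv_n$ is conformal near $\overline{\mathrm{int}(\eta)}$ and fixes $0$ with unit derivative (for $n\ge1$; for $n\in\{0,-1\}$ a conformal renormalisation, of zero Schwarzian, is needed and changes nothing), so $F_t^{-1} = F^{-1}\circ\phi_t^{-1}$ up to a Möbius factor, and the Schwarzian cochain rule $\cS(A\circ B) = ((\cS A)\circ B)(B')^2 + \cS B$ together with $\cS(\phi_t^{-1}) = -t\,v_n''' + O(t^2)$ yields, to first order,
\[
\cL_{\rv_n}(\cS F^{-1}) = -\big(v_n\,(\cS F^{-1})' + 2 v_n'\,(\cS F^{-1})\big) - v_n'''.
\]
Writing $\vartheta_F(\mathrm{u}\,\partial_z) = \frac{1}{2\pi i}\oint_0 \cS(F^{-1})(z)\,\mathrm{u}(z)\,dz$, pairing the display with $v_m$, and integrating by parts to move the derivatives off $\cS F^{-1}$, the first bracket becomes $\vartheta_F([\rv_n,\rv_m])$ and the last term becomes $-\frac{1}{2\pi i}\oint v_n'''(z) v_m(z)\,dz = -\omega(\rv_n,\rv_m)$; hence $\Omega(\rv_n,\rv_m) = -\omega(\rv_n,\rv_m)$. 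This is the step in which the Virasoro anomaly is born.

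\emph{Step 3: both vector fields singular at $0$, via integration by parts.} For $n,m \le -2$ I will not compute $\Omega(\rv_n,\rv_m)$ directly — the quasiconformal welding correction $\tilde\Phi_t$ entering the construction of $\cL_{\rv_n}$ for $n\le-2$ contributes extra terms — but instead use the hermiticity relation of Theorem \ref{T:intro_shapo}: $\bL_{\rv^*}$ is the $\cQ$-adjoint of $\bL_\rv$, and $\rv_n^* = \rv_{-n}$. Pairing $[\bL_{\rv_{-n}},\bL_{\rv_{-m}}]F$ against $\cQ(\cdot,G)$ and moving each factor $\bL_{\rv_{-n}},\bL_{\rv_{-m}}$ to the other slot turns it into $-[\bL_{\rv_n},\bL_{\rv_m}]$ acting on $G$ — a commutator of two vector fields regular at $0$, handled in Step 2, for which $\omega$ vanishes since $n+m\ne0$ — after which non-degeneracy of $\cQ$ upgrades the identity back to operators on $\cC_\mathrm{comp}$. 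Bilinearity then gives the result for all $\rv,\rw\in\C(z)\del_z$, and the $\bar\bL$- and mixed relations follow the same way. I expect the main work to be the bookkeeping in Step 2 (tracking contour deformations and the rotation/Möbius normalisations so the cochain rule applies verbatim), plus the mild awkwardness that Step 3 invokes a result proved only in Section \ref{sec:IBP}; to avoid the latter one could instead verify $\Omega(\rv_n,\rv_m) = 0$ for $n,m\le-2$ by hand from the explicit formula \eqref{eq:witt_general_formula} and the explicit polynomial $\vartheta(\rv_m) = -e^{mc}\,[\text{the coefficient of } z^{-m-2} \text{ in } \cS f^{-1}]$, or use that $\Omega = \delta\vartheta$ is automatically a Lie-algebra $2$-cocycle (Jacobi identity for the $\bL$'s) and propagate from Step 2, fixing the residual constant by evaluating at $\eta = \S^1$, where $\vartheta$ and all its derivatives vanish.
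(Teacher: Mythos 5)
Your proof is correct and follows essentially the same route as the paper's: a direct Schwarzian-chain-rule variation when at least one vector field is regular at $0$ (the paper's Cases 1--2, your Step 2), and a $\cQ$-adjoint reduction via the integration by parts formula when both are singular (the paper's Case 3, your Step 3), with the same split-then-dualize logic. The packaging into the cochain identity $\Omega = -\omega$ is a tidy organizational touch not made explicit in the paper; one small slip: the line ``$\Omega(\rv_n,\rv_m)=\cL_{\rv_n}\vartheta(\rv_m)$'' should include the $-\vartheta([\rv_n,\rv_m])$ term, but your subsequent integration by parts produces exactly $\vartheta_F([\rv_n,\rv_m])-\omega(\rv_n,\rv_m)$, so the conclusion $\Omega=-\omega$ is reached correctly.
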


\begin{proof}
To prove the commutation relations of $(\bL_\rv)_{\rv \in \C(z)\partial_z}$,
it is enough to treat the three cases below by linearity. The first one will follow directly from Proposition \ref{P:construction_cL}. The second one will require an extra computation related to the Schwarzian derivative. The third one will follow from the first case and a duality argument based on the integration by parts formula of Corollary \ref{cor:ibp}. 

\noindent \textbf{Case 1.} $\rv, \rw \in \C[z]\partial_z$. In this case $\bL_\rv = \cL_\rv$ and $\bL_\rw = \cL_\rw$ and by Proposition \ref{P:construction_cL} we deduce that $[\bL_\rv,\bL_\rw]=[\cL_\rv,\cL_\rw]=\cL_{[\rv,\rw]}$. Since $[v,w]$ also belongs to $\C[z]\partial_z$, $\cL_{[\rv,\rw]}$ agrees with $\bL_{[\rv,\rw]}$ and we conclude that
$[\bL_\rv,\bL_\rw]=\bL_{[\rv,\rw]}$.

\noindent \textbf{Case 2.}
$\rv=v\del_z\in z^2\C[z]\del_z$ and $\rw=w\del_z\in z\C[z^{-1}]\del_z$. Recall the definition of $\vartheta$ in \eqref{eq:def_vartheta}: in this case, $\vartheta(\rv)=0$ and we only need to deal with $\vartheta(\rw)$.
The latter can be seen as either an element of $\cC$ or as an endomorphism of $\cC$ acting by multiplication. To clarify, we will denote these two objects in this proof by $\vartheta(\rw) \in \cC$ and $\vartheta(\rw) \id_{\cC}$ respectively. We have
\[
\cL_\rv \circ (\vartheta(\rw) \id_{\cC}) = (\cL_\rv \vartheta(\rw)) \id_{\cC} + \vartheta(\rw) \cL_\rv,
\]
since, by the Leibniz rule, for all $F \in \cC$,
\[
(\cL_\rv \circ (\vartheta(\rw) \id_{\cC})) F = \cL_\rv(\vartheta(\rw) F) = (\cL_\rv \vartheta(\rw)) F + \vartheta(\rw) \cL_\rv F.
\]
This implies that
\[
[\bL_\rv,\bL_\rw]
=\left[\cL_\rv,\cL_\rw-\frac{c_\rM}{12}\vartheta(\rw) \id_{\cC}\right]
=[\cL_\rv,\cL_\rw]-\frac{c_\rM}{12}(\cL_\rv\vartheta(\rw)) \id_{\cC}.
\]
By Proposition \ref{P:construction_cL}, we already know that $[\cL_\rv,\cL_\rw] = \cL_{[\rv,\rw]}$. We are going to show that
\begin{equation}\label{E:cL_vartheta}
\cL_\rv\vartheta(\rw) = - \omega(\rv,\rw) + \vartheta([\rv,\rw]).
\end{equation}
We will then deduce that
\[
[\bL_\rv,\bL_\rw]
= \cL_{[\rv,\rw]} - \frac{c_\rM}{12} \vartheta([\rv,\rw]) \id_{\cC} + \frac{c_\rM}{12} \omega(\rv,\rw) \id_{\cC}
= \bL_{[\rv,\rw]} + \frac{c_\rM}{12} \omega(\rv,\rw) \id_{\cC}
\]
as desired.

It remains to prove \eqref{E:cL_vartheta}.
To this end, let $\eta \in \cJ_{0,\infty}$ parametrised by $(c,f) \in \R \times \cE$ and
let $\phi_t(z)=z+tv(z)+o(t)$ be the flow generated by $\rv$, defined for small $t$ in a neighbourhood $\overline{\rmint(\eta)}$. In the coordinates $(c,f)\in\R\times\cE$, the motion $\phi_t(\eta)$ is represented by a normalised conformal map $e^{c_t}f_t$. Since $v'(0)=0$, $\phi_t(e^cf)$ is normalised at first order and uniformises the interior of $\phi_t(\eta)$. Hence, $e^{c_t}f_t=\phi_t(e^cf)+o(t)$ where $o(t)$ is a function whose derivatives go to zero as $t \to 0$, uniformly on compact subsets of $\D$. By the chain rule for the Schwarzian, we have
\begin{align}\notag
\cS((\phi_t\circ(e^cf))^{-1})
=\cS((e^cf)^{-1}\circ\phi_t^{-1})
&=\cS(\phi_t^{-1})+((\phi_t^{-1})')^2\cS((e^cf)^{-1})\circ\phi_t^{-1}\\
&=\cS((e^cf)^{-1})-t\left(v'''+2v'\cS((e^cf)^{-1})+v\cS((e^cf)^{-1})'\right)+o(t).\label{eq:diff_schwarzian}
\end{align}
By integration by parts, $\cL_\rv\vartheta_\eta(\rw)$ is thus equal to
\begin{align*}
&\underset{t\to0}\lim\,\frac{1}{2i\pi t}\oint\left(\cS((\phi_t\circ(e^cf))^{-1}(z)-\cS(e^cf)^{-1}(z)\right)w(z)\d z\\
&=-\frac{1}{2i\pi}\oint v'''(z)w(z)\d z-\frac{1}{2i\pi}\oint\left(2v'(z)w(z)\cS((e^cf)^{-1})(z)+v(z)w(z)\cS((e^cf)^{-1})'(z)\right)\d z\\
&=-\omega(\rv,\rw)+\frac{1}{2i\pi}\oint\left(v(z)w'(z)-v'(z)w(z)\right)\cS((e^cf)^{-1})(z)\d z
=-\omega(\rv,\rw)+\vartheta_\eta([\rv,\rw])
\end{align*}
which concludes the proof of \eqref{E:cL_vartheta} and thus the proof of Case 2.

\noindent \textbf{Case 3.} $\rv, \rw \in z\C[z^{-1}]\del_z$.
Recall from \eqref{eq:star} that we define an antilinear involution of $\C(z)\del_z$ by
$
\rv^*=z^2\overline{v(1/\bar z)}\del_z.
$
On the generators, we have $\rv_n^*=\rv_{-n}$. Moreover, for all $n, m \in \Z$,
\begin{align*}
[\rv_n,\rv_m]^*=(n-m)\rv_{n+m}^*=(n-m)\rv_{-n-m}=-[\rv_{-n},\rv_{-m}]=-[\rv_n^*,\rv_m^*],
\end{align*}
so that for all $\rv, \rw \in \C(z) \partial_z$,
\begin{equation}\label{lem:star}
[\rv,\rw]^*=-[\rv^*,\rw^*].
\end{equation}
Let us fix $\rv, \rw \in z\C[z^{-1}]\del_z$.
The integration by parts formula of Corollary \ref{cor:ibp} (which is independent of the commutation relations and was also stated in Theorem \ref{T:intro_shapo} above) gives:
\begin{equation}\label{eq:negative_commutations}
[\bL_{\rv},\bL_{\rw}]
=[\Theta\circ\bL_{\rv^*}^*\circ\Theta,\Theta\circ\bL_{\rw^*}^*\circ\Theta]
=\Theta\circ[\bL_{\rv^*}^*,\bL_{\rw^*}^*]\circ\Theta
=-\Theta\circ[\bL_{\rv^*},\bL_{\rw^*}]^*\circ\Theta.
\end{equation}
Now, since $\rv^*, \rw^* \in z \C[z]\partial_z$, we know from Case 1 that
$[\bL_{\rv^*},\bL_{\rw^*}] = \bL_{[\rv^*,\rw^*]}$ ($\omega(\rv^*,\rw^*)$ vanishes in this case). Using once more the integration by parts formula, we deduce that
\[
[\bL_{\rv},\bL_{\rw}] = - \Theta \circ \bL_{[\rv^*,\rw^*]}^* \circ \Theta
= -\bL_{[\rv^*,\rw^*]^*}
= \bL_{[\rv,\rw]}.
\]
In the last equality we used that $[\rv^*,\rw^*]^* = -[\rv,\rw]$; see \eqref{lem:star}.
This is the desired result since $\omega(\rv,\rw)=0$ in this case.

  \medskip
  
  This proves the commutation relations for the representation $(\bL_\rv)_{\rv\in\C(z)\del_z}$. The proof for the representation $(\bar{\bL}_\rv)_{\rv\in\C(z)\del_z}$ is identical. The last step is to prove that the two representations commute. If $\rv,\rw\in\C[z]\del_z$, we have $[\bL_\rv,\bar{\bL}_\rw]=[\cL_\rv,\bar{\cL}_\rw]$ which vanishes by Proposition \ref{P:construction_cL}. In this case, we also have
\begin{equation}\label{eq:negative_commutations_bis}
[\bL_{\rv^*},\bar{\bL}_{\rw^*}]=[\Theta\circ\bL_\rv^*\circ\Theta,\Theta\circ\bar{\bL}_\rw^*\circ\Theta]=\Theta\circ[\bL_\rv^*,\bar{\bL}_\rw^*]\circ\Theta=-\Theta\circ[\bL_\rv,\bar{\bL}_\rw]^*\circ\Theta=0.
\end{equation}  
   Finally, suppose $\rv\in z^2\C[z]\del_z$ and $\rw\in z\C[z^{-1}]\del_z$. Then, $[\bar{\bL}_\rv,\bL_\rw]=[\bar{\cL}_\rv,\cL_\rw-\frac{c_\rM}{12}\vartheta(\rw)\id_{\cC}]=-\frac{c_\rM}{12}\bar{\cL}_\rv\vartheta(\rw)\id_{\cC}$, which vanishes by analyticity of the Schwarzian on $\cE$ (there is no $\bar{t}$ term in \eqref{eq:diff_schwarzian}). Similarly, we get that $[\bL_\rv,\bar{\bL}_\rw]=0$ for $\rv\in z\C[z^{-1}]\del_z$ and $\rw\in z^2\C[z]\del_z$, so we are done. 
\end{proof}

\begin{remark}
In this proof, we used the integration by parts formula of Corollary \ref{cor:ibp} to reduce Case 3 to Case 1. Without this result, one would need to compute $\cL_\rv\vartheta(\rw)$ with $\rv,\rw\in\C[z^{-1}]\del_z$, which is a much more involved geometric computation than Case 2.
\end{remark}

\section{Highest-weight representations}\label{sec:hw_modules}

In the previous section, we defined two representation of the Virasoro algebra on $L^2(\nu)$. In this section, we do a Laplace transform in the ($\log$) conformal radius, which will allow us to define a family of representations $(\bL_n^\lambda,\bar{\bL}_n^\lambda)$ acting on $L^2(\nu^\#)$, indexed by $\lambda\in\C$. The relevance of this family is that it allows us to construct highest-weight representations of weight $\lambda$. The main result of the section is Theorem \ref{thm:module_structure} giving a complete classification of these representations.

	\subsection{Construction}\label{subsec:level_operators}
 The main idea is that the eigenstates of $\del_c$ will label highest-weight states for the highest-weight representations. We could view the diagonalisation of this operator as an analytic question (consisting of a Fourier/Laplace transform), but since we have in mind algebraic statements (commutation relations etc.) we prefer to take it as such. This will result in a family of Virasoro representations $(\bL_n^\lambda,\bar\bL_n^\lambda)_{n\in\Z}$ indexed by $\lambda\in\C$, which are initially defined as endomorphisms of $\C[(a_m,\bar a_m)_{m\geq1}]$, but can also be viewed as densely defined operators on $L^2(\nu^\#)$. This will be used in Section \ref{sec:spectral} to write a Plancherel formula.

Recall that the left and right levels of a polynomial in $\C[(a_m,\bar{a}_m)_{m\geq1}]$ have been defined below \eqref{E:monomial}.
Let $\bN,\overline{\bN}$ be the endomorphisms of $\C[(a_m,\bar{a}_m)_{m\geq1}]$ characterised by
\[\bN(\ba^{\bk}\bar{\ba}^{\tilde{\bk}})=|\bk|\ba^{\bk}\bar{\ba}^{\tilde{\bk}},\qquad\overline{\bN}(\ba^{\bk}\bar{\ba}^{\tilde{\bk}})=|\tilde{\bk}|\ba^{\bk}\bar{\ba}^{\tilde{\bk}},\qquad\forall\bk,\tilde{\bk}\in\cT.\]
We call these operators the \emph{degree operators}. They commute and are diagonal with spectrum $\N$. For each $N\in\N$, the $N$-eigenspace of $\bN$ (resp. $\overline{\bN}$) is the space of polynomials with left (resp. right) degree equal to $N$. Observe that $\bN=\sum_{m=1}^\infty ma_m\del_{a_m}$ and $\overline{\bN}=\sum_{m=1}^\infty m\bar{a}_m\del_{\bar{a}_m}$, so that by Proposition \ref{prop:generators_mobius},
\[\cL_0=-\frac{\del_c}{2}+\frac{1}{2}(\bN-\overline{\bN});\qquad\bar{\cL}_0=-\frac{\del_c}{2}-\frac{1}{2}(\bN-\overline{\bN}).\]

We have seen in the previous section that $\bL_n$ is a first order differential operator whose coefficients are polynomials in $a_m,\bar{a}_m$, and $e^c$. The first step is to refine this result by giving the degrees of these polynomials. The next lemma uses the standard notation $x_-=\max\{-x,0\}$.
\begin{lemma}\label{lem:level}
There exist polynomials $P_{n,m}, Q_{n,m}, \tilde P_{n,m}, \tilde Q_{n,m} \in \C[(a_k,\bar a_k)_{k\geq1}]$, $n \in \Z, m \ge 0$ such that
for all $n\in\Z$, we have
\begin{align*}
&\bL_n=e^{nc}P_{n,0}\del_c+e^{nc}Q_{n,0}+e^{nc}\sum_{m=1}^\infty(P_{n,m}\del_{a_m}+Q_{n,m}\del_{\bar{a}_m}),\\
&\bar{\bL}_n=e^{nc}\tilde{P}_{n,0}\del_c+e^{nc}\tilde{Q}_{n,0}+e^{nc}\sum_{m=1}^\infty(\tilde{P}_{n,m}\del_{\bar{a}_m}+\tilde{Q}_{n,m}\del_{a_m}),
\end{align*}
where, for all $m\in\N$:
\begin{itemize}[leftmargin=*, label=\raisebox{0.25ex}{\tiny$\bullet$}]
    \item $P_{n,m}$ (resp. $\tilde{P}_{n,m}$) has degree $((n-m)_-,0)$ (resp. $(0,(n-m)_-)$).
    \item If $n\leq0$, $Q_{n,m}$ (resp. $\tilde{Q}_{n,m}$) is a linear combination of monomials of degree $(k,\tilde{k})$ with $k-\tilde{k}=(n+m)_-$ (resp. $\tilde{k}-k=(n+m)_-$). If $n>0$, $Q_{n,m}=\tilde{Q}_{n,m}=0$.
\end{itemize}
Moreover, recalling the definitions \eqref{eq:def_vartheta} of $\vartheta$ and \eqref{eq:def_varpi} of $\varpi$, we have for all $n \in \Z$,
\[e^{nc}P_{n,0}=\varpi(\rv_n);\qquad e^{nc}Q_{n,0}=-\frac{c_\rM}{12}\vartheta(\rv_n);\qquad e^{nc}\tilde{P}_{n,0}=\overline{\varpi(\rv_n)};\qquad e^{nc}\tilde{Q}_{n,0}=-\frac{c_\rM}{12}\overline{\vartheta(\rv_n)}.\]
\end{lemma}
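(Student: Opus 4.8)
The plan is to combine the explicit descriptions of the operators obtained in Section~\ref{subsec:witt} with a grading (``charge'') argument coming from the Virasoro relations of Proposition~\ref{P:commutation_bL}. First, by Proposition~\ref{P:construction_cL} and Lemma~\ref{L:preserve_cC}, each $\bL_n$ is a first order differential operator on $\cC$ with polynomial coefficients in $(a_m,\bar a_m)_{m\ge1}$, and the global factor $e^{nc}$ is visible case by case: for $n\ge1$ one has $\cL_n=e^{nc}\sum_{m\ge1}P_{n,m}\partial_{a_m}$ with $P_{n,m}(f)=-\frac{1}{2i\pi}\oint f(z)^{n+1}z^{-m-2}\,\d z\in\C[(a_m)_{m\ge1}]$ (from \eqref{eq:flow_positive}); for $n\in\{-1,0\}$ it is Proposition~\ref{prop:generators_mobius}; and for $n\le-2$ it is \eqref{eq:witt_general_formula}, where the coefficient of $\partial_{a_m}$ is a polynomial in $(a_m)_{m\ge1}$ only (the combination ``$A-B$'' there, whose holomorphicity is the content of the computations around \eqref{eq:bulk}). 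The structural fact to retain is that in every case $P_{n,m}$ contains no anti-holomorphic variable, while $Q_{n,m}$ may. For the $\partial_c$- and constant-coefficients: $\cL_n$ is a derivation, so its $\partial_c$-coefficient equals $\cL_{\rv_n}c=\varpi(\rv_n)$ by Proposition~\ref{prop:diff_c}, and since $\vartheta(\rv_n)$ acts by multiplication this is also the $\partial_c$-coefficient of $\bL_n$, giving $e^{nc}P_{n,0}=\varpi(\rv_n)$; and the constant term of $\bL_n$ is $\bL_n\ind=-\frac{c_\rM}{12}\vartheta(\rv_n)$ (because $\cL_n\ind=0$), giving $e^{nc}Q_{n,0}=-\frac{c_\rM}{12}\vartheta(\rv_n)$. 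The barred identities follow verbatim from $\bar{\cL}_n$ and $\overline{\vartheta(\rv_n)}$.

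The degree bounds I would deduce from a charge argument. Since $\bL_0=\cL_0$, $\bar{\bL}_0=\bar{\cL}_0$, and $\omega(\rv_0,\rv_n)=\frac{1}{2i\pi}\oint v_0'''(z)v_n(z)\,\d z=0$ (as $v_0(z)=-z$), Proposition~\ref{P:commutation_bL} gives $[\bL_0,\bL_n]=-n\bL_n$ and $[\bar{\bL}_0,\bL_n]=0$; subtracting and using $\cL_0-\bar{\cL}_0=\bN-\overline{\bN}$ (Proposition~\ref{prop:generators_mobius}) yields
\[[\bN-\overline{\bN},\bL_n]=-n\bL_n.\]
Writing $\bL_n=e^{nc}D_n$ with $D_n$ independent of $c$, this reads $[\bN-\overline{\bN},D_n]=-nD_n$, i.e.\ $D_n$ is homogeneous of charge $-n$ for the grading assigning charge $+m$ to $a_m$ and $-m$ to $\bar a_m$ (hence charge $-m$ to $\partial_{a_m}$, $+m$ to $\partial_{\bar a_m}$, and $0$ to $\partial_c$). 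Because $\partial_c$, $1$, $\partial_{a_m}$, $\partial_{\bar a_m}$ are linearly independent over $\C[(a_m,\bar a_m)_{m\ge1}]$ in the module of first order operators, one may match components and conclude that $P_{n,0},Q_{n,0}$ are charge-homogeneous of charge $-n$, that $P_{n,m}$ is charge-homogeneous of charge $m-n$, and that $Q_{n,m}$ is charge-homogeneous of charge $-n-m$. Combined with the no-anti-holomorphic-variable fact from the first step, charge $m-n$ forces $P_{n,m}$ to be homogeneous of bidegree $((n-m)_-,0)$ (the zero polynomial when $m<n$); and charge $-n-m$ forces every monomial $\ba^\bk\bar\ba^{\tilde\bk}$ of $Q_{n,m}$ to satisfy $|\bk|-|\tilde\bk|=-(n+m)$, which for $n\le0$ is the asserted relation.

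Finally, for $n>0$ the flow $\phi_t$ is holomorphic on a neighbourhood of $\overline{\mathrm{int}(\eta)}$, and since $f(z)^{n+1}$ vanishes to order $n+1\ge2$ at $0$, the expansion \eqref{eq:flow_positive} gives $c_t=c+o(t)$ and an expansion of $a_m(t)$ with no $\bar t$-term; hence $\cL_n$ has neither a $\partial_c$- nor any $\partial_{\bar a_m}$-component, so $P_{n,0}=Q_{n,0}=Q_{n,m}=0$, and likewise $\tilde P_{n,0}=\tilde Q_{n,0}=\tilde Q_{n,m}=0$. (Consistently, $\vartheta(\rv_n)=\frac{e^{-c}}{2i\pi}\oint\cS f^{-1}(z)v_n(e^cz)\,\d z=0$, since $\cS f^{-1}$ is holomorphic at $0$ while $v_n(e^cz)$ has a zero of order $n+1\ge2$.) The one place where genuine work is hidden is the claim, invoked in the first step, that $P_{n,m}$ involves no anti-holomorphic variable when $n\le-2$: this is exactly what upgrades ``charge $m-n$'' to the precise bidegree $((n-m)_-,0)$, and it must be read off from the Beltrami-differential computations of Section~\ref{subsec:witt} — concretely, from the observation around \eqref{eq:bulk} that the relevant contour integrals are polynomials in $(a_m)_{m\ge1}$ alone. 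Everything else is formal once the commutation relations of Proposition~\ref{P:commutation_bL} are in hand.
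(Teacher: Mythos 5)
Your proof is correct and takes essentially the same approach as the paper: read off the holomorphicity of $P_{n,m}$, the vanishing of $Q_{n,m}$ for $n>0$, and the explicit $\partial_c$- and constant coefficients from the construction of $\cL_n,\bar\cL_n$ in the proof of Proposition~\ref{P:construction_cL}, then obtain the degree constraints from a charge argument using the Virasoro commutation relations with $\bL_0,\bar\bL_0$. The only stylistic difference is that you apply $[\bN-\overline{\bN},\cdot]$ to the whole operator $D_n=e^{-nc}\bL_n$ and match components, whereas the paper isolates the scalar $P_{n,m}=e^{-nc}\bL_n(a_m)-Q_{n,0}a_m$ (resp.\ $Q_{n,m}$) and applies $\bar\bL_0$ to it; these are the same computation.
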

Note that the polynomial $Q_{n,0}$ is proportional to the Neretin polynomial introduced after \eqref{eq:def_vartheta}.

\begin{remark}
At this stage, we are not saying anything about the total degree of $Q_{n,m}$ and $\tilde{Q}_{n,m}$. However, a consequence of the proof of Theorem \ref{thm:module_structure} is that this degree is bounded by $m-n$. For instance, the reader can check from Proposition \ref{prop:generators_mobius} that $Q_{-1,1}=|a_1|^2-1$, a non-homogeneous polynomial of total degree 2.
\end{remark}

\begin{proof}
The general form of $\cL_n$ and $\bar{\cL}_n$ has been given in \eqref{eq:witt_general_formula}, and gives the value of $P_{n,0},\tilde{P}_{n,0}$. The values of $Q_{n,0},\tilde{Q}_{n,0}$ also follow by definition of $\bL_n,\bar{\bL}_n$. It just remains to prove the statements on the degrees. We only treat the case of $\bL_n$, the case of $\bar{\bL}_n$ being identical. The degrees of $P_{n,0}$ and $Q_{n,0}$ are clear by definition of $\varpi$ and $\vartheta$. It was shown in the proof of Proposition \ref{P:construction_cL} that, for $n>0$, $Q_{n,m}=0$ and the right degree of $P_{n,m}$ vanishes (it is a polynomial in the coefficients $a_m$, with no $\bar a_m$). 

We turn to the left degree of $P_{n,m}$ for $m\geq1$. We have $P_{n,m}=e^{-nc}\bL_n(a_m)-Q_{n,0}a_m$. Applying $\bar{\bL}_0=-\frac{1}{2}(\del_c+\bN-\overline{\bN})$ on both sides and using $[\bar{\bL}_0,\bL_n]=0$, we find $\bN P_{n,m}=-(n-m)P_{n,m}$, i.e. $P_{n,m}$ has left degree $(n-m)_-$.

Finally, we study $Q_{n,m}$ for $n\leq0$ and $m\geq1$. We have $Q_{n,m}=e^{-nc}\bL_n(\bar{a}_m)-Q_{n,0}\bar{a}_m$. Applying again $\bar{\bL}_0$ to both sides, we find $(\bN-\overline{\bN})Q_{n,m}=-(n+m)Q_{n,m}$. Hence, $Q_{n,m}$ is a combination of monomials of degree $(k,\tilde{k})$ with $k-\tilde{k}=(n+m)_-$.
\end{proof}

Since $\del_c$ and the multiplication by $e^{nc}$ are continuous on $\cC_c^\infty(\R)$, they extend to bounded operators on $\cC_c^\infty(\R)'$ by duality. It is then clear from the expressions of the previous lemma that the operators $\bL_n,\bar{\bL}_n$ extend to endomorphisms of $\cC_c^\infty(\R)'\otimes\C[(a_m,\bar{a}_m)_{m\geq1}]$, and this extension is unique by density of $\cC_c^\infty(\R)$ in $\cC_c^\infty(\R)'$. The level operators can be exponentiated on $\cC_c^\infty(\R)'\otimes\C[(a_m,\bar{a}_m)_{m\geq1}]$: the operator $e^{-\bN c}$ acts by multiplication by $e^{-Nc}$ on the space of polynomials with left level $N$. This defines an endomorphism of $\cC_c^\infty(\R)'\otimes\C[(a_m,\bar{a}_m)_{m\geq1}]$.

Let $\lambda\in\C$. The previous lemma gives
\begin{equation}\label{E:highest_weight}
 \bL_n(e^{-2\lambda c})=\bar{\bL}_n(e^{-2\lambda c})=0 \quad \text{for all } n \ge1; \qquad \bL_0(e^{-2\lambda c}) =\bar{\bL}_0(e^{-2\lambda c})= \lambda e^{-2\lambda c},
\end{equation}
and similarly for the conjugate representation. By definition, $e^{-2\lambda c}$ is a highest-weight state of weight $\lambda$. We will use this to construct highest-weight representations, but first we want to get rid of the exponential prefactors in $c$ in order to view all the states in the space of polynomials. To this end, given $\bk,\tilde{\bk}\in\cT$, we define
\begin{equation}
    \label{E:def_Psi}
\Psi_{\lambda,\bk,\tilde{\bk}}:=e^{(2\lambda+|\bk|+|\tilde{\bk}|)c}\bL_{-\bk}\bar{\bL}_{-\tilde{\bk}}(e^{-2\lambda c}),
\end{equation}
and set $\Psi_{\lambda,\bk}:=\Psi_{\lambda,\bk,\emptyset}$, $\Psi_\lambda:=\Psi_{\lambda,\emptyset,\emptyset}=\ind$. By Lemma \ref{lem:level}, we have $\Psi_{\lambda,\bk}\in\C[(a_m)_{m\geq1}]$ and $\Psi_{\lambda,\bk,\tilde{\bk}}\in\C[(a_m,\bar{a}_m)_{m\geq1}]$. Next, define
\[\cV_\lambda:=\mathrm{span}\left\lbrace\Psi_{\lambda,\bk}\big|\bk\in\cT\right\rbrace;\qquad\cW_\lambda:=\mathrm{span}\left\lbrace\Psi_{\lambda,\bk,\tilde{\bk}}\Big|\,\bk,\tilde{\bk}\in\cT\right\rbrace.\]
We can define the diagonal endomorphisms $\bL_0^\lambda,\bar{\bL}_0^\lambda$ of $\cW_\lambda$ characterised by
\begin{equation}\label{E:L_0^lambda}
\bL_0^\lambda\Psi_{\lambda,\bk,\tilde{\bk}}=(\lambda+|\bk|)\Psi_{\lambda,\bk,\tilde{\bk}};\qquad\bar{\bL}_0^\lambda\Psi_{\lambda,\bk,\tilde{\bk}}=(\lambda+|\tilde{\bk}|)\Psi_{\lambda,\bk,\tilde{\bk}}.
\end{equation}
While the states $(\Psi_{\lambda,\bk,\tilde{\bk}})_{\bk,\tilde{\bk}\in\cT}$ may not be linearly independent, linear relations may only exist at the same $\lambda$-level by standard representation theory (see Appendix \ref{app:virasoro}), i.e. for $\Psi_{\lambda, \bk, \tilde \bk}$ and $\Psi_{\lambda, \bk', \tilde \bk'}$ with $(|\bk|,|\tilde \bk|) = (|\bk'|,|\tilde \bk'|)$; so these operators are indeed well defined.

Next, for all $n\in\Z$, we define the endomorphisms $\bL_n^\lambda,\bar{\bL}_n^\lambda$ of $\C[(a_m,\bar{a}_m)_{m\geq1}]$ by
\begin{equation}\label{eq:op_lambda}
\begin{aligned}
&\bL_n^\lambda:=-P_{n,0}(\bL_0^\lambda+\bar{\bL}_0^\lambda)+Q_{n,0}+\sum_{m=1}^\infty(P_{n,m}\del_{a_m}+Q_{n,m}\del_{\bar{a}_m})\\
&\bar{\bL}_n^\lambda:=-\tilde{P}_{n,0}(\bL_0^\lambda+\bar{\bL}_0^\lambda)+\tilde{Q}_{n,0}+\sum_{m=1}^\infty(\tilde{P}_{n,m}\del_{\bar{a}_m}+\tilde{Q}_{n,m}\del_{a_m}).
\end{aligned}
\end{equation}
Concretely, these operators are just obtained from $\bL_n,\bar{\bL}_n$ by removing the prefactor $e^{nc}$, and replacing $\del_c$ by the operator $-(\bL_0^\lambda+\bar{\bL}_0^\lambda)$. These operators exponentiate to a linear map
\begin{equation}
    \label{E:cP_lambda}
\cP_\lambda:=e^{-(\bL_0^\lambda+\bar{\bL}_0^\lambda)c}:\cW_\lambda\to\cC_c^\infty(\R)'\otimes\C[(a_m,\bar{a}_m)_{m\geq1}].
\end{equation}
We emphasise that, since $\cW_\lambda$ is made of finite linear combinations, the definition of the exponential above reduces to that of an exponential of a linear operator on a finite dimensional space.
The next proposition essentially shows that $\cP_\lambda$ conjugates the operators $\bL_n^\lambda$ and $\bL_n$.

\begin{proposition}\label{lem:commutations_lambda}
For all $n \in \Z$, $\bL_n^\lambda = \cP_\lambda^{-1} \circ \bL_n \circ \cP_\lambda$ and $\bar{\bL}_n^\lambda = \cP_\lambda^{-1} \circ \bar{\bL}_n \circ \cP_\lambda$. In particular, the following commutation relations of endomorphisms of $\C[(a_m,\bar{a}_m)_{m\geq1}]$ hold: for all $n,m \in \Z$,
\begin{align*}
&[\bL_n^\lambda,\bL_m^\lambda]=(n-m)\bL_{n+m}^\lambda+\frac{c_\rM}{12}(n^3-n)\delta_{n,-m};\\
&[\bar{\bL}_n^\lambda,\bar{\bL}_m^\lambda]=(n-m)\bar{\bL}_{n+m}^\lambda+\frac{c_\rM}{12}(n^3-n)\delta_{n,-m};
\qquad
[\bL_n^\lambda,\bar{\bL}_m^\lambda]=0.
\end{align*}
Moreover, for all $\bk, \tilde \bk \in \cT$, we have $\Psi_{\lambda,\bk,\tilde{\bk}}=\bL_{-\bk}^\lambda\bar{\bL}_{-\tilde{\bk}}^\lambda\ind$.
In particular, $\bL_0^\lambda\ind=\bar{\bL}_0^\lambda\ind=\lambda\ind$ and $\bL_n^\lambda\ind=\bar{\bL}_n^\lambda\ind=0$ for all $n\geq1$.
\end{proposition}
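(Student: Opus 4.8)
The plan is to establish the conjugation identity $\bL_n^\lambda = \cP_\lambda^{-1}\circ\bL_n\circ\cP_\lambda$ first, and then derive everything else as corollaries. The starting point is Lemma~\ref{lem:level}, which writes $\bL_n = e^{nc}P_{n,0}\del_c + e^{nc}Q_{n,0} + e^{nc}\sum_{m\geq1}(P_{n,m}\del_{a_m}+Q_{n,m}\del_{\bar a_m})$ with $P_{n,m}, Q_{n,m}$ polynomials in $(a_m,\bar a_m)$ of controlled degree. The operator $\cP_\lambda = e^{-(\bL_0^\lambda+\bar\bL_0^\lambda)c}$ acts on a monomial $\ba^\bk\bar\ba^{\tilde\bk}$ (viewed in $\cW_\lambda$ via $\Psi_{\lambda,\bk,\tilde\bk}$, or more simply on the model $\C[(a_m,\bar a_m)]$ where $\bL_0^\lambda+\bar\bL_0^\lambda$ acts as $2\lambda+\bN+\overline\bN$) by multiplication by $e^{-(2\lambda+|\bk|+|\tilde\bk|)c}$. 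So conjugating a monomial of bidegree $(k,\tilde k)$ by $\cP_\lambda$, applying $\bL_n$, and conjugating back, one tracks how each term of $\bL_n$ shifts the bidegree: by the degree bookkeeping in Lemma~\ref{lem:level}, the term $P_{n,m}\del_{a_m}$ changes the left degree by $m-n$ wait — more precisely $P_{n,m}$ has left degree $(n-m)_-$ and $\del_{a_m}$ lowers left degree by $m$, so the net shift on the $c$-exponent is exactly what is needed for the $e^{nc}$ prefactor to cancel, leaving $P_{n,m}\del_{a_m}$; similarly the $\del_c$ term, after conjugation, hits the multiplication-by-$e^{(\dots)c}$ factor and produces $-(2\lambda+|\bk|+|\tilde\bk|)P_{n,0} = -P_{n,0}(\bL_0^\lambda+\bar\bL_0^\lambda)$ acting on that monomial. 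Matching term by term gives precisely the formula \eqref{eq:op_lambda} for $\bL_n^\lambda$, i.e.\ $\bL_n^\lambda = \cP_\lambda^{-1}\circ\bL_n\circ\cP_\lambda$, and identically for $\bar\bL_n^\lambda$.

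Once the conjugation identity is in hand, the commutation relations are immediate: $\cP_\lambda$ is an invertible linear map (injective because $\cS_\exp'(\R)\otimes\C[(a_m,\bar a_m)]$ has no nonzero element killed by multiplication by an exponential), so conjugation is a Lie algebra homomorphism, and
\[
[\bL_n^\lambda,\bL_m^\lambda] = \cP_\lambda^{-1}[\bL_n,\bL_m]\cP_\lambda = \cP_\lambda^{-1}\Big((n-m)\bL_{n+m}+\tfrac{c_\rM}{12}(n^3-n)\delta_{n,-m}\Big)\cP_\lambda = (n-m)\bL_{n+m}^\lambda + \tfrac{c_\rM}{12}(n^3-n)\delta_{n,-m},
\]
using Proposition~\ref{P:commutation_bL} (translated from $\bL_\rv$-notation to the generator notation $\bL_n$ via $\omega(\rv_n,\rv_m) = (n^3-n)\delta_{n,-m}$, cf.\ \eqref{eq:def_omega}) and the fact that the central term is a scalar, hence fixed by conjugation. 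The same computation handles $[\bar\bL_n^\lambda,\bar\bL_m^\lambda]$, and $[\bL_n^\lambda,\bar\bL_m^\lambda]=0$ follows from $[\bL_n,\bar\bL_m]=0$.

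For the statements about $\ind$ and the $\Psi$'s: by definition $\cP_\lambda\ind = e^{-(2\lambda)c}\cdot\ind = e^{-2\lambda c}$ (the constant function has bidegree $(0,0)$). Then $\bL_n^\lambda\ind = \cP_\lambda^{-1}\bL_n(e^{-2\lambda c})$, and \eqref{E:highest_weight} (a direct consequence of Lemma~\ref{lem:level}: for $n\geq1$ every term of $\bL_n$ either carries a $\del_c$, $\del_{a_m}$ or $\del_{\bar a_m}$ that kills a function of $c$ alone, or carries $Q_{n,0}$ which vanishes for $n>0$; and $\bL_0(e^{-2\lambda c}) = -\tfrac12\del_c(e^{-2\lambda c})\cdot(\text{coefficient})$... more directly $P_{0,0}=-\tfrac12$, $Q_{0,0}=0$, giving eigenvalue $\lambda$) yields $\bL_n^\lambda\ind = 0$ for $n\geq1$ and $\bL_0^\lambda\ind = \lambda\ind$, and likewise for $\bar\bL_n^\lambda$. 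Finally, $\bL_{-\bk}^\lambda\bar\bL_{-\tilde\bk}^\lambda\ind = \cP_\lambda^{-1}\bL_{-\bk}\bar\bL_{-\tilde\bk}\cP_\lambda\ind = \cP_\lambda^{-1}\bL_{-\bk}\bar\bL_{-\tilde\bk}(e^{-2\lambda c}) = \cP_\lambda^{-1}\big(e^{-(2\lambda+|\bk|+|\tilde\bk|)c}\Psi_{\lambda,\bk,\tilde\bk}\big) = \Psi_{\lambda,\bk,\tilde\bk}$ by \eqref{E:def_Psi} and the definition of $\cP_\lambda$ on the bidegree-$(|\bk|,|\tilde\bk|)$ component. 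I expect the main obstacle to be purely bookkeeping: carefully verifying that the degree shifts recorded in Lemma~\ref{lem:level} make every $e^{nc}$ prefactor cancel \emph{exactly} against the exponential produced by conjugating past $\cP_\lambda$, and that this works uniformly in the monomial, including the subtle point that $\cP_\lambda$ is a priori only defined on $\cW_\lambda$ (where one must invoke that linear relations among the $\Psi_{\lambda,\bk,\tilde\bk}$ respect the bigrading, as noted before \eqref{eq:op_lambda}) but the formula \eqref{eq:op_lambda} defines $\bL_n^\lambda$ on all of $\C[(a_m,\bar a_m)]$, so one should phrase the conjugation identity on the polynomial model and check consistency with $\cW_\lambda$.
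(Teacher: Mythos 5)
Your overall route is the paper's: verify the conjugation identity $\bL_n^\lambda=\cP_\lambda^{-1}\circ\bL_n\circ\cP_\lambda$ on generators of $\cW_\lambda$, then read off the commutation relations and $\Psi_{\lambda,\bk,\tilde\bk}=\bL_{-\bk}^\lambda\bar\bL_{-\tilde\bk}^\lambda\ind$. But your ``simpler model'', in which $\bL_0^\lambda+\bar\bL_0^\lambda$ is identified with $2\lambda+\bN+\overline\bN$ on $\C[(a_m,\bar a_m)_{m\geq1}]$ so that $\cP_\lambda$ becomes multiplication by $e^{-(2\lambda+\bN+\overline\bN)c}$ on monomials, is false, and the bookkeeping you base on it does not close. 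The eigenvectors of $\bL_0^\lambda,\bar\bL_0^\lambda$ are the $\Psi_{\lambda,\bk,\tilde\bk}$ via \eqref{E:L_0^lambda}, and these are \emph{not} $(\bN,\overline\bN)$-homogeneous: as the remark after Lemma \ref{lem:level} points out, $Q_{-1,1}=|a_1|^2-1$ is inhomogeneous, and concretely $\Psi_{\lambda,(1),(1)}=4\lambda^2|a_1|^2+2\lambda$ has a nonzero constant part. So the $(\bL_0^\lambda,\bar\bL_0^\lambda)$-bigrading of $\cW_\lambda$ and the $(\bN,\overline\bN)$-bigrading of polynomials are genuinely different gradings.

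This breaks your ``net shift on the $c$-exponent cancels'' argument precisely at the $Q_{n,m}\del_{\bar a_m}$ terms, which you never check. Lemma \ref{lem:level} controls only the antisymmetric part $k-\tilde k$ of the bidegree of $Q_{n,m}$, not the total degree: e.g.\ conjugating a monomial of bidegree $(|\bk|,|\tilde\bk|)$ through the $Q_{-1,1}\del_{\bar a_1}$ piece of $\bL_{-1}$, the constant part $-1$ of $Q_{-1,1}$ produces a polynomial of total degree $|\bk|+|\tilde\bk|-1$ (not $|\bk|+|\tilde\bk|+1$), and conjugating back by your $\cP_\lambda^{-1}$ leaves a stray $e^{-2c}$. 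So $\cP_\lambda^{-1}\bL_n\cP_\lambda$, taken in your monomial model, is not even a differential operator on $\C[(a_m,\bar a_m)_{m\geq1}]$. The paper avoids this entirely by never tracking monomial degrees: it applies $\bL_n$ directly to $\cP_\lambda\Psi_{\lambda,\bk,\tilde\bk}=e^{-(2\lambda+|\bk|+|\tilde\bk|)c}\Psi_{\lambda,\bk,\tilde\bk}$, a \emph{single} exponential times a polynomial, so the $\del_c$ term produces the one scalar $-(2\lambda+|\bk|+|\tilde\bk|)P_{n,0}$ (matched against $-P_{n,0}(\bL_0^\lambda+\bar\bL_0^\lambda)$ by \eqref{E:L_0^lambda}), and every other term of $\bL_n$ passes through the exponential untouched to give \eqref{eq:op_lambda} with no cancellation to verify. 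Keep your first parenthetical reading, drop the ``more simply'' alternative, and replace the degree bookkeeping with this direct computation; the rest of your argument (commutation relations by conjugacy from Proposition \ref{P:commutation_bL}, $\Psi=\bL^\lambda\ind$ from $\cP_\lambda\ind=e^{-2\lambda c}$ and \eqref{E:def_Psi}) is then exactly the paper's and is fine.
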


\begin{proof}
Let $n \in \Z$. We verify that 
\begin{equation}\label{eq:L_lambda}
\bL_n^\lambda=\cP_\lambda^{-1}\circ\bL_n\circ\cP_\lambda
\end{equation}
on the generators $\Psi_{\lambda,\bk,\tilde{\bk}}$ of $\cW_\lambda$.
Let $\bk,\tilde{\bk}\in\cT$. We have
\begin{equation}
    \label{E:pf_Psi}
    \hspace{50pt} \cP_\lambda\Psi_{\lambda,\bk,\tilde{\bk}}=e^{-(2\lambda+|\bk|+|\tilde{\bk}|)c}\Psi_{\lambda,\bk,\tilde{\bk}},
    \hspace{60pt} \text{so}
\end{equation}
\[e^{nc}P_{n,0}\del_c(\cP_\lambda\Psi_{\lambda,\bk,\tilde{\bk}})=-(2\lambda+|\bk|+|\tilde{\bk}|)e^{-(2\lambda+|\bk|+|\tilde{\bk}|-n)c}P_{n,0}\Psi_{\lambda,\bk,\tilde{\bk}}.\]
From this, we get that $\bL_n(\cP_\lambda\Psi_{\lambda,\bk,\tilde{\bk}})$ is equal to
\begin{align*}
&e^{-(2\lambda+|\bk|+|\tilde{\bk}|-n)c}\left(-(2\lambda+|\bk|+|\tilde{\bk}|)P_{n,0}+Q_{n,0}+\sum_{m=1}^\infty( P_{n,m}\del_{a_m}+Q_{n,m}\del_{\bar{a}_m})\right)\Psi_{\lambda,\bk,\tilde{\bk}}\\
&=e^{-(2\lambda+|\bk|+|\tilde{\bk}|-n)c}\bL_n^\lambda\Psi_{\lambda,\bk,\tilde{\bk}}=\cP_\lambda(\bL_n^\lambda\Psi_{\lambda,\bk,\tilde{\bk}}).
\end{align*}
The operator $\cP_\lambda$ is clearly injective, so it has an inverse $\cP_\lambda^{-1}$ defined on its image, and we have $\bL_n^\lambda\Psi_{\lambda,\bk,\tilde{\bk}}=\cP_\lambda^{-1}\circ\bL_n\circ\cP_\lambda\Psi_{\lambda,\bk,\tilde{\bk}}$ as desired.
From this, we deduce that the operators $(\bL_n^\lambda,\bar{\bL}_n^\lambda)_{n\in\Z}$ satisfy the same commutation relations as the $(\bL_n,\bar{\bL}_n)_{n\in\Z}$. To check that $\Psi_{\lambda,\bk,\tilde{\bk}}=\bL_{-\bk}^\lambda\bar{\bL}^\lambda_{-\tilde{\bk}}\ind$, recall first that, by \eqref{E:pf_Psi} applied to $\bk=\tilde{\bk}=\varnothing$, $\cP^\lambda \ind = e^{-2\lambda c}$. From the conjugation relations, we deduce that
$\bL_{-\bk}^\lambda\bar{\bL}^\lambda_{-\tilde{\bk}}\ind =\cP_\lambda^{-1} \bL_{-\bk}\bar{\bL}_{-\tilde{\bk}} e^{-2\lambda c}$. By definition \eqref{E:def_Psi} of $\Psi_{\lambda,\bk,\tilde \bk}$, we then have
$\bL_{-\bk}^\lambda\bar{\bL}^\lambda_{-\tilde{\bk}}\ind = \cP_\lambda^{-1} e^{-(2\lambda + |\bk| + |\tilde \bk|) c} \Psi_{\lambda,\bk,\tilde{\bk}}$ which agrees with $\Psi_{\lambda,\bk,\tilde \bk}$ by \eqref{E:pf_Psi}.
\end{proof}

		\subsection{Classification}		
In the previous section, we defined highest-weight representations $(\cV_\lambda,(\bL_n^\lambda)_{n\in\Z}$ and $(\cW_\lambda,(\bL_n^\lambda,\bar{\bL}_n^\lambda)_{n\in\Z})$	of weight $\lambda\in\C$. In this section, we characterise the structure of these representations, i.e. we prove Theorem \ref{thm:module_structure}. This classification shows that the modules $\cV_\lambda$ are all irreducible, i.e. there are no singular vectors at positive level. Interestingly, we can give two different proofs of this result: a purely algebraic proof and an analytic proof. The existence of an analytic proof relies on the fact that the Virasoro algebra is represented as differential operators on $\cJ_{0,\infty}$: roughly speaking, a singular vector is a function on $\cJ_{0,\infty}$ (rather, a section of a line bundle over $\cE$) with vanishing differential, so it must be a constant.

\begin{proof}[Algebraic proof of Theorem \ref{thm:module_structure}]
Let $M_{c_\rM,\lambda}$ be the (abstract) Verma module of central charge $c_\rM$ and $V_{c_\rM,\lambda}=M_{c_\rM,\lambda}/I_{c_\rM,\lambda}$ the irreducible quotient of $M_{c_\rM,\lambda}$ by the maximal proper submodule $I_{c_\rM,\lambda}$. We refer the reader to Appendix \ref{app:virasoro} for some details on these notions.

If $\lambda\not\in kac$, the Verma module is irreducible, i.e. $I_{c_\rM,\lambda}=\{0\}$. Since $\cV_\lambda$ is a quotient of the Verma module by a submodule, we have necessarily $\cV_\lambda\simeq M_{c_\rM,\lambda}$ as highest-weight representations. Hence $\dim\cV_\lambda^N=\#\cT_N$. On the other hand, $\cV_\lambda^N$ is contained in the space of polynomials at level $(N,0)$, which has dimension $\#\cT_N$. We deduce by dimension counting that $\cV_\lambda^N$ is exactly the space of polynomials at level $(N,0)$.

We turn to Item \ref{item:irred_quotient}. We claim that $\C[(a_m)_{m\geq1}]$ equipped with the representation $(\bL_n^\lambda)_{n\in\Z}$ is isomorphic to the dual Verma module $M_{c_\rM,\lambda}^\vee$ (the space of linear forms on $M_{c_\rM,\lambda}$). Recall the notation \eqref{eq:string} of $L_{-\bk}$ and denote by $(e_\bk=L_{-\bk}\cdot e_\emptyset)_{\bk\in\cT}$ a linear basis of $M_{c_\rM,\lambda}$, where $e_{\eset}$ denotes the highest-weight vector in $M_{c_\rM,\lambda}$. For all $\bk,\bk'\in\cT$ with $|\bk|=|\bk'|$, the polynomial $\bL_\bk^\lambda(\ba^{\bk'})$ has level 0 (since $\ba^{\bk'}$ is at level $|\bk'|$ and $\bL_{\bk}^\lambda$ decreases the level by $|\bk|$), i.e. it is proportional to the constant function $\ind$, and we identify it with this constant. This allows us to define a linear map $\Phi_\lambda:\C[(a_m)_{m\geq1}]\to M_{c_\rM,\lambda}^\vee$ by (recall the notation \eqref{eq:string})
\begin{equation*}
\Phi_\lambda\ba^\bk(e_{\bk'}):=\left\lbrace\begin{aligned}
&\bL_{\bk'}(\ba^\bk),&&\text{if }|\bk|=|\bk'|,\\
&0,&&\text{otherwise}.
\end{aligned}\right.
\end{equation*}

 This linear map preserves the Virasoro representations by definition. In particular, it preserves the level. Thus, to prove that $\Phi_\lambda$ is a linear isomorphism, it suffices to prove that it is injective on each level. Let $\bk=(k_m)_{m\geq1}\in\cT$ and $m_0:=\max\{m\geq1|\,k_m>0\}$. By Lemma \ref{lem:level} and its proof, for $n>0$, we have $\bL_n^\lambda=-\del_{a_n}+\cdots$, where the dots contain only differential operators in $a_m$ with $m>n$. Hence,
 \[\Phi_\lambda\ba^\bk(e_\bk)=(\bL_1^\lambda)^{k_1}(\bL_{2}^\lambda)^{k_2}\cdots(\bL_{m_0}^\lambda)^{k_{m_0}}\ba^\bk=\prod_{m=1}^{m_0}(-1)^{k_m}k_m!\neq0.\]
 This proves that $\Phi_\lambda$ is injective, and we deduce that $(\C[(a_m)_{m\geq1}],(\bL_n^\lambda)_{n\in\Z})\simeq M_{c_\rM,\lambda}^\vee$ as required. Hence, $\cV_\lambda=\mathrm{span}\{\bL_{-\bk}^\lambda\ind|\,\bk\in\cT\}$ is isomorphic to the highest-weight representation obtained by acting with the dual representation on the top vector in $M_{c_\rM,\lambda}^\vee$ (denoted $V_{c_\rM,\lambda}^\vee$ in Appendix \ref{app:virasoro}). By Lemma \ref{lem:irreducible_quotient}, this representation is irreducible.

 Finally, we turn to Item \ref{item:tensor_product}. The fact that $\cW_\lambda$ is the tensor product of two copies of $\cV_\lambda$ is obvious since the two representations commute. In the sequel, we assume $\lambda\not\in kac$. By dimension counting, we just need to show that $\oplus_{n+\tilde{n}\leq N}\cW_\lambda^{n,\tilde{n}}$ is a subspace of the space of polynomials of total degree bounded by $N$, for all $N\in\N$. Let $\bk_0,\tilde{\bk}_0\in\cT$ with $|\bk_0|+|\tilde{\bk}_0|=N$. On the one hand, the Shapovalov form is non-degenerate, so there exists $\bk,\tilde{\bk}\in\cT$ with $|\bk|+|\tilde{\bk}|=N$ and $\bL^\lambda_\bk\bL^\lambda_{\tilde{\bk}}\Psi_{\lambda,\bk_0,\tilde{\bk}_0}\neq0$, i.e. this polynomial is a non-zero constant. On the other hand, Lemma \ref{lem:level} shows that $\bL_\bk^\lambda\bar{\bL}_{\tilde{\bk}}^\lambda$ decreases the degree of $\Psi_{\lambda,\bk_0,\tilde{\bk}_0}$ by $|\bk|+|\tilde{\bk}|=N$. Hence, the total degree of $\Psi_{\lambda,\bk_0,\tilde{\bk}_0}$ is exactly $N$.
 \end{proof}

\begin{proof}[Analytic proof of Theorem \ref{thm:module_structure}]
  Let $P\in\C[(a_m)_{m\geq1}]$. We can view $P$ as a $c$-independent function on $\cJ_{0,\infty}$, and we note that $\bar{\cL}_nP=0$ for all $n\geq1$. 

Let $f\in\cE$ and consider the path $\R_+\ni t\mapsto f_t(z):=e^tf(e^{-t}z)$ in $\cE$, from $f$ to $\mathrm{id}_\D$. We have $\del_tf_t(z)=e^tf_t(z)-zf'(e^{-t}(z))=v_t\circ f_t(z)$, with $v_t(z)=z-\frac{f_t^{-1}(z)}{(f_t^{-1})'(z)}$. For each $t>0$, the function $f_t^{-1}$ extends analytically to a neighbourhood of $f_t(\bar{\D})$, so $P$ is differentiable in direction $\rv_t$ at $f_t$. By the fundamental theorem of calculus, we get for all $0<a<b$,
\begin{equation}\label{eq:ftc}
P(f_b)-P(f_a)=\int_a^b(\cL_{\rv_t}+\bar{\cL}_{\rv_t})P(f_t)\d t=\int_a^b\cL_{\rv_t}P(f_t)\d t.
\end{equation}
Since $P$ is continuous on $\cE$ (with respect to the local uniform topology), the LHS converges to $P(f)-P(\mathrm{id}_\D)$ as $a\to0$ and $b\to\infty$.

Now, suppose that $P$ is a singular vector for some representation $(\cV_\lambda,(\bL_n^\lambda)_{n\in\Z})$. Given the expression of $\bL_n^\lambda$ for $n\geq1$, we have $\cL_nP=\bL_n^\lambda P=0$ for all $n\geq1$ (with a slight abuse of notation). In the neighbourhood of 0, we have the expansion $\frac{f_t^{-1}(z)}{(f_t^{-1})'(z)}=z+O(z^2)$, so $v_t(z)=O(z^2)$. Hence, $\cL_{\rv_t}P=0$ for all $t>0$, and the RHS of \eqref{eq:ftc} vanishes for all $0<a<b$. Taking the limit $a\to0$, $b\to\infty$ gives $P(f)-P(\mathrm{id}_\D)=0$.
This shows that $P$ is constant on $\cE$, so there is no singular vector in $\cV_\lambda$ at a strictly positive level, i.e. $\cV_\lambda$ is irreducible. The rest of the proof of Theorem~\ref{thm:module_structure} follows as before.
\end{proof}


	\subsection{Computation at level 2}\label{S:level2}

Theorem \ref{thm:module_structure} characterises the structure of the module $\cV_\lambda$ for each $\lambda\in\C$. If $\lambda$ belongs to the Kac table, this implies some linear relations between states at certain levels, of which the ones in Corollary \ref{C:level2} is a special case. As an illustration of this phenomenon, we will now give the explicit computation at level 2.

\begin{proof}[Computational proof of Corollary \ref{C:level2}]
In this proof, we will denote by $\cA \psi = \psi''/\psi'$ the pre-Schwarzian derivative of a holomorphic function $\psi$. For a function $f\in\cE$, we have in our parametrisation:
\[\cA f^{-1}(0)=-\cA f(0)=-2a_1,\qquad\cS f^{-1}(0)=-\cS f(0)=-6(a_2-a_1^2).\]
We will slightly abuse notations by identifying $\varpi$ and $\vartheta$ with their restriction to $\cE$ (so we evaluate them on $c=0$).

First, we compute $\bL_{-1}^\lambda\ind$ and $\bL_{-2}^\lambda\ind$. By Lemma \ref{lem:level}, we have
\begin{align*}
&\bL_{-1}^\lambda\ind=-2\lambda P_{-1,0}+Q_{-1,0}=-2\lambda\varpi(\rv_{-1});\\
&\bL_{-2}^\lambda\ind=-2\lambda P_{-2,0}+Q_{-2,0}=-2\lambda\varpi(\rv_{-2})-\frac{c_\rM}{12}\vartheta(\rv_{-2}).
\end{align*}
In order to evaluate $\varpi(\rv_{-1})$ and $\varpi(\rv_{-2})$, we record the following expansion, where $\psi(z)=z(1+\sum_{m=1}^\infty b_mz^m)$ is conformal in the neighbourhood of $z=0$:
\begin{align*}
\frac{z^2\psi'(z)^2}{\psi(z)^2}
&=\frac{(1+2b_1z+3b_2z^2+o(z^2))^2}{(1+b_1z+b_2z^2+o(z^2))^2}
=1+2b_1z+(4b_2-b_1^2)z^2+o(z^2)\\
&=1+\cA\psi(0)z+(\frac{2}{3}\cS\psi(0)+\frac{3}{4}\cA\psi(0)^2)z^2+o(z^2).
\end{align*}
Applying this computation to $\psi=f^{-1}$, we find that
\[
\varpi(\rv_{-1}) = -\frac12 \cA f^{-1}(0)=a_1
\quad \text{and} \quad
\varpi(\rv_{-2}) = -\frac12 \Big( \frac23 \cS f^{-1}(0) + \frac34 \cA f^{-1}(0)^2 \Big)=2(a_2-a_1^2)-\frac{3}{2}a_1^2.
\]
Recalling that $\bL_\rv=\cL_\rv-\frac{c_\rM}{12}\vartheta(\rv)$ and that $\vartheta(\rv_{-1})=0$ and $\vartheta(\rv_{-2}) = -\cS f^{-1}(0)=6(a_2-a_1)^2$, we deduce that
\begin{equation}\label{eq:l_minus_one}
\bL_{-1}^\lambda\ind
=-2\lambda a_1
\quad \text{and} \quad
\bL_{-2}^\lambda\ind =3\lambda a_1^2 -\Big(4\lambda+\frac{c_\rM}{2}\Big)(a_2-a_1^2).
\end{equation}
Next, Proposition \ref{prop:generators_mobius} gives $P_{-1,1}=3(a_2-a_1^2)$ in the expression of $\bL_{-1}^\lambda$ given in \eqref{eq:op_lambda}, so that
\begin{align*}
(\bL_{-1}^\lambda)^2\ind=-2\lambda\bL_{-1}^\lambda(a_1)
&=2\lambda(2\lambda+1)a_1^2-6\lambda(a_2-a_1^2).
\end{align*}

  Wrapping up, we can present the values $(\bL_{-1}^2\ind,\bL_{-2}\ind)$ as a $2\times2$-linear system:
\[\begin{pmatrix}
(\bL_{-1}^\lambda)^2\ind \\ 
\bL_{-2}^\lambda\ind
\end{pmatrix}=\begin{pmatrix}
4\lambda(2\lambda+1) & 6\lambda \\ 
6\lambda & 4\lambda+\frac{c_\rM}{2}
\end{pmatrix} \begin{pmatrix}
\frac{1}{2}a_1^2\\ 
-(a_2-a_1^2)
\end{pmatrix}. \]
We recover the Gram matrix of the Shapovalov form as it appears in \cite[(7.23)]{dFMS_BigYellowBook}, with the dictionary $\lambda\leftrightarrow h$, $c_\rM\leftrightarrow c$. The determinant of this system is \cite[(7.25-26)]{dFMS_BigYellowBook} 
\[\det\begin{pmatrix}
4\lambda(2\lambda+1) & 6\lambda \\ 
6\lambda & 4\lambda+\frac{c_\rM}{2}
\end{pmatrix}=32\lambda(\lambda-\lambda_{1,2})(\lambda-\lambda_{2,1}),
\qquad \text{with}
\]
\begin{align*}
&\lambda_{1,2}=\frac{1}{16}\left(5-c_\rM+\sqrt{(c_\rM-1)(c_\rM-25)}\right)
\quad \text{and} \quad
\lambda_{2,1}=\frac{1}{16}\left(5-c_\rM-\sqrt{(c_\rM-1)(c_\rM-25)}\right).
\end{align*}
Elementary algebra gives $\lambda_{1,2}=\frac{6-\kappa}{2\kappa}$, $\lambda_{2,1}=\frac{3\kappa-8}{16}$ \cite[(7.31)]{dFMS_BigYellowBook}.
We deduce that $(\bL_{-1}^\lambda)^2\ind$ and $\bL_{-2}^\lambda\ind$ are colinear for these values and these values only. It is an easy check that the corresponding linear combination for these values is $((\bL_{-1}^\lambda)^2-\frac{2}{3}(2\lambda+1)\bL_{-2}^\lambda)\ind=0$.
%
%
\end{proof}

\section{Integration by parts}\label{sec:IBP}

One of the main goals of this section is to prove the following integration by parts formula (recall the notations \eqref{eq:def_vartheta} of $\vartheta$ and $\tilde \vartheta$):

\begin{theorem}\label{T:ibp}
For all $\rv\in\C(z)\del_z$, the $L^2(\nu)$-adjoints $\cL_\rv^*$ and $\bar \cL_\rv^*$ of $\cL_\rv$ and $\bar \cL_\rv$, respectively, are densely defined. Hence, $\cL_\rv$ and $\bar\cL_\rv$ are closable, and their closures satisfy
\begin{equation}
    \label{E:T_ibp}
\cL_\rv^*=-\bar{\cL}_\rv-\frac{c_\rM}{12}\overline{\tilde{\vartheta}_\eta(\rv)}
+\frac{c_\rM}{12}\overline{\vartheta_\eta(\rv)}; \qquad
\bar\cL_\rv^*=-\cL_\rv-\frac{c_\rM}{12}\tilde{\vartheta}_\eta(\rv)
+\frac{c_\rM}{12}\vartheta_\eta(\rv).
\end{equation}
We will keep denoting by $\cL_\rv$ and $\bar\cL_\rv$ the corresponding closures.
\end{theorem}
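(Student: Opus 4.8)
The plan is to derive the integration by parts formula \eqref{E:T_ibp} first for $\rv$ in the non-negative part of the Witt algebra — vector fields $\rv_n$ with $n \ge -1$, or more generally $\rv \in \C[z]\del_z$ — where the flow $\phi_t$ is globally conformal on the interior of the curve, and then for $\rv \in z\C[z^{-1}]\del_z$ by a duality/reflection argument exchanging the roles of $0$ and $\infty$; the general case follows by linearity. The core of the matter is the non-negative case, and here the essential input is the \emph{conformal restriction property} of the SLE$_\kappa$ loop measure $\nu$ (equivalently, Definition \ref{D:restriction_measure}). The idea is: for $\rv \in \C[z]\del_z$, the flow $\phi_t$ maps $\cJ_{0,\infty}$ into itself and (for the relevant sign of the motion) shrinks the interior of the curve; restriction gives an explicit Radon--Nikodym derivative between $\nu$ restricted to $\{\eta \subset \phi_t(D)\}$ and $(\phi_t)^*\nu$, with density governed by the Brownian loop measure anomaly $\exp(\tfrac{c_\rM}{2}(\text{loop terms}))$. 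Differentiating this identity at $t=0$ produces the infinitesimal version: for $F, G \in \cC$,
\[
\int \cL_\rv F \cdot \bar G \, \d\nu + \int F \cdot \overline{\bar\cL_\rv G}\, \d\nu = -\frac{c_\rM}{12}\int F \bar G\left(\tilde\vartheta_{e^{-\tilde c}g}(\rv) - \vartheta_{e^c f}(\rv)\right)\d\nu + \text{c.c.-type terms},
\]
which is exactly \eqref{E:T_ibp} tested against $G$. To make this rigorous I would: (i) recall that the flow $\phi_t$ is conformal away from a neighbourhood of $0$ and extends quasiconformally (Lemma \ref{lem:extend_qc}, Proposition \ref{lem:variational_formulas}); (ii) write the change-of-variables formula for $\nu$ under $\phi_t$ using the restriction density; (iii) expand the density to first order in $t$ using the Takhtajan--Teo variational formula for the Loewner energy / universal Liouville action, which identifies the first-order term with the Schwarzian-derivative expressions $\vartheta$ and $\tilde\vartheta$ in \eqref{eq:def_vartheta}; and (iv) justify differentiation under the integral sign using that $F, G \in \cC$ (or $\cC_\mathrm{comp}$) are bounded with controlled support, plus uniform bounds on the density for small $t$ (here the Koebe quarter theorem and de Branges' theorem control the geometry uniformly).

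For the negative part, the key observation is the reflection symmetry: the anticonformal involution $\iota(z) = 1/\bar z$ conjugates a vector field $\rv \in z\C[z^{-1}]\del_z$ to $\rv^* = z^2\bar v(1/z)\del_z \in z\C[z]\del_z$, and conjugates the interior parametrisation $(c,f)$ to the exterior parametrisation $(\tilde c, g)$. Since $\nu$ is invariant under $\iota$ (reflection symmetry), the operator $\Theta$ of \eqref{eq:def_Theta} is a self-adjoint involution of $L^2(\nu)$, and one expects $\cL_\rv = \Theta \circ \cL^{\mathrm{ext}}_{\rv^*} \circ \Theta$ where $\cL^{\mathrm{ext}}$ denotes the analogous operators built from the exterior flow — for which the non-negative case already applies (the exterior flow of $\rv \in z\C[z^{-1}]\del_z$ is conformal near $\infty$). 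Taking adjoints and using $\Theta^* = \Theta$ then transports \eqref{E:T_ibp} from the $\rv^*$ case to the $\rv$ case; the roles of $\vartheta$ and $\tilde\vartheta$ swap, which matches the symmetric appearance of both terms in \eqref{E:T_ibp}. Once \eqref{E:T_ibp} holds on the dense domain $\cC$ (resp. $\cC_\mathrm{comp}$), it shows $\cL_\rv^*$ is densely defined (its domain contains $\cC$), hence $\cL_\rv$ is closable, and the stated formula persists on the closure by a routine density/continuity argument.

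The main obstacle I anticipate is step (iii): extracting the precise first-order expansion of the restriction density and matching it exactly with the contour-integral expressions $\vartheta_{e^cf}(\rv)$ and $\tilde\vartheta_{e^{-\tilde c}g}(\rv)$, including getting the constant $c_\rM/12$ right. This requires the Takhtajan--Teo formula \cite[Chapter 2, Theorem 3.5]{TakhtajanTeo06} relating the variation of the universal Liouville action to the Schwarzian derivatives of $f$ and $g$, and careful bookkeeping of the Brownian loop measure terms $\Lambda^*(\eta, D^c)$ and their behaviour under the infinitesimal conformal map — in particular the cancellation between the "interior" anomaly (already accounted for in passing from $\nu_{8/3}$ to $\nu_\kappa$) and the genuine contribution at the curve. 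A secondary technical point is justifying all interchanges of limits/integrals and the passage to the closure; this should be manageable given the strong decay built into $\cS_\exp(\R)$ and the uniform geometric control on the support of test functions in $\cC_\mathrm{comp}$, but it needs to be done with some care since the operators are unbounded.
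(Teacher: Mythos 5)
Your overall architecture (reduce to $\rv\in\C[z]\del_z$ via conformal restriction, then transfer to $z\C[z^{-1}]\del_z$ by the $\Theta$-reflection, glue by linearity) matches the paper's, and your step (iv) on dominated convergence and the $\cC_\mathrm{comp}$/Koebe/de Branges controls is the same as what is needed. However, the central computation — step (iii) — is done by a genuinely different route in the paper. You propose expanding the restriction density to first order in $t$ via the Takhtajan--Teo variational formula for the Loewner energy. The paper instead differentiates the Brownian-loop mass $\Lambda_\D(\eta,\phi_t(\S^1))$ directly by disintegrating $\loopmeasure_\C$ over bubble measures rooted at the point of maximal modulus (Lemma \ref{L:loopmeasure_bub}) and then doing an explicit Poisson-kernel computation in the doubly connected domain $\A_\eta$ to identify $\bubmeasure_{\D,e^{i\theta}}(\text{hit }\eta)$ with $\tfrac16 e^{2i\theta}\cS\psi_\eta(e^{i\theta})+|\psi_\eta'(e^{i\theta})|^2 U(q(\eta))$ (Lemma \ref{L:bub}). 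This is a purely probabilistic/complex-analytic calculation, with no appeal to the universal Liouville action. The Takhtajan--Teo route is mentioned in the paper only as a \emph{heuristic} (via the formal path integral $\d\nu_\kappa = e^{c_\rM I^L/24}\d\nu_{8/3}$), and the authors explicitly contrast their method with the one in \cite{GQW24}, which uses results of \cite{carfagnini2023onsager,SungWang23_QCdef} in the analytic spirit you have in mind — so your route is viable but would need the intermediate step of identifying $e^{\Lambda_\D}$ with a ratio of $\zeta$-determinants (\`a la Dubédat) and then applying the Polyakov--Alvarez anomaly, which is not the same as quoting Takhtajan--Teo for the Loewner energy of the curve.

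There is one genuine gap you should address regardless of which computational route you take: you propose differentiating the restriction density directly on the sphere, where it involves the renormalised quantity $\Lambda^*(\eta,D^c)$. This is where the paper instead proves an integration by parts for $\nu_{R\D}$ (Proposition \ref{prop:ibp_disc}), where the loop mass $\Lambda_\D$ is finite and honest, and then passes to the sphere by the weak limit $(\log R)^{c_\rM/2}\nu_{R\D}\to\nu$ (Lemmas \ref{L:convergence_nu} and \ref{L:conf_eps}). The disc formula carries an extra term $U(q(\eta))(\psi_\eta^{-2}\psi_\eta'^2,\rv)$ coming from the non-trivial topology of the annulus $\A_\eta$, and the whole point of the limiting argument is that $U(q)\to 0$ as $q\to 0$ so this term vanishes — which in turn explains why the $\Lambda^*$-renormalisation causes no trouble in the sphere formula. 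Working directly with $\Lambda^*$, you would have to justify differentiating a renormalised (and, curve-by-curve, singular) quantity under the integral and explain a priori why no such extra correction appears. Finally, a small but real caveat: the flow $\phi_t$ of a generic $\rv\in\C[z]\del_z$ need not carry $\D$ into itself for all small $t$; the paper handles this by first treating \emph{Markovian} vector fields (those with $\Re(\bar z v(z))<0$ on $\S^1$), for which $\phi_t(\bar\D)\subset\D$ on a cone around $t>0$, and then spanning $\C[z]\del_z$. Your ``for the relevant sign of the motion'' gestures at this, but it needs to be made precise if the proof is to go through.
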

This result is essentially equivalent to \cite[Corollary 3.11]{GQW24}, but the proof methods are different: we relate the differential of the mass of Brownian loops to the Brownian bubble measure, while \cite{GQW24} rely on results from \cite{carfagnini2023onsager,SungWang23_QCdef}.

In Proposition \ref{prop:ibp_disc} we derive an analogous result for the SLE loop measure in the unit disc.

Theorem \ref{T:ibp} allows us to construct the Shapovalov form $\cQ$. To this end, recall from \eqref{eq:star} that we define an antilinear involution of $\C(z)\del_z$ by $\rv^*=z^2\overline{v(1/\bar z)}\del_z$, an involution $\Theta$ of $L^2(\nu)$ (see \eqref{eq:def_Theta}) and a Hermitian $\cQ$ on $L^2(\nu)$ by $\cQ(F,G) = \langle F,\Theta G \rangle_{L^2(\nu)}$, $F,G \in L^2(\nu)$.

\begin{corollary}\label{cor:ibp}
For all $\rv \in \C(z)\partial_z$,
the $\cQ$-adjoint of $\bL_\rv$ is densely defined. Hence $\bL_\rv$ is closable and its closure satisfies
\begin{equation}
    \label{E:C_ibp}
\bL_\rv^*=\Theta\circ\bL_{\rv^*}\circ\Theta.
\end{equation}
In particular, for all $F\in\cD(\cL_{\rv^*}),\,G\in\cD(\cL_\rv)$, we have
$\cQ(\bL_{\rv^*}F,G)=\cQ(F,\bL_\rv G).$
\end{corollary}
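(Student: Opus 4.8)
\textbf{Proof plan for Corollary \ref{cor:ibp}.}

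The plan is to derive the $\cQ$-adjoint relation directly from the $L^2(\nu)$-adjoint relations of Theorem \ref{T:ibp}, unwinding the definition $\cQ(F,G)=\langle F,\Theta G\rangle_{L^2(\nu)}$ and using that $\Theta$ is a self-adjoint involution on $L^2(\nu)$ together with its interaction with the operators $\bL_\rv$. First I would record what $\Theta$ does at the level of the operators. By construction $\Theta$ swaps the interior and exterior parametrisations $(c,f)\leftrightarrow(\tilde c, \iota\circ g\circ\iota)$; conjugating the definition \eqref{E:bL} of $\bL_\rv$ and using the analogous interior/exterior computations behind Proposition \ref{prop:diff_c} and Lemma \ref{lem:level} (in particular the fact that $\vartheta$ and $\tilde\vartheta$ are the interior and exterior ``Schwarzian one-forms''), one gets the key intertwining identity
\[
\Theta\circ\cL_\rv\circ\Theta = -\bar\cL_{\rv^*} - \frac{c_\rM}{12}\overline{\vartheta(\rv^*)} + \frac{c_\rM}{12}\overline{\tilde\vartheta(\rv^*)},
\]
and similarly with the bar removed on the right-hand side; equivalently, the right-hand side of \eqref{E:T_ibp} is exactly $\Theta\circ\bar\cL_{\rv^*}\circ\Theta$ up to the $c_\rM$-correction terms, so that Theorem \ref{T:ibp} can be rewritten as $\cL_\rv^* = \Theta\circ\bar\cL_{\rv^*}\circ\Theta$ — wait, more carefully, as $\cL_\rv^* = -\bar\cL_\rv + \tfrac{c_\rM}{12}(\overline{\vartheta(\rv)}-\overline{\tilde\vartheta(\rv)})$, and the combination $-\bar\cL_\rv + \tfrac{c_\rM}{12}(\cdots)$ should be matched, after conjugation by $\Theta$, with $\bL_{\rv^*}=\cL_{\rv^*}-\tfrac{c_\rM}{12}\vartheta(\rv^*)$. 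So the first real step is the bookkeeping lemma: verify that $\Theta\circ\bL_{\rv^*}\circ\Theta$ has precisely the analytic form given for $\cL_\rv^*$ in \eqref{E:T_ibp} (which after adding back the central term gives $\bL_\rv^*$).

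Concretely the steps are: (i) compute $\Theta\circ\cL_{\rv^*}\circ\Theta$ and $\Theta\circ\vartheta(\rv^*)\circ\Theta$ separately. For the second, $\Theta$ replaces the interior data $(c,f)$ by the exterior data, so $\Theta(\vartheta_{e^cf}(\rv^*))\Theta$ becomes a contour integral of the Schwarzian of the exterior map $g$ against $v^*$, and a residue/change-of-variable computation (using $\rv^*=z^2\bar v(1/z)\partial_z$ and the involution $\iota(z)=1/\bar z$) identifies it with $\overline{\tilde\vartheta_{e^{-\tilde c}g}(\rv)}$; similarly the ``interior'' piece produced by conjugating $\cL_{\rv^*}$ produces the $\overline{\vartheta_{e^cf}(\rv)}$ term. (ii) For the Lie-derivative part, $\Theta\circ\cL_{\rv^*}\circ\Theta = \overline{\bar\cL_\rv}$-type object: the vector field $\rv^*$ acting in the exterior chart corresponds, after the anticonformal flip $\iota$, to the complex conjugate flow of $\rv$ in the interior chart, so $\Theta\circ\cL_{\rv^*}\circ\Theta = -\bar\cL_\rv$ (the minus sign coming from the orientation reversal of $\iota$, exactly as the $-\bar\cL_\rv$ appears in \eqref{E:T_ibp}). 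Putting (i) and (ii) together yields $\Theta\circ\bL_{\rv^*}\circ\Theta = -\bar\cL_\rv - \tfrac{c_\rM}{12}\overline{\tilde\vartheta(\rv)} + \tfrac{c_\rM}{12}\overline{\vartheta(\rv)} = \cL_\rv^*$, hence $\bL_\rv^* = \cL_\rv^* = \Theta\circ\bL_{\rv^*}\circ\Theta$ for $\rv$ with no pole at $0$, and in general $\bL_\rv^* = \cL_\rv^* - \overline{\tfrac{c_\rM}{12}\vartheta(\rv)}$ matches $\Theta\circ(\cL_{\rv^*}-\tfrac{c_\rM}{12}\vartheta(\rv^*))\circ\Theta$ once one checks $\Theta\circ\vartheta(\rv^*)\circ\Theta = \overline{\tilde\vartheta(\rv)}$ and recombines — the arithmetic is routine.

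Once \eqref{E:C_ibp} holds as an operator identity, the statement about $\cQ$ is formal: for $F\in\cD(\cL_{\rv^*})$ and $G\in\cD(\cL_\rv)$, using that $\Theta$ is a self-adjoint involution and that $\Theta G\in\cD(\cL_\rv)$ iff $G\in\cD(\bL_\rv^*)$ appropriately (this is where one must be a little careful about domains — $\Theta$ preserves $\cC_\mathrm{comp}$, so no issue on the common core), we get
\[
\cQ(\bL_{\rv^*}F,G) = \langle \bL_{\rv^*}F,\Theta G\rangle_{L^2(\nu)} = \langle F, \bL_{\rv^*}^*\Theta G\rangle_{L^2(\nu)} = \langle F, \Theta\bL_\rv\Theta\,\Theta G\rangle_{L^2(\nu)} = \langle F,\Theta\bL_\rv G\rangle_{L^2(\nu)} = \cQ(F,\bL_\rv G),
\]
using $\bL_{\rv^*}^* = \Theta\circ\bL_\rv\circ\Theta$ (which is \eqref{E:C_ibp} with $\rv$ replaced by $\rv^*$, noting $(\rv^*)^*=\rv$). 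Since $\bL_{\rv^*}$ is densely defined and its adjoint $\bL_\rv^*=\Theta\circ\bL_{\rv^*}\circ\Theta$ is also densely defined (its domain contains $\Theta\,\cC_\mathrm{comp}=\cC_\mathrm{comp}$), $\bL_\rv$ is closable. The main obstacle is the first bookkeeping step — correctly tracking the signs and the exchange $\vartheta\leftrightarrow\tilde\vartheta$ under conjugation by $\Theta$, i.e. that the anticonformal reflection $\iota$ turns the interior Schwarzian one-form into the conjugate of the exterior one and flips the holomorphic/antiholomorphic parts of the Lie derivative; everything after that is an algebraic manipulation of adjoints on the common core $\cC_\mathrm{comp}$.
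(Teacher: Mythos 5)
Your proposal is correct and takes essentially the same route as the paper's proof: isolate the conjugation identities $\Theta\circ\cL_{\rv^*}\circ\Theta=-\bar\cL_\rv$ and $\Theta\circ(\vartheta(\rv^*)\id)\circ\Theta=\overline{\tilde\vartheta(\rv)}\id$ as a separate bookkeeping lemma (the paper does exactly this), combine them with the definition of $\bL_{\rv^*}$ and the integration by parts of Theorem~\ref{T:ibp}, and then the $\cQ$-adjoint relation and closability follow by the formal adjoint manipulation you give. The only slip is your first displayed ``key intertwining identity'' for $\Theta\circ\cL_\rv\circ\Theta$, which should not carry the two Schwarzian terms (the clean statement is simply $\Theta\circ\cL_\rv\circ\Theta=-\bar\cL_{\rv^*}$; the $\vartheta$-terms enter only via the definition of $\bL_\rv$ and via Theorem~\ref{T:ibp}), but you catch and correct this in step~(ii), so the overall plan is sound.
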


\begin{remark}
In Section \ref{sec:hw_modules}, we defined the modules $\cW_\lambda$ as algebraic subspaces of $\C[(a_m,\bar{a}_m)_{m\geq1}]$, and the representations $(\bL_n^\lambda,\bar{\bL}_n^\lambda)_{n\in\Z}$ as endomorphisms acting on these spaces. Such definitions are sufficient for the purpose of characterising these modules algebraically. Now, we can also consider the closure of $\cW_\lambda$ with respect to the $L^2(\nu^{MKS,\#})$; this is a closed subspace of $L^2(\nu^{\#})$, which is proper if and only if $\lambda$ lies on the Kac table. The operators $\bL_n^\lambda,\bar{\bL}_n^\lambda$ extend to unbounded operators on the closure, and one can deduce from the above integration by parts formula that they are closable (the adjoints are densely defined).
\end{remark}

\subsection{Proof of Corollary \ref{cor:ibp}, assuming Theorem \ref{T:ibp}}

In this section, we obtain Corollary~\ref{cor:ibp}, assuming Theorem \ref{T:ibp}. We start with a preliminary lemma that will also be useful in the proof of Theorem \ref{T:ibp}.
Recall the space $\cC_\mathrm{comp}$ introduced in \eqref{E:cC_compact}.
As in the proof of Proposition \ref{P:commutation_bL} and to make the following computations clearer, we will denote by $\vartheta_\cdot(\rv)$ the function $\eta \in \cJ_{0,\infty} \mapsto \vartheta_\eta(\rv)$ and by $\vartheta_\cdot(\rv)\id_{\cC_\mathrm{comp}}$ the operator $F \in \cC_\mathrm{comp} \mapsto \vartheta_\cdot(\rv) F$.

\begin{lemma}\label{L:Theta}
    For all $\rv \in \C(z)\del_z$,
    \begin{equation}\label{E:L_Theta1}
        \Theta\circ\cL_{\rv^*}\circ\Theta=-\bar{\cL}_\rv; \qquad \Theta\circ\bar\cL_{\rv^*}\circ\Theta=-\cL_\rv;
    \end{equation}
    \begin{equation}\label{E:L_Theta2}
        \Theta \circ (\vartheta_\cdot(\rv^*) \id_{\cC_\mathrm{comp}}) \circ \Theta = \overline{\tilde \vartheta_\cdot (\rv)} \id_{\cC_\mathrm{comp}};
        \qquad \Theta \circ (\tilde \vartheta_\cdot(\rv^*) \id_{\cC_\mathrm{comp}}) \circ \Theta = \overline{\vartheta_\cdot (\rv)} \id_{\cC_\mathrm{comp}}.
    \end{equation}
\end{lemma}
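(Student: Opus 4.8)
The strategy is to identify $\Theta$ as a pullback by the anticonformal involution $\iota$ acting on curves, and then to deduce both families of identities from the naturality of the Lie derivative and of the Schwarzian derivative under $\iota$.

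\emph{Step 1: $\Theta$ is pullback by $\iota$.} First I would check that, for every $\eta\in\cJ_{0,\infty}$, the pair $(\tilde c_\eta,\iota\circ g_\eta\circ\iota)$ is exactly the pair of $(c,f)$-coordinates of the curve $\iota(\eta)$: since $\iota$ exchanges the two complementary components of $\eta$, the interior of $\iota(\eta)$ is uniformised by $\iota\circ(e^{-\tilde c_\eta}g_\eta)\circ\iota$, and a one-line computation with $\iota(w)=1/\bar w$ shows this map equals $e^{\tilde c_\eta}(\iota\circ g_\eta\circ\iota)$ with $\iota\circ g_\eta\circ\iota\in\cE$. Hence $\Theta F(\eta)=F(\iota(\eta))$; in particular $\Theta^2=\mathrm{id}$, and for any function $\phi$ on $\cJ_{0,\infty}$ the conjugate $\Theta\circ(\phi\,\id)\circ\Theta$ is multiplication by $\eta\mapsto\phi(\iota(\eta))$.

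\emph{Step 2: the Lie-derivative identities \eqref{E:L_Theta1}.} Let $\psi_t(z)=z+tv^*(z)+o(t)$ be the flow of $\rv^*$ near $\ann_\eta$. The key elementary fact is the identity $z^2\,\overline{v^*(1/\bar z)}=v(z)$, which is immediate from $v^*(z)=z^2\bar v(1/z)$; it gives
\[(\iota\circ\psi_t\circ\iota)(z)=z-\bar t\,v(z)+o(t),\]
i.e. conjugation by $\iota$ turns the first-order germ of the flow of $\rv^*$ into that of the flow of $\rv$ run for time $-\bar t$. Then, writing $G=\Theta F$ and using $\iota\circ\psi_t=(\iota\circ\psi_t\circ\iota)\circ\iota$,
\[G(\psi_t\cdot\eta)=F\big((\iota\circ\psi_t\circ\iota)\cdot\iota(\eta)\big)=F(\iota\eta)-\bar t\,\cL_\rv F(\iota\eta)-t\,\bar\cL_\rv F(\iota\eta)+o(t),\]
where the second equality uses the chain rule exactly as in the proof of Proposition \ref{P:construction_cL} (the expansion of $G(\psi_t\cdot\eta)$ depends on $\iota\circ\psi_t\circ\iota$ only to first order). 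Comparing the coefficients of $t$ and $\bar t$ with $G(\psi_t\cdot\eta)=G(\eta)+t\,\cL_{\rv^*}G(\eta)+\bar t\,\bar\cL_{\rv^*}G(\eta)+o(t)$ gives $\cL_{\rv^*}\circ\Theta=-\Theta\circ\bar\cL_\rv$ and $\bar\cL_{\rv^*}\circ\Theta=-\Theta\circ\cL_\rv$, which is \eqref{E:L_Theta1} after composing with $\Theta$ on the left.

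\emph{Step 3: the multiplication identities \eqref{E:L_Theta2}, and the main obstacle.} By Step 1 the first identity of \eqref{E:L_Theta2} is equivalent to the pointwise statement $\vartheta_{\iota(\eta)}(\rv^*)=\overline{\tilde\vartheta_\eta(\rv)}$ for all $\eta$, and the second then follows by applying this with $\eta\rightsquigarrow\iota(\eta)$ and $\rv\rightsquigarrow\rv^*$ (using $(\rv^*)^*=\rv$). To prove the pointwise identity I would use $f_{\iota\eta}^{-1}=\iota\circ g_\eta^{-1}\circ\iota$ together with (i) invariance of $\cS$ under post-composition with a Möbius map and its transformation as a quadratic differential under pre-composition (so $\cS(g_\eta^{-1}\circ\sigma)(z)=z^{-4}\,\cS g_\eta^{-1}(1/z)$ for $\sigma(z)=1/z$), and (ii) the fact that $\cS$ commutes with the conjugation $h\mapsto\bar h$ of the paper's notation ($\cS\bar h=\overline{\cS h}$); these combine to give $\cS f_{\iota\eta}^{-1}(z)=z^{-4}\,\overline{\cS g_\eta^{-1}(1/\bar z)}$. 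Substituting this and $v^*(e^{\tilde c_\eta}z)=e^{2\tilde c_\eta}z^2\,\overline{v(e^{-\tilde c_\eta}/\bar z)}$ into the definition \eqref{eq:def_vartheta} of $\vartheta_{e^{\tilde c_\eta}f_{\iota\eta}}(\rv^*)$, and then changing variables $\zeta=1/\bar z$ on the contour, one finds that the $z^{-4}$ (from the Schwarzian), the $z^2$ (from $v^*$), the Jacobian of the change of variables, and the orientation reversal produced by complex-conjugating a contour integral combine to reproduce exactly $\overline{\tilde\vartheta_{e^{-\tilde c_\eta}g_\eta}(\rv)}$, with the prefactors $e^{-\tilde c_\eta}$ and $e^{\tilde c_\eta}$ matching. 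This last bookkeeping is the only delicate part of the argument: one must track the orientations of the two residue integrals, the $\pm1$ from conjugating a contour integral, the $z^{\mp4}$ factors encoding the quadratic-differential nature of $\cS$, and the $z^2$ twist relating $\rv$ to $\rv^*$, and verify they cancel exactly (including the $e^{\pm c}$, $e^{\mp\tilde c}$ scalings). Step 2, by contrast, is essentially formal once the identity $z^2\overline{v^*(1/\bar z)}=v(z)$ is observed.
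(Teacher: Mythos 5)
Your proof is correct, and for \eqref{E:L_Theta1} it takes a slightly cleaner route than the paper's. The paper checks $\cL_0,\bar\cL_0$ by hand from Proposition~\ref{prop:generators_mobius}, then reduces to $\rv\in z^2\C[z]\del_z$ (so that $\rv^*\in\C[z^{-1}]\del_z$ acts nicely on the exterior uniformizer) and pushes the flow through the $(\tilde c,g)$-coordinates; your argument instead conjugates the flow intrinsically on $\cJ_{0,\infty}$ and treats all $\rv\in\C(z)\del_z$ at once. For \eqref{E:L_Theta2} the two proofs are the same Schwarzian chain-rule computation. The one place you could tighten Step~2: the phrase ``the expansion of $G(\psi_t\cdot\eta)$ depends on $\iota\circ\psi_t\circ\iota$ only to first order'' deserves a justification, because the definition \eqref{eq:def_diff} is stated in terms of the \emph{exact} flow. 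In fact there is an exact identity available: since $\iota$ is anti-holomorphic and $t\mapsto\psi_t$ is holomorphic, $\chi_t:=\iota\circ\psi_t\circ\iota$ is anti-holomorphic in $t$ and satisfies $\del_{\bar t}\chi_t=-v\circ\chi_t$, $\del_t\chi_t=0$. Hence $\chi_t=\varphi_{-\bar t}$ exactly, where $\varphi$ is the flow of $\rv$, and the expansion of $F((\iota\circ\psi_t\circ\iota)\cdot\iota(\eta))$ follows directly from \eqref{eq:def_diff} for $\varphi$ evaluated at the parameter $-\bar t$, with no need to invoke a chain-rule argument borrowed from Proposition~\ref{P:construction_cL}. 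With that replacement the proof is airtight.
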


\begin{proof}
We start by proving \eqref{E:L_Theta1}. 
From the explicit expressions of $\cL_0$ and $\bar\cL_0$ from Proposition~\ref{prop:generators_mobius}, one can directly check \eqref{E:L_Theta1} in this case. 
Since $\rv \mapsto \rv^*$ maps $z^2\C[z]\del_z$ to $\C[z^{-1}] \del_z$, if \eqref{E:L_Theta1} holds for all vector fields in $z^2\C[z]\del_z$, by applying $\Theta$ to the left and to the right of \eqref{E:L_Theta1}, we see that it also holds for all vector fields in $\C[z^{-1}] \del_z$.
By linearity, the vector field $\rv_0$ covering the case of polynomial vectors fields of degree one in $z$, we would then get \eqref{E:L_Theta1} for all $\rv \in \C(z)\del_z$.
It is thus sufficient to consider the case $\rv \in z^2\C[z]\del_z$.
Let $\phi_t= \id + t v^* + o(t)$ be the flow generated by $\rv^*$ for small complex $t$. Let $\eta_t:=\phi_t(\eta)$ and $(c_{\eta_t},f_{\eta_t})\in\R\times\cE$, $(\tilde{c}_{\eta_t},g_{\eta_t})\in\R\times\tilde{\cE}$ be the uniformising maps of the interior and exterior of $\eta$. Since $\rv^*\in\C[z^{-1}]\del_z$, we have $e^{-\tilde{c}_{\eta_t}}g_{\eta_t}=\phi_t\circ(e^{-\tilde{c}_\eta}g_\eta)$. A small computation then shows that $\iota\circ g_{\eta_t}\circ\iota=\iota\circ g_\eta\circ\iota-\bar{t}v\circ(\iota\circ g_\eta\circ\iota)+o(t)$.
By definition of $\cL_\rv$ and $\bar\cL_\rv$, we then get that $(\Theta F)(c_{\eta_t},f_{\eta_t})$ is equal to
\begin{align*}
F(\tilde{c}_{\eta_t},\iota\circ g_{\eta_t}\circ\iota)
=F(\tilde{c}_\eta,\iota\circ g_\eta\circ\iota)-t\bar{\cL}_\rv F(\tilde{c}_\eta,\iota\circ g_\eta\circ\iota)-\bar{t}\cL_\rv F(\tilde{c}_\eta,\iota\circ g_\eta\circ\iota)+o(t).
\end{align*}
Thus,
\[\cL_{\rv^*}\circ\Theta F(c_\eta,f_\eta)=-\bar{\cL}_\rv F(\tilde{c}_\eta,\iota\circ g_\eta\circ\iota)
\quad \text{and} \quad
\bar \cL_{\rv^*}\circ\Theta F(c_\eta,f_\eta)=-\cL_\rv F(\tilde{c}_\eta,\iota\circ g_\eta\circ\iota)
\]
Applying $\Theta$ again gives \eqref{E:L_Theta1}.

We now prove \eqref{E:L_Theta2}. It is enough to prove the first equality. Let $\rv \in \C(z)\del_z$.
Note first that
\[
\Theta \circ (\vartheta_\cdot(\rv^*) \id_{\cC_\mathrm{comp}}) \circ \Theta = \Theta(\vartheta_\cdot(\rv^*)) \id_{\cC_\mathrm{comp}}.
\]
By definition of $\Theta$ and $\vartheta$,
we have
\[
\Theta(\vartheta_\cdot(\rv^*))(\eta)
= \vartheta_{\iota(\eta)}(\rv^*)
= \frac{e^{-\tilde c_\eta}}{2 i \pi} \oint \cS ( \iota \circ g_\eta^{-1} \circ \iota)(z) v^*(e^{\tilde c_\eta}z) \d z.
\]
By the chain rule of the Schwarzian,
$
\cS(\iota\circ g_\eta^{-1}\circ\iota)=\frac{1}{z^4}\overline{\cS g_\eta^{-1}}(1/z).
$
Plugging $v^*(z) = z^2 \overline{v(1/\bar z)}$ and performing the change of variables $z\mapsto1/\bar{z}$, we deduce that
\[\Theta(\vartheta_\cdot(\rv^*))(\eta)=-\frac{e^{\tilde{c}_\eta}}{2i\pi}\oint\overline{\cS g_\eta^{-1}}(\bar{z})\overline{v(e^{\tilde{c}_\eta}z)}\d\bar{z}=\overline{\frac{e^{\tilde{c}_\eta}}{2i\pi}\oint\cS g_\eta^{-1}(z)v(e^{\tilde{c}_\eta}z)\d z}=\overline{\tilde\vartheta_\eta(\rv)}\]
as desired.
\end{proof}

\begin{proof}[Proof of Corollary \ref{cor:ibp}, assuming Theorem \ref{T:ibp}]
Let $\rv \in \C(z)\del_z$.
By definition \eqref{E:bL} of $\bL_{\rv^*}$ in terms of $\cL_{\rv^*}$ and
combining \eqref{E:L_Theta1} and \eqref{E:L_Theta2}, we have
\[
\Theta \circ \bL_{\rv^*} \circ \Theta 
= \Theta \circ ( \cL_{\rv^*} - \frac{c_\rM}{12} \vartheta_\cdot(\rv^*) \id_{\cC_\mathrm{comp}} ) \circ \Theta
= - \bar \cL_\rv - \frac{c_\rM}{12} \overline{\tilde \vartheta_\cdot (\rv)} \id_{\cC_\mathrm{comp}}.
\]
By Theorem \ref{T:ibp}, the right-hand side agrees with $\bL_\rv^*$ showing the desired relation
$\bL_\rv^*=\Theta\circ\bL_{\rv^*}\circ\Theta.$
The last statement of Corollary \ref{cor:ibp} is immediate since then
\[\cQ(\bL_{\rv^*}F,G)=\langle\bL_{\rv^*}F,\Theta G\rangle_{L^2(\nu)}=\langle F,\Theta\circ\bL_\rv G\rangle_{L^2(\nu)}=\cQ(F,\bL_\rv G).\]
By \cite[Theorem~VIII.1.(b)]{ReedSimon1}, the fact that the $L^2(\nu)$-adjoint $\bL_\rv^*$ of $\bL_\rv$ is densely defined implies that the operator $\bL_\rv$ is closable.
\end{proof}

\subsection{Integration by parts in the unit disc}

As in Definition \ref{D:restriction_measure}, for any simply connected domain $D$ with the topology of the disc, define
\[ 
\d\nu_D(\eta):=\ind_{\eta \subset D} e^{\frac{c_\rM}{2}\Lambda^*(\eta,\del D)}\d\nu(\eta).
\]
In this section, we study the case of the unit disc.
We start by introducing a few notations:

\begin{notation}\label{N:annulus}
    For $q \in (0,1)$, we will denote by $\A_q$ the annulus $\D \setminus \overline{q \D}$. Let $\eta$ be a Jordan curve in $\D$. Let $\A_\eta:=\D\setminus\overline{\rmint(\eta)}$ be the annulus-type domain encircled by $\S^1$ and $\eta$. There exists a unique $q(\eta)\in(0,1)$ and a conformal map $\psi_\eta:\A_\eta\to\A_{q(\eta)}$ with $\psi_\eta(1)=1$.

Note that for all Jordan curve $\eta$ in $\D$, $\psi_\eta$ extends to a neighbourhood of $\S^1$ by Schwarz reflection, and in particular $\cS\psi_\eta$ is holomorphic in a neighbourhood of $\S^1$.
Given $\rv\in\C(z)\del_z$, we will write
\begin{equation}
    \label{E:residue_pairing}
    (\cS\psi_\eta,\rv):=\frac{1}{2i\pi}\oint_{\S^1}\cS\psi_\eta(z)v(z)\d z
\end{equation}
the residue pairing and similarly for $(\psi_\eta^{-2} {\psi_\eta'}^2,\rv)$.
\end{notation}

The main result of this section is the following integration by parts formula for $\nu_\D$:

\begin{proposition}\label{prop:ibp_disc}
For all $\rv\in\C[z]\del_z$ and $F,G\in\cC$, we have
\begin{equation}\label{E:prop_ibp_disc}
    (\cL_\rv F,G)_{L^2(\nu_\D)}
    = \Big(F, -\bar \cL_\rv G - \Big(\frac{c_\rM}{12} \overline{(\cS \psi_\eta,\rv)} + \frac{c_\rM}{2} U(q(\eta)) \overline{( \psi_\eta^{-2}{\psi_\eta'}^2, \rv)} \hspace{1pt} \Big) G \Big)_{L^2(\nu_\D)},
\end{equation}
\begin{equation}
    \label{E:U}
    \text{where} \qquad
    U(q) := \frac{1}{12} + \frac{\pi^2}{12|\log q|^2} - \frac{\pi^2}{2|\log q|^2} \sum_{n \ge 1} \sinh^{-2} \Big( \frac{n\pi^2}{|\log q|} \Big), \qquad q \in (0,1).
\end{equation}
Moreover, the same conclusion holds if $\nu$ is replaced by any weak restriction measure on $\cJ_{0,\infty}$.
\end{proposition}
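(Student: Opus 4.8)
\textbf{Proof strategy for Proposition \ref{prop:ibp_disc}.}

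The plan is to obtain the integration by parts formula from the conformal restriction property of the SLE loop measure in the unit disc, which is exact up to the Brownian loop anomaly. Recall that for $\kappa \in (0,4]$, $\nu_\D$ is a conformal restriction measure with central charge $c_\rM$ in the sense of Definition \ref{D:restriction_measure}: for a simply connected $D \subset \D$, pulling back $\nu_\D$ restricted to loops in $D$ by a uniformising map $\Phi: \D \to D$ produces $\nu_\D$ reweighted by $\exp(\frac{c_\rM}{2}\Lambda^*(\eta,\partial \cdot))$-type factors. The key point is to differentiate this covariance relation infinitesimally along the flow $\phi_t$ generated by $\rv \in \C[z]\partial_z$. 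Since $\rv$ has no pole at $0$ and is polynomial, $\phi_t$ is a (holomorphic) perturbation of the identity defined near $\overline{\D}$, and $\phi_t \cdot \eta$ stays inside $\D$ for small $t$ when $\eta$ is compactly contained; one reduces to this case by the density of $\cC_\mathrm{comp}$-type functions and a cutoff argument, or works with the explicit change of variables directly.

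First I would set up the change of variables: writing $\eta_t = \phi_t \cdot \eta$, the measure $\nu_\D$ seen through $\phi_t^{-1}$ picks up a Radon–Nikodym factor which, by the restriction property, is $\exp(\frac{c_\rM}{2}(\Lambda_\D(\eta, \cdot) \text{-terms}))$. The first-order term in $t$ of this factor splits into two geometric pieces: one coming from the variation of the Brownian loop mass $\Lambda_\D(\eta, \S^1)$ near the boundary $\S^1$, and one coming from the variation near $\eta$ itself. The variation near $\S^1$ is where the Schwarzian $\cS\psi_\eta$ enters — this is the standard mechanism by which the Brownian loop measure generates the central charge anomaly, and it produces the term $\frac{c_\rM}{12}\overline{(\cS\psi_\eta,\rv)}$ (plus its holomorphic counterpart, which combines with $-\bar\cL_\rv$ on the other side after using Lemma \ref{L:Theta}-type symmetry). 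The variation of the loop mass attached to the annular region between $\eta$ and $\S^1$ is what produces the modular function $U(q)$: here one uses that the mass of Brownian loops in an annulus $\A_q$ that wind or that are pinned has an explicit expression in terms of the Eisenstein-like series defining $U(q)$ in \eqref{E:U}. Concretely, I would compute $\partial_t \log(\text{Brownian loop mass in } \A_{q(\eta_t)})|_{t=0}$ using the known formula for $\loopmeasure_{\A_q}$ of loops homotopically nontrivial (or the full partition function $\Lambda_{\A_q}$), differentiate $q(\eta_t)$ in $t$ via the conformal map $\psi_\eta$, and express the derivative of $q$ as a residue pairing $(\psi_\eta^{-2}{\psi_\eta'}^2, \rv)$ — the quadratic differential $\psi_\eta^{-2}{\psi_\eta'}^2$ being exactly the variation of $\log$-modulus under a boundary perturbation, analogous to the appearance of $\varpi$ in Proposition \ref{prop:diff_c}.

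The main obstacle I anticipate is the precise bookkeeping of the Brownian loop measure variation: one must carefully separate the loops that touch a neighbourhood of $\S^1$ (giving the Schwarzian/central-charge term, via Lawler–Werner-type computations of how $\loopmeasure$ transforms under conformal maps, and ultimately via the $\zeta$-determinant / Polyakov–Alvarez anomaly) from the loops that are "trapped" in the annulus and feel the modulus $q(\eta)$ (giving $U(q)$). Establishing the explicit form \eqref{E:U} of $U$ requires either a direct heat-kernel/$\zeta$-determinant computation on the flat annulus $\A_q$ (where $U(q)$ essentially arises as $-\partial_q \log \det{}_\zeta \Delta_{\A_q}$ normalised appropriately, and the $\sinh^{-2}$ series is the standard lattice sum for the annulus determinant) or an appeal to the explicit formula for the Brownian loop measure of the annulus due to Lawler–Werner and Cardy. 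I would also need to justify exchanging differentiation and integration (dominated convergence on $\cC$, using that $F, G$ and their derivatives are bounded and that the geometric quantities $\cS\psi_\eta$, $q(\eta)$ vary smoothly and are controlled uniformly on the support) and to handle the case where $\rv$ pushes $\eta$ slightly outside $\D$ by a limiting/truncation argument. Once the first-order expansion is in hand, matching the $t$- and $\bar t$-coefficients and using the reflection symmetry of $\nu_\D$ under $z \mapsto \bar z$ to pair the holomorphic and antiholomorphic pieces yields \eqref{E:prop_ibp_disc}.
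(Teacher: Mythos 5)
Your overall strategy --- differentiate the conformal restriction covariance along the flow $\phi_t$ and identify the first-order variation of the Brownian-loop anomaly with the two terms $\cS\psi_\eta$ and $U(q)$ --- is correct and is indeed how the paper proceeds. However, the mechanism you propose for computing the variation of the Brownian loop mass is genuinely different from the paper's. You envision splitting the variation into a ``boundary'' piece (loops near $\S^1$), computed via the Polyakov--Alvarez / $\zeta$-determinant anomaly, and a ``trapped'' piece (loops feeling the modulus $q$), computed via $\partial_q \log \det_\zeta\Delta_{\A_q}$ or an explicit loop-measure-of-the-annulus formula. The paper instead does not split at all: it decomposes $\loopmeasure_\C$ by rooting each loop at its point of maximal modulus (the Lawler--Werner bubble decomposition $\loopmeasure_\C = \pi^{-1}\int_\C |\d z|^2 \bubmeasure_{|z|\D,z}$), shows (Lemma \ref{L:loopmeasure_bub}) that $\partial_t\Lambda_\D(\eta,\phi_t(\S^1))|_{t=0^+} = -\pi^{-1}\int_0^{2\pi}\bubmeasure_{\D,e^{i\theta}}(\text{hit }\eta)\,\Re(e^{-i\theta}v(e^{i\theta}))\,\d\theta$, and then computes the bubble mass $\bubmeasure_{\D,e^{i\theta}}(\text{hit }\eta)$ in closed form (Lemma \ref{L:bub}) as $\frac16 e^{2i\theta}\cS\psi_\eta(e^{i\theta}) + e^{2i\theta}\psi_\eta'(e^{i\theta})^2\psi_\eta(e^{i\theta})^{-2}U(q(\eta))$ by comparing boundary Poisson kernels of $\D$ and of $\A_\eta$ (the latter via the uniformisation to the flat strip, which is where the $\sinh^{-2}$ series in $U(q)$ comes from). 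Thus the Schwarzian and the modular term are \emph{both} produced by a single coalescing-limit computation of a renormalised Poisson kernel, not by a decomposition into local and global contributions; and the explicit form of $U$ in \eqref{E:U} falls out of the strip Poisson kernel, not from a lattice sum for a determinant. Your $\zeta$-determinant route should also succeed (the functional determinant of the annulus is known and Polyakov--Alvarez does control the boundary variation), but it is heavier analytically. Finally, two smaller points: the paper restricts to Markovian vector fields (those with $\Re(\bar z v(z))<0$ on $\S^1$), which span $\C[z]\partial_z$ and for which $\phi_t(\bar\D)\subset\D$ for small $t$ in a cone --- this removes the need for your cutoff/truncation argument; and the $t$ vs.\ $\bar t$ separation is obtained by analytic continuation of $t$ in that cone rather than via the reflection symmetry $z\mapsto\bar z$ of $\nu_\D$.
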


 Notice that $\lim_{q \to 0^+} U(q) = 0$. As a consequence, the term featuring $U(q(\eta))$ disappears when passing to the limit to get the integration by parts formula in the sphere as stated in Theorem~\ref{T:ibp}. This term arises in computations with the Brownian bubble measure on the annulus $\A_q$, and is related to the boundary excursion kernel (see Section \ref{S:pf_ibp} for details). The limit $U(q)\to0$ can be interpreted probabilistically as the fact that $\{0\}$ is polar for the bubble measure in $\D$.

\noindent\textbf{Brownian loop measure and Brownian bubble measure}
The proof of Proposition \ref{prop:ibp_disc} will make use of two families of measures on Brownian paths (measures on continuous paths of finite duration).
The first one is the family of Brownian loop measures $\loopmeasure_D$, for any domain $D$, whose definition is given in \eqref{E:def_BLM} and \eqref{E:BLM_restriction}.
Secondly,
for a smooth bounded simply connected domain $D$ (we will only consider discs) and a boundary point $z \in \partial D$, the Brownian bubble measure $\bubmeasure_{D,z}$ in $D$ rooted at $z$ is a $\sigma$-finite measure on Brownian excursions from $z$ to $z$ in $D$. It is defined as
\[ 
\bubmeasure_{D,z} = \pi \lim_{\eps \to 0^+} \frac{1}{\eps} H_D(z+\eps n_z,z) \mu_D^\#(z+\eps n_z, z),
\]
where $n_z$ denotes the inward unit normal vector at $z$, $H_D$ is the Poisson kernel and $\mu_D^\#(z+\eps n_z, z)$ is the probability law of Brownian motion starting at $z+\eps n_z$, killed upon reaching $\partial D$, and conditioned on leaving $\partial D$ through $z$. We refer to \cite[Section 3.4]{lawler2004brownian} for more details.
The Brownian loop measure and the bubble measures are related by
(see \cite[Proposition 8]{lawler2004brownian}):
\begin{equation}
    \label{E:decompo_BLM}
\loopmeasure_\C = \frac{1}{\pi} \int_\C |\d z|^2 ~ \bubmeasure_{|z|\D,z}.
\end{equation}
This amounts to rerooting the loop at its point of maximal modulus. Hence, the identity above in fact holds when one sees the measures on both sides as measures on \emph{unrooted} loops.

The definition \eqref{E:def_BLM} of $\loopmeasure_\C$ uses Brownian bridge measures $\P_\C^{t,z,z}$. For fixed $t$ and $z$, computing the $\P_\C^{t,z,z}$-probability of a given crossing event can be quite involved, whereas the probability of the same event with respect to an excursion measure (with random duration) is usually much more explicit. We will thus use the bubble measure (and the decomposition \eqref{E:decompo_BLM}) as an effective tool to compute the $\loopmeasure_\C$-measure of some crossing event.
See in particular \eqref{E:pf_cross_bub0}.

\medskip

Given a subdomain $D \subset \D$ of the unit disc containing 0 and such that $\C \setminus D$ is non polar,
the Brownian loop measure naturally appears in the Radon--Nikodym derivative of $\nu_D$ with respect to $\nu_\D$. Indeed, the SLE loop measure is a weak restriction measure on $\cJ_{0,\infty}$ and thus satisfies (see Definition \ref{D:restriction_measure}):
\[ 
\d \nu_D(\eta) = e^{\frac{c_\rM}{2}(\Lambda^*(\eta,\partial D) - \Lambda^*(\eta,\partial \D)) } \indic{\eta \subset D}\d\nu_\D(\eta).
\]
By the restriction property \eqref{E:BLM_restriction} of the Brownian loop measure, one further gets that
\begin{equation}
    \label{E:RN}
\d \nu_D(\eta) = e^{\frac{c_\rM}{2}\Lambda_\D(\eta,\D \setminus D)} \indic{\eta \subset D}\d\nu_\D(\eta),
\quad \text{where} \quad
\Lambda_\D(\eta,\D \setminus D) = \loopmeasure_\D (\text{hit both } \eta \text{ and } \D \setminus D).
\end{equation}
Here, ``$\text{hit both } \eta \text{ and } \D \setminus D$" is the event that the sample of $\loopmeasure_D$ intersects both $\eta$ and the boundary of $\D\setminus D$. 
This transparent notation will be used throughout the proof.

\begin{proof}[Proof of Proposition \ref{prop:ibp_disc}]
In this proof, we assume that $\nu$ is any weak restriction measure on $\cJ_{0,\infty}$.
We will prove \eqref{E:prop_ibp_disc} for so-called Markovian vector fields
$\rv=v\del_z\in\C[z]\del_z$ (following the terminology of \cite{BGKRV22}), i.e. vector fields satisfying $\Re(\bar{z} v(z)) < 0$ for all $z \in \S^1$. This will be enough since they linearly generate $\C[z]\del_z$ (any vector field in $\C[z]\del_z$ can be made Markovian by adding a sufficiently large multiple of $\rv_0$).
So consider a Markovian vector field $\rv \in \C[z] \partial_z$ and the associated flow $\phi_t(z) = z + tv(z) + o(t)$ defined for small complex values of $t$. Because $\rv$ is Markovian, when $t$ belongs to a cone $\{ \arg(t) \in (-\delta, \delta) \}$ that is narrow enough,
the maps $\phi_t$ are well defined for all $t$ in this cone and contract the unit disc, i.e. $\phi_t(\bar{\D}) \subset \D$ \cite[Lemma 2.2]{BGKRV22}. In this proof we will always restrict ourselves to such values of $t$.
Let $F,G \in \cC$.
By dominated convergence theorem, $(\cL_\rv F, G)_{L^2(\nu_\D)}$ corresponds to the $t$-coefficient in the expansion of
\begin{align*}
\int ( F(\phi_t(\eta)) - F(\eta) ) \overline{G(\eta)} \d\nu_\D(\eta).
\end{align*}
By conformal invariance (see Definition \ref{D:restriction_measure}) and then by \eqref{E:RN} (using that $\phi_t(\bar{\D}) \subset \D$), the above display equals
\begin{align}
    \label{E:pf_P_ibp_disc}
& \int F(\eta) \overline{G(\phi_t^{-1}(\eta))} \d\nu_{\phi_t(\D)}(\eta)
- \int F(\eta) \overline{G(\eta)} \d\nu_{\D}(\eta) \\
& = \int F(\eta) \Big( \overline{G(\phi_t^{-1}(\eta))}  e^{\frac{c_\rM}{2}\Lambda_\D(\eta,\phi_t(\S^1))} \indic{\eta \subset \phi_t(\D)} - \overline{G(\eta)}\Big)\d\nu_\D(\eta).
\notag
\end{align}
In Lemmas \ref{L:loopmeasure_bub} and \ref{L:bub} below, we will show that
\begin{equation}\label{E:pf_prop_ibp_disc2}
    \Lambda_\D(\eta,\phi_t(\S^1))
= - t \Big( \frac{1}{6} (\cS \psi_\eta, \rv) + U(q(\eta)) (\psi_\eta^{-2}{\psi_\eta'}^2, \rv) \Big)
- \bar t \Big( \frac{1}{6} (\cS \psi_\eta, \rv^*) + U(q(\eta)) (\psi_\eta^{-2}{\psi_\eta'}^2, \rv^*) \Big) + o(t),
\end{equation}
where we recall that $\psi_\eta$ is defined in Notation \ref{N:annulus}.
The $t$-coefficient in the expansion of the term in parenthesis on the right hand side of \eqref{E:pf_P_ibp_disc} is thus equal to
\[
- \overline{\bar \cL_\rv G(\eta)} - \frac{c_\rM}{12} \overline{ G(\eta) } (\cS \psi_\eta, \rv) - \frac{c_\rM}{2} U(q(\eta)) \overline{G(\eta)} (\psi_\eta^{-2}{\psi_\eta'}^2, \rv).
\]
By dominated convergence theorem we then get \eqref{E:prop_ibp_disc}.

The rest of the section is dedicated to the proof of \eqref{E:pf_prop_ibp_disc2}. Up to considering the vector field $e^{i\theta}\rv$ for $|\theta|<\delta$, it suffices to consider the case of small positive values of $t$. We decompose the proof into two steps, corresponding to Lemmas \ref{L:loopmeasure_bub} and \ref{L:bub} respectively.

\begin{lemma}\label{L:loopmeasure_bub}
The following derivative estimate holds:
\begin{equation}
\lim_{\substack{t \to 0\\t>0}} \frac{1}{t} \Lambda_\D(\eta,\phi_t(\S^1))
=
-\frac{1}{\pi} \int_0^{2\pi} \d \theta~ \bubmeasure_{\D,e^{i\theta}}(\text{hit } \eta) \Re( e^{-i\theta} v(e^{i\theta}) ).
\end{equation}
\end{lemma}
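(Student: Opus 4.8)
The plan is to realise $\Lambda_\D(\eta,\phi_t(\S^1))$ as the $\loopmeasure_\D$-mass of loops that hit $\eta$ and are \emph{not} contained in $\phi_t(\D)$, and then to expand this mass to first order in $t$ by resolving the short excursions of such loops into the thin collar $\D\setminus\overline{\phi_t(\D)}$. First I would record the following reduction: since $\rv$ is Markovian and $\eta$ is a fixed compact subset of $\D$, there is $\delta>0$ such that for $0<t<\delta$ the map $\phi_t$ contracts $\overline\D$, $\phi_t(\D)\subsetneq\D$, $\eta\subset\phi_t(\D)$, and $\phi_t(\S^1)$ is a smooth Jordan curve lying at positive distance from $\eta$. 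A loop in $\D$ meeting $\phi_t(\S^1)$ either stays in $\overline{\phi_t(\D)}$ while touching $\phi_t(\S^1)$ — a $\loopmeasure_{\phi_t(\D)}$-null, hence (by the restriction property $\loopmeasure_{\phi_t(\D)}=\loopmeasure_\D|_{\{\subset\phi_t(\D)\}}$) a $\loopmeasure_\D$-null event, by the standard fact that the Brownian loop measure in a domain gives no mass to loops touching its boundary — or else it exits $\phi_t(\D)$; and if it additionally hits $\eta\subset\phi_t(\D)$ then, not being contained in $\phi_t(\D)$, it crosses $\phi_t(\S^1)$. Hence up to a null set
\[
\Lambda_\D(\eta,\phi_t(\S^1))=\loopmeasure_\D\big(\{\text{hit }\eta\}\cap\{\text{not contained in }\phi_t(\D)\}\big),
\]
and the quantity on the right is finite for $t\in(0,\delta)$ because $\eta$ and $\phi_t(\S^1)$ are disjoint compacts.

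Next I would describe the collar. Since $\phi_t(e^{i\theta})=e^{i\theta}+tv(e^{i\theta})+o(t)$, the inward normal half-width of $\D\setminus\overline{\phi_t(\D)}$ at $e^{i\theta}$ equals $-t\,\Re(e^{-i\theta}v(e^{i\theta}))+o(t)$, which is positive by the Markovian hypothesis and controlled uniformly in $\theta$ because $v$ is a polynomial. A loop hitting $\eta$ that is not contained in $\phi_t(\D)$ makes one or more excursions from $\phi_t(\S^1)$ into this collar. Using elementary estimates for Brownian motion near a smooth boundary arc — the $\loopmeasure_\D$-mass of loops that hit $\eta$ and reach depth $r$ into the collar is $O(r)$, uniformly in $t$ — I would argue that the mass of loops making two or more such excursions, or one excursion that travels far along the collar, is $o(t)$; so to first order it suffices to account for loops with a single shallow excursion.

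Then I would resolve a single such excursion by a first-entrance/last-exit decomposition at $\phi_t(\S^1)$, and invoke the convergence of suitably rescaled Brownian excursions near a smooth boundary point to the Brownian bubble measure, together with the conformal covariance of $\bubmeasure$ (see \cite{lawler2004brownian}). The upshot is that the $\loopmeasure_\D$-mass of loops hitting $\eta$ whose excursion into the collar is rooted near $e^{i\theta}$ within an arc of length $d\theta$ contributes, to leading order, $\tfrac1\pi\,\big(-t\,\Re(e^{-i\theta}v(e^{i\theta}))\big)\,\bubmeasure_{\D,e^{i\theta}}(\text{hit }\eta)\,d\theta$ — the collar half-width entering linearly, and the constant $\tfrac1\pi$ coming from the heat-kernel normalisation ($\tfrac1{2\pi s}$) in the definition of $\loopmeasure_\D$ for the generator $\tfrac12\Delta$. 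Integrating over $\theta\in[0,2\pi)$, collecting the $o(t)$ error, and dividing by $t$ then gives
\[
\lim_{\substack{t\to0\\t>0}}\frac1t\,\Lambda_\D(\eta,\phi_t(\S^1))=-\frac1\pi\int_0^{2\pi}\bubmeasure_{\D,e^{i\theta}}(\text{hit }\eta)\,\Re(e^{-i\theta}v(e^{i\theta}))\,d\theta .
\]

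The hard part will be the last step: making the passage from thin-collar loop-measure mass to the Brownian bubble measure rigorous, with the \emph{exact} constant $\tfrac1\pi$, uniformly in the angular variable along which the collar width varies, while simultaneously pushing the contributions of deep or repeated excursions into the $o(t)$ remainder. This requires quantitative hitting estimates for the Brownian loop measure near a smooth boundary arc that are uniform in $t$, combined with the conformal covariance of $\bubmeasure$ used to transport the explicit half-plane/boundary-point computation to the disc; tracking the signs carefully (the Markovian condition making $-\Re(e^{-i\theta}v(e^{i\theta}))>0$, consistent with $t\mapsto\Lambda_\D(\eta,\phi_t(\S^1))$ increasing from $0$) then yields the stated formula.
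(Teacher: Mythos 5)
Your reduction in the first paragraph (identifying $\Lambda_\D(\eta,\phi_t(\S^1))$ with the $\loopmeasure_\D$-mass of loops hitting $\eta$ that exit $\phi_t(\D)$, and dismissing the boundary-touching event as null) is correct, and your ``thin collar'' picture with half-width $-t\,\Re(e^{-i\theta}v(e^{i\theta}))+o(t)$ is the right heuristic. But the route you take from there is genuinely different from the paper's, and it has a real gap precisely at the step you flag as ``the hard part''.

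The paper avoids all the first-entrance/last-exit excursion analysis by starting from the exact rooting identity of Lawler--Werner (\cite[Proposition~8]{lawler2004brownian}):
\[
\loopmeasure_\C \;=\; \frac{1}{\pi}\int_\C |\d z|^2\;\bubmeasure_{|z|\D,\,z},
\]
i.e.\ one roots each loop at its point of maximal modulus. Restricting to $\D$ and passing to polar coordinates gives immediately
\[
\Lambda_\D(\eta,\phi_t(\S^1)) \;=\; \frac{1}{\pi}\int_0^1 r\,\d r\int_0^{2\pi}\d\theta\;\bubmeasure_{r\D,\,re^{i\theta}}\big(\text{hit both }\phi_t(\S^1)\text{ and }\eta\big).
\]
The factor $1/\pi$ and the bubble measure are then present from the start; the asymptotics in $t$ reduce to splitting the $r$-integral into the regions where the root $re^{i\theta}$ lies inside $\phi_t(\D)$ (contributing $0$), near the collar boundary (contributing $o(t)$), or outside $\phi_t(\D)$ (where the hitting condition on $\phi_t(\S^1)$ becomes automatic and the bubble measure of hitting $\eta$ converges to $\bubmeasure_{\D,e^{i\theta}}(\text{hit }\eta)$ by continuity). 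No excursion decomposition, no rescaled-excursion convergence, no separate control of repeated excursions is needed.

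Your proposal, by contrast, would require reproving a limit form of exactly this identity from scratch: the ``one shallow excursion dominates'' step, the appearance of $\bubmeasure_{\D,e^{i\theta}}$ as a scaling limit of excursions rooted on $\phi_t(\S^1)$, and above all the exact constant $1/\pi$ are all left at the heuristic level (you attribute $1/\pi$ to ``heat-kernel normalisation'', which is where it ultimately comes from, but extracting it through a first-entrance/last-exit decomposition at a moving boundary, uniformly in $\theta$ as the collar width varies, is the entire content of the lemma). The argument is not wrong in spirit, but as written the conclusion does not follow; the missing ingredient is the Lawler--Werner rooting decomposition, which the paper invokes to close precisely this gap.
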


\begin{proof}
Using the decomposition \eqref{E:decompo_BLM} of the Brownian loop measure, we have
\begin{equation}
\label{E:pf_cross_bub0}
\Lambda_\D(\eta,\phi_t(\S^1)) = \frac{1}{\pi} \int_0^1 \d r ~ r \int_0^{2\pi} \d \theta ~\bubmeasure_{r\D,r e^{i\theta}}(\text{hit~both~} \phi_t(\S^1) \text{ and } \eta).
\end{equation}
We also record the following estimate: for any $\theta \in [0,2\pi]$,
$\phi_t(e^{i\theta}) = e^{i\theta} + t v(e^{i\theta}) + o(t)$
implying that
\begin{equation}
\label{E:pf_cross_bub1}
|\phi_t(e^{i\theta})| = 1 + \Re(e^{-i\theta} v(e^{i\theta})) t + o(t).
\end{equation}

We now provide an upper bound for $\limsup_{t \to 0} \Lambda_\D(\eta,\phi_t(\S^1)) / t$.
Let $\delta >0$ be small but fixed and $C := \max_{0\le\theta \le 2\pi} \{- \Re(e^{-i\theta}v(e^{i\theta})) \}$. Let $\theta \in [0,2\pi]$.
By \eqref{E:pf_cross_bub1}, if $r \le 1 - (C+\delta) t$, then $\bubmeasure_{r\D,r e^{i\theta}}(\text{hit~} \phi_t(\S^1) )$ vanishes for $t$ small enough. Moreover, we claim that
if $1-(C+\delta)t \le r \le |\phi_t(e^{i\theta})| - \delta t$, then
\begin{equation}
\label{E:pf_cross_bub2}
\bubmeasure_{r\D,r e^{i\theta}}(\text{hit~} \phi_t(\S^1) ) \to 0 \quad \text{as} \quad t \to 0.
\end{equation}
Indeed, in this case and by \eqref{E:pf_cross_bub1} and continuity of $v$, there exists $c = c(\delta)>0$ such that for all $\theta' \in [\theta-c,\theta+c]$, $\phi_t(e^{i\theta'}) \notin r \D$ and for $\theta'$ outside this neighbourhood, $|\phi_t(e^{i \theta'})| \ge 1 - C t+o(t)$. So to get a chance of hitting $\phi_t(\S^1)$, the loop rooted at $re^{i\theta}$ would have to at least travel a macroscopic distance and hit a small region located near the boundary of $r\D$ (in the case where $\phi_t(\S^1) \cap r \D \neq \varnothing$). This shows \eqref{E:pf_cross_bub2}. Since the length of the interval $[1-(C+\delta)t , |\phi_t(e^{i\theta})| - \delta t]$ is of order $t$, it shows that
\begin{equation}
\label{E:pf_cross_bub3}
\lim_{t \to 0} \frac{1}{t} \int_0^{|\phi_t(e^{i\theta})| - \delta t} \d r ~r \bubmeasure_{r\D,r e^{i\theta}}(\text{hit~both~} \phi_t(\S^1) \text{ and } \eta) = 0.
\end{equation}
Consider now the case $r \ge |\phi_t(e^{i\theta})| - \delta t$.
We can bound
\[
\bubmeasure_{r\D,r e^{i\theta}}(\text{hit~both~} \phi_t(\S^1) \text{ and } \eta)
\le \bubmeasure_{r\D,r e^{i\theta}}(\text{hit~} \eta).
\]
The right hand side converges to $\bubmeasure_{\D,e^{i\theta}}(\text{hit~} \eta)$ as $r \to 1$.
Together with \eqref{E:pf_cross_bub1} and \eqref{E:pf_cross_bub3}, we obtain that 
\[
\limsup_{t \to 0} \frac{1}{t} \int_0^1 \d r ~r \bubmeasure_{r\D,r e^{i\theta}}(\text{hit~both~} \phi_t(\S^1) \text{ and } \eta) \le (-\Re(e^{-i\theta}v(e^{i\theta})) + \delta) \bubmeasure_{\D,e^{i\theta}}(\text{hit~} \eta).
\]
Since the left hand side does not depend on $\delta$, and going back to \eqref{E:pf_cross_bub0}, this shows that
\[
\limsup_{t \to 0} \frac{1}{t} \Lambda_\D(\eta,\phi_t(\S^1)) \le -\frac{1}{\pi} \int_0^{2\pi} \d \theta~ \bubmeasure_{\D,e^{i\theta}}(\text{hit } \eta) \Re( e^{-i\theta} v(e^{i\theta}) )
\]
as desired.

To get a lower bound, we simply bound
\[
\liminf_{t \to 0} \frac{1}{t} \Lambda_\D(\eta,\phi_t(\S^1))
\ge \frac{1}{\pi} \liminf_{t \to 0} \frac1t \int_0^{2\pi} \d \theta \int_{|\phi_t(e^{i\theta})| + \delta t}^1 \d r ~ r  \bubmeasure_{r\D,r e^{i\theta}}(\text{hit~both~} \phi_t(\S^1) \text{ and } \eta)
\]
where $\delta>0$ is small but fixed. If $r \ge |\phi_t(e^{i\theta})|+\delta t$ and $t$ is small enough, $re^{i\theta} \in \phi_t(\D)^c$ and
\[
\bubmeasure_{r\D,r e^{i\theta}}(\text{hit~both~} \phi_t(\S^1) \text{ and } \eta) = \bubmeasure_{r\D,r e^{i\theta}}(\text{hit~} \eta) = \bubmeasure_{\D,e^{i\theta}}(\text{hit~} \eta) - o(1)
\]
so that
\[
\liminf_{t \to 0} \frac{1}{t} \Lambda_\D(\eta,\phi_t(\S^1))
\ge - \frac{1}{\pi} \int_0^{2\pi} \d \theta \bubmeasure_{\D,e^{i\theta}}(\text{hit~} \eta) (\Re(e^{-i\theta}v(e^{i\theta})) + \delta).
\]
Sending $\delta \to 0$ finishes the proof.
\end{proof}

We now relate $\bubmeasure_{\D,e^{i\theta}}(\text{hit } \eta)$ to the quantity appearing in the statement of Proposition \ref{prop:ibp_disc}. Recall that $q(\eta)$ and $\psi_\eta$ are introduced in Notation \ref{N:annulus} and that $U(q)$ is defined in \eqref{E:U}.

\begin{lemma}\label{L:bub}
    Let $\eta$ be a Jordan curve in $\D$. Then, for any $\theta \in [0,2\pi]$,
    $e^{2i\theta} \cS \psi_\eta (e^{i\theta}) \in \R$ and $e^{i\theta} \psi_\eta'(e^{i\theta})/\psi_\eta(e^{i\theta}) \in \R$ are real numbers and
    \begin{equation}
        \label{E:L_bub}
\bubmeasure_{\D,e^{i\theta}}(\text{hit } \eta)
= \frac16 e^{2i\theta} \cS \psi_\eta (e^{i\theta})
+ e^{2i\theta} \frac{\psi_\eta'(e^{i\theta})^2}{\psi_\eta(e^{i\theta})^2} U(q(\eta)).
    \end{equation}
\end{lemma}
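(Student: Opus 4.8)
The plan is to express the Brownian bubble mass of hitting $\eta$ as a diagonal limit of a difference of boundary (excursion) Poisson kernels, to peel off the local Schwarzian anomaly near the root using conformal covariance, and to reduce everything to an explicit computation on the reference annulus $\A_{q}$.

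First, the two reality statements. Since $\psi_\eta$ maps $\S^1$ onto $\S^1$, its Schwarz-reflected extension satisfies $\iota\circ\psi_\eta\circ\iota=\psi_\eta$ near $\S^1$, where $\iota(z)=1/\bar z$. Differentiating $|\psi_\eta(e^{i\theta})|^2=1$ in $\theta$ shows that $e^{i\theta}\psi_\eta'(e^{i\theta})/\psi_\eta(e^{i\theta})$ is real (and positive, as $\psi_\eta$ is orientation preserving on $\S^1$), hence $e^{2i\theta}\psi_\eta'(e^{i\theta})^2/\psi_\eta(e^{i\theta})^2=|\psi_\eta'(e^{i\theta})|^2\ge 0$. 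The Schwarzian chain rule $\cS(\iota\circ\psi_\eta\circ\iota)(z)=z^{-4}\overline{\cS\psi_\eta}(1/z)$ combined with the reflection symmetry gives $\cS\psi_\eta(e^{i\theta})=e^{-4i\theta}\overline{\cS\psi_\eta(e^{i\theta})}$, i.e. $e^{2i\theta}\cS\psi_\eta(e^{i\theta})\in\R$.

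For the identity \eqref{E:L_bub}, I would use the standard identification of the Brownian bubble measure with a regularised difference of excursion Poisson kernels, $\bubmeasure_{D,z}(\text{hit }K)=\lim_{w\to z}\big(H_D(z,w)-H_{D\setminus K}(z,w)\big)$ (with the conventions of \cite{lawler2004brownian}): this is well defined because $D$ and $D\setminus K$ agree near $z$, so the diagonal singularities of the two kernels cancel. Applying it with $D=\D$ and $K=\mathrm{int}(\eta)$, then using the conformal covariance $H_{\A_\eta}(z,w)=|\psi_\eta'(z)||\psi_\eta'(w)|H_{\A_q}(\psi_\eta z,\psi_\eta w)$ (valid since $\psi_\eta$ extends conformally past $\S^1$), writing $H_{\A_q}=H_\D-(H_\D-H_{\A_q})$ and recognising $\lim_{w'\to z'}(H_\D-H_{\A_q})(z',w')=\bubmeasure_{\D,z'}(\text{hit }q\S^1)$, one gets
\[
\bubmeasure_{\D,e^{i\theta}}(\text{hit }\eta)=\lim_{w\to e^{i\theta}}\Big(H_\D(e^{i\theta},w)-|\psi_\eta'(e^{i\theta})\psi_\eta'(w)|\,H_\D(\psi_\eta e^{i\theta},\psi_\eta w)\Big)+|\psi_\eta'(e^{i\theta})|^2\,\bubmeasure_{\D,\psi_\eta(e^{i\theta})}(\text{hit }q\S^1).
\]
The first term is the conformal anomaly of the disc boundary Poisson kernel; since $H_\D(z,w)$ is a constant multiple of $|z-w|^{-2}$ for $z,w\in\S^1$, a Laurent expansion of $\psi_\eta(w)$ about $w=e^{i\theta}$ (using the reality of $e^{i\theta}\psi_\eta'/\psi_\eta$ to kill the pre-Schwarzian contributions) shows this limit equals $\tfrac16 e^{2i\theta}\cS\psi_\eta(e^{i\theta})$, the universal constant $\tfrac16$ being the Schwarzian cocycle of a single Brownian path. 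Since $|\psi_\eta'(e^{i\theta})|^2=e^{2i\theta}\psi_\eta'(e^{i\theta})^2/\psi_\eta(e^{i\theta})^2$ and $\bubmeasure_{\D,z_0}(\text{hit }q\S^1)$ is rotation invariant, the identity reduces to proving $\bubmeasure_{\D,z_0}(\text{hit }q\S^1)=U(q)$.

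For this last computation I would uniformise $\A_q=\D\setminus\overline{q\D}$ onto the flat cylinder $\mathcal{C}_L=\{0<\Re w<L\}/2\pi i\Z$ of length $L=|\log q|$ via $w=-\log z$, and compute $H_{\A_q}$ with both points on $\S^1$ from the cylinder Green function. The latter is explicit by the method of images (reflections across the two ends $\Re w=0,L$ together with $2\pi i$-periodicity), giving a double series. Comparing its diagonal limit with $H_\D$ produces the elementary constants $\tfrac1{12}+\tfrac{\pi^2}{12L^2}$ — the $\tfrac1{12}$ from $|e^{i\alpha}-e^{i\beta}|^{-2}=(\alpha-\beta)^{-2}+\tfrac1{12}+\cdots$ against the flat kernel $(\alpha-\beta)^{-2}$, the $\tfrac{\pi^2}{12L^2}$ from the angular images — and the series $-\tfrac{\pi^2}{2L^2}\sum_{n\ge1}\sinh^{-2}(n\pi^2/L)$ from the images across the inner end, yielding $U(q)$ as in \eqref{E:U}. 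The limit $U(q)\to0$ as $q\to0$ (i.e. $\{0\}$ polar for the bubble measure in $\D$) is a useful consistency check matching the remark before the proof.

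The main obstacle I expect is the explicit annulus/cylinder step: pinning down the correct normalisation relating the bubble-hitting mass to the regularised difference of excursion Poisson kernels, carrying out the method-of-images summation for $\mathcal{C}_L$, and tracking the conformal anomaly of $\log$ so that the universal constants come out with the right signs. The anomaly computation of the second paragraph (establishing the constant $\tfrac16$) is a finite but somewhat delicate Laurent manipulation, and could alternatively be obtained by transporting the statement to the half-plane via a known transformation rule for the boundary Poisson kernel.
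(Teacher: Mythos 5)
Your plan follows essentially the same route as the paper's proof: write the bubble mass as a diagonal limit of the difference of boundary Poisson kernels via the restriction property, transfer $H_{\A_\eta}$ to the reference annulus by conformal covariance of $\psi_\eta$, and split the result into a local Schwarzian anomaly plus $|\psi_\eta'(e^{i\theta})|^2$ times the annulus bubble mass $U(q)$. The only tactical deviation is that the paper sidesteps the Laurent expansion you flag as delicate by using $|z-w|^{-2}=-\Re\{zw/(z-w)^2\}$ on $\S^1$ together with $\partial_z\partial_w\log\frac{\psi_\eta(z)-\psi_\eta(w)}{z-w}\big|_{z=w}=\tfrac16\cS\psi_\eta(z)$, and it computes $H_{\A_q}$ by summing the explicit strip Poisson kernel $\tfrac{\pi}{4}\sinh^{-2}(\tfrac{\pi}{2}\cdot)$ over angular windings rather than by the method of images you propose — both of which land on $U(q)$.
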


\begin{remark}
    This formula can be seen as an extension of \cite[Equation (13)]{lawler2004brownian} to a non simply connected setup.
\end{remark}

\begin{proof}
We start by checking that $e^{2i\theta} \cS \psi_\eta(e^{i\theta}) \in \R$.
This follows from the following more general result.
Let $\chi$ be an analytic diffeomorphism of $\S^1$. Then 
\begin{equation}
    \label{E:chi_real}
\text{for all } z\in\S^1, \quad z^2\cS\chi(z)\in\R.
\end{equation}
To prove \eqref{E:chi_real}, we write locally $\chi(e^{i\theta})=e^{ih(\theta)}$ for some analytic function $h$ defined in a complex neighbourhood of the real line, with $h(\theta)\in\R$ if $\theta\in\R$. In particular, $\cS h\in\R$ on $\R$ and $h'\in\R$ on $\R$. Let us denote $e_\alpha(z):=e^{\alpha z}$ and observe that $\cS e_\alpha(z)=-\frac{\alpha^2}{2}$. By the chain rule for the Schwarzian, we get
\[\frac{1}{2}-e^{2i\theta}\cS\chi(e^{i\theta})=\cS h(\theta)+\frac{1}{2}h'(\theta)^2.\]
Hence, $e^{2i\theta}\cS\chi(e^{i\theta})=-\cS h(\theta)+\frac{1}{2}(1-h'(\theta)^2)$ is real for any $\theta\in\R$. This proves \eqref{E:chi_real}. The proof that
\begin{equation}\label{E:pf_L_bub1}
    e^{i\theta} \frac{\psi_\eta'(e^{i\theta})}{\psi_\eta(e^{i\theta})} = |\psi_\eta'(e^{i\theta})|
\end{equation}
proceeds similarly and we omit the details.

We now turn to the proof of \eqref{E:L_bub}. Let $\theta \in [0,2\pi]$. 
We start by writing
\[
\bubmeasure_{\D, e^{i\theta}}(\text{hit } \eta) = \pi \lim_{\theta' \to \theta} \mu_{\D, e^{i\theta}, e^{i\theta'}}^\exc (\text{hit } \eta)
\]
where $\mu_{\D, e^{i\theta}, e^{i\theta'}}^\exc$ is a measure on Brownian excursions from $e^{i\theta}$ to $e^{i\theta'}$ whose total mass equals the boundary Poisson kernel $H_\D(e^{i\theta}, e^{i\theta'})$. See \cite[Sections 3.3 and 3.4]{lawler2004brownian}. By the restriction property, we further have
\[
\bubmeasure_{\D, e^{i\theta}}(\text{hit } \eta) = \pi \lim_{\theta' \to \theta} (|\mu_{\D, e^{i\theta}, e^{i\theta'}}^\exc| - |\mu_{\A_\eta, e^{i\theta}, e^{i\theta'}}^\exc|)
= \pi \lim_{\theta' \to \theta} (H_\D(e^{i\theta}, e^{i\theta'}) - H_{\A_\eta}(e^{i\theta}, e^{i\theta'})).
\]
The rest of the proof is then concerned with Poisson kernel computations. The boundary Poisson kernel in the unit disc is explicit and given by $H_\D(z,w) = \frac{1}{\pi|z-w|^2}$, for all $z,w \in \S^1$.
To deal with the Poisson kernel in $\A_\eta$, we first notice that, by conformal covariance of the boundary Poisson kernel \cite[Section 3.3]{lawler2004brownian},
\[
H_{\A_\eta}(e^{i\theta}, e^{i\theta'}) = |\psi_\eta'(e^{i\theta})| |\psi_\eta'(e^{i\theta'})| H_{\A_{q(\eta)}}(\psi_\eta(e^{i\theta}),\psi_\eta(e^{i\theta'})).
\]
The multivalued conformal map $z \mapsto \log z/|\log q|$ maps the annulus $\A_q$ to the vertical strip $V:= \{ z \in \C: -1 < \Re(z) < 0\}$, so
\[
H_{\A_q}(e^{i\theta},e^{i\theta'}) = \frac{1}{|\log q|^2} \sum_{n \in \Z} H_V \Big( i\frac{\theta}{|\log q|},i \frac{\theta' + 2n\pi}{|\log q|} \Big),
\]
the factor $1/|\log q|^2$ coming from the derivative of the conformal map.
    The boundary Poisson kernel in the strip $V$ is explicit (this boils down to finding an explicit conformal map from $V$ to the unit disc; see e.g. \cite{widder1961functions}): for all $y, y' \in \R$,
    \[
    H_V(iy,iy') = \frac{\pi}{4} \sinh^{-2} \Big( \frac{\pi}{2}(y-y') \Big).
    \]
    This shows that
    \begin{align*}
    & H_{\A_q}(e^{i\theta},e^{i\theta'}) = \frac{\pi}{4} \frac{1}{|\log q|^2} \sum_{n \in \Z} \sinh^{-2} \Big( \frac{\pi}{2} \frac{\theta'-\theta+2n\pi}{|\log q|} \Big) \\
    & ~~~ = \frac{1}{\pi|e^{i\theta} - e^{i\theta'}|^2} - \frac{1}{12 \pi} - \frac{\pi}{12|\log q|^2} + \frac{\pi}{2|\log q|^2} \sum_{n \ge 1} \sinh^{-2} \Big( \frac{n\pi^2}{|\log q|} \Big) + o(1)
    \qquad \text{as } \theta'\to \theta.
    \end{align*}
    Putting things together and using the notation $U(q)$ \eqref{E:U},
    \begin{align}
    \label{E:pf_L_bub}
        \bubmeasure_{\D,e^{i\theta}}(\text{hit } \eta)
        & = \lim_{\theta' \to \theta} \Big( \frac{1}{|e^{i\theta}-e^{i\theta'}|^2} - \frac{|\psi_\eta'(e^{i\theta})\psi_\eta'(e^{i\theta'})|}{|\psi_\eta(e^{i\theta'}) - \psi_\eta(e^{i\theta})|^2} \Big)
        + |\psi_\eta'(e^{i\theta})|^2 U(q(\eta)).
    \end{align}
    To compute the above limit, one can do some tedious Taylor expansion computations and find the result. We proceed differently by noticing that for any distinct $z, w \in \S^1$,
    \[
    \frac{1}{|z-w|^2} = - \Re \Big\{ \frac{z w}{(z-w)^2} \Big\}.
    \]
    Combining this with \eqref{E:pf_L_bub1}, we deduce that
    \[
    \lim_{\theta' \to \theta} \Big( \frac{1}{|e^{i\theta}-e^{i\theta'}|^2} - \frac{|\psi_\eta'(e^{i\theta})\psi_\eta'(e^{i\theta'})|}{|\psi_\eta(e^{i\theta'}) - \psi_\eta(e^{i\theta})|^2} \Big)
    = \lim_{\theta' \to \theta} \Re \Big\{ e^{i\theta} e^{i\theta'} \Big( \frac{\psi_\eta'(e^{i\theta})\psi_\eta'(e^{i\theta'})}{(\psi_\eta(e^{i\theta}) - \psi_\eta(e^{i\theta'}))^2} - \frac{1}{(e^{i\theta} - e^{i\theta'})^2} \Big) \Big\}.
    \]
    The term in parenthesis on the right hand side converges as $\theta' \to \theta$ to
    \[
    \frac{\partial^2}{\partial z \partial w} \log \Big( \frac{\psi_\eta(z) - \psi_\eta(w)}{z-w} \Big) \Big\vert_{z=w=e^{i\theta}}
    \]
    which agrees with $\cS \psi_\eta(e^{i\theta})/6$ by an elementary property of the Schwarzian derivative. Going back to \eqref{E:pf_L_bub}, recalling that $e^{2i\theta} \cS \psi_\eta(e^{i\theta}) \in \R$ and using \eqref{E:pf_L_bub1} once more, we have obtained \eqref{E:L_bub}
    as desired.
\end{proof}

Combining Lemmas \ref{L:loopmeasure_bub} and \ref{L:bub}, we obtain
\begin{align*}
\lim_{\substack{t \to 0\\t>0}} \frac{1}{t} \Lambda_\D(\eta,\phi_t(\S^1))
& = - \Re \Big\{ \frac{1}{\pi} \int_0^{2\pi} \d \theta \Big( \frac16 e^{2i\theta} \cS \psi_\eta (e^{i\theta})
+ e^{2i\theta} \frac{\psi_\eta'(e^{i\theta})^2}{\psi_\eta(e^{i\theta})^2} U(q(\eta)) \Big) e^{-i \theta} v(e^{i\theta}) \Big\} \\
& = -2 \Re \Big\{ \frac{1}{6} (\cS \psi_\eta, \rv) + U(q(\eta)) (\psi_\eta^{-2}{\psi_\eta'}^2, \rv) \Big\}
\end{align*}
which corresponds to the asymptotic behaviour \eqref{E:pf_prop_ibp_disc2} when $t$ is real.
This concludes the proof of Proposition \ref{prop:ibp_disc}.
\end{proof}

	\subsection{Proof of Theorem \ref{T:ibp}}\label{S:pf_ibp}

In this section, we will use a limiting procedure together with Proposition~\ref{prop:ibp_disc} to prove Theorem \ref{T:ibp}. We start with two lemmas.

\begin{lemma}\label{L:convergence_nu}
We have $(\log R)^\frac{c_\rM}{2}\nu_{R\D}\to\nu$ weakly as $R\to\infty$.
Moreover, the same holds for any weak restriction measure on $\cJ_{0,\infty}$.
\end{lemma}
\begin{proof}
In the case of the SLE loop measure $\nu$, this follows from \cite{Zhan21_SLEloop}. Indeed,
by the last sentence before \cite[Remark 5.3]{Zhan21_SLEloop}, $|\log \epsilon|^\frac{c_\rM}{2}\nu_{\epsilon\D^*}\to\nu$ weakly as $\epsilon\to0$. The result then follows by taking an inversion and by conformal invariance of $\nu_{\hat \C}$.
Consider now any any weak restriction measure $\nu^\MKS$ on $\cJ_{0,\infty}$.
By Definition \ref{D:restriction_measure} of weak restriction measures,
\[
\nu^\MKS = \lim_{R \to \infty} \ind_{\eta \subset R \D} \d \nu^\MKS(\eta)
= \lim_{R\to\infty} e^{-\frac{c_\rM}{2}\Lambda^*(\eta,R \S^1)}  \d \nu^\MKS_{R \D}(\eta)
\]
and $\Lambda^*(\eta,R \S^1) = \Lambda^*(\iota(\eta),R^{-1}\S^1) = -\log \log R + o(1)$ (this term does not depend on the specific choice of weak restriction measure on $\cJ_{0,\infty}$ and is thus the same as the one in \cite[p.~385]{Zhan21_SLEloop}).
\end{proof}

\begin{lemma}\label{L:conf_eps}
Let $\eta \in \cJ_{0,\infty}$ and
$e^{-\tilde c}g: \D^* \to \C \setminus \overline{\inte(\eta)}$ be a conformal map uniformising the exterior of $\eta$. Let $R_0>0$ be such that $\overline{\inte(\eta)} \subset R_0 \D$. For $R\ge R_0$, let $\psi_{\eta,R} : R \D \setminus \overline{\inte(\eta)} \to\A_{q(\eta,R)}$, for some $q(\eta,R) \in (0,1)$, be the conformal map with $q(\eta,R)^{-1} \psi_{\eta,R} \circ (e^{-\tilde c} g)(1) = 1$. Then, $q(\eta,R)^{-1} \psi_{\eta,R} \circ (e^{-\tilde c} g) \to \id_{\D^*}$ uniformly on $\overline{\D^*}$ as $R \to \infty$. More precisely,
$q(\eta,R)^{-1} \psi_{\eta,R} \circ (e^{-\tilde c} g)$ extends to a quasiconformal diffeomorphism of $\overline{\D^*}$ that converges uniformly.
\end{lemma}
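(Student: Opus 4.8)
I would first record the basic picture. Writing $G := e^{-\tilde{c}} g \colon \D^* \to \mathrm{ext}(\eta) := \hat{\C}\setminus\overline{\inte(\eta)}$, which extends to a homeomorphism $\overline{\D^*} \to \overline{\mathrm{ext}(\eta)}$ with $G(\S^1) = \eta$ by Carath\'eodory's theorem, one has for $R \ge R_0$ a doubly connected domain $D_R := R\D \setminus \overline{\inte(\eta)}$ (with boundary $\eta \cup R\S^1$, since $\overline{\inte(\eta)} \subset R_0\D$), and its preimage $\Omega_R := G^{-1}(D_R)$, a doubly connected domain with inner boundary $\S^1$ and outer boundary $\gamma_R := G^{-1}(R\S^1)$. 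Since $G(z) \in \C$ for every finite $z \in \D^*$, the domains $\Omega_R$ increase to $\D^* \setminus \{\infty\}$ as $R \to \infty$. The map in the statement is $h_R := q(\eta,R)^{-1}\psi_{\eta,R}\circ G$, which is a conformal isomorphism of $\Omega_R$ onto the round annulus $\{1 < |w| < 1/q(\eta,R)\}$ taking $\S^1$ to $\S^1$ and $\gamma_R$ to $\{|w| = 1/q(\eta,R)\}$, normalised by $h_R(1) = 1$. The goal is to prove $h_R \to \id$ with enough uniformity to produce the asserted quasiconformal extension.

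\textbf{Step 1 (moduli).} The quantitative crux, which I would establish first, is $1/q(\eta,R) = R\,e^{O(1)}$. Using $g(z) = z + O(1)$ near $\infty$, the curve $\gamma_R$ lies in $\{\tfrac12 e^{\tilde{c}}R < |z| < 2e^{\tilde{c}}R\}$ while $\{1 < |z| < \tfrac12 e^{\tilde{c}}R\} \subset \Omega_R$ for $R$ large, so $\mathrm{mod}(\Omega_R) = \tfrac{1}{2\pi}\log R + O(1)$; equating this with $\mathrm{mod}(\{1<|w|<1/q(\eta,R)\}) = \tfrac{1}{2\pi}\log(1/q(\eta,R))$ gives the claim, and in particular $q(\eta,R) \to 0$.

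\textbf{Step 2 (convergence near $\S^1$).} Next I would introduce $L_R := \log(h_R(z)/z)$, which is a well-defined holomorphic function on $\Omega_R$ (because $h_R$ is non-vanishing there and $h_R(z)/z$ has winding number $0$ along a separating loop) with $L_R(1) = 0$. Its real part $\log|h_R/z|$ is harmonic, vanishes on $\S^1$, and by Step 1 is $O(1)$ on $\gamma_R$; comparing harmonic measures with the round sub-annulus of $\Omega_R$ of outer radius $\tfrac12 e^{\tilde{c}}R$ should give
\[ |\Re L_R(z)| \le C\,\frac{\log|z|}{\log R}, \qquad 1 \le |z| \le \tfrac14 e^{\tilde{c}}R . \]
Since $h_R(1/\bar z) = 1/\overline{h_R(z)}$, Schwarz reflection extends $L_R$ holomorphically across $\S^1$, and feeding the last bound into the Borel--Carath\'eodory inequality together with $L_R(1) = 0$ should yield $\sup_{1 \le |z| \le M_R}|h_R(z) - z| \to 0$ for a slowly growing $M_R \to \infty$ (say $M_R = \sqrt{\log R}$). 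In particular $h_R \to \id$ uniformly, with all derivatives, on compact subsets of $\overline{\D^*} \setminus \{\infty\}$ — which is the form in which this lemma is used to take the limit of $\cS\psi_{\eta,R}$ along $\S^1$ in the proof of Theorem \ref{T:ibp}.

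\textbf{Step 3 (quasiconformal extension) and the main obstacle.} Finally, fixing a smooth cutoff $\chi$ on $[1,\infty)$ equal to $1$ near $1$ and to $0$ on $[2,\infty)$, I would set $\Phi_R(z) := z + \chi(|z|/M_R)(h_R(z) - z)$ on $\overline{\D^*}\setminus\{\infty\}$ and $\Phi_R(\infty) := \infty$. Then $\Phi_R$ coincides with $q(\eta,R)^{-1}\psi_{\eta,R}\circ G$ near $\S^1$ and with $\id$ off $\{|z| \le 2M_R\}$; as $h_R$ is holomorphic on the interpolation annulus, $\del_{\bar z}\Phi_R = \chi'(|z|/M_R)\tfrac{z}{2M_R|z|}(h_R(z)-z)$, so Step 2 (and Cauchy's estimate for $h_R'-1$) forces the Beltrami coefficient of $\Phi_R$ to $0$ uniformly, making $\Phi_R$ a quasiconformal diffeomorphism of $\overline{\D^*}$ for $R$ large with $\|\Phi_R - \id\|_\infty \le \sup_{1\le|z|\le 2M_R}|h_R(z)-z| \to 0$. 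I expect Step 1 to be the real obstacle: before the modulus of $\Omega_R$ is pinned down, the maximum principle only gives that $|h_R/z|$ is bounded on $\Omega_R$, not that it tends to $1$; it is precisely $1/q(\eta,R) \asymp R$ that puts the domain and target of $h_R$ at the same scale. The rest is bookkeeping, with the only subtlety being that $M_R$ must be chosen to grow slowly enough relative to the rate produced in Step 2.
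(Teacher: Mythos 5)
Your proposal is correct in outline but takes a genuinely different route from the paper's. The paper conjugates by an inversion (so as to work in $\bar\D$ with a shrinking inner disc $r\bar\D$), derives only a \emph{crude} one-sided bound $q(r)\le \tilde C r^{\alpha}$ from a harmonic-measure comparison, and then runs a soft normal-families argument: a diagonal extraction on a dense countable set together with an interior Cauchy/Lipschitz estimate produces a subsequential limit $\phi$ holomorphic on $\D\setminus\{0\}$; the crude $q(r)$ bound forces $\phi(0)=0$ so that $\phi$ extends over $0$; since $\phi$ is a uniform limit of analytic circle diffeomorphisms it extends by Schwarz reflection to the whole plane with its only pole at $\infty$, hence is affine, hence $=\id$ by the normalization at $0$ and $1$, and uniqueness of the subsequential limit finishes the proof. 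You instead go fully quantitative: your Step~1 pins down $1/q(\eta,R)=R\,e^{O(1)}$ (sharper than what the paper needs), your Step~2 bounds $\Re\log(h_R/z)$ by harmonic-measure comparison and converts this to a bound on $h_R-\id$ via normalization at $1$, and your Step~3 builds the QC extension by an explicit cutoff interpolation. This buys an explicit rate of convergence, which the paper's compactness argument does not give, but it costs a bit more than you acknowledge in Step~2: Borel--Carath\'eodory is a disc theorem, and applying it in the annulus $\{1/M<|z|<M\}$ to recover $L_R=\log(h_R/z)$ from its real part and the single normalization $L_R(1)=0$ requires either the Poisson/Villat kernel for annuli, or a chaining of disc estimates, or interior gradient bounds integrated along paths --- none hard, but more than a one-line citation, and this (rather than Step~1, which you call the ``real obstacle'') is where the quantitative route actually needs the most bookkeeping. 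You should also note explicitly that $L_R$ is single-valued (the winding of $h_R/z$ vanishes since $h_R\vert_{\S^1}$ has degree $1$), which you do state, and that the reflection symmetry $\Re L_R(1/\bar z)=-\Re L_R(z)$ gives the matching bound on the reflected side. With those points filled in, your argument is a valid alternative, and arguably preferable when a rate of convergence is wanted.
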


\begin{proof}
This might be standard, but since we do not know any reference for this result, we provide a proof.
By conjugating with an inversion, it is equivalent to proving the following statement:

Let $\eta$ be a bounded Jordan curve containing 0 in its interior, and $f:\D\to\rmint(\eta)$ a conformal map. Let $r_0$ such that $r_0\bar{\D}\subset\rmint(\eta)$ and $\psi_r:\rmint(\eta)\setminus r\bar{\D}\to\A_{q(r)}$ be the conformal map such that $\psi_r\circ f(1)=1$, for all $0<r<r_0$. Then, $\psi_r\circ f\to\mathrm{id}_\D$ uniformly on $\bar{\D}$ as $r\to0$.

Assume without loss of generality that $f\in\cE$. The map $\phi_r:=\psi_r\circ f$ restricts to an analytic diffeomorphism of $\S^1$, so it extends analytically to a neighbourhood of $\S^1$ in $\D^*$ by Schwarz reflection. Pick any smooth quasiconformal extension of $\phi_r$ to $\bar{\D}$ and keep denoting it the same way. We can choose this quasiconformal extension so that it fixes 0. 

First, we give a (very rough) bound on $q(r)$. Let $C:=\max\{|f(z)|~|\,z\in\S^1\}$. By the Koebe $1/4$-theorem, we have $\frac{1}{4}\in\rmint(\eta)$. Fix $c>0$ such that $\psi_r(1/4)\geq c$ for all $r>0$. On the one hand, the harmonic measure of $r\S^1$ viewed from $\frac{1}{4}$ in $\rmint(\eta)$ is upper-bounded by $\log(1/4C)/\log(r/C)$. On the other hand, the harmonic measure of $q(r)\S^1$ viewed from $c$ in $\A_{q(r)}$ is $\log c/\log q(r)$. Hence, by conformal invariance of harmonic measure
\begin{equation}\label{eq:ub_q}
q(r)\leq\tilde{C}r^\alpha,
\end{equation}
with $\tilde{C}=C^{\frac{\log c}{\log(4C)}}$ and $\alpha=-\frac{\log c}{\log(4C)}>0$. 

Second, we adapt the proof of Montel's theorem in order to extract a subsequence converging uniformly on compacts of $\D\setminus\{0\}$. Let $(z_m)_{m\in\N}$ be a dense sequence in $\D\setminus\{0\}$. Since $\phi_r$ is uniformly bounded by 1, a standard diagonal argument shows that there exists a sequence $(\phi(z_m))_{m\in\N}$ and a subsequence $(\phi_{r_n})$ such that $\phi_{r_n}(z_m)\to\phi(z_m)$ simultaneously for all $m$ as $n\to\infty$. 

By Koebe $1/4$-theorem, $r\D\subset f(4\D)$ so $\phi_r$ is defined on $\A_{4r}$. By Cauchy's integral formula, we have for all $z,w\in\A_{8r}$, 
\[|\phi_r(z)-\phi_r(w)|\leq C'r^{-2}|z-w|,\]
for some $C'>0$. This implies that $\phi$ extends uniquely to a continuous function on $\D\setminus\{0\}$, and the convergence $\phi_{r_n}\to\phi$ is uniform on compacts of $\D\setminus\{0\}$. In particular, $\phi$ is holomorphic on $\D\setminus\{0\}$. Moreover, the first step gives $\phi(z)\to0$ as $z\to0$, so $\phi$ extends analytically on $\D$.

Since $\phi$ is not constant, it restricts to an analytic diffeomorphism of $\S^1$ as a uniform limit of analytic diffeomorphisms. Moreover, $\phi$ extends to the whole plane by Schwarz reflection, with a unique pole at $\infty$. Hence, $\phi$ is affine. Since $\phi$ fixes 1 and 0, we have $\phi=\mathrm{id}_\C$. 

Finally, by uniqueness of the subsequential limit, we deduce that $\psi_\epsilon\circ f$ converges uniformly on $\bar{\D}$ to the identity as $\epsilon\to0$.
\end{proof}

We now have all the ingredients to prove Theorem \ref{T:ibp}.

\begin{proof}[Proof of Theorem \ref{T:ibp}]
Let us first assume that \eqref{E:T_ibp} holds for all holomorphic vector fields $\rv \in \C[z]\del_z$. Let $\rv \in \C[z]\del_z$. Applying $\Theta$ to the left and to the right of the first identity in \eqref{E:T_ibp} and using Lemma~\ref{L:Theta}, we obtain that
\[
-\bar \cL_{\rv^*}^* = \cL_{\rv^*} - \frac{c_\rM}{12} \vartheta(\rv^*).
\]
This is the second identity in \eqref{E:T_ibp} for $\rv^*$. Since $\rv \mapsto \rv^*$ maps $\C[z]\del_z$ to $z^2\C[1/z]\del_z$ and by linearity, this shows that the second identity in \eqref{E:T_ibp} holds for all vector fields in $\C(z)\del_z$. The case of the first identity of \eqref{E:T_ibp} is symmetric and it remains to prove \eqref{E:T_ibp} for $\rv \in \C[z] \partial_z$. We focus on the first identity.

Let $\rv \in \C[z] \partial_z$, $F, G \in \cC$ and $R>0$. 
For any Jordan curve $\eta$ contained in $R\D$, we will denote by $\psi_{\eta,R}$ the conformal map introduced in Lemma \ref{L:conf_eps} which sends $R\D \setminus \overline{\inte(\eta)}$ to $\A_{q(\eta,R)}$.
By scaling and Proposition~\ref{prop:ibp_disc},
\begin{align*}
    (\cL_\rv F, G)_{L^2(\nu_{R\D})}
    & = \Big( F, -\bar \cL_\rv G - \Big(\frac{c_\rM}{12} \overline{(\cS \psi_{\eta,R},\rv)} + \frac{c_\rM}{2} U(q(\eta,R)) \overline{(\psi_{\eta,R}^{-2}{\psi_{\eta,R}'}^2, \rv)} \hspace{1pt} \Big) G \Big)_{L^2(\nu_{R \D})}.
\end{align*}
Since $\cS \psi_{\eta,R} = \cS (q(\eta,R)^{-1} \psi_{\eta,R})$, Lemma \ref{L:conf_eps} implies that $(\cS\psi_{\eta,R},\rv)\to(\cS((e^{-\tilde c} g_\eta)^{-1}),\rv)$ in $\cC^0(\cJ_{0,\infty})$.
By Lemma \ref{L:convergence_nu}, $(\log R)^\frac{c_\rM}{2}\nu_{R\D}\to\nu$ weakly. So $(\cS\psi_{\eta,R},\rv)(\log R)^\frac{c_\rM}{2}\nu_{R\D}\to(\cS((e^{-\tilde c} g_\eta)^{-1}),\rv)$ weakly.
Concerning the term featuring $U(q(\eta,R))$, and because $\lim_{q \to 0^+} U(q) = 0$, we similarly have the weak convergence
$U(q(\eta,R)) ( \psi_{\eta,R}^{-2}{\psi_{\eta,R}'}^2, \rv)(\log R)^\frac{c_\rM}{2}\nu_{R\D}\to0$.
Altogether, this shows that
\[
(\log R)^{\frac{c_\rM}{2}}
(\cL_\rv F, G)_{L^2(\nu_{R\D})}
\xrightarrow[R \to \infty]{} \Big(F, -\bar \cL_\rv G - \frac{c_\rM}{12} \overline{(\cS (e^{-\tilde c} g_\eta)^{-1}, \rv)} G \Big)_{L^2(\nu)}.
\]
This concludes the proof that
\[
(\cL_\rv F, G)_{L^2(\nu)}
= \Big(F, -\bar \cL_\rv G - \frac{c_\rM}{12} \overline{(\cS (e^{-\tilde c} g)^{-1}, \rv)} G \Big)_{L^2(\nu)}.
\]
Finally, by Proposition \ref{P:construction_cL} and Lemma \ref{L:preserve_cC}, $-\bar{\cL}_\rv-\frac{c_\rM}{12}\overline{\tilde{\vartheta}(\rv)}$ maps $\cC_\mathrm{comp}$ to $L^2(\nu)$. Since $\cC_\mathrm{comp}$ is dense in $L^2(\nu)$ (see Lemma \ref{L:dense}), this implies the closability of $\cL_\rv$ \cite[Theorem~VIII.1.(b)]{ReedSimon1} (we apply this reasoning to the Hilbert space $L^2(\nu)$ equipped with its $L^2$-inner product, not $\cQ$). 
\end{proof}

\section{Spectral resolution}\label{sec:spectral}

This section is devoted to the spectral resolution of the Shapovalov form. We will in particular prove Theorem~\ref{T:spectral}. 
We first record the elementary relation between the electrical thickness and the conformal radius viewed from $\infty$ (recall the definition of $\tilde{c}_0$ from \eqref{E:tilde_c_0}).
\begin{lemma}\label{lem:electrical_thickness}
The coordinate $-\tilde{c}_0$ on $\cE$ coincides with the electrical thickness of \cite{AngSun21_CLE}.
\end{lemma}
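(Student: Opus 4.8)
The plan is to simply unwind the two definitions and compare. First I would recall the definition of the electrical thickness used in \cite{AngSun21_CLE} (see also \cite{SSW09}, and originally Kenyon--Wilson): for a Jordan curve $\eta$ separating $0$ from $\infty$, with bounded complementary component $\rmint(\eta)\ni 0$ and unbounded component $\rmext(\eta)\ni\infty$, the electrical thickness $\mathrm{ET}(\eta)$ is
\[
\mathrm{ET}(\eta)=\log\mathrm{cr}(\rmext(\eta),\infty)-\log\mathrm{cr}(\rmint(\eta),0),
\]
where $\mathrm{cr}(\rmint(\eta),0)=|\varphi'(0)|$ for a conformal map $\varphi:\D\to\rmint(\eta)$ with $\varphi(0)=0$, and $\mathrm{cr}(\rmext(\eta),\infty)=\lim_{z\to\infty}|\psi(z)/z|$ for a conformal map $\psi:\D^*\to\rmext(\eta)$ with $\psi(\infty)=\infty$ (equivalently, the logarithmic capacity of $\overline{\rmint(\eta)}$). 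If \cite{AngSun21_CLE} phrases this through Green's functions or capacities rather than uniformising maps, I would first pass to the formulation above using the classical identities relating the Green function of a simply connected domain to its Riemann map.

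Then I would apply this to $\eta=f(\S^1)$ for $f\in\cE$. On the one hand, $\rmint(\eta)=f(\D)$ and, since $f(0)=0$ and $f'(0)=1$ by definition of $\cE$, we have $\mathrm{cr}(\rmint(\eta),0)=|f'(0)|=1$, so its logarithm vanishes. On the other hand, by the very definition \eqref{E:tilde_c_0} of $\tilde c_0(f)$, the exterior $\rmext(\eta)=\hat{\C}\setminus\overline{f(\D)}$ is uniformised by $e^{-\tilde c_0(f)}g$ for some $g\in\tilde{\cE}$, i.e. $g(\infty)=\infty$ and $g'(\infty)=1$; hence $\mathrm{cr}(\rmext(\eta),\infty)=e^{-\tilde c_0(f)}$. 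Substituting into the displayed formula gives $\mathrm{ET}(f(\S^1))=-\tilde c_0(f)$, which is the assertion of the lemma.

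I do not expect any serious obstacle here; the only point requiring care is to confirm that the normalisations of \cite{AngSun21_CLE} agree with ours, namely that their ``conformal radius from $\infty$'' is the leading coefficient of the exterior uniformising map (and not its reciprocal), and that the sign in their definition makes the electrical thickness a nonnegative quantity — consistent with the nonpositivity of $\tilde c_0$ recorded after \eqref{E:tilde_c_0}, which itself reflects the classical inequality $\mathrm{cr}(\rmext(\eta),\infty)\geq\mathrm{cr}(\rmint(\eta),0)$.
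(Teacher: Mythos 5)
Your proposal is correct and takes essentially the same route as the paper's short proof. The paper mirrors the precise $\delta$-normalisation of \cite{AngSun21_CLE} (rescale $f(\S^1)$ by $e^\delta$ so it touches $\S^1$ from outside, then invert by $z\mapsto1/z$), but via the elementary identity $\mathrm{cr}(\rmint(1/\eta),0)=1/\mathrm{cr}(\rmext(\eta),\infty)$ and scale-invariance this collapses to exactly your characterisation $\log\mathrm{cr}(\rmext,\infty)-\log\mathrm{cr}(\rmint,0)$, after which the evaluation on $\cE$ is the same one-line computation.
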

\begin{proof}
The electrical thickness is defined on $\cE$ as follows. Let $f\in\cE$ and fix the unique $\delta>0$ such that $e^\delta f(\S^1)\subset\overline{\D^*}$ and $e^\delta f(\S^1)\cap\S^1\neq\emptyset$. Then, the interior of the Jordan curve $e^{-\delta}/f(\S^1)$ is uniformised by $z\mapsto e^{\tilde{c}_0-\delta}/g(1/z)$. In \cite{AngSun21_CLE}, the electrical thickness is defined to be $-\delta-(\tilde{c}_0-\delta)=-\tilde{c}_0$. 
\end{proof}

\begin{proof}[Proof of Theorem \ref{T:spectral}]
Let $F,G\in\cC_c^\infty(\R)$, $\bk, \bk', \tilde \bk, \tilde{\bk}' \in \cT$ with $|\bk| = |\bk'|$ and $|\tilde\bk| = |\tilde\bk'|$ and $\lambda \in \C$ such that $2\Re (\lambda) + |\bk| + |\tilde \bk| +\kappa/8-1<0$.
Let us first show that
\begin{equation}
    \label{eq:fourier2}
\cQ\left(e^{-(2\lambda+|\bk|+|\tilde{\bk}|)c}F \Psi_{\lambda,\bk,\tilde\bk},e^{-(2\bar{\lambda}+|\bk|+|\tilde{\bk}|)c}G\Psi_{\bar{\lambda},\bk',\tilde\bk'}\right)=\frac{1}{\sqrt{\pi}}\int_\R\hat{F}(p)\overline{\hat{G}(-p)}\cQ_{\lambda+ip}\left(\Psi_{\lambda,\bk,\tilde\bk},\Psi_{\lambda,\bk',\tilde\bk'}\right)\d p.
\end{equation}
We have $e^{-(2\lambda+|\bk|+|\tilde{\bk}|)c}F\Psi_{\lambda,\bk,\tilde\bk},e^{-(2\bar{\lambda}+|\bk|+|\tilde{\bk}|)c}G\Psi_{\bar{\lambda},\bk',\tilde\bk'}\in\cC$, so the Hermitian form $\cQ$ evaluated at these two elements is well defined and finite. By definition \eqref{eq:def_shapo} of $\cQ$ and decomposing $\nu = \pi^{-1/2} \d c \otimes \nu^\#$, we have
\begin{align*}
&\cQ\left(e^{-(2\lambda+|\bk|+|\tilde{\bk}|)c}F\Psi_{\lambda,\bk,\tilde\bk},e^{-(2\bar{\lambda}+|\bk|+|\tilde{\bk}|)c}G\Psi_{\bar{\lambda},\bk',\tilde\bk'}\right)\\
&= \int_\R e^{-(2\lambda+|\bk|+|\tilde{\bk}|)c}F(c)\E^\#\left[e^{-(2\lambda+|\bk|+|\tilde{\bk}|)\tilde{c}}\overline{G(\tilde{c})}\Psi_{\lambda,\bk,\tilde\bk}(f)\overline{\Psi_{\bar{\lambda},\bk',\tilde\bk'}(\iota\circ g\circ\iota)}\right]\frac{\d c}{\sqrt{\pi}}.
\end{align*}
Using that $\tilde{c}=\tilde{c}_0-c$ and a change of variables, this is further equal to
\begin{align*}
&\E^\#\left[\Psi_{\lambda,\bk,\tilde\bk}(f)\overline{\Psi_{\bar{\lambda},\bk',\tilde\bk'}(\iota\circ g\circ\iota)}\int_\R F(c)\overline{G(\tilde{c}_0-c)}e^{-(2\lambda+|\bk|+|\tilde{\bk}|)\tilde{c}_0}\frac{\d c}{\sqrt{\pi}}\right]\\
&=\E^\#\left[\Psi_{\lambda,\bk,\tilde\bk}(f)\overline{\Psi_{\bar{\lambda},\bk',\tilde\bk'}(\iota\circ g\circ\iota)}\int_\R\hat{F}(p)\overline{\hat{G}(-p)}e^{-(2ip+2\lambda+|\bk|+|\tilde{\bk}|)\tilde{c}_0}\frac{\d p}{\sqrt{\pi}}\right].
\end{align*}
Now, since $2\Re(\lambda)+|\bk|+|\tilde \bk|+\kappa/8-1<0$, \eqref{eq:def_R} guarantees that the last line in the above display is integrable. Hence, we can apply Fubini to get \eqref{eq:fourier2}.
We will now prove each statement of Theorem~\ref{T:spectral}, starting with \eqref{E:T_Q_lambda}.

\textbf{Proof of \eqref{E:T_Q_lambda}.}
We are going to compute
\begin{equation}\label{E:pf_spectral3}
\cQ\left(\bL_{-\bk}\bar{\bL}_{-\tilde{\bk}}(e^{-2\lambda c}F),\bL_{-\bk'}\bar{\bL}_{-\tilde{\bk}'}(e^{-2\bar{\lambda}c}G)\right)
\end{equation}
in two ways. First, since $\cP_{\lambda+ip} \ind = e^{-2(\lambda+ip) c}$ for all $p \in \R$, we can write in $L^2(\R,\d c)$ by Fourier inversion:
\[
e^{-2\lambda c} F(c) = \int_\R \frac{\d p}{\sqrt{\pi}} ~\hat{F}(p) e^{-2(ip+\lambda)c} = \int_\R \frac{\d p}{\sqrt{\pi}} ~\hat{F}(p) \cP_{\lambda+ip} \ind.
\]
Using that $\bL_{-\bk} \bar \bL_{-\tilde \bk} \cP_{\lambda+ip} = \cP_{\lambda+ip} \bL_{-\bk}^{\lambda+ip} \bar{\bL}_{-\tilde \bk}^{\lambda+ip}$ (consequence of the conjugation relation of Proposition~\ref{lem:commutations_lambda}) and dominated convergence theorem, we infer that
\begin{equation}\label{E:pf_spectral4}
\bL_{-\bk}\bar{\bL}_{-\tilde{\bk}}(e^{-2\lambda c}F)
= \int_\R \frac{\d p}{\sqrt{\pi}} ~\hat{F}(p) \bL_{-\bk}\bar{\bL}_{-\tilde{\bk}} \cP_{\lambda+ip} \ind
= \int_\R \frac{\d p}{\sqrt{\pi}} ~\hat{F}(p) \cP_{\lambda+ip} \bL^{\lambda+ip}_{-\bk}\bar{\bL}^{\lambda+ip}_{-\tilde{\bk}} \ind.
\end{equation}
By Proposition \ref{lem:commutations_lambda}, $\bL^{\lambda+ip}_{-\bk}\bar{\bL}^{\lambda+ip}_{-\tilde{\bk}} \ind = \Psi_{\lambda+ip,\bk,\tilde \bk}$. By definition of $\cP_{\lambda+ip} = e^{-(\bL_0^{\lambda+ip}+\bar\bL_0^{\lambda+ip})}$ and by \eqref{E:L_0^lambda}, $\cP_{\lambda+ip} \Psi_{\lambda+ip,\bk,\tilde \bk} = e^{-(2\lambda+2ip+|\bk|+|\tilde\bk|)c} \Psi_{\lambda+ip,\bk,\tilde \bk}$. Altogether, this shows that
\[
\bL_{-\bk}\bar{\bL}_{-\tilde{\bk}}(e^{-2\lambda c}F)
=  e^{-(2\lambda+|\bk|+|\tilde\bk|)c}\int_\R \frac{\d p}{\sqrt{\pi}} ~\hat{F}(p) e^{-2ipc} \Psi_{\lambda+ip,\bk,\tilde \bk}.
\]
Since the Fourier transform of
\[
c \mapsto \int_\R \frac{\d p}{\sqrt{\pi}} ~\hat{F}(p) e^{-2ipc} \Psi_{\lambda+ip,\bk,\tilde \bk}
\qquad \text{is given by} \qquad
p \mapsto \hat{F}(p) \Psi_{\lambda+ip,\bk,\tilde\bk},
\]
an interpolation of \eqref{eq:fourier2} implies that \eqref{E:pf_spectral3} is equal to
\begin{equation}\label{E:pf_spectral6}
\int_\R\hat{F}(p)\overline{\hat{G}(-p)}\cQ_{\lambda+ip}\left(\Psi_{\lambda+ip,\bk,\tilde\bk},\Psi_{\bar{\lambda}+ip,\bk',\tilde\bk'}\right)\frac{\d p}{\sqrt{\pi}}.
\end{equation}
To get a second expression for \eqref{E:pf_spectral3}, we first use the integration by parts of Theorem \ref{T:intro_shapo}:
\[
\cQ\left(\bL_{-\bk}\bar{\bL}_{-\tilde{\bk}}(e^{-2\lambda c}F),\bL_{-\bk'}\bar{\bL}_{-\tilde{\bk}'}(e^{-2\bar{\lambda}c}G)\right)
=
\cQ\left(\bL_{\bk'}\bL_{-\bk}\bar{\bL}_{\tilde{\bk}'}\bar{\bL}_{-\tilde{\bk}}(e^{-2\lambda c}F),e^{-2\bar{\lambda} c}G\right).
\]
Similarly to \eqref{E:pf_spectral4}, we have
\[
\bL_{\bk'}\bL_{-\bk}\bar{\bL}_{\tilde{\bk}'}\bar{\bL}_{-\tilde{\bk}}(e^{-2\lambda c}F)
= \int_\R \frac{\d p}{\sqrt{\pi}} \, \hat{F}(p) \cP_{\lambda+ip} \bL^{\lambda+ip}_{\bk'}\bL^{\lambda+ip}_{-\bk}\bar{\bL}^{\lambda+ip}_{\tilde{\bk}'}\bar{\bL}^{\lambda+ip}_{-\tilde{\bk}} \ind.
\]
By definition of the Gram matrix, $\bL^{\lambda+ip}_{\bk'}\bL^{\lambda+ip}_{-\bk}\bar{\bL}^{\lambda+ip}_{\tilde{\bk}'}\bar{\bL}^{\lambda+ip}_{-\tilde{\bk}} \ind = \rB_{\lambda+ip}(\bk,\bk') \rB_{\lambda+ip}(\tilde{\bk},\tilde{\bk}')\ind$ and, because $\cP_{\lambda+ip}\ind = e^{-2(\lambda+ip)c}$, we find that
\[
\bL_{\bk'}\bL_{-\bk}\bar{\bL}_{\tilde{\bk}'}\bar{\bL}_{-\tilde{\bk}}(e^{-2\lambda c}F)
= e^{-2\lambda c} \int_\R \frac{\d p}{\sqrt{\pi}} \, \hat{F}(p) e^{-2ipc} \rB_{\lambda+ip}(\bk,\bk') \rB_{\lambda+ip}(\tilde{\bk},\tilde{\bk}').
\]
Noting that the Fourier transform of
\[
c \mapsto \int_\R \frac{\d p}{\sqrt{\pi}} \, \hat{F}(p) e^{-2ipc} \rB_{\lambda+ip}(\bk,\bk') \rB_{\lambda+ip}(\tilde{\bk},\tilde{\bk}')
\quad \text{is given by} \quad
p \mapsto \hat{F}(p) \rB_{\lambda+ip}(\bk,\bk') \rB_{\lambda+ip}(\tilde{\bk},\tilde{\bk}'),
\]
we can conclude as before, using \eqref{eq:fourier2}, that \eqref{E:pf_spectral3} equals
\begin{equation}\label{E:pf_spectral5}
 \int_\R \hat{F}(p)\overline{\hat{G}(-p)}\rB_{\lambda+ip}(\bk,\bk') \rB_{\lambda+ip}(\tilde{\bk},\tilde{\bk}')\cQ_{\lambda+ip}(\ind,\ind)\frac{\d p}{\sqrt\pi}.
\end{equation}
Recalling that $\rR(\lambda + ip) = \cQ_{\lambda+ip}(\ind,\ind)$ and because \eqref{E:pf_spectral6} and \eqref{E:pf_spectral5} agree for all $F,G \in \cC_c^\infty(\R)$, we get for Lebesgue-almost every $p \in \R$,
\[
\cQ_{\lambda+ip}( \Psi_{\lambda+ip,\bk,\tilde \bk}, \Psi_{\bar{\lambda}-ip,\bk',\tilde \bk'} )
= \rB_{\lambda+ip}(\bk,\bk') \rB_{\lambda+ip}(\tilde \bk,\tilde \bk') \rR(\lambda+ip).
\]
Finally, both functions are continuous (in fact, analytic) in $p\in\R$ since we are in the region of absolute convergence. 
Thus, the equality holds for all $p\in\R$. In the case where the $\lambda$-levels $(|\bk|,|\tilde \bk|)$ and $(|\bk'|,|\tilde \bk'|)$ are different, the proof is by all means identical and we find zero. This concludes the proof of \eqref{E:T_Q_lambda}.

\textbf{Proof of \eqref{E:T_spectral1}.}
Combining \eqref{eq:fourier2} and \eqref{E:T_Q_lambda} we see that \eqref{E:T_spectral1} holds for all $\lambda \in \C$ with $2\Re (\lambda) + |\bk| + |\tilde \bk| +\kappa/8-1<0$, i.e. that for such values of $\lambda$,
\[
\cQ\left(F\cP_\lambda\Psi_{\lambda,\bk,\tilde{\bk}},G\cP_{\bar{\lambda}}\Psi_{\bar{\lambda},\bk',\tilde{\bk}'}\right)=\int_\R\hat{F}(p)\overline{\hat{G}(-p)}\rR(\lambda+ip)\rB_{\lambda+ip}(\bk,\bk')\rB_{\lambda+ip}(\tilde{\bk},\tilde{\bk}')\frac{\d p}{\sqrt{\pi}}.
\]
We conclude by noticing that the LHS is analytic in the whole $\lambda$-plane, and the RHS is analytic in the region $\Re(\lambda)<\frac{1}{2}(1-\frac{\kappa}{8})$, so both sides coincide in this region. (In particular, the RHS analytically continues to the whole $\lambda$-plane too, but the analytic continuation of this integral includes some additional contributions when the poles of $\rR(\lambda)$ cross the integration line.)

\textbf{Proof of \eqref{eq:proj_F}.} The Fourier transform of any $F\in\cC_c^\infty(\R)$ extends analytically to the whole $p$-plane. Moreover, $\hat{F}$ decays faster than any polynomials as $|p|\to\infty$. Now, let $F\in\cC_\mathrm{comp}$, and recall that $(\Psi_{ip,\bk,\tilde{\bk}})_{\bk,\tilde{\bk}\in\cT}$ is a basis of $\C[(a_m,\bar{a}_m)_{m\geq1}]$ for all $p\in\R$ (Theorem \ref{thm:module_structure}). Hence, the Fourier transform $\hat{F}(p,\cdot)$ decomposes on this basis for each $p\in\R$, i.e. we can write
\[F=\int_\R\sum_{\bk,\tilde{\bk}\in\cT}\alpha_F(p,\bk,\tilde{\bk})e^{-2ipc}\Psi_{ip,\bk,\tilde{\bk}}\frac{\d p}{\sqrt{\pi}},\]
where $p\mapsto\alpha_F(p,\bk,\tilde{\bk})$ is analytic for all $\bk,\bk'\in\cT$ and decays faster than any polynomial as $|p|\to\infty$. Moreover, $\alpha_F(\,\cdot,\bk,\bk')=0$ for $|\bk|+|\tilde{\bk}|$ large enough ($F(p,\cdot)$ is a polynomial). Let $\cP:\cC_\mathrm{comp}\to\cC_\mathrm{comp}$ be the operator defined by
\[\cP F:=\int_\R\sum_{\bk,\tilde{\bk}\in\cT}\alpha_F(p,\bk,\tilde{\bk})e^{-(2ip+|\bk|+|\tilde{\bk}|)c}\Psi_{ip,\bk,\tilde{\bk}}\frac{\d p}{\sqrt{\pi}}.\]
(Formally, we have $\cP=\int_\R\cP_{ip}\frac{\d p}{\sqrt{\pi}}$.) This operator is clearly invertible, and by standard properties of the Laplace transform, we have $\alpha_{\cP^{-1}F}(p,\bk,\tilde{\bk})=\alpha_F(p-\frac{i}{2}(|\bk|+|\tilde{\bk}|),\bk,\tilde{\bk})=:\tilde{\alpha}_F(p,\bk,\tilde{\bk})$. Hence, we have in $L^2(\nu)$
\[F=\int_\R\sum_{\bk,\tilde{\bk}\in\cT}\tilde{\alpha}_F(p,\bk,\tilde{\bk})e^{-(2ip+|\bk|+|\tilde{\bk}|)c}\Psi_{ip,\bk,\tilde{\bk}}\frac{\d p}{\sqrt{\pi}}.\]

It just remains to express $\tilde{\alpha}(p,\bk,\tilde{\bk})$ in terms of the pairings $\cQ(F,e^{(2ip-|\bk|-|\tilde{\bk}|)c}\Psi_{-ip,\bk,\tilde{\bk}})$. Note that these pairings are well defined since $F\in\cC_\mathrm{comp}$. Moreover, for $\bk,\tilde\bk\in\cT$ fixed and $\Re(\lambda)<\frac{1}{2}(1-|\bk|-\tilde\bk-\frac{\kappa}{8})$, we have by \eqref{E:T_spectral1}
\begin{align*}
\cQ(F,\cP_{\bar{\lambda}}\Psi_{\bar{\lambda},\bk',\tilde{\bk}'})
&=\sum_{\bk,\tilde{\bk}\in\cT}\tilde{\alpha}_F(-i\lambda,\bk,\tilde{\bk})\cQ_\lambda(\Psi_{\lambda,\bk,\tilde{\bk}},\Psi_{\bar{\lambda},\bk',\tilde{\bk}'})\\
&=\sum_{\bk,\tilde{\bk}\in\cT}\tilde{\alpha}_F(-i\lambda,\bk,\tilde{\bk})\rR(\lambda)\rB_\lambda(\bk,\bk')\rB_\lambda(\tilde{\bk},\tilde{\bk}').
\end{align*}
Implicitly, we have first multiplied $\cP_\lambda\Psi_{\lambda,\bk',\tilde{\bk}'}$ by a test function $G$ (in order to apply $\eqref{E:T_spectral1}$), as in the proof of \eqref{E:T_Q_lambda}. Both sides of the last display are analytic for $\Re(\lambda)<\frac{1}{2}(1-\frac{\kappa}{8})$, so the equality extends to this region. 
Inverting this linear system for $\lambda=ip\in i\R$ yields
\[\tilde{\alpha}_F(p,\bk,\tilde{\bk})=\frac{1}{\rR(ip)}\sum_{\bk',\tilde{\bk}'\in\cT}\cQ\left(F,\cP_{-ip}\Psi_{-ip,\bk',\tilde{\bk}'}\right)\rB_{ip}^{-1}(\bk,\bk')\rB_{ip}^{-1}(\tilde{\bk},\tilde{\bk}'),\]
concluding the proof of \eqref{eq:proj_F}.
\end{proof}

\section{Uniqueness of restriction measures}\label{sec:uniqueness}

In this section, we prove the uniqueness of restriction measures as stated in Theorem \ref{thm:uniqueness}. Before moving to the actual proof, we would like to remind the reader that the representations $(\bL_n,\bar{\bL}_n)_{n\in\Z}$ and the map $\Theta$ depend only on the geometric structure of $\cJ_{0,\infty}$, and not on the measure that we put on it. The same holds for the representations $(\cW_\lambda,(\bL_n^\lambda,\bar{\bL}_n^\lambda)_{n\in\Z})$. What may depend on the measure are the properties determined by the associated $L^2$-product (and the integration by parts formula), in particular the adjoint relations $\Theta^*=\Theta$, $\bL_n^*=\Theta\circ\bL_{-n}\circ\Theta$, etc.

\begin{proof}[Proof of Theorem \ref{thm:uniqueness}]
The existence part is already known \cite{Zhan21_SLEloop}, so we only need to show uniqueness. 
A conformally invariant Borel measure on the space $\cJ$ is completely characterised by its restriction to $\cJ_{0,\infty}$. Since the restriction to $\cJ_{0,\infty}$ of a restriction measure on $\cJ$ (in the sense of Definition \ref{D:restriction_measure}) is a weak restriction measure on $\cJ_{0,\infty}$, it is enough to prove uniqueness of weak restriction measures on $\cJ_{0,\infty}$. Let $\tilde{\nu}$ be such a Borel measure on $\cJ_{0,\infty}$.
We first claim that $\tilde{\nu}$ is scale invariant, i.e. $\tilde{\nu}(\lambda \cdot) = \tilde{\nu}$ for $\lambda>0$. It is enough to check that for any $\lambda,R>0$, $\tilde{\nu}(\lambda \cdot) \ind_{\cdot \subset R\D} = \tilde{\nu}(\cdot) \ind_{\cdot \subset R\D}$. This equality is a consequence of the fact that $\tilde{\nu}$ is a weak restriction measure and the scale invariance of $\Lambda^*$, proving the desired claim.
Now,
scale invariance of $\tilde{\nu}$ implies that (up to rescaling by a multiplicative constant) $\tilde{\nu}=\pi^{-1/2} \d c\otimes\tilde{\nu}^\#$ for some probability measure $\tilde{\nu}^\#$ on $\cE$. Let $\tilde{\E}^\#$ be the expectation w.r.t. $\tilde{\nu}^\#$. 

As in Definition \ref{D:restriction_measure}, for any domain $D\subset \C$ containing $0$, let
$\d\tilde{\nu}_D(\eta):=\ind_{\eta \subset D} e^{\frac{c_\rM}{2}\Lambda^*(\eta,\del D)}\d\tilde\nu(\eta)$.
As stated in Proposition~\ref{prop:ibp_disc}, $\tilde{\nu}_\D$ satisfies the integration by parts formula of Proposition \ref{prop:ibp_disc}.
Moreover, by Lemma \ref{L:convergence_nu}, $(\log R)^{c_\rM/2} \tilde\nu_{R\D} \to \tilde\nu$ weakly as $R \to \infty$. 
The same limiting procedure as the one described in the proof of Theorem \ref{T:ibp} enables us to infer that, for any $F, G \in \cC_\mathrm{comp}$,
\[
(\cL_\rv G, F)_{L^2(\tilde\nu)}
= \Big(G, -\bar \cL_\rv F - \frac{c_\rM}{12} \overline{(\cS (e^{-\tilde c} g)^{-1}, \rv)} F \Big)_{L^2(\tilde\nu)},
\]
and a similar formula holds with $\bar\cL_\rv$ instead of $\cL_\rv$. In particular, applying the previous formula to $G$ constant in the support of $F$, one gets that
\[
    \int\bL_nF\d\tilde{\nu}=0\qquad\text{and}\qquad\int\bar\bL_nF\d\tilde{\nu}=0, \qquad \forall n<0, \,\forall F\in\cC_\mathrm{comp}.
\]

It is a simple exercise to check that the Kac table is bounded from below by $-\frac{(\kappa-4)^2}{16\kappa}$, and we fix an arbitrary $\lambda_0<-\frac{(\kappa-4)^2}{16\kappa}<0$. Note that $\tilde\E^\#[e^{-2\lambda_0\tilde{c}_0}]<\infty$ since $\tilde{c}_0\leq0$ with full $\tilde{\nu}^\#$-probability (see e.g. \cite[Equation (21)]{Pommerenke75}). Let $N,\tilde{N}\in\N$, $n\in\Z_{<0}$, and fix $P\in\cW_{\lambda_0-N-\tilde N+n}^{N,\tilde{N}}$ (recall the notation from Theorem \ref{thm:module_structure}). By definition of the operators $\bL_n^\lambda$ (recall \eqref{eq:L_lambda} and \eqref{E:cP_lambda}), the previous equation implies
\[\tilde\E^\#[e^{-2\lambda_0\tilde{c}_0}\bL_n^{\lambda_0-N-\tilde N+n}(P)]=0\qquad\text{and}\qquad\tilde{\E}^\#[e^{-2\lambda_0\tilde{c}_0}\bar\bL_n^{\lambda_0-N-\tilde{N}+n}(P)=0].\]
Since the spaces $\cW_\lambda^{N,\tilde{N}}$ are generated by all the Virasoro descendants of the constant function $\ind$, we infer that for all $P\in\cW_{\lambda_0-N-\tilde{N}}^{N,\tilde{N}}$:
\begin{equation}\label{eq:characterisation}
 \tilde{\E}^\#[e^{-2\lambda_0\tilde{c}_0}P]=\left\lbrace\begin{aligned}
 &0&&\text{if }N+\tilde{N}>0\\
 &\tilde{\E}^\#[e^{-2\lambda_0\tilde{c}_0}]&&\text{if }P=\ind.
 \end{aligned}\right.
 \end{equation}
 Finally, since $\lambda_0-N-\tilde{N}$ does not belong to the Kac table for any $N,\tilde{N}\in\N$, an elementary induction using the third item of Theorem \ref{thm:module_structure} implies that for all $N\in\N$, we have $\oplus_{n+\tilde{n}\leq N}\cW_{\lambda_0-n-\tilde{n}}^{n,\tilde{n}}=\mathrm{span}\{\ba^\bk\bar{\ba}^{\tilde{\bk}}|\,|\bk|+|\tilde{\bk}|\leq N\}$. Hence, $\oplus_{n,\tilde{n}\in\N}\cW_{\lambda_0-n-\tilde{n}}^{n,\tilde{n}}=\C[(a_m,\bar a_m)_{m\geq1}]$.  

We are now in position to conclude the proof. The space $\cE$ is a Radon space, and $\C[(a_m,\bar{a}_m)_{m\geq1}]$ is dense in $\cC^0(\cE)$ by Lemma \ref{L:dense}. Hence, we can conclude by the Riesz--Markov representation theorem (e.g. \cite[Theorem 2.14]{Rudin74}) that \eqref{eq:characterisation} characterises uniquely the measure $e^{-2\lambda_0\tilde{c}_0}\tilde{\nu}^\#$ in the space of Borel measures on $\cE$, up to the global multiplicative constant $\tilde{\E}^\#[e^{-2\lambda_0\tilde c_0}]$. Namely, $e^{-2\lambda_0\tilde{c}_0}\tilde{\nu}^\#=Ce^{-2\lambda_0\tilde{c}_0}\nu^\#$ for some $C>0$. It follows that $\tilde{\nu}^\#=C\nu^\#$ and $C=1$ since both are probability measures.

\end{proof}

\begin{remark}
The operators $\bL_n^\lambda, \bar \bL_n^\lambda$ \eqref{eq:op_lambda} are well defined for all \emph{complex} values of $c_\rM$ and form two commuting representations of the  Virasoro algebra with central charge $c_\rM$. From these operators and as before, we can then define $\Psi_{\lambda,\bk,\tilde{\bk}}=\bL_{-\bk}^\lambda \bar \bL_{-\tilde \bk}^\lambda \ind$. For $\lambda$ not in the Kac table, the Riesz-Markov representation theorem shows that the assignment $\Psi_{\lambda,\bk,\tilde{\bk}}\to\delta_{(\bk,\tilde{\bk})=(\emptyset,\emptyset)}$ extends uniquely to a \emph{complex} Borel measure $\nu^\#_{c_\rM}$ on $\cE$ (recall \eqref{eq:characterisation}). On the other hand, Definition \ref{D:restriction_measure} also makes sense for all complex $c_\rM$, and following the same route as in our proof of uniqueness, such a measure (rather, its shape) is determined by the above measure $\nu^\#_{c_\rM}$. So, if such a measure exists, it is unique. With some additional effort, it should be possible to prove the existence as well, i.e. that $\nu^\#_{c_\rM}$ satisfies the properties of Definition \ref{D:restriction_measure}.
Of course, the existence of the SLE loop measure grants for free that the existence and positivity for $c_\rM\leq1$. For non-real $c_\rM$, it is clear that a restriction measure cannot be positive. Finally, we believe that positivity fails for $c_\rM>1$ (due to positivity properties of the Shapovalov form in this region, it is very plausible that an odd function of $\Theta$ would have negative ``$L^2$-norm").
\end{remark}

\section{Applications}\label{S:applications}

We now discuss two consequences of the characterisation of the SLE loop measure in terms of restriction property.

\subsection{Reversibility}\label{S:reversibility}
The reversibility of the SLE loop measure is its invariance under the map $\iota:z\mapsto1/\bar{z}$, which is present in Zhan's axiomatisation. This property is not obvious \textit{a priori}: in the chordal case for instance, Schramm's definition of SLE only uses scale invariance \cite{Schramm2000}, but not the full M\"obius invariance. In fact, the key property of invariance under inversion $z\mapsto1/z$ was not part of the axioms and remained to be proved. This proof came only several years later, in \cite{zhan2008reversibility} for the simple case $\kappa\in(0,4]$, and in \cite{Miller16_ig3} for the non-simple case $\kappa\in(4,8)$.

Now, we explain how Theorem \ref{thm:uniqueness} provides an alternative and simpler approach to the question of reversibility. We remind the reader that our proof of uniqueness uses only the \emph{scale} invariance of the SLE loop measure (which allows to write $\nu=\d c\otimes\nu^\#$), and not the full M\"obius invariance (hence the argument is not recursive). By invariance of the Brownian loop measure under complex conjugation and M\"obius inversion, it follows immediately from Definition \ref{D:restriction_measure} that $\iota^*(\d c\otimes\nu^\#)$ is also a restriction measure, hence $\iota^*(\d c\otimes\nu^\#)=K\d c\otimes\nu^\#$ for some $K>0$. To see that $K=1$, observe that for all $F\in\cC_c^\infty(\R)$ with $\int_\R F(c)\d c=1$, we have 
\[1=\int_\R F(c)\d c=K\E^\#[\int F(\tilde{c})\d c]=K\E^\#[\int_\R F(c-\tilde{c}_0)\d c]=K\E^\#[\int_\R F(c)\d c]=K\int_\R F(c)\d c=K.\] 

As mentioned to us by W. Werner, and before the proof of Zhan \cite{zhan2008reversibility} (see also \cite{10.1214/14-AOP943} and \cite{lawler2021new}), such an approach for proving reversibility of SLE had been considered, but a characterisation of SLE in terms of restriction property was missing; see for instance \cite[Section~8.1]{10.1214/154957805100000113}. Theorem \ref{thm:uniqueness} is this missing piece, at least in the loop context.

\subsection{Duality}

Our second application concerns the duality $\kappa \leftrightarrow 16/\kappa$ for SLE. The duality of chordal SLE was first conjectured by Duplantier (see also Dubédat \cite{10.1214/009117904000000793}) and later proved by Zhan \cite{zhan2008duality}, Dubédat \cite{dubedat2009duality} and Miller--Sheffield \cite{MR3477777}. Duality for the SLE loop, as stated in Theorem \ref{T:duality} below, appears to be new. 
We mention that the paper in preparation \cite{AngCaiSunWu} gives an independent proof of this result, based on conformal welding of quantum discs and L\'evy excursion theory (private communication with the authors).

In the rest of this section, we make the dependence of the measures on $\kappa$ explicit in the notation, since it involves both $\kappa$ and $16/\kappa$ (e.g. we write $\nu_\kappa$ for $\nu$). We recall from \cite{Zhan21_SLEloop} that the SLE loop measure is well defined for all values of $\kappa \in (0,8)$, and it is an infinite (but $\sigma$-finite) measure. In the regime $\kappa\in(4,8)$, samples $\eta'$ are almost surely self-intersecting, but not self-crossing. The curve $\eta'$ is a random connected subset of $\hat{\C}$, so the connected components of $\hat{\C}\setminus\eta'$ are simply connected. We let $\nu_{\kappa'}$ be the restriction of the SLE$_{\kappa'}$ loop measure to those loops having 0 and $\infty$ in different connected components. Denote by $D_0$ (resp. $D_\infty$) the connected component containing 0 (resp. $\infty$). We define the \emph{outer boundary} of $\eta'$ to be the boundary of $D_\infty$, and we denote it by $\eta$. At this stage, we do not know if $\eta$ is a Jordan curve, but there is a unique conformal map $e^{-\tilde{c}}g:\D^*\to D_\infty$ with $\tilde{c}\in\R$ and $g\in\tilde{\cU}:=\{g:\D^*\to\hat{\C}\text{ conformal},\,g(\infty)=\infty,\,g'(\infty)=1\}$. We denote by $\mu_\kappa$ the law of $e^{-\tilde{c}}g$, which is a $\sigma$-finite Borel measure on $\R\times\tilde{\cU}$. By scale invariance, we have a factorisation $\mu_\kappa=\frac{\d\tilde{c}}{\sqrt{\pi}}\otimes\mu_\kappa^\#$, where $\mu_\kappa^\#$ is a Borel probability measure on $\tilde{\cU}$ and the global multiplicative constant has been fixed for convenience. We will abuse notations by viewing $\mu_\kappa^\#$ as a Borel measure on $\cU=\{f:\D\to\hat{\C}\text{ conformal},\,f(0)=0,\,f'(0)=1\}$ using the isomorphism $\tilde{\cU}\to\cU,\,g\mapsto\iota\circ g\circ\iota$.

\begin{theorem}[Duality]\label{T:duality} Fix $\kappa' \in (4,8)$ and let $\kappa = 16/\kappa'\in(2,4)$. With the notations just above, the outer boundary of $\eta'$ is $\nu_{\kappa'}$-a.s. a Jordan curve, and we have $\mu_\kappa=\nu_\kappa$ as Borel measures on $\cJ_{0,\infty}$ (i.e. the outer boundary of an SLE$_{\kappa'}$-loop is an SLE$_\kappa$-loop).
\end{theorem}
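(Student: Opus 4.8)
\textbf{Proof plan for Theorem \ref{T:duality} (duality for the SLE loop).}

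The strategy is to reduce the statement to the uniqueness result, Theorem \ref{thm:uniqueness}. The plan is to show that the law $\mu_\kappa$ of the outer boundary of an $\mathrm{SLE}_{\kappa'}$-loop (decorated by the choice of $0$ and $\infty$ lying in different complementary components) is a weak restriction measure on $\cJ_{0,\infty}$ with central charge $c_\rM = 1 - 6(\tfrac{2}{\sqrt\kappa}-\tfrac{\sqrt\kappa}{2})^2$, and then invoke uniqueness to identify it with $\nu_\kappa$. The first step is to establish that the outer boundary $\eta$ is $\nu_{\kappa'}$-a.s. a Jordan curve separating $0$ from $\infty$, so that $\mu_\kappa$ is genuinely a measure on $\cJ_{0,\infty}$; for $\kappa'\in(4,8)$ this is a local statement about the boundary of the complementary component $D_\infty$ of a non-self-crossing curve, which can be imported from known facts about the regularity of $\mathrm{SLE}_{\kappa'}$ boundary touching, together with the fact that $0$ and $\infty$ are (on the event we condition on) in distinct components. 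One should also check that $\mu_\kappa$ is non-zero and locally finite: $\sigma$-finiteness and scale invariance give the factorisation $\mu_\kappa = \tfrac{1}{\sqrt\pi}\d\tilde c\otimes\mu_\kappa^\#$ used in the statement, and local finiteness follows by comparison with $\nu_{\kappa'}$, which is locally finite.

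The heart of the proof is verifying the conformal restriction covariance of $\mu_\kappa$, i.e. that for simply connected $D_1,D_2$ containing $0$ and avoiding $\infty$, and $\Phi:D_1\to D_2$ conformal, one has $(\mu_\kappa)_{D_2}=\Phi^*(\mu_\kappa)_{D_1}$ with $\d(\mu_\kappa)_D(\eta)=\ind_{\eta\subset D}e^{\frac{c_\rM}{2}\Lambda^*(\eta,\partial D)}\d\mu_\kappa(\eta)$. The key input is the conformal restriction property of the $\mathrm{SLE}_{\kappa'}$ loop measure itself (as in \cite{Zhan21_SLEloop}, Definition \ref{D:restriction_measure} applied with parameter $\kappa'$), combined with the observation that the event ``$\eta\subset D$'' for the outer boundary $\eta$ is \emph{not} the same as ``$\eta'\subset D$'': the full curve $\eta'$ can poke outside $D$ while its outer boundary stays inside. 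The correct way to relate the two is through the decomposition of $\eta'$ into its outer boundary $\eta$ and the ``inner'' excursions of $\eta'$ into $D_\infty$, which, conditionally on $\eta$, are distributed as $\mathrm{SLE}_{\kappa'}$-type curves in each complementary component of $\eta$ — this is the loop analogue of the decomposition used in the chordal duality proofs of Zhan \cite{zhan2008duality}, Dubédat \cite{dubedat2009duality}, and Miller--Sheffield \cite{MR3477777}. One then needs the Brownian loop measure bookkeeping: restricting $\eta'$ to a subdomain $D$ introduces a Radon--Nikodym weight $\exp(\frac{c_{\kappa'}}{2}\Lambda)$ with $c_{\kappa'}$ the central charge of $\mathrm{SLE}_{\kappa'}$, but the relevant central charges satisfy $c_{\kappa'} = c_\kappa = c_\rM$ (this is the arithmetic identity $1-6(\tfrac{2}{\sqrt{\kappa'}}-\tfrac{\sqrt{\kappa'}}{2})^2 = 1 - 6(\tfrac{2}{\sqrt\kappa}-\tfrac{\sqrt\kappa}{2})^2$ when $\kappa\kappa'=16$), which is precisely why duality can hold at the level of restriction measures. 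Carefully tracking which Brownian loops are counted (those hitting $\partial D$ and the curve, versus those hitting $\partial D$ and only the outer boundary) should yield exactly the weight $e^{\frac{c_\rM}{2}\Lambda^*(\eta,\partial D)}$ for $\eta$, after the inner excursions have been integrated out.

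The main obstacle I expect is making rigorous the conditional decomposition of $\eta'$ given its outer boundary $\eta$, together with the exact accounting of Brownian loops under restriction: one must show that the inner excursions, re-glued conformally, reconstruct an $\mathrm{SLE}_{\kappa'}$ loop, and that no anomaly beyond the stated $\Lambda^*$ term survives. An alternative, possibly cleaner route that avoids this decomposition altogether is to verify the \emph{infinitesimal} restriction property directly — that is, to show $\mu_\kappa$ satisfies the integration by parts formula of Theorem \ref{T:ibp} (equivalently Proposition \ref{prop:ibp_disc}) — since, as remarked after Theorem \ref{thm:uniqueness}, infinitesimal $c_\rM$-restriction covariance already characterises the measure uniquely. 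This would amount to computing the first-order variation of $\mu_\kappa^\#$ under a Markovian vector field $\rv$ using the variation of the $\mathrm{SLE}_{\kappa'}$ loop measure, and checking the resulting anomaly is governed by $c_\rM U(q(\eta))$ and the Schwarzian as in Proposition \ref{prop:ibp_disc}; the computation localizes near $\partial D$, where only the outer boundary $\eta$ and the Brownian bubbles matter, which may make it more tractable than the global welding argument. Either way, once the restriction (or infinitesimal restriction) property is in hand, Theorem \ref{thm:uniqueness} gives $\mu_\kappa^\# = \nu_\kappa^\#$ and hence $\mu_\kappa = \nu_\kappa$, completing the proof.
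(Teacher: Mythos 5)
Your high-level strategy — establish that $\mu_\kappa$ is a weak restriction measure with central charge $c_\rM$ and then invoke Theorem \ref{thm:uniqueness}, using the arithmetic identity $c_{\kappa'} = c_\kappa$ when $\kappa\kappa'=16$ — is exactly the paper's, and the key role you assign to the equality of central charges is correct. But there is a genuine error in the middle of your plan, and it sends you off on a much harder route than necessary.

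You assert that for simply connected $D$, the event $\{\eta\subset D\}$ for the outer boundary $\eta$ is \emph{not} the same as $\{\eta'\subset D\}$ for the full curve, because ``the full curve $\eta'$ can poke outside $D$ while its outer boundary stays inside.'' This is false precisely because $D$ is simply connected: if $\eta\subset D$, then $D^c$ is connected, contains $\infty$, and avoids $\eta$, so $D^c$ lies entirely in the exterior component $D_\infty$; since $\eta'$ is disjoint from $D_\infty$, it follows that $\eta'\subset D$. Thus $\ind_{\eta'\subset D}=\ind_{\eta\subset D}$. By the same containment $D^c\subset D_\infty$, a Brownian loop hitting both $\eta'$ and $D^c$ must cross $\partial D_\infty=\eta$, so $\Lambda^*(\eta',D^c)=\Lambda^*(\eta,D^c)$. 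With these two elementary identities, the restriction Radon--Nikodym derivative for $\eta'$ is \emph{literally} a function of $\eta$ alone, and the restriction property for $\nu_{\kappa'}$ pushes forward verbatim to a restriction property for $\mu_\kappa$ — no conditional decomposition of $\eta'$ given $\eta$, no reconstruction of inner excursions, and no bookkeeping of ``which loops hit only the outer boundary'' is needed. Your misidentification of the two events is what makes you reach for the welding-type machinery of the chordal duality proofs (Zhan, Dub\'edat, Miller--Sheffield), which the paper deliberately avoids. The alternative infinitesimal route you sketch (verifying Proposition \ref{prop:ibp_disc} directly for $\mu_\kappa$) would also work in principle, but it is doing extra work for the same conclusion, since the elementary identities above already give the global restriction property.

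One further gap: your proposal presupposes that the outer boundary is a.s. a Jordan curve before constructing $\mu_\kappa$, invoking external boundary-regularity facts for $\mathrm{SLE}_{\kappa'}$. The paper sidesteps this by first working on $\cU$ (all normalised univalent maps, not necessarily extending injectively to $\bar\D$), proving $\mu_\kappa^\#=\nu_\kappa^\#$ as measures on $\cU$, and then \emph{deducing} the Jordan-curve property a posteriori from the fact that $\nu_\kappa^\#$ gives full mass to $\cE$. This is cleaner and keeps the argument self-contained.
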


Theorem \ref{T:duality} does not address the self-dual case $\kappa'=\kappa=4$. In this case, the curve is almost surely simple, and the duality for SLE$_4$ can be understood as the reversibility proved in Section~\ref{S:reversibility}.

By conformal restriction, Theorem~\ref{T:duality} implies a similar result in any simply connected domain $D$, where one looks at the boundary of the connected component which contains the boundary~$\partial D$.

\begin{proof}
Before going into the actual proof, we make a preliminary remark. Every $f\in\cU$ has a Taylor expansion $f(z)=z(1+\sum_{m=1}^\infty a_mz^m)$ in the neighbourhood of $0$, and $\C[(a_m,\bar{a}_m)_{m\geq1}]$ is dense in $\cC^0(\cU)$, see Section \ref{subsec:dense}. Hence, a Borel probability measure is characterised by its evaluation on the space of polynomials in these coefficients. On the other hand, for $\kappa\in(0,4]$, we can view the SLE$_\kappa$ shape measure $\nu_\kappa^\#$ as a Borel probability measure on $\cU$ giving full mass to $\cE$ (corresponding to Jordan curves, see Section \ref{SS:notation}). Thus, to prove that $\mu^\#_\kappa=\nu_\kappa^\#$, it suffices to prove that these measures coincide on $\C[(a_m,\bar{a}_m)_{m\geq1}]$, and we will get as a corollary that the outer boundary of SLE$_{\kappa'}$ is $\nu_{\kappa'}$-a.s. a Jordan curve. 

For all simply connected domains $D\subset\C$, the SLE$_{\kappa'}$ loop measure in $D$ is by definition $\d\nu_{\kappa',D}(\eta')=\ind_{\eta'\subset D}e^{\frac{c_\rM}{2}\Lambda^*(\eta',D^c)}\d\nu_{\kappa'}(\eta')$, and it satisfies the restriction property with central charge $c_\rM=c_\rM(\kappa')=1-6(\frac{2}{\sqrt{\kappa'}}-\frac{\sqrt{\kappa'}}{2})^2$ \cite{Zhan21_SLEloop}.
Denote by $\mu_{\kappa,D}$ the distribution of the outer boundary a SLE$_{\kappa'}$ in $D$.
The main (elementary) observations to make are that $c_\rM = c_\rM(\kappa)$ and that for any \emph{simply connected} domain $D\subset \C$ and curve $\eta'$, denoting $\eta$ the outer boundary of $\eta'$,
\[
\ind_{\eta' \subset D} = \ind_{\eta \subset D}
\quad \text{and} \quad
\Lambda^*(\eta',D^c) = \Lambda^*(\eta,D^c).
\]
Thus, the restriction property of SLE$_{\kappa'}$ gives rise to a restriction property for its outer boundary:
$\d\mu_{\kappa,D}(\eta')=\ind_{\eta'\subset D}e^{\frac{c_\rM}{2}\Lambda^*(\eta',D^c)}\d\mu_{\kappa}(\eta')$.
Because $\nu_{\kappa',D}$ is conformally invariant, $\mu_{\kappa,D}$ is also conformally invariant. Overall, this proves that $\mu_\kappa$ is a $c_\rM$-restriction measure in the sense of Definition~\ref{D:restriction_measure}, except that it is defined on the space $\R \times \cU$ instead of $\R \times \cE$.
It then satisfies the integration by parts formula of Section \ref{S:pf_ibp}, which determines $\mu_\kappa^\#$ on all polynomials (following the proof of Theorem \ref{thm:uniqueness}). Hence, $\mu_\kappa^\#$ coincides with $\nu_\kappa^\#$ on $\C[(a_m,\bar{a}_m)_{m\geq1}]$, so $\mu_\kappa^\#=\nu_\kappa^\#$ in the space of Borel probability measures on $\cU\simeq\tilde{\cU}$. From our preliminary remark, we deduce that $\mu_\kappa^\#=\nu_\kappa^\#$ in the space of Borel probability measures on $\cU$, so $\mu_\kappa^\#$ descends to a probability measure on $\cE$ which it equals $\nu_\kappa^\#$. 

We have just proved that the shape measure of the outer boundary of $\eta'$ is the SLE$_\kappa$ shape measure $\nu_\kappa^\#$. This implies that $\mu_\kappa=K\nu_\kappa$ for some global multiplicative constant $K>0$. Our choice of normalisation ensures that the distribution of the $\log$-conformal radius of $\eta$ viewed from $\infty$ is $\frac{\d\tilde{c}}{\sqrt{\pi}}$. This coincides with our choice of normalisation for $\nu_\kappa$, so indeed $K=1$.
\end{proof}

\section{Related and future works, open questions}\label{sec:future}

    \subsection{On Airault--Malliavin measures, Kontsevich--Suhov measures, and SLE}\label{subsec:comparison}
In this section, we review the differences and similarities between Airault--Malliavin's unitarising measures \cite{AiraultMalliavin01}, and Kontsevich--Suhov's restriction measures \cite{KontsevichSuhov07}. The distinction is subtle, and we hope that the following discussion will clarify certain aspects. We will not attempt to review the rich developments that followed these works; instead, we refer to the introduction of \cite{GQW24} and references therein.

First, we need to remind the reader of conformal welding. Let $f\in\cE$, and set $\eta:=f(\S^1)$ and $g:\D^*\to\mathrm{ext}(\eta)$ conformal with $g(\infty)=\infty$, $g'(\infty)>0$. Since $f,g$ extend to homeomorphisms on the closure of their domains, we can set $h:=g^{-1}\circ f|_{\S^1}\in\mathrm{Homeo}(\S^1)$. Postcomposing $h$ with a rotation does not change the welding curve, and this leads to an embedding $\cE\to\S^1\backslash\mathrm{Homeo}(\S^1)$. The converse problem of associating a curve to a homeomorphism is called \emph{conformal welding}, and does not admit a solution (let alone unique) in general (see \cite{Bishop} for a comprehensive introduction to conformal welding). The homeomorphism $h$ is called the \emph{welding homeomorphism} of $\eta$ (or $f$). 

This leads to a left (resp. right) action of the group $\mathrm{Diff}^\omega(\S^1)$ on $\cE$: given $f\in\cE$ and $\tilde{\Phi}\in\mathrm{Diff}^\omega(\S^1)$, define $\tilde{\Phi}\cdot f$ (resp. $f\cdot\tilde{\Phi}$) to be the element of $\cE$ with welding homeomorphism $\tilde{\Phi}\circ h$ (resp. $h\circ\tilde{\Phi}$). This action was considered by Kirillov in the case of smooth Jordan curves, and he gave explicit formulas for the infinitesimal action \cite[(7)]{Kirillov98}. One can then consider the Lie derivatives along the invariant vector fields, acting as differential operators on the space of test functions on $\cE$. This defines a representation of the Witt algebra, which can be turned into a Virasoro representation using Neretin polynomials \cite[Section~6]{Kirillov98}. In Airault--Malliavin's terminology, a measure on $\cE$ is a \emph{unitarising measure (for the Virasoro algebra)} if Kirillov's representation is unitary with respect to the $L^2$-inner-product of the measure (equivalently, the Virasoro generators satisfy suitable integration by parts formulas) \cite[Section 2.3]{AiraultMalliavin01}.

Crucially, Kirillov's representation is \emph{not} the same as the one considered in Section \ref{sec:rep} of this paper. The simplest way to see this is that the infinitesimal action considered in this work does not exponentiate to an action of a group. Instead, the notion of conformal restriction refers to the variation of the measure when we perturb the curve by a local conformal transformation. Visually, Kirillov's action takes place on the left-hand-side of Figure \ref{fig:setup}, while the restriction action is generated on the right-hand-side, and there is a non trivial change of variables between the two (see below). As pointed out in \cite[Section 2.5.2]{KontsevichSuhov07}, the hermiticity relations \cite[(2.3.4)]{AiraultMalliavin01} are strikingly similar to the infinitesimal restriction property of \cite[Definition 2.4]{KontsevichSuhov07}, but it should be stressed again that these statements refer to two different families of operators. Moreover, the hermiticity relations of \cite{KontsevichSuhov07} should be understood with respect to the Hermitian form $\cQ$ introduced in this paper (and in \cite{GQW24}).

Recently, we studied the Kirillov action on the $L^2$-space of the SLE$_\kappa$ loop measure: remarkably, it satisfies \cite[(2.3.4)]{AiraultMalliavin01} with the central charge $c_\rL:=26-c_\rM\geq25$ \cite[Theorem 1.1]{BJ25}. The proof relies on Theorem \ref{T:ibp}, and what amounts to the computation of the ``Jacobian" of the change of variables between the two sets of invariant coordinates induced by the above Virasoro representations. This change of variables is based on the (infinitesimal) solution theory of the Beltrami equation, which is nothing but the resolvent of the Cauchy--Riemann operator acting on $(-1,0)$-differentials. \textit{A posteriori}, the number $26=2\times 13$ can be interpreted in light of Takhtajan--Zograf's local index theorem for this operator \cite[Theorem 2\footnote{Put $n=-1$ in this statement.}]{TakhtajanZograf}. Additionally, the value of the central charge reconciles Airault--Malliavin's assumption (unitarity requires $c_\rL>0$,\footnote{In fact, $c_\rL\geq1$ is more appropriate due to standard results from representation theory \cite[Proposition~8.2]{KacRaina_Bombay}, and $c_\rL\geq25$ is even more natural in light of the coupling with Liouville CFT \cite{sheffield2016,AHS20,BJ25}.} see \cite[(1.2.8)]{AiraultMalliavin01} and below), and Kontsevich--Suhov's assumption $c_\rM\leq 1$ \cite[Conjecture~1]{KontsevichSuhov07} (motivated by the restrition property of SLE). To the best of our knowledge, \cite{BJ25} is the first proof that the SLE loop measure (hence any Kontsevich--Suhov measure) is an Airault--Malliavin measure (hence that such measures exist\footnote{\cite[Theorem 2.2]{AiraultMalliavinThalmaier02} is a non-existence statement for \emph{probability} measures; of course, any unitarising measure is invariant under $\mathrm{PSL}_2(\R)$ and must have infinite mass.}).

    \subsection{Multiple SLE loop measure}
Let $N\in\Z_{>0}$. Let $\cJ_{\hat\C,N}$ be the set of $N$-tuples of Jordan curves $\boldeta=(\eta_1,...,\eta_N)$ such that $\hat\C\setminus\cup_{j=1}^N\eta_j$ consists of $N$ simply connected components, and one $N$-connected component. By convention, the \emph{interior} of $\eta_j$ is the simply connected component bounded by $\eta_j$. One can define a conformally invariant measure on $\cJ_{\hat\C,N}$ satisfying the conformal restriction property, by setting
\[\d\nu_{\hat\C,N}(\boldeta)=\ind_{\{\boldeta\in\cJ_{\hat\C,N}\}}e^{\frac{c_\rM}{2}\Lambda^*(\boldeta)}\d\nu_{\hat\C}^{\otimes N}(\boldeta),\]
where $\Lambda^*(\boldeta)$ is a natural quantity (expressed as a mass of Brownian loops) which generalises the case $N=2$ studied in \cite{LuoMaibach}; see \cite{BJSW25}.

Let $z_1,...,z_N$ be pairwise distinct points in $\hat\C$, and restrict the above measure to the event that $\eta_j$ surrounds $z_j$ and no other marked point. Let $c_j$ be the $\log$-conformal radius of $\eta_j$ viewed from $z_j$, and define $\langle\Psi_{\lambda_1}(z_1)\cdots\Psi_{\lambda_N}(z_N)\rangle_{\hat\C,N}:=\int\prod_{j=1}^Ne^{-2\lambda_jc_j}\d\nu_{\hat\C,N}$, provided the integral converges. 
\begin{open}
    Is there an analytic formula for $C(\lambda_1,\lambda_2,\lambda_3):=\langle\Psi_{\lambda_1}(0)\Psi_{\lambda_2}(1)\Psi_{\lambda_3}(\infty)\rangle_{\hat\C,3}$?
\end{open}
This question is natural from the point of view of conformal field theory, where the three-point correlation function is called the \emph{structure constant}. In Liouville CFT, the structure constant is known as the DOZZ formula \cite{DornOtto94,Zamolodchikov96,Teschner_dozz,KRV_DOZZ}. Another motivation comes from the recent breakthrough \cite{AngSun21_CLE}, showing that the three-point correlation function of the conformal loop ensemble is the imaginary DOZZ formula \cite[Theorem 8.3]{AngSun21_CLE}, confirming predictions from physics \cite{IJS16}.

In connection with CFT, it would be interesting to prove that the correlation functions defined above satisfy the celebrated Belavin--Polyakov--Zamolodchikov (BPZ) equations \cite{BPZ84} when one or more $\lambda_j$ lies on the Kac table. One obstruction is that the integral defining the correlations does not converge for these values.
\begin{problem}
    Prove that the correlation function $\langle\prod_{j=1}^N\Psi_{\lambda_j}(z_j)\rangle$ admits a meromorphic continuation in the $\lambda_j$'s. For $\lambda_j$ in the Kac table, show that its continuation satisfies a BPZ equation.
\end{problem}
We expect these BPZ equations to hold due to the structure of degenerate modules (see Theorem~\ref{thm:module_structure} and Corollary~\ref{C:level2}) and the conformal Ward identities expressing the conformal invariance of the correlation functions (the latter being part of an ongoing work with Xin Sun and Baojun Wu.

The last question related to CFT is whether one can express the four-point correlation in terms of the three-point correlation, i.e. write a conformal bootstrap formula. One idea is to introduce a fifth SLE loop $\eta_5$ disconnecting $\eta_1,\eta_2$ from $\eta_3,\eta_4$, write $\langle\prod_{j=1}^4\Psi_{\lambda_j}(z_j)\rangle_{\hat\C,4}$ as the $\cQ$-inner-product over the law of $\eta_5$, and express this inner-product using Theorem \ref{T:spectral}.
\begin{problem}
    Write a conformal boostrap formula along the lines above.
\end{problem}

 By Koebe's theorem, one can map the $N$-connected $D_{\boldeta}:=\hat\C\setminus\cup_{j=1}^N\overline{\mathrm{int}(\eta_j)}$ to a circle domain (which is unique modulo M\"obius transformations), and this circle domain encodes the modulus of $D_{\boldeta}$. In a work in progress with Xin Sun and Baojun Wu \cite{BJSW25}, we compute the law of this modulus, which turns out to be related to the Belavin--Knizhnik measure from bosonic string theory \cite{Polyakov81,BelavinKnizhnik}. We also prove a coupling statement of the SLE with the Gaussian free field, generalising the conformal welding of random surfaces \cite{sheffield2016,AHS20,BJ25} to multiply connected cases.

\appendix

\section{Representation theory of the Virasoro algebra}\label{app:virasoro}
This appendix reviews some standard facts from the representation theory of the Virasoro algebra. The main reference is \cite{KacRaina_Bombay}.

The \emph{Virasoro algebra} is the infinite dimensional Lie algebra with generators $((L_n)_{n\in\Z},\bc)$ and relations 
\[[L_n,L_m]=(n-m)L_{n+m}+\frac{n^3-n}{12}\delta_{n,-m}\bc,\qquad[\bc,L_n]=0.\]
In all representations, we assume that $\bc$ acts by multiplication by a scalar, and we slightly abuse notation by identifying $\bc$ with the operator $\bc\mathrm{Id}$ where $\bc\in\C$ called the \emph{central charge}.\footnote{In the main text, the central charge is denoted $c_\rM$, but we want to distinguish between the abstract modules and the representation under study. The bold font is used to avoid any confusion with the coordinate $c$ on $\cJ_{0,\infty}$.}

A \emph{highest-weight representation} is a Virasoro module $V$ such that $V=\mathrm{span}\{e_\bk=L_{-\bk}\cdot e_\emptyset|\,\bk\in\cT\}$ for some vector $e=e_\emptyset\in V$ called the \emph{highest-weight state}. If $L_0\cdot e=\lambda e$ for some $\lambda\in\C$, the representation is called of \emph{weight} $\lambda$. Every highest-weight representation has a grading $V=\oplus_{N\in\N}V^N$, with $V^N=\mathrm{span}\{L_{-\bk}\cdot e|\,\bk\in\cT_N\}$. 

A \emph{Verma module} is a highest-weight representation such that all vectors $L_{-\bk}\cdot e$ are linearly independent. For each $\bc\in\C$ and $\lambda\in\C$, there exists a unique Verma module of central charge $c$ and weight $\lambda$, which we denote by $M_{\bc,\lambda}$. Vectors in $M_{\bc,\lambda}^N$ for $N>0$ are called the \emph{descendants} of $e_\emptyset$. Every highest-weight representation of central charge $\bc$ and weight $\lambda$ is a quotient of $M_{\bc,\lambda}$ by a submodule. The Verma module $M_{\bc,\lambda}$ has a unique maximal proper (possibly trivial) submodule $I_{\bc,\lambda}$, and the quotient 
\[V_{\bc,\lambda}:=M_{\bc,\lambda}/I_{\bc,\lambda}\]
 is irreducible. It is called the \emph{irreducible quotient} (of $M_{\bc,\lambda}$ by the maximal proper submodule). A vector $\chi\in M_{\bc,\lambda}$ is \emph{singular} if $L_n\cdot\chi=0$ for all $n\geq1$, and $\chi$ is a descendant of $e_\emptyset$. A singular vector at some level $N$ generates its own highest-weight representation (isomorphic to $M_{\bc,\lambda+N}$); in particular $M_{\bc,\lambda}$ is reducible if it has at least one singular vector. Conversely, every proper submodule is generated by singular vectors (and their descendants). In particular, $I_{\bc,\lambda}$ is the space of all singular vectors in $M_{\bc,\lambda}$ and their descendants. 

The \emph{Shapovalov form} is the unique Hermitian form $\langle\cdot,\cdot\rangle_{c,\lambda}$ on $M_{\bc,\lambda}$ such that $\langle e,e\rangle_{\bc,\lambda}=1$, and
\[\langle L_n\cdot v,w\rangle_{\bc,\lambda}=\langle v,L_{-n}\cdot w\rangle_{\bc,\lambda},\qquad\forall v,w\in M_{\bc,\lambda},\,\forall n\in\Z.\]
The coefficients of the Gram matrix are denoted $\rB_\lambda(\bk,\bk')=\langle L_{-\bk}\cdot e_\emptyset,L_{-\bk'}\cdot e_\emptyset\rangle_{\bc,\lambda}$, for all $\bk,\bk'\in\cT$. Equivalently, one has $L_{\bk'}L_{-\bk}\cdot e_\emptyset=\rB_\lambda(\bk,\bk')e_\emptyset$ (recall notation \eqref{eq:string}). These coefficients are universal: they depend only on the central charge and the conformal weight. The maximal proper submodule $I_{\bc,\lambda}$ is the kernel of the Shapovalov form, i.e.
\[I_{\bc,\lambda}=\{v\in M_{\bc,\lambda}|\,\forall w\in M_{\bc,\lambda},\,\langle v,w\rangle_{\bc,\lambda}=0\}.\]

The \emph{dual Verma module} $M^\vee_{\bc,\lambda}$ is the space of linear forms on $M_{\bc,\lambda}$ equipped with the dual representation $(L_n^\vee)_{n\in\Z}$, i.e.\footnote{In principle, the dual representation should rather be $L_n^\vee\cdot\alpha(v):=-\alpha(L_n\cdot v)$ but this is equivalent due to the automorphism $L_n\mapsto-L_{-n}$.}
\[L_n^\vee\cdot\alpha(v):=\alpha(L_{-n}\cdot v),\qquad\forall\alpha\in M_{\bc,\lambda}^\vee,\,\forall v\in M_{\bc,\lambda},\,\forall n\in\Z.\]
The space $M^\vee_{\bc,\lambda}$ has a distinguished vector $e^\vee$: the dual to the highest-weight vector in $M_{\bc,\lambda}$.
The Verma module and its dual are not isomorphic as Virasoro modules if $M_{\bc,\lambda}$ is reducible ($M_{\bc,\lambda}^\vee$ is not even a highest-weight representation in this case), but they are always isomorphic as graded vector spaces. $V_{\bc,\lambda}^\vee:=\mathrm{span}\{L_{-\bk}^\vee e^\vee|\,\bk\in\cT\}$. By definition, $(V_{\bc,\lambda}^\vee,(L_n^\vee)_{n\in\Z})$ is a highest-weight representation of weight $\lambda$.

\begin{lemma}\label{lem:irreducible_quotient}
We have $V_{\bc,\lambda}\simeq V_{\bc,\lambda}^\vee$ as highest-weight representations.
\end{lemma}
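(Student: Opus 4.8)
\textbf{Proof plan for Lemma \ref{lem:irreducible_quotient}.}

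The plan is to exhibit an explicit isomorphism of highest-weight representations $V_{\bc,\lambda}\to V_{\bc,\lambda}^\vee$. The natural candidate is the map induced by the Shapovalov form: since $\langle\cdot,\cdot\rangle_{\bc,\lambda}$ is a Hermitian (sesquilinear) form on $M_{\bc,\lambda}$ satisfying the adjunction $\langle L_n v, w\rangle=\langle v, L_{-n}w\rangle$, it defines a linear map $\Phi: M_{\bc,\lambda}\to M_{\bc,\lambda}^\vee$ by $\Phi(v):=\langle v,\cdot\rangle_{\bc,\lambda}$ (taking care, if one wants genuine linearity rather than antilinearity, to precompose or postcompose with the standard antilinear structure, or simply to work with the bilinear version of the Shapovalov form; the coefficients $\rB_\lambda(\bk,\bk')$ are what matters and these are the same). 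First I would check that $\Phi$ intertwines the two representations: for $v\in M_{\bc,\lambda}$, $n\in\Z$, one has $\Phi(L_{-n}v)(w)=\langle L_{-n}v,w\rangle=\langle v,L_n w\rangle=\Phi(v)(L_n w)=(L_n^\vee\cdot\Phi(v))(w)$, using the definition of the dual action $L_n^\vee\cdot\alpha(w)=\alpha(L_{-n}w)$ — so $\Phi(L_{-n}v)=L_n^\vee\Phi(v)$, i.e. $\Phi$ is a module morphism (after matching the sign convention via the automorphism $L_n\mapsto -L_{-n}$ noted in the footnote).

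Next I would identify the kernel of $\Phi$. By the radical characterisation recalled just above the lemma, $\ker\Phi=\{v:\langle v,w\rangle_{\bc,\lambda}=0\ \forall w\}=I_{\bc,\lambda}$, the maximal proper submodule. Hence $\Phi$ descends to an injective module morphism $\bar\Phi: V_{\bc,\lambda}=M_{\bc,\lambda}/I_{\bc,\lambda}\hookrightarrow M_{\bc,\lambda}^\vee$. It remains to see that the image is exactly $V_{\bc,\lambda}^\vee=\mathrm{span}\{L_{-\bk}^\vee e^\vee\mid\bk\in\cT\}$. On the one hand, $\bar\Phi$ sends the highest-weight vector $\bar e_\emptyset$ to a highest-weight vector of weight $\lambda$ in $M_{\bc,\lambda}^\vee$ (it is killed by all $L_n^\vee$, $n\ge 1$, and is an $L_0^\vee$-eigenvector of eigenvalue $\lambda$, by the grading-preserving property of $\Phi$ and the fact that $\langle e_\emptyset,\cdot\rangle$ pairs nontrivially only with level $0$); since $\langle e_\emptyset,e_\emptyset\rangle_{\bc,\lambda}=1\ne 0$, this image is nonzero, so up to scaling it is $e^\vee$. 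On the other hand, $\bar\Phi$ is a module map, so $\bar\Phi(L_{-\bk}\bar e_\emptyset)=L_{-\bk}^\vee e^\vee$ (up to the same scalar and sign convention), and these vectors span $V_{\bc,\lambda}^\vee$ by definition. Therefore $\mathrm{im}\,\bar\Phi=V_{\bc,\lambda}^\vee$, and $\bar\Phi:V_{\bc,\lambda}\xrightarrow{\ \sim\ }V_{\bc,\lambda}^\vee$ is the desired isomorphism.

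The only subtle point — and the step I would be most careful about — is the bookkeeping of conventions: the Shapovalov form is Hermitian (antilinear in one slot) while $M_{\bc,\lambda}^\vee$ is defined via ordinary (complex-linear) duality, and the dual action in the footnote is the one $L_n^\vee\alpha(v)=\alpha(L_{-n}v)$ rather than the sign-flipped $-\alpha(L_nv)$. One should fix once and for all a $\C$-bilinear version of the Shapovalov pairing (or compose with complex conjugation on coefficients, which does not affect any of the vanishing/nonvanishing statements since the Gram matrix entries $\rB_\lambda(\bk,\bk')$ are the relevant invariants), so that $\Phi$ is genuinely $\C$-linear and genuinely intertwining; then the argument above goes through verbatim. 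Everything else — intertwining, $\ker\Phi=I_{\bc,\lambda}$, surjectivity onto $V_{\bc,\lambda}^\vee$ — is then a direct consequence of the definitions and the radical description of $I_{\bc,\lambda}$ recalled in the text.
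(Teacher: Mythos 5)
Your proposal is correct but takes a genuinely different route from the paper's. The paper identifies $V_{\bc,\lambda}$ with the annihilator $\{\alpha\in M_{\bc,\lambda}^\vee\mid\alpha|_{I_{\bc,\lambda}}=0\}$, shows (using the radical description of $I_{\bc,\lambda}$) that $V_{\bc,\lambda}^\vee$ sits inside this annihilator, and then does a level-by-level dimension count, invoking the uniqueness of the maximal proper submodule to conclude. You instead construct the isomorphism directly: the Shapovalov form induces an intertwiner $\Phi\colon M_{\bc,\lambda}\to M_{\bc,\lambda}^\vee$, whose kernel is exactly the radical $I_{\bc,\lambda}$ and whose image is $V_{\bc,\lambda}^\vee$, so $\Phi$ descends to the desired isomorphism. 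Both arguments rest on the same fact --- that $I_{\bc,\lambda}$ is the radical of the Shapovalov form --- but yours buys an explicit map and avoids the dimension bookkeeping, while the paper's avoids having to verify intertwining.

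One index slip in your intertwining check: from $\Phi(L_{-n}v)(w)=\langle v,L_n w\rangle=\Phi(v)(L_n w)$ and the definition $L_n^\vee\alpha(v)=\alpha(L_{-n}v)$, the right-hand side is $(L_{-n}^\vee\Phi(v))(w)$, not $(L_n^\vee\Phi(v))(w)$. So the correct conclusion is $\Phi(L_{-n}v)=L_{-n}^\vee\Phi(v)$, i.e.\ $\Phi(L_n v)=L_n^\vee\Phi(v)$ for all $n$ --- a genuine module morphism for the dual action exactly as defined, with no need to invoke the sign automorphism $L_n\mapsto-L_{-n}$ from the footnote. With this correction the rest is clean: $\Phi(e_\emptyset)=e^\vee$ because $\rB_\lambda(\emptyset,\bk)=\delta_{\bk,\emptyset}$, so $\bar\Phi(L_{-\bk}\bar e_\emptyset)=L_{-\bk}^\vee e^\vee$ and $\operatorname{im}\bar\Phi=V_{\bc,\lambda}^\vee$ as you claim. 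Your handling of the sesquilinearity (pass to the bilinear version or precompose with conjugation, noting the Gram entries $\rB_\lambda(\bk,\bk')$ are what matter) is fine.
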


\begin{proof}
Since $V_{\bc,\lambda}=M_{\bc,\lambda}/I_{\bc,\lambda}$, we have a linear isomorphism
\[V_{\bc,\lambda}\simeq\{\alpha\in M_{\bc,\lambda}^\vee|\,\forall v\in I_{\bc,\lambda},\,\alpha(v)=0.\}\]
In the rest of the proof, we abuse notations by writing $V_{\bc,\lambda}$ for this subspace of $M_{\bc,\lambda}^\vee$.

For all non-empty partitions $\bk\in\cT$ and all $\chi\in I_{\bc,\lambda}$, we have $L_{-\bk}^\vee e^\vee(\chi)=e^\vee(L_{\bk}\chi)=0$, where the last equality is an easy consequence of the fact that the kernel of the Shapovalov form is generated by singular vectors and their descendants. This proves that $V_{\bc,\lambda}^\vee\subset V_{\bc,\lambda}$. 

On the other hand, $V_{\bc,\lambda}^\vee$ is a highest-weight representation of weight $\lambda$, we have $\dim V_{\bc,\lambda}^N\leq\dim V_{\bc,\lambda}^{\vee N}$ for all $N\in\N$. It follows that $\dim V_{\bc,\lambda}^{\vee N}=\dim V_{\bc,\lambda}^N$ for all $N\in\N$, so $V_{\bc,\lambda}$ is isomorphic to the irreducible quotient by uniqueness of the maximal proper submodule.
\end{proof}

\section{The SLE loop measure is Borel}\label{app:SLE}

Before stating the result, we recall some facts concerning topologies and their associated Borel $\sigma$-algebras on the space $\cJ_{0,\infty}$.

\begin{itemize}
    \item The Hausdorff distance on the set of non-empty, compact subsets of $\hat\C$ is given by
\[d_\cH(X,Y)=\max\big\lbrace\sup_{x\in X}\,d(x,Y),\,\sup_{y\in Y}\,d(y,X)\big\rbrace,\]
where $d$ is the distance on $\hat\C$ induced by (say) the round metric on $\hat \C$. The Hausdorff metric restricts to a metric on $\cJ_{0,\infty}$. We will denote the associated topology $\tau_\cH$ and its Borel $\sigma$-algebra $\cB(\tau_\cH)$.
\item We endow the space $\R \times \cE$ with the topology of local uniform convergence on compact subsets of $\D$. We denote by $\tau_{\cC}$ the pullback of this topology to the space $\cJ_{0,\infty}$ by the map $\eta \in \cJ_{0,\infty} \mapsto (c_\eta,f_\eta) \in\R \times\cE$, and by $\cB(\tau_\cC)$ the associated Borel $\sigma$-algebra.
\end{itemize}
A theorem of Carathéodory gives a geometric description of convergence in the space $(\cJ_{0,\infty},\tau_{\cC})$, sometimes referred to as kernel convergence. See \cite[Theorem 1.8]{Pommerenke75}. In particular, $\tau_\cC \subset \tau_\cH$. The topologies are however different as one can see by considering the outer boundary of $D(0,1+1/n) \cup D(2,1+1/n)$, $n\ge 1$: they converge in $(\cJ_{0,\infty},\tau_{\cC})$ to $\S^1$, but do not converge in $(\cJ_{0,\infty},\tau_{\cH})$.\footnote{We thank an anonymous referee for this counterexample.}
\begin{lemma}
    Although $\tau_\cC \subsetneq \tau_\cH$, the $\sigma$-algebras $\cB(\tau_\cC)$ and $\cB(\tau_\cH)$ coincide.
\end{lemma}
\begin{proof}
    Let $F: \eta \in (\cJ_{0,\infty},\tau_\cH) \to (c_\eta,f_\eta) \in \R \times \cE$, where $\cE$ is equipped with the topology of uniform convergence on compact subsets of $\D$.
    Since $\tau_\cC\subset\tau_\cH$, $F$ is continuous and in particular measurable. Moreover, $F$ is a bijection. Since the space $\R \times \cE$ and the space of compact subsets of $\hat\C$ equipped with the Hausdorff distance are Polish, and since $\cJ_{0,\infty}$ is a measurable subset of the latter, the Lusin--Souslin theorem (see \cite[Theorem 15.1]{zbMATH00722611}) implies that $F$ is a Borel isomorphism. The $\sigma$ algebras $\cB(\tau_\cC)$ and $\cB(\tau_\cH)$ thus coincide.
\end{proof}

The main result of this appendix reads as follows.

\begin{proposition}\label{prop:sle_is_borel}
The SLE loop measure is a Borel measure on the space of simple Jordan curves equipped with the Hausdorff metric.
\end{proposition}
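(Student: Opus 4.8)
The plan is to deduce the statement from Zhan's construction \cite{Zhan21_SLEloop} by transferring the measure, which is Borel for a metric-space topology on loops modulo parametrisation, to the Carath\'eodory topology. Recall that in \cite{Zhan21_SLEloop} the whole-plane SLE$_\kappa$ loop measure is built as a $\sigma$-finite measure $\nu_{\hat{\C}}$ on the Polish space $(\Omega,d_\Omega)$ of unrooted loops modulo increasing reparametrisation, where $d_\Omega([\gamma_1],[\gamma_2])=\inf_\phi\norm{\gamma_1-\gamma_2\circ\phi}_\infty$, the infimum running over increasing homeomorphisms $\phi$ of $\S^1$, and that $\nu_{\hat{\C}}$ is Borel on $(\Omega,d_\Omega)$. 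Two points must then be established: (i) that $\nu_{\hat{\C}}$ is carried by the Borel subset $\Omega_s\subset\Omega$ of \emph{simple} loops; and (ii) that the bijection $\tau$ sending a simple loop $[\gamma]\in\Omega_s$ to its trace $\gamma(\S^1)\in\cJ$ is a Borel isomorphism when $\cJ$ carries the Carath\'eodory topology. Granting (i) and (ii), the pushforward $\tau_*\nu_{\hat{\C}}$ is the required Borel measure on the space of simple Jordan curves; its restriction to $\cJ_{0,\infty}$ and its image under the homeomorphism $\cJ_{0,\infty}\simeq\R\times\cE$ of Section \ref{SS:setup} then give the versions of the statement used in the body (on $\cJ_{0,\infty}$ and on $\cE$).

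For (i) I would use that $\kappa\in(0,4]$: chordal and radial SLE$_\kappa$ are a.s. simple curves, and the rooted loop measures out of which $\nu_{\hat{\C}}$ is assembled in \cite{Zhan21_SLEloop} --- obtained from chordal/radial SLE by conformal restriction and surgery, together with a limiting procedure --- inherit this property, so $\nu_{\hat{\C}}(\Omega\setminus\Omega_s)=0$. For (ii), the map $\tau$ into the space $\mathcal{K}(\hat{\C})$ of non-empty compact subsets of $\hat{\C}$ with the Hausdorff metric is $1$-Lipschitz, hence continuous, and injective by the Jordan curve theorem. The crucial claim is that $\tau$ is continuous into $\cJ$ equipped with the Carath\'eodory topology: if $[\gamma_n]\to[\gamma]$ in $d_\Omega$ then the traces converge in Hausdorff distance, and for every $z\notin\gamma(\S^1)$ the winding number of $\gamma_n$ about $z$ converges to that of $\gamma$ about $z$, so $z$ lies eventually in a fixed complementary component of $\eta_n=\tau([\gamma_n])$; these two facts together give Carath\'eodory kernel convergence of both complementary Jordan domains of $\eta_n$ to those of $\eta=\tau([\gamma])$, which is exactly convergence in the Carath\'eodory topology (by Carath\'eodory's theorem \cite{Caratheodory52} the normalised uniformising map of a Jordan domain extends to a homeomorphism of $\overline{\D}$, so kernel convergence is equivalent to uniform convergence on $\overline{\D}$ of these normalised maps). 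Since $(\Omega,d_\Omega)$ is Polish and $\cJ$ with the Carath\'eodory topology is Polish as well (a complete separable metric being furnished by the uniformising maps of the two complementary components), $\tau$ is a continuous bijection between standard Borel spaces, and the Lusin--Souslin theorem yields that $\tau^{-1}$ is Borel and $\tau$ is a Borel isomorphism, proving (ii). Alternatively one can show directly that $\tau$ is a homeomorphism: uniform convergence of the normalised uniformising maps implies, by a Koebe-distortion equicontinuity argument using local connectedness of Jordan boundaries, uniform convergence of the boundary parametrisations modulo reparametrisation, i.e. $d_\Omega$-convergence.

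The main obstacle is the topology comparison in (ii). The Carath\'eodory topology on $\cJ$ is \emph{strictly finer} than the Hausdorff topology on traces, so continuity of $\tau$ genuinely requires controlling both complementary components simultaneously --- which is what the winding-number argument (equivalently, tracking both uniformising maps) achieves --- rather than merely Hausdorff convergence of the traces. Everything else --- the a.s. simplicity of SLE$_\kappa$ for $\kappa\le4$, the Lusin--Souslin theorem, and Carath\'eodory's boundary-extension and kernel-convergence theorems --- is standard, so once the topology comparison is set up the argument is routine.
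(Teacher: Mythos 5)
Your proof is correct but takes a genuinely different route from the paper's. The paper works from the driving-function side: Proposition~\ref{prop:cSLE_is_borel} shows that chordal SLE$_\kappa$ is Borel for the Carath\'eodory topology on slits by exhibiting events such as ``$\eta$ visits $D(q,r)$'' (and then Hausdorff-ball events) as measurable relative to the driving Brownian motion, and the loop measure is then assembled \`a la Zhan from the constituent chordal/radial/whole-plane Borel measures. You instead take Zhan's already-constructed Borel measure $\nu_{\hat{\C}}$ on the Polish space $(\Omega,d_\Omega)$ of unparametrised loops, argue it is carried by simple loops, and push it forward under the trace map $\tau:\Omega_s\to\cJ$; continuity of $\tau$ into the Carath\'eodory topology is the only thing to check, and gives Borel-ness of the pushforward directly, with Lusin--Souslin being a bonus that upgrades $\tau$ to a Borel isomorphism. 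Your version isolates the topology comparison cleanly, while the paper's makes measurability relative to the driving process transparent; the two are related, since the paper's proof exhibits measurability of Hausdorff-metric balls and implicitly needs that the Hausdorff and Carath\'eodory Borel $\sigma$-algebras coincide, which is precisely the Lusin--Souslin step you make explicit.

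Two small corrections to your write-up. First, the parenthetical claim that kernel convergence is ``equivalent to uniform convergence on $\overline{\D}$ of the normalised maps'' overstates the Carath\'eodory kernel theorem: kernel convergence alone gives only local uniform convergence, and the strengthening to uniform convergence on $\overline{\D}$ requires in addition Fr\'echet (equivalently $d_\Omega$-) convergence of the boundaries --- that is Rad\'o's theorem, not Carath\'eodory's. You have this extra input by hypothesis, so the conclusion stands, but the justification should cite Rad\'o. Second, if $\Omega$ quotients only by increasing reparametrisations then $\tau$ is two-to-one on traces; either take the quotient by all homeomorphisms of $\S^1$, or observe that $\nu_{\hat{\C}}$ is invariant under orientation reversal so the pushforward on $\cJ$ is unaffected.
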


To prove this proposition, we will first start by proving a similar statement for chordal SLE. For the reader's convenience, we start by a short reminder on this theory. We refer to \cite{RohdeSchramm05} for a more complete introduction, and to \cite[Chapter 6]{Pommerenke75} for some background on the original analytic (deterministic) theory due to Loewner.

A \emph{Jordan slit} is an (unparametrised) simple Jordan arc in $\H$ with an endpoint at $0\in\del\H$. We denote by $\mathcal{K}$ the space of Jordan slits, equipped with the Hausdorff topology. Let also $\overline{\mathcal{K}}$ be the space of all simple Jordan arcs with an endpoint at 0 and an endpoint in $\H\cup\{\infty\}$. This space is also equipped with the Hausdorff metric, and $\mathcal{K}$ is an open subset of $\overline{\mathcal{K}}$.

Each Jordan slit has a unique parametrisation $\eta$ on some interval $[0,T]$ such that the following holds. For all $t\in[0,T]$, let $f_t:\H\setminus\eta[0,t]\to\H$ be the conformal map with the hydrodynamical normalisation (i.e. $f_t(z)=z+O(1/z)$ as $z\to\infty$). Then, $f_t(z)=z+\frac{2t}{z}+o(1/z)$ as $z\to\infty$. One says that $\eta$ is parametrised by half-plane capacity. In this case, $f_t$ satisfies the differential equation $\del_tf_t=\frac{2}{f_t-W_t}$, where $W\in\cC^0([0,T];\R)$ is called the \emph{driving function}. Conversely, every driving function $W\in\cC^0([0,T];\R)$ generates a family of compact hulls in the upper-half plane (but not necessarily a curve). We denote by $\mathfrak{L}:\cC^0(\R_+)\to\overline{\mathcal{K}}$ the Loewner map sending driving functions to Jordan slits.

Chordal SLE$_\kappa$ (run until capacity $T>0$) is the probability measure on $\mathcal{K}$ where the driving function is a Brownian motion in $[0,T]$ with speed $\kappa>0$. We denote by $\mathrm{SLE}^T_\kappa$ this measure, so $
\mathrm{SLE}_\kappa^1=\mathfrak{L}_*\P$
 where $\P$ is the law of $\sqrt{\kappa}$ times standard real Brownian motion on $[0,1]$, a Borel probability measure on $\cC^0([0,1])$. It is well-known that for $\kappa\in(0,4]$, the solution to the Loewner equation is almost surely a simple Jordan slit.

 \begin{proposition}\label{prop:cSLE_is_borel}
    For all $T>0$, $\mathrm{SLE}_\kappa^T$ is a Borel measure on $\mathcal{K}$. Moreover, $\mathrm{SLE}_\kappa^T$ converges weakly to a (Borel) probability measure on $\overline{\mathcal{K}}$ giving full mass to Jordan curves from 0 to $\infty$ in $\H$. 
 \end{proposition}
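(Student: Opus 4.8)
\textbf{Proof plan for Proposition \ref{prop:cSLE_is_borel}.}

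The plan is to reduce everything to measurability and continuity properties of the Loewner map $\mathfrak{L}:\cC^0(\R_+)\to\overline{\mathcal{K}}$, and then to transport the known convergence result for the driving Brownian motion. First I would fix $T>0$ and recall that $\mathrm{SLE}_\kappa^T=\mathfrak{L}_*\P_T$ where $\P_T$ is the law of $\sqrt{\kappa}$ times standard Brownian motion restricted to $[0,T]$, a Borel probability measure on $\cC^0([0,T])$ (equivalently on $\cC^0(\R_+)$ after an arbitrary continuation, which is irrelevant since the hull up to capacity $T$ only depends on $W|_{[0,T]}$). To conclude that $\mathrm{SLE}_\kappa^T$ is Borel on $\mathcal{K}$, it suffices to show that $\mathfrak{L}$ (viewed as a map into $\overline{\mathcal{K}}$ with the Carath\'eodory topology) is Borel measurable; since $\mathcal{K}$ is an open — hence Borel — subset of $\overline{\mathcal{K}}$ and $\mathfrak{L}_*\P_T$ gives full mass to $\mathcal{K}$ (for $\kappa\in(0,4]$ the solution is a.s. a simple slit, by Rohde--Schramm), the pushforward restricts to a Borel measure on $\mathcal{K}$.

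The key step is therefore the measurability — and ideally continuity at ``nice'' points — of $\mathfrak{L}$. I would argue this directly from the Loewner ODE: for a fixed $z\in\H$, the map $W\mapsto (f_t(z))_{t\le T}$ depends continuously on $W$ in the uniform topology by standard ODE stability estimates (Gr\"onwall applied to $\partial_t f_t = 2/(f_t - W_t)$, valid up to the swallowing time of $z$), and hence the hulls $K_t = \{z: \tau_z\le t\}$, and their complements, depend measurably on $W$. Concretely, one can characterise Carath\'eodory convergence of $\overline{\mathcal{K}}$-elements in terms of convergence of the uniformising maps $g_t = f_t^{-1}$ on compacts, and then use that $W_n\to W$ uniformly implies $g_t^{(n)}\to g_t$ locally uniformly (again by ODE stability for the backward equation), which is exactly Carath\'eodory convergence of the generated hulls. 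This gives continuity of $\mathfrak{L}$ on the set of $W$'s for which the Loewner chain is generated by a curve with good continuity modulus; measurability on all of $\cC^0$ then follows by a monotone-class/approximation argument (approximating $W$ by piecewise-linear driving functions, for which $\mathfrak{L}$ is explicitly continuous). I expect this ODE-stability bookkeeping — tracking uniformity in $z$ up to swallowing times and translating it into the Carath\'eodory topology — to be the main obstacle, though it is essentially routine and is implicit in \cite{RohdeSchramm05}.

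For the second assertion, I would invoke the known fact (Rohde--Schramm, Lawler--Schramm--Werner) that for $\kappa\in(0,4]$ the chordal SLE$_\kappa$ curve run for all capacity converges, as $T\to\infty$, to a simple curve from $0$ to $\infty$ in $\H$, and that the half-plane capacity parametrisation up to time $T$ is a continuous functional of the full curve (restriction to an initial segment is Carath\'eodory-continuous at curves without self-touchings). Thus the family $(\mathrm{SLE}_\kappa^T)_{T>0}$ is the family of laws of initial segments of a single a.s.-defined random curve $\eta:[0,\infty]\to\overline{\H}$ with $\eta(0)=0$, $\eta(\infty)=\infty$; tightness on $\overline{\mathcal{K}}$ is immediate because $\overline{\mathcal{K}}$ with the Carath\'eodory topology is compact (or metrisable and separable with relatively compact image), and the unique subsequential limit is the law of the full curve $\eta$, which gives full mass to Jordan arcs from $0$ to $\infty$. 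Measurability of this limit measure on the Borel $\sigma$-algebra of $\overline{\mathcal{K}}$ is automatic since it is a weak limit of Borel probability measures on a metrisable space. This then feeds directly into the proof of Proposition \ref{prop:sle_is_borel} via Zhan's construction of the loop measure by welding/rooting such chordal pieces.
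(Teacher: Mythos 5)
Your route is genuinely different from the paper's proof. The paper verifies measurability directly and in a more probabilistic way: it shows that the events $E_{z,r}=\{\eta\text{ visits }D(z,r)\}$ are measurable with respect to the driving Brownian motion (by reading them off the Loewner flow at rational test points), then deduces that Hausdorff-metric balls in $\mathcal{K}$ are measurable, which suffices because the Carath\'eodory Borel $\sigma$-algebra is contained in the Hausdorff one. You instead aim at the stronger statement that the Loewner map $\mathfrak{L}$ is Borel (indeed continuous) into $\overline{\mathcal{K}}$. Both approaches deliver the conclusion; yours, if carried out, also yields the reusable fact that $\mathfrak{L}$ is continuous into the Carath\'eodory topology, whereas the paper's is shorter and avoids any topological analysis of the Loewner map.

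Two comments on your execution. First, you are overcomplicating the continuity step: $\mathfrak{L}$, viewed as a map from $\cC^0([0,T])$ into hulls with the Carath\'eodory topology, is continuous on all of $\cC^0$, not merely on a set of drivers with "good continuity modulus." Carath\'eodory kernel convergence is equivalent to locally uniform convergence of the uniformising maps $g_t$, and the latter follows from Gr\"onwall because any compact subset of $\H\setminus K_T$ stays at positive distance from the swallowing locus, where the Loewner ODE is uniformly Lipschitz. The restrictions you anticipate (Lind--Marshall--Rohde-type $\tfrac12$-H\"older bounds, approximation by piecewise-linear drivers, monotone-class patching) are only needed to get continuity into the space of \emph{parametrised curves with the uniform topology} --- a finer topology that is not the target here. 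Second, the genuine loose ends lie elsewhere: (i) $\mathfrak{L}$ is a partial map (not every $W$ generates a slit), so you must either show that $\mathcal{K}$ is a Borel subset of the space of hulls and restrict, or argue directly that $\mathfrak{L}^{-1}(B)$ is $\P_T$-measurable for every Borel $B\subset\mathcal{K}$; and (ii) $\overline{\mathcal{K}}$ is not compact in the Carath\'eodory topology (Carath\'eodory limits of slits need not be Jordan arcs, and diameters are unbounded as $T\to\infty$), so tightness of $(\mathrm{SLE}_\kappa^T)_{T>0}$ is not for free --- it needs Carath\'eodory compactness of hulls in $\hat\C$ together with a.s.\ transience of $\mathrm{SLE}_\kappa$ to identify the limit as a simple curve to $\infty$. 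These are fillable gaps, not structural errors, but they should be stated rather than waved away.
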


\begin{proof}
We start by showing that $\mathrm{SLE}^1_\kappa$ is Borel.
  For each $z\in\H$ and $r>0$, define the open set $E_{z,r}:=\{\eta\text{ visits }D(z,r)\}\subset\mathcal{K}$. We can rewrite this event as
  \begin{equation}\label{E:event_SLE}
  E_{z,r}=\bigcup_{\alpha \in (0,1)\cap\Q} \bigcap_{n \ge 1} \{\exists q\in\Q^2\cap D(z,\alpha r) :\,\Im(f_1(q))<1/n,\, \Im(q)\ge 1-\alpha\}.
  \end{equation}
    Indeed, if $\eta$ visits some point $w \in D(z,r) \cap \mathbb{H}$, then a sequence of rational points $(q_n)_{n \ge 1}$ converging to $w$ satisfies $\Im (f_1(q_n)) \to 0$ and $\Im(q_n) \to \Im(q)>0$ as $n \to \infty$. Conversely, let $\alpha \in (0,1)$ and assume that for all $n \ge 1$, there exists $q_n \in \Q^2 \cap D(z,\alpha r)$ such that $\Im(f_1(q_n))<1/n$ and $\Im(q_n)\ge 1-\alpha$. By extracting a subsequence if necessary, $(q_n)$ converges to some $q \in \overline{D(z,\alpha r)} \cap \mathbb{H} \subset D(z,r)\cap \mathbb{H}$ which satisfies $\Im(f_1(q))=0$. Since $\eta$ is a Jordan slit, this implies that $\eta$ hits $q$ (non simple curves could surround $q$ without touching it), showing that the right hand side of \eqref{E:event_SLE} is included in the left hand side of \eqref{E:event_SLE}.
  Because for all $q \in \mathbb{H}$, $f_1(q)$ is measurable with respect to the Brownian motion driving the Loewner equation, the identity \eqref{E:event_SLE} implies that $E_{z,r}$ is $\mathrm{SLE}_\kappa^1$-measurable.

  Now, let $\eta_0,\eta\in\mathcal{K}$ and $r>0$. From the previous paragraph, for all $z\in\eta$, the event $\{d(z,\eta_0)<r\}$ is measurable (with $d$ the usual distance). Hence, the event $\{\sup_{z\in\eta}\,d(z,\eta_0)<r\}$ is also measurable. The same is true when exchanging the roles of $\eta_0$ and $\eta$, so by definition of the Hausdorff distance $d_\cH$, the event 
  $\{d_\cH(\eta_0,\eta)<r\}$ is measurable. This shows that metric balls in $\mathcal{K}$ are measurable, so $\mathrm{SLE}_\kappa^1$ is Borel. 

  By scaling, we easily deduce that $\mathrm{SLE}_\kappa^T$ is a Borel measure on $\mathcal{K}$ (or $\overline{\mathcal{K}}$) for all $T>0$. Chordal SLE$_\kappa$ from 0 to $\infty$ is then Borel on $\overline{\mathcal{K}}$ as a weak limit of Borel measures.
 \end{proof}

\begin{proof}[Proof of Proposition \ref{prop:sle_is_borel}]
The proof of Proposition \ref{prop:cSLE_is_borel} also applies to the setup of radial SLE$_\kappa(\rho)$ \cite[Section 2.1.2]{Miller17_ig4}. The Loewner equation in this setting takes a different form and the driving function is no longer a Brownian motion but a continuous semimartingale, locally absolutely continuous with respect to Brownian motion.
Whole-plane SLE$_\kappa(\rho)$ is a measure on Jordan arcs joining two distinct points in the Riemann sphere, which can be constructed as a two-sided version of radial SLE \cite[Section 2.1.3]{Miller17_ig4}. Hence, it is a Borel measure on the space of Jordan arcs as a weak limit of Borel measures.

Now, we recall the construction of the SLE loop measure \cite{Zhan21_SLEloop} (see also \cite[Section~2.3]{AngSun21_CLE}). Let $p,q\in\hat{\C}$ distinct. Sample a whole-plane SLE$_\kappa(2)$ from $p$ to $q$. Conditionally on this curve, sample an independent chordal SLE$_\kappa$ from $q$ to $p$ in the complement of the curve. Integrate over $p,q\in \hat{\C}$ with respect to the kernel appearing in \cite[(4.5)]{Zhan21_SLEloop} (see also \cite[(2.9)]{AngSun21_CLE}). All the measures appearing in this process are Borel (with respect to the Hausdorff topology), and all the operations are continuous with respect to the weak topology. Hence, the SLE$_\kappa$ loop measure is Borel.
\end{proof}

\section{A formal computation}\label{Appendix:formal}

In this section, we make a formal computation explaining how one could derive the integration by parts formula \eqref{E:intro_IBP} from the path integral
\[ 
\text{``}~\d \nu_\kappa(\eta) = \exp \Big( \frac{c_\rM}{24} I^L(\eta) \Big) \d \nu_{8/3}(\eta),~\text{''}
\]
where $I^L(\eta)$ is the Loewner energy, or universal Liouville action, of $\eta$. 
As already alluded to, this path integral is inspired by the large deviation result of \cite{carfagnini2023onsager}, but
the fact that this non-rigorous computation yields the correct integration by parts formula also gives an \textit{a posteriori} justification to the above path integral.
Let $F, G \in \cC_\mathrm{comp}$ be test functions, $\rv \in \C(z)\del_z$ be a vector field and $(\phi_t)_t$ be the associated flow, defined for small complex values of $t$. Recall that for all $\eta \in \cJ_{0,\infty}$,
\[ 
F(\phi_t(\eta)) = F(\eta) + t\cL_\rv F(\eta) + \bar t \bar\cL_\rv F(\eta) + o(t).
\]
We have
\begin{align*}
    \text{``}~\int F(\phi_t(\eta)) \overline{G(\eta)} \d \nu_\kappa(\eta)
    & = \int F(\phi_t(\eta)) \overline{G(\eta)} \exp \Big( \frac{c_\rM}{24} I^L(\eta) \Big) \d \nu_{8/3}(\eta) \\
    & = \int F(\eta) \overline{G(\phi_t^{-1}(\eta))} \exp \Big( \frac{c_\rM}{24} I^L(\phi_t^{-1}(\eta)) \Big) \d \nu_{8/3}(\eta). ~\text{''}
\end{align*}
In the last equality, we made a change of variable and used the invariance of the SLE$_{8/3}$-loop measure under quasiconformal maps. To first order, $\phi_t^{-1}$ is the flow associated to $-\rv$. Identifying the $t$-coefficients in the above display, we thus get that
\begin{align*}
    \text{``}~\int \cL_\rv F(\eta) \overline{G(\eta)} \d\nu_\kappa(\eta)
    & = - \int F(\eta) \Big( \overline{\bar\cL_\rv G(\eta)} + \overline{G(\eta)} \frac{c_\rM}{24} \cL_\rv I^L(\eta) \Big) \exp \Big( \frac{c_\rM}{24} I^L(\eta) \Big) \d \nu_{8/3}(\eta) \\
    & = - \int F(\eta) \Big( \overline{\bar\cL_\rv G(\eta)} + \overline{G(\eta)} \frac{c_\rM}{24} \cL_\rv I^L(\eta) \Big) \d \nu_\kappa(\eta). ~\text{''}
\end{align*}
By \cite[Chapter 2, Theorem 3.8]{TakhtajanTeo06},
$\cL_\rv I^L = 2\tilde\vartheta(\rv) - 2\vartheta(\rv)$. Altogether,
\begin{align*}
    \text{``}~\int \cL_\rv F(\eta) \overline{G(\eta)} \d\nu_\kappa(\eta)
    = - \int F(\eta) \Big( \overline{\bar\cL_\rv G(\eta)} + \frac{c_\rM}{12} \overline{G(\eta)} (\tilde\vartheta_\eta(\rv) - \vartheta_\eta(\rv)) \Big) \d \nu_\kappa(\eta) ~\text{''}
\end{align*}
which coincides with the first identity in \eqref{E:intro_IBP}.

\bibliographystyle{alpha}
\bibliography{bpz}
\end{document}